\newtheorem{theorem}{Theorem}[section]
\newtheorem{definition}[theorem]{Definition}
\newtheorem{proposition}[theorem]{Proposition}
\newtheorem{corollary}[theorem]{Corollary} 
\newtheorem{lemma}[theorem]{Lemma}
\renewcommand{\oe}{\mathrm{oe}}
\newcommand{\LL}{\mathbb{L}}
\newcommand{\dghp}{\mathrm{d}_{\mathrm{GHP}}}
\newcommand{\gh}{\mathrm{GH}}
\newcommand{\ghp}{\mathrm{GHP}}
\newcommand{\dhau}{\mathrm{d}_{\mathrm{H}}}
\newcommand{\N}{\mathbb{N}}
\newcommand{\R}{\mathbb{R}}
\newcommand{\Z}{\mathbb{Z}}
\newcommand{\M}{\mathbb{M}}
\renewcommand{\AA}{\mathcal{A}}
\newcommand{\ind}{\mathbbm{1}}
\newcommand{\convdist}{\ensuremath{\stackrel{d}{\rightarrow}}}
\newcommand{\fl}[1]{\ensuremath{\lfloor #1 \rfloor}}
\newcommand{\diam}{\mathrm{diam}}
\newcommand{\E}[1]{\ensuremath{\mathbb{E}\left[#1\right]}}
\newcommand{\p}[1]{\ensuremath{\mathbb{P}\left(#1\right)}}
\newcommand{\pran}[1]{\left(#1\right)}
\newcommand{\eps}{\ensuremath{\epsilon}}
\providecommand{\Ell}{}
\renewcommand{\Ell}{\ensuremath{L}}
\definecolor{tocc}{rgb}{0,0,0.5}
\newcommand{\refP}[1]{Proposition~\ref{#1}}
\newcommand{\refL}[1]{Lemma~\ref{#1}}
\newcommand{\refS}[1]{Section~\ref{#1}}
\newcommand{\MM}{\mathbb{M}}
\newcommand{\bX}{\mathbf{X}}
\newcommand{\bY}{\mathbf{Y}}
\newcommand{\bZ}{\mathbf{Z}}
\newcommand{\dist}{\mathrm{dist}}
\newcommand\textcolour\textcolor
\newcommand{\eqdist}{\stackrel{\mathrm{d}}{=}}
\newcommand{\dgh}{\ensuremath{\mathrm{d}_{\mathrm{GH}}}}
\newcommand{\rg}{$\R$-graph~}
\newcommand{\rX}{\ensuremath{\mathrm{X}}}
\newcommand{\rY}{\ensuremath{\mathrm{Y}}}
\newcommand{\skel}{\ensuremath{\mathrm{skel}}}
\newcommand{\core}{\ensuremath{\mathrm{core}}}
\newcommand{\conn}{\ensuremath{\mathrm{conn}}}
\newcommand{\len}{\ensuremath{\mathrm{len}}}
\newcommand{\im}{\ensuremath{\mathrm{Im}}}
\newcommand{\cut}{\ensuremath{\mathrm{cut}}}
\newcommand{\dis}{\ensuremath{\mathrm{dis}}}
\newcommand{\cB}{\mathcal{B}}
\newcommand{\cP}{\mathcal{P}}
\newcommand{\cM}{\mathcal{M}}
\newcommand{\rZ}{\ensuremath{\mathrm{Z}}}
\newcommand{\bbM}{\ensuremath{\mathbb{M}}}
\def\build#1_#2^#3{\mathrel{\mathop{\kern 0pt#1}\limits_{#2}^{#3}}}
\title{The scaling limit of the minimum spanning tree of the complete graph\vspace{-0.1cm}}
\author{L. Addario-Berry\thanks{Department of Mathematics and Statistics, McGill University}, N. Broutin\thanks{Projet RAP, Inria Rocquencourt-Paris}, C. Goldschmidt\thanks{Department of Statistics and Lady Margaret Hall, University of Oxford},  G. Miermont\thanks{UMPA, \'Ecole Normale Sup\'erieure de Lyon}\vspace{-0.1cm}}
\date{January 8, 2013}
\begin{document}

\maketitle

\vspace{-1cm}
\begin{abstract}
Consider the minimum spanning tree (MST) of the complete graph
  with $n$ vertices, when edges are assigned independent 
  random weights. Endow this tree with the graph distance
  renormalized by $n^{1/3}$ and with the uniform measure on its vertices.
  We show that the resulting space converges in distribution as $n\to\infty$ to a random
  measured metric space in the Gromov--Hausdorff--Prokhorov topology. We additionally 
  show that the limit is a random binary $\R$-tree and has
  Minkowski dimension $3$ almost surely. In particular, its law is
  mutually singular with that of the Brownian continuum random tree or
  any rescaled version thereof. Our approach relies on a coupling between
  the MST problem and the Erd\H{o}s--R\'enyi random
  graph. 
  We exploit the explicit description of the scaling limit of the Erd\H{o}s--R\'enyi random 
  graph in the so-called critical window, established in \cite{ABBrGo09a}, and   
  provide a similar description of the scaling limit for a ``critical minimum spanning forest'' contained within the MST. 
\end{abstract}

\vspace{-1cm}
\begin{figure}[h!]
  \begin{center}
\scalebox{-0.41}[0.41]{\includegraphics[trim=2.5cm 2.5cm 2cm 2.5cm, clip=true,angle=180]{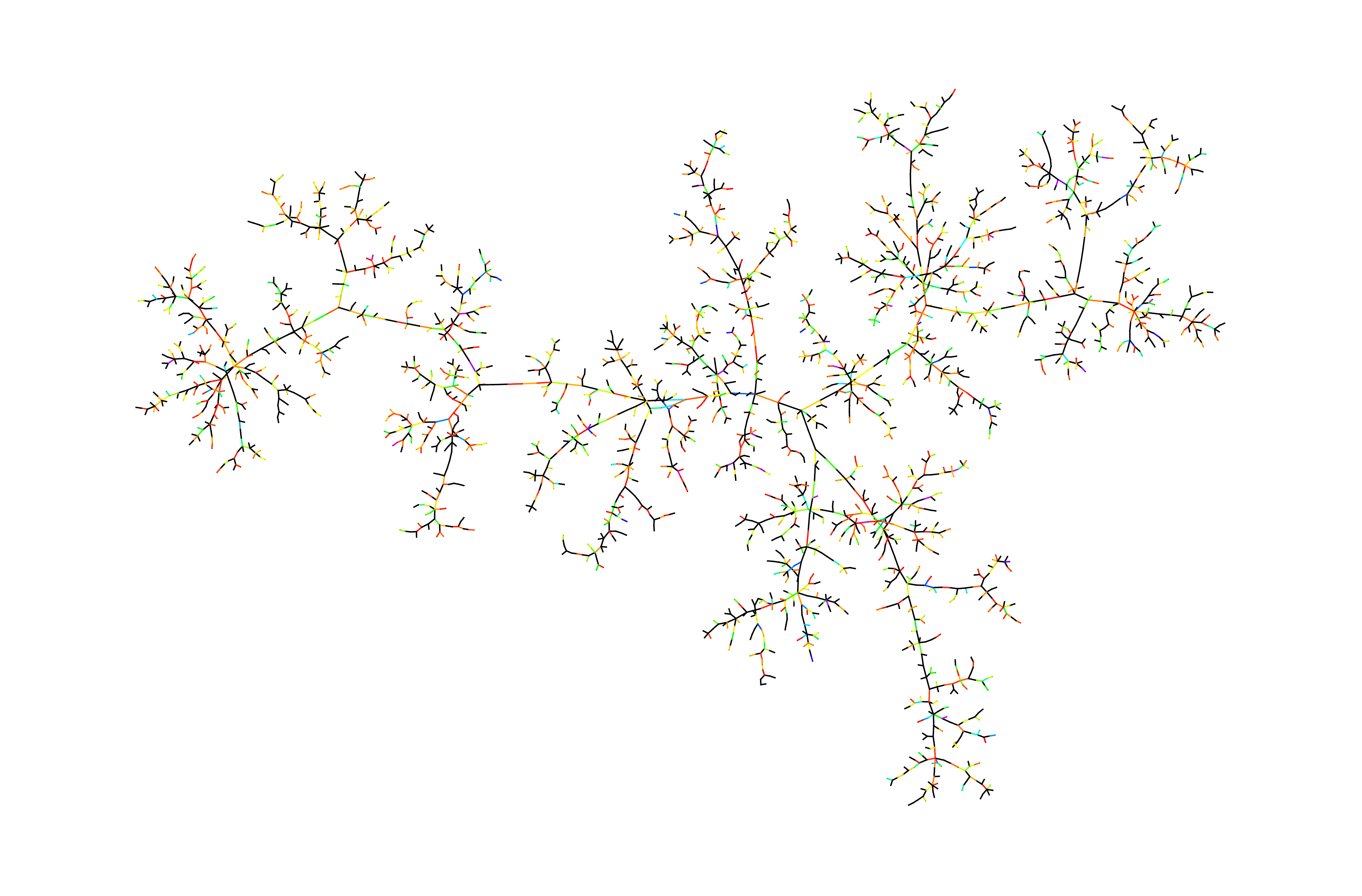}}
 \end{center}
\end{figure}
\vspace{-0.7cm}

\noindent Figure 1: A simulation of the minimum spanning tree on
  $K_{3000}$. Black edges have weights less than $1/3000$; for coloured
  edges, weights increase as colours vary from from red to purple.

\newpage

\addtocounter{figure}{1}

\begingroup
\hypersetup{linkcolor=tocc}
\tableofcontents
\endgroup

\section{Introduction}

\subsection{A brief history of minimum spanning trees} 
The minimum spanning tree (MST) problem is one of the first and
foundational problems in the field of combinatorial optimisation. In
its initial formulation by Bor\r{u}vka \cite{boruvka26jistem}, one is
given distinct, positive edge weights (or {\em lengths}) for $K_n$,
\nomenclature[Kn]{$K_n$}{Complete graph on $\{1,\ldots,n\}$.}
the complete graph on vertices labelled by the elements of
$\{1,\ldots,n\}$. Writing $\{w_e,e \in E(K_n)\}$ for this collection
\nomenclature[Eg]{$E(G)$}{Set of edges of the graph $G$.}
of edge weights, one then seeks the unique connected subgraph $T$ of
$K_n$ with vertex set $V(T) = \{1,\ldots,n\}$ 
\nomenclature[Vg]{$V(G)$}{Set of vertices of the graph $G$.}
that minimizes the {\em
  total length}
\begin{equation}\label{eq:totallength}
\sum_{e \in E(T)} w_e\, .
\end{equation}
Algorithmically, the MST problem is among the easiest in combinatorial
optimisation: procedures for building the MST are both easily
described and provably efficient. The most widely known MST algorithms
are commonly called {\em Kruskal's algorithm} and {\em Prim's
  algorithm}.\footnote{Both of these names are misnomers or, at the
  very least, obscure aspects of the subject's development; see Graham
  and Hell \cite{graham1985history} or Schriver
  \cite{schriver05history} for careful historical accounts.} Both
procedures are important in this work; as their descriptions are
short, we provide them immediately.

\medskip
\noindent {\sc Kruskal's algorithm}: start from a forest of $n$
isolated vertices $\{1,\ldots,n\}$. At each step, add the unique edge
of smallest weight joining two distinct components of the current
forest. Stop when all vertices are connected.

\medskip
\noindent {\sc Prim's algorithm}: fix a starting vertex $i$. At each
step, consider all edges joining the component currently containing
$i$ with its complement, and from among these add the unique edge of
smallest weight. Stop when all vertices are connected.  \medskip

Unfortunately, efficient procedures for constructing MST's do not
automatically yield efficient methods for understanding the typical
structure of the resulting objects. To address this, a common approach
in combinatorial optimisation is to study a procedure by examining how
it behaves when given random input; this is often called {\em average
  case} or {\em probabilistic analysis.}

The probabilistic analysis of MST's dates back at least as far as
Beardwood, Halton and Hammersley \cite{beardwood59shortest} who
studied the \emph{Euclidean MST} of $n$ points in $\R^d$.  Suppose
that $\mu$ is an absolutely continuous measure on $\R^d$ with
bounded support, and let $(P_i, i \ge 1)$ be i.i.d.\ samples from
$\mu$.  For edge $e=\{i,j\}$, take $w_e$ to be the Euclidean distance
between $P_i$ and $P_j$.  Then there exists a constant $c=c(\mu)$ such
that if $X_n$ is the total length of the minimum spanning tree, then
\[
\frac{X_n}{n^{(d-1)/d}} \stackrel{\mathrm{a.s.}}{\rightarrow} c. 
\]
This \emph{law of large numbers for Euclidean MST's} is the
jumping-off point for a massive amount of research: on more general
laws of large numbers
\cite{steele81subadditive,steele1988growth,yukich1996ergodic,penrose2003weak},
on central limit theorems
(\cite{alexander1996rsw,kesten1996central,lee1997central,lee1999central,penrose2001central,yukich2000asymptotics},
and on the large-$n$ scaling of various other ``localizable''
functionals of random Euclidean MST's
(\cite{penrose1996random,penrose1998extremes,penrose1999strong,steele1987number,kozma2006minimal}. (The
above references are representative, rather than exhaustive. The books
of Penrose \cite{penrose2003random} 
and of Yukich
\cite{yukich1998probability} are comprehensive compendia of the known
results and techniques for such problems.)

From the perspective of Bor\r{u}vka's original formulation, the most
natural probabilistic model for the MST problem may be the
following. Weight the edges of the complete graph $K_n$ with
independent and identically distributed (i.i.d.) random edge weights
$\{W_e: e \in E(K_n)\}$ whose common distribution $\mu$ is atomless
and has support contained in $[0,\infty)$, and let $\MM^n$ be the
resulting random MST. 
\nomenclature[Mn]{$\mathbb{M}^n$}{Minimum spanning tree of $K_n$ (as a graph).}
The conditions on $\mu$ ensure that all edge
weights are positive and distinct. Frieze \cite{frieze85mst} showed
that if the common distribution function $F$ is differentiable
at $0^+$ and $F'(0^+)>0$, then the total weight $X_n$ satisfies
\begin{equation}\label{eq:knlln}
F'(0^+)\cdot \E{X_n} \to \zeta(3),
\end{equation}
whenever the edge weights have finite mean. It is also known that
$F'(0^+)\cdot X_n \stackrel{\mathrm{p}}{\to} \zeta(3)$ without any
moment assumptions for the edge weights
\cite{frieze85mst,steele1987frieze}.  Results analogous to
(\ref{eq:knlln}) have been established for other graphs, including the
hypercube \cite{penrose98mst}, high-degree expanders and graphs of
large girth \cite{beveridge1998random}, and others
\cite{frieze89mst,frieze2000note}.

Returning to the complete graph $K_n$, Aldous \cite{aldous90mst}
proved a distributional convergence result corresponding to
(\ref{eq:knlln}) in a very general setting where the edge weight
distribution is allowed to vary with $n$, extending earlier, related
results \cite{timofeev88finding,avram92minimum}. Janson
\cite{janson95mst} showed that for i.i.d.\ Uniform$[0,1]$ edge weights
on the complete graph, $n^{1/2} (X_n - \zeta(3))$ is asymptotically
normally distributed, and gave an expression for the variance that was
later shown \cite{janson06mst} to equal $6\zeta(4)-4\zeta(3)$.

If one is interested in the {\em graph theoretic} structure of the
tree $\MM^n$ rather than in information about its edge weights, the
choice of distribution $\mu$ is irrelevant. To see this, observe that
the behaviour of both Kruskal's algorithm and Prim's algorithm is
fully determined once we order the edges in increasing order of
weight, and for any distribution $\mu$ as above, ordering the edges by
weight yields a uniformly random permutation of the edges. We are thus
free to choose whichever distribution $\mu$ is most convenient, or
simply to choose a uniformly random permutation of the edges. Taking
$\mu$ to be uniform on $[0,1]$ yields a particularly fruitful
connection to the now-classical {\em Erd\H{o}s--R\'enyi random graph
  process}. This connection has proved fundamental to the detailed
understanding of the global structure of $\MM^n$ and is at the heart
of the present paper, so we now explain it.

Let the edge weights $\{W_e: e \in E(K_n)\}$ be i.i.d.\ Uniform$[0,1]$
random variables. The Erd\H{o}s--R\'enyi graph process
$(\mathbb{G}(n,p),0 \le p \le 1)$ is the increasing graph process
obtained by letting $\mathbb{G}(n,p)$ have vertices $\{1,\ldots,n\}$
and edges $\{e \in E(K_n): W_e \le p\}$.\footnote{Later, it will be
  convenient to allow $p \in \R$, and we note that the definition of
  $\mathbb{G}(n,p)$ still makes sense in this case.} For fixed $p$,
each edge of $K_n$ is independently present with
probability $p$. Observing the process as $p$ increases from zero to
one, the edges of $K_n$ are added one at a time in exchangeable random
order. This provides a natural coupling with the behaviour of
Kruskal's algorithm for the same weights, in which edges are {\em
  considered} one at a time in exchangeable random order, and added
precisely if they join two distinct components. More precisely, for $0
< p < 1$ write $\mathbb{M}(n,p)$ for the subgraph of the MST $\MM^n$
with edge set $\{e \in E(\MM^n): W_e \le p\}$. Then for every $0 < p <
1$, the connected components of $\mathbb{M}(n,p)$ and of
$\mathbb{G}(n,p)$ have the same vertex sets.

In their foundational paper on the subject \cite{erdos60evolution},
Erd\H{o}s and R\'enyi described the {\em percolation phase transition}
for their eponymous graph process. They showed that for $p=c/n$ with
$c$ fixed, if $c < 1$ (the {\em subcritical} case) then
$\mathbb{G}(n,p)$ has largest component of size $O(\log n)$ in
probability, whereas if $c > 1$ (the {\em supercritical} case) then
the largest component of $\mathbb{G}(n,p)$ has size
$(1+o_p(1))\gamma(c) n$, where 
$\gamma(c)$ is the survival probability of a Poisson$(c)$ branching process. 
They also showed that for $c>1$,
all components aside from the largest have size $O(\log n)$ in
probability.

In view of the above coupling between the graph process and Kruskal's
algorithm, the results of the preceding paragraph strongly suggest
that ``most of'' the global structure of the MST $\MM^n$ should
already be present in the largest component of $\mathbb{M}(n,c/n)$,
for any $c > 1$. In order to understand $\MM^n$, then, a natural
approach is to delve into the structure of the forest
$\mathbb{M}(n,p)$ for $p \sim 1/n$ (the {\em near-critical} regime)
and, additionally, to study how the components of this forest attach
to one another as $p$ increases through the near-critical regime.  In
this paper, we use such a strategy to show that after suitable
rescaling of distances and of mass, the tree $\MM^n$, viewed as a
measured metric space, converges in distribution to a random compact
measured metric space $\mathscr{M}$ of total mass measure one, which
is a {\em random real tree} in the sense of
\cite{evans2006rpr,legall06rrt}.

The space $\mathscr{M}$ is the scaling limit of the minimum spanning
tree on the complete graph.  It is binary and its mass measure is
concentrated on the leaves of $\mathscr{M}$. The space $\mathscr{M}$
shares all these features with the first and most famous random real
tree, the Brownian continuum random tree, or CRT
\cite{aldouscrt91,aldouscrtov91,aldouscrt93,legall06rrt}. However,
$\mathscr{M}$ is not the CRT; we rule out this possibility 
by showing that $\mathscr{M}$ almost surely has Minkowski dimension 3. 
Since the CRT has both Minkowski dimension 2
and Hausdorff dimension 2, this shows that the law of $\mathscr{M}$ is mutually singular with that of the CRT, 
or any rescaled version thereof.

The remainder of the introduction is structured as follows. First,
Section~\ref{sec:overview}, below, we provide the precise statement of
our results. Second, in Section~\ref{sec:proof_idea} we provide an
overview of our proof techniques. Finally, in
Section~\ref{sec:history}, we situate our results with respect to the
large body of work by the probability and statistical physics
communities on the convergence of minimum spanning trees, and briefly
address the question of universality.

\subsection{The main results of this paper}\label{sec:overview}
Before stating our results, a brief word on the spaces in which we
work is necessary.  We formally introduce these spaces in
Section~\ref{sec:metrics}, and here only provide a brief
summary. First, let $\mathcal{M}$ be the set of measured
isometry-equivalence classes of compact measured metric
spaces, and let $\dghp$ denote the Gromov--Hausdorff--Prokhorov
distance on $\cM$; the pair $(\cM,\dghp)$ forms a Polish space. 

We wish to think of $\MM^n$ as an element of $(\cM,\dghp)$.  In order
to do this, we introduce a measured metric space $M^n$ obtained from
$\MM^n$ by rescaling distances by $n^{-1/3}$ and assigning mass $1/n$
to each vertex.  The main contribution of this paper is the following
theorem. 
\nomenclature[Mnaaa]{$M^n,\mathscr{M}$}{$M^n$ is the measured metric space version of $\mathbb{M}^n$; $\mathscr{M}$ is its GHP limit.}
\begin{theorem}\label{thm:main-0}
  There exists a random, compact measured metric space $\mathscr{M}$ 
  such that, as $n \to \infty$, 
\[
M^n \convdist \mathscr{M}\, 
\]
in the space $(\cM,\dghp)$.  The limit $\mathscr{M}$ is a random
$\R$-tree. It is almost surely binary, and its mass measure is
concentrated on the leaves of $\mathscr{M}$.  Furthermore, almost
surely, the Minkowski dimension of $\mathscr{M}$ exists and is
equal to $3$.
\end{theorem}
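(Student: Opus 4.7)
The plan is to leverage the coupling between Kruskal's algorithm and the Erd\H{o}s--R\'enyi graph process, together with the scaling limit of the critical Erd\H{o}s--R\'enyi random graph from \cite{ABBrGo09a}. Specifically, I would build $\mathscr{M}$ in three stages: first identify scaling limits of the ``critical minimum spanning forest'' $\mathbb{M}(n, p_\lambda)$ with $p_\lambda = 1/n + \lambda n^{-4/3}$ for each fixed $\lambda$, then describe how its components merge as $\lambda$ grows, and finally pass to the limit $\lambda \to \infty$ to recover all of $M^n$.

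For fixed $\lambda$, the components of $\mathbb{G}(n, p_\lambda)$, with distances rescaled by $n^{-1/3}$ and vertex mass $n^{-2/3}$, converge jointly to a sequence of random compact $\R$-graphs having only finitely many independent cycles each. The intersection of $\MM^n$ with each such component $C$ is the MST of $C$, obtained combinatorially by deleting, in each independent cycle of $C$, the edge of maximum weight. Provided the locations of these maximum-weight edges converge jointly with the components (in the limit to a uniformly distributed point along each cycle), I would show that this cycle-cutting operation is continuous in a suitable extended GHP topology on $\R$-graphs with distinguished cycle structure, yielding the scaling limit of $\mathbb{M}(n, p_\lambda)$ as a random sequence of $\R$-trees. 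As $\lambda$ increases further, each new edge of Kruskal's algorithm is either internal to a component, in which case it does not alter the MST but creates an additional cycle in $\mathbb{G}$, or it connects two components, producing an MST merger; in the scaling limit these mergers occur according to a Poisson process with intensity proportional to products of component masses, and each gluing is at a uniformly chosen pair of points. I then define $\mathscr{M}$ as the completion of the metric space obtained by performing all such gluings across $\lambda \in \R$. Tightness of $(M^n)$ in the GHP topology and compactness of $\mathscr{M}$ both reduce to showing that, uniformly in $n$, small components contribute negligibly to diameter and mass at times $\lambda$ large; obtaining these uniform estimates on the ``tail'' of the construction is one of the main technical obstacles.

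The structural properties of $\mathscr{M}$ follow from the construction. Each $\R$-tree obtained by cycle-cutting is binary, inherited from the binary branchpoint structure of the critical $\R$-graph limit in \cite{ABBrGo09a}, and each merging event glues at uniformly chosen points, which are almost surely leaves in the existing trees, so no vertex of degree $\ge 4$ is ever created. The mass measure concentrates on leaves because a typical vertex of $K_n$ attaches to $\MM^n$ through a single edge added at a supercritical time, and in the limit its image has degree one. Finally, for the Minkowski dimension, I would estimate the covering number $N(\eps)$ of $\mathscr{M}$. The upper bound $N(\eps) = O(\eps^{-3})$ should follow from a multi-scale covering using the components present at a well-chosen time $\lambda(\eps) \to \infty$ as $\eps \to 0$, tuned so that each component has diameter of order $\eps$; estimating the number and masses of such components then provides control on $N(\eps)$. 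The matching lower bound $N(\eps) \ge c\eps^{-3}$ requires a packing argument exhibiting $\sim \eps^{-3}$ pairwise $\eps$-separated points, which amounts to showing that $\mathscr{M}$ carries long thin branches at every scale. I expect this lower bound to be the most delicate part of the argument, since it demands sharp quantitative control over the geometry of $\mathscr{M}$ that goes well beyond mere distributional convergence, and in particular it is what ultimately distinguishes $\mathscr{M}$ (dimension $3$) from the CRT (dimension $2$).
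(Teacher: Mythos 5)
Your fixed-$\lambda$ analysis coincides with the paper's: Theorem~\ref{thm:gnp-converge} upgrades the GHP convergence of $G^n_\lambda$ from \cite{ABBrGo09a}, and Theorem~\ref{thm:msf-converge} then gives $M^n_\lambda \convdist \mathscr{M}_\lambda$ by applying a continuum cycle-breaking operation whose GHP-continuity is Theorem~\ref{sec:breaking-cycles-r-1}. One correction, though: the MST of a component $C$ is \emph{not} obtained by deleting, in each independent cycle of $C$, the edge of maximum weight. When $s(C)\geq 2$, cycles in a basis overlap and the same edge can be the maximum in several of them, so this prescription can delete too few edges. The correct discrete procedure (Proposition~\ref{sec:cycle-break-algor}) is to iteratively remove the maximum-weight non-disconnecting edge, and the continuum analogue cuts at a point chosen from the length measure restricted to $\conn(\rX)$, not at a uniform point ``along each cycle''; the difference matters because $\conn(\rX)$ is a global quantity that changes after each cut.

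Where you genuinely diverge is the passage $\lambda\to\infty$, and here there is a real gap. You propose to construct $\mathscr{M}$ as the closure of an increasing union obtained by tracking multiplicative-coalescent mergers and gluing components at uniformly chosen points. The paper does nothing of the kind. Instead, Lemma~\ref{lem:cauchy-property2} and Proposition~\ref{lem:mst-lim} (built on \cite{abbr09} and \cite{luczak90component}) show that $\dghp(\hat{M}^{n,1}_\lambda,M^n)$ is small with high probability uniformly in $n$ once $\lambda$ is large, and Theorem~\ref{thm:big_statement} then invokes the principle of accompanying laws to conclude directly that $M^n\convdist\mathscr{M}$ in $(\cM,\dghp)$ without ever defining a continuum merging process. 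Your proposed construction would require controlling a metric-space-valued Kruskal/coalescent process---not just the masses, which is what the multiplicative coalescent governs, but the full geometry and the evolving gluing points across all $\lambda$---and this is substantially beyond the tail estimates you correctly flag as needed. The paper's route succeeds precisely because it shows that for large $\lambda$ the single largest component already approximates the whole MST, so no merging machinery is required.

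The structural properties are also established differently: binarity and concentration of mass on leaves are deduced from compactness of $\mathscr{M}$ together with a coupling in which $\mathscr{M}^1_\lambda$ sits inside $\mathscr{M}$ (Proposition~\ref{sec:largest-tree-minimum} and Theorem~\ref{thm:basic_properties}), not by tracking merger events and arguing that gluings hit leaves. Your outline for the Minkowski dimension is closest in spirit to the paper's: Proposition~\ref{prop:box-counting} does establish matching covering-number bounds, with the lower bound coming from packing $\Theta(\lambda^3)$ disjoint balls centred on long core edges and the upper bound from a multiscale covering at scale $\lambda^{-(1-\eps)}$. The essential input you would still need is the explicit distributional description of $\core(\mathscr{G}^1_\lambda)$ from \cite{ABBrGo09b} (Proposition~\ref{sec:prop-scal-limit-1}), which gives Dirichlet-distributed core edge lengths and a decomposition of $\mathscr{G}^1_\lambda$ into Brownian CRTs glued along the kernel---without this, neither the packing lower bound nor the CRT-diameter control for the upper bound would go through.
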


A consequence of the last statement is that $\mathscr{M}$ is not a
rescaled version of the Brownian CRT $\mathscr{T}$, 
\nomenclature[T]{$\mathscr{T}$}{Brownian CRT.}
in the sense that
for any non-negative random variable $A$, the laws of $\mathscr{M}$
and the space $\mathscr{T}$, in which all distances are multiplied by
$A$, are mutually singular. Indeed, the Brownian tree has Minkowski
dimension $2$ almost surely. The assertions of
Theorem~\ref{thm:main-0} are contained within the union of
Theorems~\ref{thm:big_statement} and~\ref{thm:basic_properties} and
Corollary~\ref{cor:notCRT}, below.

In a preprint \cite{Louigi} posted simultaneously with the current work, 
the first author of this paper shows that
the {\em unscaled} tree $\mathbb{M}^n$, when rooted at vertex $1$, converges in the local weak sense to a 
random infinite tree, and that this limit almost surely has cubic volume growth. 
The results of \cite{Louigi} form a natural complement to 
Theorem~\ref{thm:main-0}.

As mentioned earlier, we approach the study of $M^n$ and of its
scaling limit $\mathscr{M}$ via a detailed description of the graph
$\mathbb{G}(n,p)$ and of the forest $\mathbb{M}(n,p)$, for $p$ near
$1/n$. As is by this point well-known, it turns out that the right
scaling for the ``critical window'' is given by taking $p=1/n +
\lambda/n^{4/3}$, for $\lambda \in \R$, and for such $p$, the largest
components of $\mathbb{G}(n,p)$ typically have size of order $n^{2/3}$
and possess a bounded number of cycles
\cite{luczak1994srg,aldous97}. Adopting this parametrisation, for
$\lambda \in \R$ write
\[
(\mathbb{G}^{n,i}_{\lambda},i \ge 1)
\]
for the components of $\mathbb{G}(n,1/n + \lambda/n^{4/3})$ listed in
decreasing order of size (among components of equal size, list
components in increasing order of smallest vertex label, say). For
each $i \ge 1$, we then write $G_{\lambda}^{n,i}$ for the measured
metric space obtained from $\mathbb{G}^{n,i}_{\lambda}$ by rescaling
distances by $n^{-1/3}$ and giving each vertex mass $n^{-2/3}$, and
let
\[
G_{\lambda}^n = (G^{n,i}_{\lambda},i \ge 1). 
\]
We likewise define a sequence $(\mathbb{M}^{n,i}_{\lambda},i \ge 1)$
of graphs, and a sequence $M_\lambda^n = (M^{n,i}_{\lambda},i \ge 1)$
of measured metric spaces, starting from
$\mathbb{M}(n,1/n+\lambda/n^{4/3})$ instead of 
$\mathbb{G}(n,1/n+\lambda/n^{4/3})$. 

In order to compare sequences $\bX = (\rX_i,i \ge 1)$ of elements of
$\cM$ (i.e., elements of $\cM^{\N}$), we let
$\LL_p$, for $p\geq 1$, be the set of sequences $\bX \in \cM^{\N}$ with
\[
\sum_{i \ge 1} \diam(\rX_i)^p +\sum_{i\geq 1}\mu_i(X_i)^p< \infty\, , 
\] 
and for two such sequences $\bX=(\rX_i,i \ge 1)$ and $\bX' = (\rX'_i,i \ge
1)$, we let 
\[
\dist^p_{\ghp}(\bX,\bX')=\left( \sum_{i \ge 1}\dghp(\rX_i,\rX'_i)^p \right)^{1/p}\, .
\]
The resulting metric space $(\LL_p,\dist^p_{\ghp})$ is a Polish space. 

The second main result of this
paper is the following (see Theorems \ref{thm:msf-converge} and
\ref{thm:big_statement} below). 
\begin{theorem}\label{thm:msf-converge-0}
Fix $\lambda \in \R$. Then there exists a random sequence $\mathscr{M}_{\lambda}=(\mathscr{M}_{\lambda}^i,i \ge 1)$ of compact measured metric spaces, 
such that as $n \to \infty$,
\begin{equation}\label{eq:abbgr1}
M_{\lambda}^n \convdist \mathscr{M}_{\lambda}
\end{equation}
in the space $(\mathbb{L}_4, \mathrm{dist}_{\ghp}^4)$. Furthermore,
let $\hat{\mathscr{M}}_\lambda^1$ be the first term
$\mathscr{M}_{\lambda}^1$ of the limit sequence $\mathscr{M}_\lambda$,
with its measure renormalized to be a probability. Then as
$\lambda \to \infty$, $\hat{\mathscr{M}}_\lambda^1$ converges in
distribution to $\mathscr{M}$ in the space $(\mathcal{M},\dghp)$.
\end{theorem}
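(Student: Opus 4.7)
The plan is to establish the two assertions of Theorem~\ref{thm:msf-converge-0} in turn. For the first assertion, I would build on the main result of \cite{ABBrGo09a}: as $n \to \infty$, the sequence $G_\lambda^n$ of rescaled critical Erd\H{o}s--R\'enyi components converges to a random sequence $\mathscr{G}_\lambda = (\mathscr{G}_\lambda^i, i \ge 1)$ of compact measured $\R$-graphs in $(\LL_4, \dist^4_\ghp)$. The bridge to the MSF is the cycle rule for MSTs: within each component $\mathbb{G}^{n,i}_\lambda$, the MST restricted to that component is obtained by removing, from each independent cycle, the unique edge of maximum weight. Since the surplus of each critical component is tight in $n$, only a bounded (in probability) number of edges are removed per component.

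Conditional on the graph structure of a component of $\mathbb{G}(n,p)$, the relative order of the edge weights is uniform, so the location of the maximum-weight edge along any cycle is, in the discrete sense, uniformly distributed along that cycle. After rescaling distances by $n^{-1/3}$, this passes to the limit as a cut-point distributed uniformly with respect to length measure on the corresponding topological cycle of $\mathscr{G}_\lambda^i$. I would then define a map $\Phi$ that sends a sequence of measured $\R$-graphs to the sequence of measured $\R$-trees obtained by removing one such random cut-point from each independent cycle in each component, with cut points independent across cycles conditionally on the input. Provided $\Phi$ is almost surely continuous at $\mathscr{G}_\lambda$ with respect to $\dist^4_\ghp$, the continuous mapping theorem yields $M_\lambda^n \convdist \Phi(\mathscr{G}_\lambda) =: \mathscr{M}_\lambda$.

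For the second assertion, the natural strategy is to establish tightness of $(\hat{\mathscr{M}}_\lambda^1)_{\lambda > 0}$ in $(\cM, \dghp)$ and identify the subsequential limits via a coupling argument. At the discrete level, the graphs $\mathbb{M}(n, 1/n + \lambda/n^{4/3})$ are monotonically nested in $\lambda$, and for $\lambda'$ large the first component $\mathbb{M}^{n,1}_{\lambda'}$ captures an ever-larger fraction of the full MST $\MM^n$. Quantitative estimates on the size of the near-giant component just above the critical window (e.g.\ from \cite{luczak1994srg}) then imply that, after mass renormalization, the family $(\hat{M}^{n,1}_\lambda)_\lambda$ is Cauchy in probability uniformly in $n$. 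Combined with the first assertion and a diagonal argument, this yields the existence of a limit $\mathscr{M}$ to which $\hat{\mathscr{M}}_\lambda^1$ converges in distribution.

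The principal obstacle is establishing the continuity of the cycle-cutting map $\Phi$ in the GHP topology. Small perturbations of an $\R$-graph can in principle destroy or introduce short topological cycles, so one must argue that the cycles of $\mathscr{G}_\lambda^i$ are robust in a quantitative sense, and that the cut points depend continuously on the underlying $\R$-graph. Additionally, the $\LL_4$-summability of diameters must be propagated through $\Phi$, so that the tree sequence inherits the integrability properties of the input graph sequence. A secondary but nontrivial obstacle lies in the uniform (in $n$) control needed for the $\lambda \to \infty$ limit in the second assertion, which requires careful bookkeeping for the vertices lying outside $\mathbb{M}^{n,1}_\lambda$.
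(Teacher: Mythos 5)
Your high-level strategy is the one the paper takes, and you correctly identify the two technical pillars (continuity of a continuum cutting operation, and a uniform-in-$n$ Cauchy estimate for the $\lambda\to\infty$ limit). That said, there are two substantive errors/gaps worth flagging.

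First, the description of the cutting map is wrong as stated. One does not obtain the MST by fixing a cycle basis and removing the max-weight edge from each independent cycle, with the cut points ``independent across cycles'': in a theta graph, the max-weight edges of two basis cycles can coincide or both lie on the shared path, and removing both disconnects. The correct procedure is \emph{sequential} cycle-breaking (Proposition~\ref{sec:cycle-break-algor}): repeatedly remove a uniformly chosen non-disconnecting edge; in the continuum, this becomes the Markov kernel $\mathcal{K}$ of Section~\ref{sec:breaking-cycles-r}, where each cut point is sampled from the length measure on $\conn(\cdot)$ of the \emph{already-cut} graph, so the successive cut points are not conditionally independent. The continuity needed is then for the kernel $\mathcal{K}^\infty$ (Theorem~\ref{sec:breaking-cycles-r-1}), which the paper proves after restricting to the sets $\mathcal{A}_r$ of $\R$-graphs with surplus, total core length and minimum kernel-edge length controlled by $r$; the joint convergence of $s(\cdot)$ and $r(\cdot)$ in Theorem~\ref{thm:gnp-converge} is precisely what guarantees one can restrict to some $\mathcal{A}_r$ with high probability. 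This is where the ``short cycles appearing/disappearing'' worry you raise is handled.

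Second, your treatment of the $\lambda\to\infty$ assertion has a genuine gap on the measure side. The principle of accompanying laws requires $\dghp(\hat{M}^{n,1}_\lambda, M^n)\to 0$ uniformly in $n$ as $\lambda\to\infty$, and this is a \emph{measured} statement. Size-of-giant estimates alone give control on the metric (Gromov--Hausdorff) part, essentially via Lemma~\ref{lem:cauchy-property2}, but to couple the uniform measure on $M^n$ with the renormalized uniform measure on $M^{n,1}_\lambda$ one must show the pendant forests hanging off $\mathbb{M}^{n,1}_\lambda$ have masses that equidistribute. The paper does this (Lemma~\ref{lem:exchbitssmall} and Proposition~\ref{lem:mst-lim}) by exploiting exchangeability of the pendant-subtree sizes $(S^n_\lambda(v))_v$, a susceptibility bound from Janson--Luczak to show the maximum pendant mass is $o_p(n)$, and Kallenberg's theorem on sums of finite exchangeable sequences to control the Prokhorov discrepancy. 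Without some argument of this kind, ``Cauchy in probability uniformly in $n$'' for the measured spaces does not follow from diameter control. Finally, a small point: the $\LL_4$-summability propagation you worry about is automatic once one notes that the set $\{i: s(\mathscr{G}^i_\lambda)>1\}$ is almost surely finite, and a single cut increases the diameter by at most a factor of two.
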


\subsection{An overview of the proof}\label{sec:proof_idea}
 Theorem 1 of \cite{ABBrGo09a} states that for each $\lambda \in \R$, there is a random sequence \[
\mathscr{G}_{\lambda} = (\mathscr{G}_{\lambda}^i, i \ge 1)
\]
of compact measured metric spaces, such that 
\begin{equation}\label{eq:abbr}
G_{\lambda}^n \convdist \mathscr{G}_{\lambda},
\end{equation}
in the space $(\LL_4,\dist^4_{\ghp})$. (Theorem 1 of \cite{ABBrGo09a}
is, in fact, slightly weaker than this because the metric spaces there
are considered without their accompanying measures, but it is easily
strengthened; see Section~\ref{sec:proofs}.) The limiting spaces are
similar to $\R$-trees; we call them {\em $\R$-graphs}. In
Section~\ref{sec:RtreesRgraphs} we define $\R$-graphs and develop a
decomposition of $\R$-graphs analogous to the classical ``core  
and kernel'' decomposition for finite connected graphs (see, e.g.,
\cite{janson00random}). 
We expect this generalisation of the theory of $\R$-trees to find further 
applications. 
The main results of \cite{ABBrGo09b} provide
precise distributional descriptions of the cores and kernels of the
components of $\mathscr{G}_{\lambda}$.

It turns out that, having understood the distribution of
$G_{\lambda}^n$, we can access the distribution of $M_{\lambda}^n$ by
using a minimum spanning tree algorithm called {\em cycle
  breaking}. This algorithm finds the minimum weight spanning tree of
a graph by listing edges in {\em decreasing} order of weight, then
considering each edge in turn and removing it if its removal leaves
the graph connected.

Using the convergence in (\ref{eq:abbr}) and an analysis of the cycle breaking algorithm, we will establish Theorem~\ref{thm:msf-converge-0}. 
The sequence $\mathscr{M}_{\lambda}$ is constructed from $\mathscr{G}_{\lambda}$ by a continuum analogue of the cycle breaking procedure. Showing that the continuum analogue of cycle breaking is well-defined and commutes with the appropriate limits is somewhat involved; this is the subject of Section~\ref{sec:ckcc}. 

For fixed $n$, the process $(M^{n,1}_{\lambda},\lambda\in \R)$ is
eventually constant, and we note that $\M^n = \lim_{\lambda \to \infty} \M^{n,1}_{\lambda}$
In order to establish that $M^n$ converges in distribution in the space $(\cM,\dghp)$ as $n \to \infty$, we rely on two ingredients. First, the convergence
in (\ref{eq:abbgr1}) 
is strong enough to imply that the first component $M_\lambda^{n,1}$ converges in distribution as
$n\to\infty$ to a limit $\mathscr{M}^1_\lambda$ in the space $(\cM,\dghp)$. 

Second, the results in \cite{abbr09} entail
Lemma~\ref{lem:cauchy-property2}, which in particular implies that for any $\eps > 0$, 
\begin{equation}\label{eq:abbr09b}
  \lim_{\lambda \to \infty} \limsup_{n \to \infty} \p{ \dgh(M^{n,1}_{\lambda},M^n) \geq \eps} =0. 
\end{equation}
This is enough to prove a version of our main result for the metric spaces without their measures.  In Lemma~\ref{lem:mst-lim}, below, we strengthen this statement. 
Let $\hat{M}^{n,1}_{\lambda}$ be the measured metric space obtained from $M^{n,1}_{\lambda}$ by rescaling so that the total mass is one (in $M^{n,1}_{\lambda}$ we gave each vertex mass $n^{-2/3}$; now we give each vertex mass $|V(\mathbb{M}^{n,1}_{\lambda})|^{-1}$). 
We show that for any $\eps > 0$, 
\begin{equation}\label{eq:mst-lim}
  \lim_{\lambda \to \infty} \limsup_{n \to \infty} \p{ \dghp(\hat{M}^{n,1}_{\lambda},M^n) \geq \eps} =0. 
\end{equation}
Since $(\cM,\dghp)$ is a complete, separable space, the so-called
principle of accompanying laws 
entails
that
$$M^n\convdist \mathscr{M}$$ 
in the space $(\cM,\dghp)$ for some limiting random measured metric space
$\mathscr{M}$ which is thus the scaling limit of the minimum spanning
tree on the complete graph. Furthermore, still as a consequence of the
principle of accompanying laws, $\mathscr{M}$ is also the limit in
distribution of $\mathscr{M}^1_\lambda$ as $\lambda\to\infty$ in the
space $(\cM,\dghp)$.

For fixed $\lambda \in \R$, we will see that each component of
$\mathscr{M}_{\lambda}$ is almost surely binary. Since $\mathscr{M}$
is compact and (if the measure is ignored) is an increasing limit of
$\mathscr{M}^1_{\lambda}$ as $\lambda \to \infty$, it will follow that
$\mathscr{M}$ is almost surely binary.

To prove that the mass measure is concentrated on the leaves of
$\mathscr{M}$, we use a result of {\L}uczak \cite{luczak90component}
on the size of the largest component in the barely supercritical
regime. This result in particular implies that for all $\epsilon > 0$,
\[
\lim_{\lambda \to \infty} \limsup_{n \to \infty} \p{\left|
    \frac{|V(\mathbb{M}^{n,1}_{\lambda})|}{2\lambda n^{2/3}} -
    1\right| > \eps} = 0.
\]
Since $\mathbb{M}^{n,1}_{\infty}$ has $n$ vertices, it follows
that for any $\lambda \in \R$, the proportion of the mass of
$M^{n,1}_{\infty}$ ``already present in $M^{n,1}_{\lambda}$'' is
asymptotically negligible. But (\ref{eq:abbr09b}) tells us that for
$\lambda$ large, with high probability every point of
$M^{n,1}_{\infty}$ not in $M^{n,1}_{\lambda}$ has distance $o_{\lambda
  \to \infty}(1)$ from a point of $M^{n,1}_{\lambda}$, so has distance
$o_{\lambda \to \infty}(1)$ from a leaf of $M^{n,1}_{\infty}$. Passing
this argument to the limit, it will follow that $\mathscr{M}$ almost
surely places all its mass on its leaves.

The statement on the Minkowski dimension of $\mathscr{M}$ depends
crucially on an explicit description of the components of
$\mathscr{G}_\lambda$ from \cite{ABBrGo09b}, which allows us to estimate
the number of balls needed to cover $\mathscr{M_\lambda^1}$. Along
with a refined version of \eqref{eq:abbr09b}, which yields an
estimate of the distance between $\mathscr{M}^1_\lambda$ and
$\mathscr{M}$, we are able to obtain bounds on the
covering number of $\mathscr{M}$.

This completes our overview, and we now proceed
with a brief discussion of related work, before turning to details.

\subsection{Related work}\label{sec:history}
In the majority of the work on convergence of MST's, inter-point
distances are chosen so that the edges of the MST have constant
average length (in all the models discussed above, the average edge
length was $o(1)$).  For such weights, the limiting object is
typically a non-compact infinite tree or forest. As detailed above, the study
bifurcates into the ``geometric'' case in which the points lie in a
Euclidean space $\R^d$, and the ``mean-field'' case where the
underlying graph is $K_n$ with i.i.d\ edge weights. In both cases, a
standard approach is to pass directly to an infinite underlying graph
or point set, and define the minimum spanning tree (or forest)
directly on such a point set.

It is not {\em a priori} obvious how to define the minimum spanning
tree, or forest, of an infinite graph, as neither of the algorithms
described above are necessarily well-defined (there may be no smallest
weight edge leaving a given vertex or component). However, it is known
\cite{aldous92asymptotics} that given an infinite locally finite graph
$G=(V,E)$ and distinct edge weights $\mathbf{w} = \{w_e,e \in E\}$,
the following variant of Prim's algorithm is well-defined and builds a
forest, each component of which is an infinite tree.

\medskip
\noindent {\sc Invasion percolation}: for each $v \in V$, run Prim's
algorithm starting from $v$ and call the resulting set of edges
$E_v$. Then let $\mathrm{MSF}(G,\mathbf{w})$ be the graph with
vertices $V$ and edges $\bigcup_{v \in V} E_v$.  \medskip

The graph $\mathrm{MSF}(G,\mathbf{w})$ is also described
by the following rule, which is conceptually based on the coupling
between Kruskal's algorithm and the percolation process, described
above. For each $r > 0$, let $G_r$ be the subgraph with edges $\{e \in
E: w_e \le r\}$.  Then an edge $e = uv \in E$ with $w_e = r$ is an
edge of $\mathrm{MSF}(G,\mathbf{w})$ if and only if $u$ and $v$ are in
distinct components of $G_r$ and one of these components is finite.

The latter characterisation again allows the MSF to be studied by
coupling with a percolation process. This connection was exploited by
Alexander and Molchanov \cite{alexander94percolation} in their proof
that the MSF almost surely consists of a single, one-ended tree for
the square, triangular, and hexagonal lattices with i.i.d.\
Uniform$[0,1]$ edge weights and, later, to prove the same result for
the MSF of the points of a homogeneous Poisson process in $\R^2$
\cite{alexander1995percolation}. Newman \cite{newman1997topics} has
also shown that in lattice models in $\mathbb{R}^d$, the critical
percolation probability $\theta(p_c)$ is equal to $0$ if and only if
the MSF is a proper forest (contains more than one tree). Lyons, Peres
and Schramm \cite{lyons2006minimal} developed the connection with
critical percolation. Among several other results, they showed that if
$G$ is any Cayley graph for which $\theta(p_c(G))=0$, then the
component trees in the MSF all have one end almost surely, and that
almost surely every component tree of the MSF itself has percolation
threshold $p_c=1$. (See also \cite{timar06ends} for subsequent work on
a similar model.) For two-dimensional lattice models, more detailed
results about the behaviour of the so-called ``invasion percolation
tree'', constructed by running Prim's algorithm once from a fixed
vertex, have also recently been obtained
\cite{damron2008relations,damron2010outlets}.

In the mean-field case, one common approach is to study the MST or MSF
from the perspective of local weak convergence
\cite{aldous2004omp}. This leads one to investigate the minimum
spanning forest of Aldous' {\em Poisson-weighted infinite tree}
(PWIT). Such an approach is used implicitly in \cite{mcdiarmid97mst}
in studying the first $O(\sqrt{n})$ steps of Prim's algorithm on
$K_n$, and explicitly in \cite{addario2009invasion} to relate the
behaviour of Prim's algorithm on $K_n$ and on the PWIT. Aldous
\cite{aldous90mst} establishes a local weak limit for the tree
obtained from the MST of $K_n$ as follows. Delete the (typically
unique) edge whose removal minimizes the size of the component
containing vertex $1$ in the resulting graph, then keep only the
component containing $1$.

Almost nothing is known about {\em compact} scaling limits for whole
MST's. In two dimensions, Aizenman, Burchard, Newman and Wilson
\cite{aizenman1999scaling} have shown tightness for the family of
random sets given by considering the subtree of the MST connecting a
finite set of points (the family is obtained by varying the set of
points), either in the square, triangular or hexagonal lattice, or in
a Poisson process. They also studied the properties of subsequential
limits for such families, showing, among other results, that any
limiting ``tree'' has Hausdorff dimension strictly between $1$ and
$2$, and that the curves connecting points in such a tree are almost
surely H\"{o}lder continuous of order $\alpha$ for any $\alpha <
1/2$. Recently, Garban, Pete, and Schramm \cite{garban10mst} announced
that they have proved the existence of a scaling limit for the MST in
2D lattice models. The MST is expected to be invariant under scalings,
rotations and translations, but not conformally invariant, and to have
no points of degree greater than four.  In the mean-field case,
however, we are not aware of any previous work on scaling limits for
the MST. In short, the scaling limit $\mathscr{M}$ that we identify in
this paper appears to be a novel mathematical object. It is one of the
first MST scaling limits to be identified, and is perhaps the first
scaling limit to be identified for any problem from combinatorial
optimisation.

We expect $\mathscr{M}$ to be a universal object: the MST's of a wide
range of ``high-dimensional'' graphs should also have $\mathscr{M}$ as
a scaling limit. By way of analogy, we mention two facts. First, Peres
and Revelle~\cite{peresrevelle} have shown the following universality
result for {\em uniform} spanning trees (here informally stated). Let
$\{G_n\}$ be a sequence of vertex transitive graphs of size tending to
infinity. Suppose that (a) the uniform mixing time of simple random
walk on $G_n$ is $o(G_n^{1/2})$, and (b) $G_n$ is sufficiently
``high-dimensional'', in that the expected number of meetings between
two random walks with the same starting point, in the first
$|G_n|^{1/2}$ steps, is uniformly bounded. Then after a suitable
rescaling of distances, the spanning tree of $G_n$ converges to the
CRT in the sense of finite-dimensional distributions. Second, under a
related set of conditions, van der Hofstad and Nachmias
\cite{van2012hypercube} have very recently proved that the largest
component of critical percolation on $G_n$ in the barely supercritical
phase has the same scaling as in the Erd\H{o}s--R\'enyi graph process
(we omit a precise statement of their result as it is rather
technical, but mention that their conditions are general enough to
address the notable case of percolation on the hypercube). However, a
proof of an analogous result for the MST seems, at this time, quite
distant. As will be seen below, our proof requires detailed control on
the metric and mass structure of all components of the Kruskal process
in the critical window and, for the moment, this is not available for
any other models.

\section{Metric spaces and types of convergence}\label{sec:metrics}

The reader may wish to simply skim this section on a first reading, referring back to it as needed.

\subsection{Notions of convergence} 
\label{sec:notions-convergence}

\subsubsection*{Gromov--Hausdorff distance}

Given a metric space $(X,d)$, we write $[X,d]$ for the isometry class
of $(X,d)$, 
\nomenclature[Xd]{$[X,d]$}{Isometry class of the metric space $(X,d)$.} 
and frequently use the notation $\mathrm{X}$ for either
\nomenclature[X]{$\mathrm{X}$}{A metric space, possibly decorated with measures and/or points.}
$(X,d)$ or $[X,d]$ when there is no risk of ambiguity. 
For a metric space $(X,d)$ we write $\diam((X,d))=\sup_{x,y \in X} d(x,y)$,
\nomenclature[Diam]{$\diam((X,d))$}{Equal to $\sup_{x,y \in X} d(x,y)$.}
which may be infinite. 

Let $\mathrm X = (X,d)$ and $\mathrm X' = (X',d')$ be metric spaces.
If $C$ is a subset of $X\times X'$, the {\em distortion} $\dis(C)$ is
defined by
\[
\dis(C) = \sup\{|d(x,y)-d'(x',y')|: (x,x') \in C, (y,y') \in C\}.
\]
\nomenclature[Dis]{$\mathrm{dis}(C)$}{Distortion of the correspondence $C$; equal to $\sup\{\lvert d(x,y)-d'(x',y')\rvert: (x,x') \in C, (y,y') \in C\}$.}
A \emph{correspondence} $C$ between $X$ and $X'$ is a measurable
subset of $X \times X'$ such that for every $x \in X$, there exists
$x' \in X'$ with $(x,x') \in C$ and vice versa.  
Write $C(X,X')$ for
the set of correspondences 
between $X$ and $X'$. 
\nomenclature[Cxx]{$C(X,X')$}{Set of correspondences between $X$ and $X'$.}
The Gromov--Hausdorff
distance $\dgh(\rX,\rX')$ between the isometry classes of $(X,d)$ and
$(X',d')$ is
\[
\dgh(\mathrm X,\mathrm X') = \frac{1}{2}\inf\{ \dis(C): C \in C(X,X')
\},
\]
and there is a correspondence which achieves this infimum. 
\nomenclature[Dgh]{$\dgh(\mathrm X,\mathrm X')$}{Gromov--Hausdorff distance between $\rX$ and $\rX'$; equal to $\frac{1}{2}\inf\{ \dis(C): C \in C(X,X')\}$. See same section for $\dgh^k(\mathrm X, \mathrm X')$.}
(In fact,
since our metric spaces are assumed separable, the requirement that
the correspondence be measurable is not strictly necessary.) It can be
verified that $\dgh$ is indeed a distance and, writing $\mathring{\cM}$ for
the set of isometry classes of compact metric spaces, that
$(\mathring{\cM},\dgh)$ is itself a complete separable metric space.
\nomenclature[Mdgh]{$(\mathring{\cM},\dgh)$}{Set of isometry classes of compact metric spaces with GH distance; see same section for $(\cM^{(k)},\dgh^k)$.}

Let $(X,d,(x_1,\ldots,x_k))$ and $(X',d',(x'_1,\ldots,x'_k))$ be
metric spaces, each with an ordered set of $k$ distinguished points
(we call such spaces {\em $k$-pointed metric spaces})\footnote{When
  $k=1$, we simply refer to pointed (rather than $1$-pointed) metric
  spaces, and write $(X,d,x)$ rather than $(X,d,(x))$}. We say that
these two $k$-pointed metric spaces are {\em isometry-equivalent} if
there exists an isometry $\phi:X\to X'$ such that $\phi(x_i)=x'_i$ for
every $i\in \{1,\ldots,k\}$.  As
before, we write $[X,d,(x_1,\ldots,x_k)]$ for the isometry equivalence
class of $(X,d,(x_1,\ldots,x_k))$, and denote either by $\rX$ when
there is little chance of ambiguity.

The {\em $k$-pointed Gromov--Hausdorff distance} is defined as
\[
\dgh^k(\mathrm X, \mathrm X') = \frac{1}{2}\inf\left\{\dis(C) :C \in
  C(X,X') \text{ such that } (x_i,x'_i) \in C, 1 \leq i\leq
  k\right\}.
\]
Much as above, the space $(\cM^{(k)},\dgh^k)$ of isometry classes of
$k$-pointed compact metric spaces is itself a complete separable metric
space. 

\subsubsection*{The Gromov--Hausdorff--Prokhorov distance}

A \emph{compact measured metric space} is a triple $(X,d,\mu)$ where
$(X,d)$ is a compact metric space and $\mu$ is a (non-negative) finite
measure on $(X,\cB)$, where $\cB$ is the Borel $\sigma$-algebra on
$(X,d)$. 
\nomenclature[Xdmu]{$(X,d,\mu),[X,d,\mu]$}{$(X,d,\mu)$ is a measured metric space; $\mu$ is a finite measure on $X$. $[X,d,\mu]$ is its measured isometry-equivalence class.}
Given a measured metric space
$(X,d,\mu)$, a metric space $(X',d')$ and a measurable function $\phi:
X \to X'$, we write $\phi_* \mu$ for the push-forward of the measure
$\mu$ to the space $(X',d')$.  
\nomenclature[Phimu]{$\phi_* \mu$}{Push-forward of measure $\mu$ under a map $\phi$.}
Two compact measured metric spaces
$(X,d,\mu)$ and $(X',d',\mu')$ are called \emph{isometry-equivalent}
if there exists an isometry $\phi:(X,d)\to (X',d')$ such that
$\phi_*\mu=\mu'$. The isometry-equivalence class of $(X,d,\mu)$ will be
denoted by $[X,d,\mu]$. Again, both will often be denoted by
$\mathrm{X}$ when there is little risk of ambiguity. If 
$\rX=(X,d,\mu)$ then we write $\mathrm{mass}(\rX)=\mu(X)$. 
\nomenclature[Massx]{$\mathrm{mass}(\rX)$}{For a measured metric space $\rX=(X,d,\mu)$, equal to $\mu(X)$.}

There are several natural distances on compact measured metric spaces
that generalize the Gromov--Hausdorff distance, see for instance
\cite{evanswinter,villani09,miermont09,AbDeHo12}. The presentation we
adopt is still different from these references, but closest in spirit
to \cite{AbDeHo12} since we are dealing with arbitrary finite measures
rather than just probability measures. In particular, it induces the
same topology as the compact Gromov--Hausdorff--Prokhorov metric of
\cite{AbDeHo12}.

If $(X,d)$ and $(X',d')$ are two metric spaces, let $M(X,X')$ be the set of
finite non-negative Borel measures on $X\times X'$. 
\nomenclature[Mxx]{$M(X,X')$}{Set of finite non-negative Borel measures on $X\times X'$.}
We will denote by $p,p'$ the canonical projections from $X\times
X'$ to $X$ and $X'$.  

Let $\mu$ and $\mu'$ be finite non-negative Borel measures on $X$ and $X'$
respectively.  The {\em discrepancy} of $\pi\in M(X,X')$ with respect
to $\mu$ and $\mu'$ is the quantity
$$D(\pi;\mu,\mu')=\|\mu-p_*\pi\|+\|\mu'-p'_*\pi\|\, ,$$
where $\|\nu\|$ is the total variation of the signed measure
$\nu$. 
\nomenclature[Dpimu]{$D(\pi;\mu,\mu')$}{Discrepancy of $\pi \in M(X,X')$.}
Note in particular that $D(\pi;\mu,\mu')\geq |\mu(X)-\mu'(X')|$, 
by the triangle inequality and the fact that $\|\nu\|\geq |\nu(1)|$, where $\nu(1)$ is the total
mass of $\nu$. If $\mu$ and $\mu'$ are probability
distributions (or have the same mass), a measure $\pi\in M(X,X')$ with
$D(\pi;\mu,\mu')=0$ is a coupling of $\mu$ and $\mu'$ in the standard
sense.

Recall that the Prokhorov distance between two finite non-negative
Borel measures $\mu$ and $\mu'$ on the \emph{same} metric space $(X,d)$ is given by 
$$\inf\{\eps>0:\mu(F)\leq
\mu'(F^\eps)+\eps\mbox{ and }\mu'(F)\leq \mu(F^\eps)+\eps\mbox{ for
  every closed }F\subseteq X \}\, .$$ 
An alternative distance, which generates the same topology but more easily extends to the 
setting where $\mu$ and $\mu'$ are measures on different metric spaces, is given by 
\[
\inf\left\{\eps>0: D(\pi; \mu, \mu') < \eps,
\pi(\{(x,x') \in X \times X: d(x,x')\geq \eps\})< \eps \text{ for some $\pi \in M(X,X)$} 
\right\}\, .
\]
To extend this, we replace the condition on $\{(x,x') \in X \times X: d(x,x')\geq \eps\}$ by 
an analogous condition on the measure of the set of pairs lying outside the correspondence. 
More precisely, let $\rX = (X,d,\mu)$ and $\rX' = (X',d',\mu')$ be measured metric
spaces. The Gromov--Hausdorff--Prokhorov distance between $\rX$ and
$\rX'$ is defined as
\[
\dghp(\rX,\rX') =   \inf\left\{\frac{1}{2}\dis(C)\vee
  D(\pi;\mu,\mu')\vee \pi(C^c)
\right\}\, ,
\]
the infimum being taken over all $C \in C(\rX, \rX')$ and $\pi\in
M(X,X')$.  
\nomenclature[Dghp]{$\dghp(\rX,\rX')$}{Gromov--Hausdorff--Prokhorov distance between 
$\rX$ and $\rX'$; see the same section for $\dghp^{k,l}(\rX,\rX')$.}
Here and elsewhere we write $x\vee y = \max(x,y)$ (and,
likewise, $x \wedge y = \min(x,y)$).  

Just as for $\dgh$, it can be verified that
$\dghp$ is a distance and that writing $\cM$ for the set of measured
isometry-equivalence classes of compact measured metric spaces,
$(\cM,\dghp)$ is a complete separable metric space (see, e.g., \cite{AbDeHo12}). 
\nomenclature[Mdghp]{$(\cM,\dghp)$}{Set of measured isometry-equivalence classes of compact measured metric spaces, with GHP distance; see same section for $(\cM^{k,l},\dghp^{k,l})$.}

Note that $\dghp((X,d,0),(X',d',0))=\dgh((X,d),(X',d'))$. In other words, 
the mapping $[X,d]\mapsto [X,d,0]$ is an isometric embedding of
$(\mathring{\mathcal{M}},\dgh)$ into $\mathcal{M}$, and we will sometimes abuse 
notation by writing $[X,d]\in \mathcal{M}$.  Note also that
$$
  \dgh(\rX,\rX')\vee
|\mu(X)-\mu'(X')|\leq \dghp(\rX,\rX') \leq
\frac{1}{2}(\diam(\rX)+\diam(\rX'))\vee (\mu(X)+\mu'(X'))\, .
$$
In particular, if $\rZ$ is the ``zero'' metric space consisting of a
single point with measure $0$, then 
\begin{equation}
  \label{eq:4}
\dghp(\rX,\rZ)=\frac{\diam(\rX)}{2}\vee \mu(X)\, ,\qquad \text{ for every
}\rX=[X,d,\mu]\, .
\end{equation}

Finally, we can define an analogue of $\dghp$ for measured
isometry-equivalence class of spaces of the form
$(X,d,\mathbf{x},\boldsymbol{\mu})$ where
$\mathbf{x}=(x_1,\ldots,x_k)$ are points of $X$ and
$\boldsymbol{\mu}=(\mu_1,\ldots,\mu_l)$ are finite Borel measures on
$X$. 
If $(X,d,\mathbf{x},\boldsymbol{\mu}),(X',d',\mathbf{x}',\boldsymbol{\mu}')$
are such spaces, whose measured, pointed isometry classes are denoted
by $\rX,\rX'$, we let 
$$\dghp^{k,l}(\rX,\rX')=
\inf \left\{ \frac{1}{2}\dis(C)\vee
  \max_{1\leq j\leq l} \left (D(\pi_j;\mu_j,\mu'_j)\vee
    \pi_j(C^c)\right) \right\}
$$
where the infimum is over all $C\in C(X,X')$ such that $(x_i,x'_i)\in
C, 1\leq i\leq k$ and all $\pi_j\in M(X,X'),1\leq j\leq l$.  Writing
$\cM^{k,l}$ for the set of measured isometry-equivalence classes of
compact metric spaces equipped with $k$ marked points and $l$ finite
Borel measures, we again obtain a complete separable metric space
$(\cM^{k,l},\dghp^{k,l})$.  We will need the following fact, which is in
essence \cite[Proposition 10]{miermont09}, except that we have to
take into account more measures and/or marks. This is a minor modification
of the setting of \cite{miermont09}, and the proof is similar.

\begin{proposition}
  \label{sec:grom-hausd-prokh}
  Let $\rX_n=(X_n,d_n,\mathbf{x}_n,\boldsymbol{\mu}_n)$ converge to
  $\rX_\infty=(X_\infty,d_\infty,\mathbf{x}_\infty,\boldsymbol{\mu}_\infty)$
  in $\cM^{k,l}$, and assume that the first measure $\mu_n^1$ of
  $\boldsymbol{\mu}_n$ is a probability measure for every $n\in
  \N\cup\{\infty\}$. Let $y_n$ be a random variable with distribution $\mu^1_n$, and let
  $\tilde{\mathbf{x}}_n=(x_n^1,\ldots,x_n^k,y_n)$. Then
  $(X_n,d_n,\tilde{\mathbf{x}}_n,\boldsymbol{\mu}_n)$ converges in
  distribution to
  $(X_\infty,d_\infty,\tilde{\mathbf{x}}_\infty,\boldsymbol{\mu}_\infty)$ in 
$\cM^{k+1,l}$. 
\end{proposition}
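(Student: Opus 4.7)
The plan is to build, on a common probability space, couplings of the augmented spaces $(X_n, d_n, \tilde{\mathbf{x}}_n, \boldsymbol{\mu}_n)$ and $(X_\infty, d_\infty, \tilde{\mathbf{x}}_\infty, \boldsymbol{\mu}_\infty)$ with the correct conditional laws of $y_n$ and $y_\infty$, and under which $\dghp^{k+1,l}$ tends to zero in probability; this is equivalent to convergence in distribution in the Polish space $\cM^{k+1,l}$.

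Fix $\eps_n\to 0$ with $\dghp^{k,l}(\rX_n,\rX_\infty)<\eps_n$, and choose near-optimal witnesses: a correspondence $C_n\in C(X_n,X_\infty)$ containing the pairs $(x_n^i,x_\infty^i)$ for $1\leq i\leq k$, and measures $\pi_n^1,\ldots,\pi_n^l\in M(X_n,X_\infty)$, so that $\dis(C_n)\leq 2\eps_n$ and, for each $j$, both $D(\pi_n^j;\mu_n^j,\mu_\infty^j)\leq \eps_n$ and $\pi_n^j(C_n^c)\leq \eps_n$. The main technical step is to upgrade $\pi_n^1$ to a genuine coupling $\tilde\pi_n^1$ of the probability measures $\mu_n^1$ and $\mu_\infty^1$ that still puts nearly all its mass in $C_n$. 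With $\sigma_n=(\mu_n^1-p_*\pi_n^1)_+$ and $\sigma_\infty=(\mu_\infty^1-p'_*\pi_n^1)_+$ (positive parts of the signed differences, each of total mass at most $\eps_n$), one subtracts from $\pi_n^1$ the excess of its marginals over $\mu_n^1$ and $\mu_\infty^1$ and then adds back a product-type correction built from $\sigma_n$ and $\sigma_\infty$; the resulting $\tilde\pi_n^1$ has exact marginals $\mu_n^1,\mu_\infty^1$ and satisfies $\tilde\pi_n^1(C_n^c)\leq 3\eps_n$. This is a direct transcription of the construction in the proof of Proposition~10 of \cite{miermont09}, and is the main obstacle.

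Having $\tilde\pi_n^1$, sample $(y_n,y_\infty)\sim\tilde\pi_n^1$. Then $y_n\sim\mu_n^1$ and $y_\infty\sim\mu_\infty^1$ marginally, which are the required conditional laws, and $(y_n,y_\infty)\in C_n$ with probability at least $1-3\eps_n$. On this event the unchanged correspondence $C_n$ contains all $k+1$ pairs of marked points, and the measures $\tilde\pi_n^1,\pi_n^2,\ldots,\pi_n^l$ witness
\[
\dghp^{k+1,l}\bigl((X_n, d_n, \tilde{\mathbf{x}}_n, \boldsymbol{\mu}_n),(X_\infty, d_\infty, \tilde{\mathbf{x}}_\infty, \boldsymbol{\mu}_\infty)\bigr)\leq 3\eps_n.
\]
On the complementary event the same quantity is uniformly bounded by a constant depending only on $\diam(\rX_n),\diam(\rX_\infty)$ and the total masses $\mu_n^j(X_n),\mu_\infty^j(X_\infty)$, and this bound stays bounded in $n$ because $\rX_n\to\rX_\infty$ in $\cM^{k,l}$. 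Hence $\dghp^{k+1,l}\to 0$ in probability, which gives the stated convergence in distribution. In short, once the random mark $y_n$ is coupled to a partner $y_\infty$ lying in $C_n$, the same witnesses that control $\dghp^{k,l}(\rX_n,\rX_\infty)$ also control the augmented distance.
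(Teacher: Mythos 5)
Your proposal is correct and follows exactly the approach that the paper intends: the paper does not write out a proof, deferring instead to Proposition~10 of \cite{miermont09}, and your argument is precisely that construction adapted to the extra marks and measures. The coupling-upgrade step (replacing $\pi_n^1$ by a genuine coupling $\tilde\pi_n^1$ of $\mu_n^1$ and $\mu_\infty^1$ with $\tilde\pi_n^1(C_n^c)=O(\eps_n)$) is the only real content, and you have it right, though the exact constant in $\tilde\pi_n^1(C_n^c)\leq 3\eps_n$ may come out slightly larger (e.g.\ $4\eps_n$) depending on how one tracks the mass removed from both marginals before adding the product correction; this does not affect the conclusion.
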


\subsubsection*{Sequences of metric spaces}
We now consider a natural metric on certain 
sequences of measured metric spaces. 
For $p\geq 1$ and $\bX=(\rX_i,i\geq 1),\bX'=(\rX'_i,i\geq 1)$ in $\cM^\N$, we let
\[
\dist^p_{\ghp}(\bX, \bX') = 
\pran{ \sum_{i \ge 1} \mathrm{d}_{\mathrm{GHP}}
  \big(\rX_i, \rX_i'\big)^p
}^{1/p}\, .
\] 
If $\bX \in \cM^{n}$ for some $n \in \N$, we consider $\bX$ as an
element of $\cM^{\N}$ by appending to $\bX$ an infinite sequence of
copies of the ``zero'' metric space $\rZ$. This allows us to use
$\mathrm{dist}^p_{\ghp}$ to compare sequences of metric spaces with
different numbers of elements, and to compare finite sequences with
infinite sequences. In particular, let $\bZ=(\rZ,\rZ,\ldots)$, and 
\[
\LL_p = \left\{\bX \in \cM^{\N}~:~ \dist^p_{\ghp}(\bX,\bZ)<\infty\right\}, 
\]
so, by \eqref{eq:4}, $\bX\in \LL_p$ if and only if the sequences
$(\diam(\rX_i),i\geq 1)$ and $(\mu_i(X_i),i\geq 1)$ are in
$\ell^p(\N)$. 
\nomenclature[Lp]{$(\LL_p,\dist^p_{\ghp})$}{Set of sequences of measured metric spaces, 
with the distance $\dist^p_{\ghp}$.}
We leave the reader to check that $(\LL_p,\dist^p_{\ghp})$ is a complete separable metric
space.

\subsection{Some general metric
  notions}\label{sec:some-general-metric}

Let $(X,d)$ be a metric space. For $x\in X$ and $r\geq 0$, we let
$B_r(x)=\{y\in X:d(x,y)<r\}$ and $\overline{B}_r(x)=\{y\in
X:d(x,y)\leq r\}$. We say $(X,d)$ is {\em degenerate} if $|X|=1$. 
As regards metric spaces, we mostly follow \cite{burago01} for our
terminology.
 
\subsubsection*{Paths, length, cycles}

Let $\mathcal{C}([a,b],X)$ be the set of continuous functions from
$[a,b]$ to $X$, hereafter called {\em paths with domain $[a,b]$} or {\em paths from $a$ to $b$}. 
\nomenclature[Cabx]{$\mathcal{C}([a,b],X)$}{Set of paths from $a$ to $b$.}
The image of a path is called an {\em arc}; it is a {\em simple arc} if the
path is injective. 
\nomenclature[Arc]{arc}{The image of a path; see same section for simple arc.}
If $f\in \mathcal{C}([a,b],X)$, the length of $f$
is defined by
\[
\len(f)=\sup\bigg\{\sum_{i=1}^kd(f(t_{i-1}),f(t_i)):k\geq 1, t_0,
t_1, \ldots,t_k \in [a,b], t_0\leq t_1\leq \ldots\leq t_k\bigg\}\, .
\]
If $\len(f)<\infty$, then the function $\varphi:[a,b]\to [0,\len(f)]$
\nomenclature[Lenf]{$\mathrm{len}(f)$}{The length of path $f$.}
defined by $\varphi(t)=\len(f|_{[a,t]})$ is non-decreasing and
surjective. The function $f\circ\varphi^{-1}$, where $\varphi^{-1}$ is
the right-continuous inverse of $\varphi$, is easily seen to be continuous,
and we call it the path $f$ {\em parameterized by arc-length}.

The {\em intrinsic distance} (or {\em intrinsic metric}) associated with $(X,d)$ is the function $d_l$
defined by 
$$d_l(x,y)=\inf\{\len(f):f\in \mathcal{C}([0,1],X), f(0)=x,f(1)=y\}\, .$$
The function $d_l$ need not take finite values. 
\nomenclature[Dl]{$d_l$}{The intrinsic distance associated with $(X,d)$ or with a subset $Y \subset X$.}
When
it does, then it defines a new distance on $X$ such that $d\leq
d_l$. The metric space $(X,d)$ is called {\em intrinsic} if $d=d_l$. 
Similarly, if $Y \subset X$ then the {\em intrinsic metric on $Y$} is given by 
\[
d_l(x,y)=\inf\{\len(f):f\in \mathcal{C}([0,1],Y), f(0)=x,f(1)=y\}\, .
\]

Given $x,y \in X$, a {\em geodesic between $x$ and $y$} 
(also called a {\em shortest path between $x$ and $y$}) 
is an
isometric embedding $f:[a,b] \to X$ such that $f(a)=x$ and $f(b)=y$
(so that obviously $\len(f)=b-a=d(x,y)$).  
\nomenclature[Geodesic]{geodesic}{Definition in text; see same section for geodesic arc, geodesic space.}
In this case, 
we call the image $\im(f)$ a {\em geodesic arc between $x$ and~$y$}.
  
  A metric space $(X,d)$ is called a geodesic space if for any
two points $x,y$ there exists a geodesic between $x$ and $y$. A
geodesic space is obviously an intrinsic space. If $(X,d)$ is compact,
then the two notions are in fact equivalent. Also note that for every
$x$ in a geodesic space and $r>0$, $\overline{B}_r(x)$ is the closure
of $B_r(x)$. Essentially all metric spaces $(X,d)$ that we consider in
this paper are in fact compact geodesic spaces.

A path $f\in \mathcal{C}([a,b],X)$ is a {\em local geodesic} between
$x$ and $y$ if $f(a)=x$, $f(b)=y$, and for any $t \in [a,b]$ there is
a neighborhood $V$ of $t$ in $[a,b]$ such that $f|_V$ is a
geodesic. 
\nomenclature[Local]{local geodesic}{Definition in text.}
It is then straightforward that $b-a=\len(f)$.  (Our
terminology differs from that of \cite{burago01}, where this would be
called a geodesic. We also note that we do not require $x$ and $y$ to be distinct.)

An {\em embedded cycle} is the image of a
continuous injective function $f:\mathbb{S}_1\to X$, where
$\mathbb{S}_1=\{z\in \mathbb{C}:|z|=1\}$. 
\nomenclature[cycle]{cycle}{Embedded cycle: image of continuous injective 
$f:\mathbb{S}_1\to X$; see same section for acyclic and unicyclic metric spaces.}
The length $\len(f)$ is the
length of the path $g:[0,1]\to X$ defined by $g(t)=f(e^{2\mathrm{i}\pi t})$ for
$0\leq t\leq 1$. It is easy to see that this length depends only on
the embedded cycle $c=\im(f)$ rather than its particular
parametrisation. We call it the {\em length} of the embedded
cycle, and write $\len(c)$ for this length. A metric space with no embedded cycle is called {\em acyclic},
and a metric space with exactly one embedded cycle is called {\em
  unicyclic}.

\subsubsection*{$\R$-trees and $\R$-graphs}\label{sec:r-trees-r}

A metric space $\rX=(X,d)$ is an {\em $\R$-tree} if it is an acyclic
geodesic metric space.  
\nomenclature[Rtree]{$\mathbb{R}$-tree}{Acyclic geodesic metric space.}
If $(X,d)$ is an $\R$-tree then for $x
\in T$, the {\em degree} $\deg_X(x)$ of $x$ is the number of connected
components of $X\setminus \{x\}$.  
\nomenclature[Degree]{$\deg_X(x)$}{Degree of $x$ in $\R$-graph or $\R$-tree $X$.}
A {\em leaf} is a point of degree
$1$; we let $\mathcal{L}(\rX)$ be the set of leaves of $\rX$.  
\nomenclature[Lx]{$\mathcal{L}(\rX)$}{Set of leaves of $\rX$.}  

A metric space $(X,d)$ is an {\em $\R$-graph} if it is locally an
$\R$-tree in the following sense. Note that by definition an
$\R$-graph is connected, being a geodesic space.

\begin{definition}\label{def:r-graph}
A compact geodesic metric space $(X,d)$ is an {\em
    $\R$-graph} if for every $x\in X$, there exists $\eps>0$ such that 
  $(B_\eps(x),d|_{B_\eps(x)})$ is an $\R$-tree. 
  \nomenclature[Rgraph]{$\mathbb{R}$-graph}{See Definition~\ref{def:r-graph}.}
\end{definition}

Let $\rX=(X,d)$ be an $\R$-graph and fix $x\in X$. The degree of $x$,
denoted by $\deg_X(x)$ and with values in $\N\cup \{\infty\}$, is
defined to be the degree of $x$ in $B_{\eps}(x)$ for every $\eps$
small enough so that $(B_\eps(x),d)$ is an $\R$-tree, and this
definition does not depend on a particular choice of $\eps$. If
$Y\subset X$ and $x\in Y$, we can likewise define the degree
$\deg_Y(x)$ of $x$ in $Y$ as the degree of $x$ in the $\R$-tree
$(B_\eps(x)\cap Y(x))\setminus \{x\}$, where $Y(x)$ is the connected
component of $Y$ that contains $x$, for any $\eps$ small
enough. Obviously, $\deg_Y(x)\leq \deg_{Y'}(x)$ whenever $Y\subset
Y'$.

Let
\nomenclature[Skel]{$\mathrm{skel}(\rX)$}{Points of degree at least two in an $\R$-graph $\rX$.}
\[\mathcal{L}(\rX)=\{x\in X:\deg_X(x)=1\}\, ,\qquad \skel(\rX)=\{x\in
X:\deg_X(x)\geq 2\}\, .
\]
 An element of $\mathcal{L}(\rX)$ is called a
{\em leaf of $\rX$}, and the set $\skel(\rX)$ is called the {\em skeleton of
  $\rX$}.  A point with degree at least $3$ is called a {\em branchpoint of $\rX$}.  
  \nomenclature[Branch]{branchpoint}{Point of degree at least three in an $\R$-graph $\rX$.} 
  We let $k(\rX)$ be the set of branchpoints of $\rX$.
  \nomenclature[Kx]{$k(\rX)$}{Set of branchpoints of $\R$-graph $\rX$.}
  If $\rX$ is, in fact, an $\R$-tree, then $\skel(\rX)$ is the set of
points whose removal disconnects the space, but this is not true in
general. Alternatively, it is easy to see that
\[
\skel(\rX)=\mathop{\bigcup_{x,y \in X}}_{c \in \Gamma(x,y)} c\setminus\{x,y\} 
\]
where for $x,y \in X$, $\Gamma(x,y)$ denotes the collection of all
geodesic arcs between $x$ and $y$. Since $(X,d)$ is
separable, this may be re-written as a countable union, and so there
is a unique $\sigma$-finite Borel measure $\ell$ on $X$ with
$\ell(\im(g)) = \len(g)$ for every injective path $g$, and such that
$\ell(X\setminus \skel(\rX))=0$. The measure $\ell$ is the Hausdorff
measure of dimension $1$ on $\rX$, and we refer to it as
the \emph{length measure} on $X$. 
\nomenclature[L]{$\ell$}{Length measure on an $\R$-graph $\rX$.}
If $(X,d)$ is an $\R$-graph then the
set $\{x\in X:\deg_X(x)\geq 3\}$ is countable (as is
classically the case for compact $\R$-trees), and hence this set has measure zero
under $\ell$.

\begin{definition}\label{def:core}
  Let $(X,d)$ be an $\R$-graph. Its {\em core}, denoted by
  $\core(\rX)$, is the union of all the simple arcs having both
  endpoints in embedded cycles of $\rX$. If it is non-empty, then
  $(\core(\rX),d)$ is an $\R$-graph with no leaves.
  \nomenclature[core]{$\mathrm{core}(\rX)$}{See Definition~\ref{def:core}.}
\end{definition}

The last part of this definition is in fact a proposition, which is
stated more precisely and proved below as Proposition
\ref{sec:skeleton-core-kernel}.  Since the core of $X$ encapsulates
all the embedded cycles of $X$, it is intuitively clear that when we
remove $\core(\rX)$ from $X$, we are left with a family of
$\R$-trees. This can be formalized as follows.  Fix $x\in X\setminus
\core(\rX)$, and let $f$ be a shortest path from $x$ to $\core(\rX)$, 
i.e., a geodesic from $x$ to $y \in \core(\rX)$, where $y \in \core(\rX)$ 
is chosen so that $\len(f)$ is minimum.
(recall that $\core(\rX$) is a closed subspace of $X$). This shortest
path is unique, otherwise we would easily be able to construct an embedded cycle
$c$ not contained in $\core(\rX)$, contradicting the
definition of $\core(\rX)$. Let $\alpha(x)$ be the endpoint of this path not equal to
$x$, which is thus the unique point of $\core(\rX)$ that is closest to
$x$. By convention, we let $\alpha(x)=x$ if $x\in \core(\rX)$. We call
$\alpha(x)$ the point of attachment of $x$.
\nomenclature[Alphax]{$\alpha(x)$}{Point of attachment of $x$ to $\core(\rX)$.}

\begin{proposition}\label{sec:structure-r-graphs-6}
  The relation $x\sim y\iff \alpha(x)=\alpha(y)$ is an equivalence
  relation on $X$. If $[x]$ is the equivalence class of $x$, then $([x],d)$ is a
  compact $\R$-tree. The equivalence class $[x]$ of a point $x\in
  \core(\rX)$ is a singleton if and only if 
  $\deg_X(x)=\deg_{\core(\rX)}(x)$. 
\end{proposition}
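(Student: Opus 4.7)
The equivalence relation claim is immediate from the definition. For the rest, set $a:=\alpha(x)$ so that $[x]=\alpha^{-1}(a)$. I would first verify that $\alpha:X\to\core(\rX)$ is continuous: given $x_n\to x$, compactness of $\core(\rX)$ (which is closed in the compact space $X$) together with continuity of $d$ ensure that any subsequential limit $b$ of $\alpha(x_n)$ is a closest point of $\core(\rX)$ to $x$, and the uniqueness of such a closest point gives $b=\alpha(x)$. Consequently $[x]$ is closed in $X$, hence compact. Next, for any $y\in[x]$, the shortest path $f_y$ from $y$ to $a$ lies entirely in $[x]$, since for every $w$ on $f_y$ the tail of $f_y$ from $w$ to $a$ is itself a shortest path from $w$ to $\core(\rX)$, so $\alpha(w)=a$ by uniqueness; this makes $[x]$ path-connected. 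Acyclicity of $([x],d)$ follows because every embedded cycle in $X$ lies in $\core(\rX)$ (it is a union of two sub-arcs of itself, each a simple arc with endpoints on the cycle, hence in $\core(\rX)$), so an embedded cycle inside $[x]$ would sit in $[x]\cap\core(\rX)=\{a\}$, which is impossible.

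The hard step will be showing that $([x],d)$ is geodesic. I plan to prove that any geodesic $\gamma^\ast$ from $y$ to $z$ in $X$, with $y,z\in[x]$, stays in $[x]$. If $\gamma^\ast$ met some $b\in\core(\rX)\setminus\{a\}$, then because $b$ lies on a geodesic we would have $d(y,z)=d(y,b)+d(b,z)>d(y,a)+d(a,z)\geq d(y,z)$, where the strict inequalities come from the uniqueness of the nearest core point for $y$ and for $z$---contradiction. If $\gamma^\ast$ passes through $a$, uniqueness of $f_y$ and $f_z$ forces $\gamma^\ast=f_y\cdot f_z^{-1}\subseteq[x]$. Otherwise $\gamma^\ast$ avoids $\core(\rX)$ entirely; consider $U=f_y\cup f_z\cup\gamma^\ast$, a compact, connected, locally connected subset of $X$ with $U\cap\core(\rX)\subseteq\{a\}$. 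Were $U$ to contain no embedded cycle, it would be a dendrite, hence an $\R$-tree, and the unique arc from $y$ to $z$ in $U$ would be contained in $f_y\cup f_z\subseteq[x]$; the simple arc $\gamma^\ast$ from $y$ to $z$ in $U$ would then equal this unique arc, contradicting that $\gamma^\ast$ leaves $[x]$. So $U$ must contain an embedded cycle, which must lie in $\core(\rX)$, and therefore in $U\cap\core(\rX)\subseteq\{a\}$---another contradiction.

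For the last assertion, fix $x\in\core(\rX)$ (so $\alpha(x)=x$). I would argue that $[x]=\{x\}$ is equivalent to the non-existence of $y\in X\setminus\core(\rX)$ with $\alpha(y)=x$. In a small $\R$-tree ball $B_\eps(x)$, assign to each $\core(\rX)$-direction at $x$ the $X$-direction at $x$ containing it; this assignment is injective because distinct directions in an $\R$-tree correspond to distinct germs of outgoing simple arcs at $x$. If some $X$-direction $D$ at $x$ contains no core direction, then $\core(\rX)\cap D$ is bounded away from $x$, and for $y\in D$ sufficiently close to $x$ one checks that $y\notin\core(\rX)$ and $\alpha(y)=x$ (all other points of $\core(\rX)$ are strictly farther from $y$ than $x$ is), giving $[x]\supsetneq\{x\}$. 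Conversely, if some $y\in[x]\setminus\{x\}$ exists, then $f_y$ approaches $x$ along some $X$-direction $D$; if $D$ contained a core direction, the germs of $f_y$ and of that core arc at $x$ would have to agree (uniqueness of germs in the $\R$-tree $B_\eps(x)$), forcing $f_y$ to enter $\core(\rX)$ arbitrarily close to $x$---contradicting that the interior of $f_y$ avoids $\core(\rX)$. Combining these, $[x]=\{x\}$ iff every $X$-direction at $x$ contains a core direction, iff the injective map is a bijection, iff $\deg_X(x)=\deg_{\core(\rX)}(x)$.
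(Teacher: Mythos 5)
Your argument is correct and elaborates the paper's proof, which rests on the same two facts you use: that $[x]\cap\core(\rX)$ is the single point $\alpha(x)$, and that every embedded cycle lies in $\core(\rX)$, so no cycle can meet $[x]\setminus\{\alpha(x)\}$. You supply details the paper leaves implicit: continuity of $\alpha$ (hence compactness of $[x]$), the observation that $f_y\subseteq[x]$ (hence path-connectedness), and the direction-counting argument for the degree equivalence. Two minor cautions. First, ``a dendrite, hence an $\R$-tree'' is not right as stated: a dendrite carrying the ambient metric need not be geodesic and so need not be an $\R$-tree; but the property you actually use --- that any two points of a dendrite are joined by a unique arc --- does hold, so your argument survives. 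Second, for $U$ to be a dendrite you need it to be locally connected; this is true (a connected finite union of arcs is a continuous image of $[0,1]$, hence a Peano continuum by Hahn--Mazurkiewicz), but worth a sentence. For comparison, the paper compresses the geodesic step into the claim that any competitor exiting $[x]$ must cross $\alpha(x)$ twice, justified by the ``illegal cycle'' observation; your three-case analysis proves the same thing, the crux being your case 3 (a geodesic avoiding the core entirely), which is exactly where the cycle observation does its work.
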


\proof The fact that $\sim$ is an equivalence relation is obvious. Fix
any equivalence class $[x]$. Note that $[x]\cap \core(\rX)$ contains only
the point $\alpha(x)$, so that $[x]$ is
connected and acyclic by definition. Hence, any two points of $[x]$ are
joined by a unique simple arc (in $[x]$). 
This path is moreover a
shortest path for the metric $d$, because a path starting and ending
in $[x]$, and visiting $X\setminus [x]$, must pass at least twice through
$\alpha(x)$ (if this were not the case, we could find an embedded cycle not
contained in $\core(\rX)$). The last statement is easy and left to the
reader. \endproof

\begin{corollary}\label{sec:r-trees-r-1}
If $(X,d)$ is an $\R$-graph, then $\core(\rX)$ is the maximal
closed subset of $X$ having only points of degree greater than or
equal to $2$. 
\end{corollary}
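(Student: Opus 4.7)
The claim breaks into two parts: first, that $\core(\rX)$ itself is closed and has every point of degree at least $2$ (in $\core(\rX)$); second, that any closed $Y \subseteq X$ satisfying $\deg_Y(y) \geq 2$ for every $y \in Y$ is contained in $\core(\rX)$. Part one will follow directly from the assertion in Definition~\ref{def:core} (to be proved as Proposition~\ref{sec:skeleton-core-kernel}) that $(\core(\rX),d)$ is an $\R$-graph with no leaves: being an $\R$-graph, $\core(\rX)$ is compact (hence closed in $X$), and the ``no leaves'' condition combined with the local $\R$-tree structure of an $\R$-graph forces every point to have degree at least $2$.

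The real content is part two, for which I plan a proof by contradiction. Assume there exists $y \in Y \setminus \core(\rX)$. By Proposition~\ref{sec:structure-r-graphs-6}, the equivalence class $[y]$ under $\sim$ is a compact $\R$-tree whose only point lying in $\core(\rX)$ is $\alpha(y)$. The key preliminary observation is that any path in $X$ joining a point of $[y]\setminus\{\alpha(y)\}$ to a point of $X\setminus[y]$ must pass through $\alpha(y)$, since otherwise, combined with the simple arc inside $[y]$, it would produce an embedded cycle not contained in $\core(\rX)$. In particular, for every $z \in [y] \setminus \{\alpha(y)\}$ the open ball $B_{d(z,\alpha(y))}(z)$ lies entirely inside $[y]$.

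Next, using compactness of $Y \cap [y]$, choose $y^{*} \in Y \cap [y]$ maximizing $d(\cdot, \alpha(y))$. Since $y \in Y \cap [y]$ and $y \neq \alpha(y)$, the maximum is strictly positive, so $y^{*} \neq \alpha(y)$. The step I expect to be the main (though mild) obstacle is then to deduce that $\deg_Y(y^{*}) \leq 1$, which will contradict the standing assumption on $Y$. Pick $\eps>0$ small enough that $B_{\eps}(y^{*})$ is an $\R$-tree contained in $[y]$. Cutting this $\R$-tree at $y^{*}$ yields several subtrees, exactly one of which contains the initial segment of the unique geodesic from $y^{*}$ to $\alpha(y)$. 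For any point $z$ lying in a different subtree one has $d(z,\alpha(y)) = d(z,y^{*}) + d(y^{*},\alpha(y)) > d(y^{*},\alpha(y))$, and so, by maximality of $y^{*}$, $z \notin Y$. Hence $(B_{\eps}(y^{*}) \cap Y)\setminus\{y^{*}\}$ is contained in the single subtree pointing toward $\alpha(y)$, and thus has at most one connected component.

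This gives $\deg_Y(y^{*}) \leq 1$, contradicting the hypothesis, so no such $y$ exists and $Y \subseteq \core(\rX)$. Combined with part one, this establishes that $\core(\rX)$ is the maximal closed subset with all points of degree at least two.
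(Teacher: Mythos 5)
Your proof is correct and proceeds by essentially the same argument as the paper's: find a point of $Y$ extremal with respect to distance to the core (respectively to $\alpha(y)$), and show this maximizer must be a leaf of $Y$, contradicting the degree hypothesis. The only cosmetic difference is that you maximize $d(\cdot,\alpha(y))$ within a single equivalence class $Y\cap[y]$, while the paper maximizes $d(\cdot,\core(\rX))$ globally after restricting attention to the case $Y\supsetneq\core(\rX)$; the two are interchangeable since one can always pass to $Y\cup\core(\rX)$.
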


\proof If $Y$ is closed and strictly contains $\core(\rX)$, then we can find $x\in Y$ such
that $d(x,\core(\rX))=d(x,\alpha(x))>0$ is maximal. Then $Y\cap [x]$
is included in the set of points $y\in [x]$ such that the geodesic
arc from $y$ to $\alpha(x)$ does not pass through $x$. This set is an
$\R$-tree in which $x$ is a leaf, so $\deg_Y(x)\leq 1$. 
\endproof

Note that this characterisation is very close to the definition of the
core of a (discrete) graph. Another important structural component is
$\conn(\rX)$, the set of points of $\core(\rX)$ such that $X\setminus
\{x\}$ is connected. 
\nomenclature[conn]{$\mathrm{conn}(\rX)$}{Set of points of $\core(\rX)$ such that $X\setminus
\{x\}$ is connected.}
Figure \ref{fig:Rgraph} summarizes the preceding
definitions. The space $\conn(\rX)$ is not connected or closed in
general.  Clearly, a point of $\conn(\rX)$ must be contained in an
embedded cycle of $X$, but the converse is not necessarily true. A
partial converse is as follows.

\begin{proposition}\label{sec:skeleton-core-kernel-2}
  Let $x\in \core(\rX)$ have degree $\deg_X(x)=2$ and suppose $x$ is 
  contained in an embedded cycle of $X$. Then $x\in \conn(\rX)$.
\end{proposition}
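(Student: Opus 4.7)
The plan is to show directly that $X \setminus \{x\}$ is connected by exploiting the local $\R$-tree structure at $x$ and the embedded cycle $c$ through $x$.

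First, since $\deg_X(x)=2$, Definition~\ref{def:r-graph} provides $\eps>0$ such that $(B_\eps(x),d)$ is an $\R$-tree in which $x$ has degree $2$. Thus $B_\eps(x)\setminus\{x\}$ has exactly two connected components, call them $U_1$ and $U_2$. My first claim is that every point $y\in X\setminus\{x\}$ lies in the same connected component of $X\setminus\{x\}$ as some point of $U_1\cup U_2$. To see this, pick any continuous path $\gamma\colon[0,1]\to X$ with $\gamma(0)=y$ and $\gamma(1)=x$ (such a path exists since $X$ is a geodesic space). Let $t^*=\inf\{t\in[0,1]:\gamma(t)\in \overline{B}_{\eps/2}(x)\}$. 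By continuity $d(\gamma(t^*),x)=\eps/2>0$, so $\gamma(t^*)\ne x$; moreover $\gamma([0,t^*])\cap B_{\eps/2}(x)\subseteq\{\gamma(t^*)\}$, so in particular $\gamma([0,t^*])$ does not contain $x$. Hence $\gamma|_{[0,t^*]}$ is a path in $X\setminus\{x\}$ from $y$ to a point of $U_1\cup U_2$.

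Next, I show that $U_1$ and $U_2$ themselves lie in the same connected component of $X\setminus\{x\}$. Let $c=\im(f)$ with $f\colon\mathbb{S}_1\to X$ continuous and injective, and let $t_0\in\mathbb{S}_1$ be the unique parameter with $f(t_0)=x$. Since $\mathbb{S}_1\setminus\{t_0\}$ is connected and $f$ is continuous and injective, $c\setminus\{x\}=f(\mathbb{S}_1\setminus\{t_0\})$ is a connected subset of $X\setminus\{x\}$. So it suffices to show that $c\setminus\{x\}$ meets both $U_1$ and $U_2$. By continuity of $f$ there exists $\delta>0$ such that $f$ maps both arcs $(t_0-\delta,t_0)$ and $(t_0,t_0+\delta)$ into $B_\eps(x)\setminus\{x\}$, and being continuous images of connected sets, each of these is contained entirely in either $U_1$ or $U_2$. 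If both were contained in the same $U_i$, then $f$ restricted to the arc $(t_0-\delta,t_0+\delta)$ would be an injective continuous map into the $\R$-tree $\overline{U_i\cup\{x\}}$ whose image meets $x$ only at parameter $t_0$; this image is an arc in an $\R$-tree that crosses the point $x$, and such an arc must leave $x$ into two distinct components of the local tree complement of $x$ (otherwise the arc would double back, contradicting injectivity). Since $x$ has degree exactly $2$ in $B_\eps(x)$, those two components are $U_1$ and $U_2$, contradicting the hypothesis that both sides lie in the same $U_i$.

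Combining the two claims, every point of $X\setminus\{x\}$ is connected within $X\setminus\{x\}$ to some point of $U_1\cup U_2$, and $U_1,U_2$ are themselves in a common connected component of $X\setminus\{x\}$ (via $c\setminus\{x\}$). Therefore $X\setminus\{x\}$ is connected, so $x\in\conn(\rX)$. The main delicate point is the last step of the middle paragraph, namely the local argument that the cycle $c$ must enter and exit $x$ through \emph{different} components $U_1,U_2$; this uses both the injectivity of $f$ and the fact that $\deg_X(x)=2$, together with the $\R$-tree structure on $B_\eps(x)$ which rules out any ``doubling back'' of an arc at $x$.
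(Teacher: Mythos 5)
Your proof is correct, though it takes a genuinely different route from the paper's. The paper's argument fixes arbitrary $y,y'\in X\setminus\{x\}$, projects each to its closest point $z,z'$ on the embedded cycle $c$, observes that $z\neq x$ and $z'\neq x$ (otherwise the geodesic from $y$ to its projection would supply a third local direction at $x$ beyond the two along $c$, forcing $\deg_X(x)\geq 3$), and then routes $y\to z\to z'\to y'$ with the middle segment running along the arc of $c$ that avoids $x$. You instead decompose $X\setminus\{x\}$ locally: the degree-$2$ hypothesis and the local $\R$-tree structure give exactly two local components $U_1,U_2$ of a punctured ball around $x$; a first-hitting argument shows every point of $X\setminus\{x\}$ is connected within $X\setminus\{x\}$ to $U_1\cup U_2$; and the cycle minus $x$ is a connected set that must meet both $U_1$ and $U_2$, because a simple arc in an $\R$-tree passing through $x$ at an interior parameter must exit into distinct components of the punctured tree (this is where uniqueness of simple arcs in $\R$-trees does the work your ``no doubling back'' phrase alludes to). One small clean-up: in your second paragraph, if $y$ is already within $\eps/2$ of $x$ then $t^*=0$ and $d(\gamma(t^*),x)=d(y,x)$ need not equal $\eps/2$; but in that case $y\in U_1\cup U_2$ directly and the claim is trivial, so the conclusion stands. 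The paper's proof is a bit slicker as a one-shot routing argument, while yours makes explicit the local structural fact---exactly two local sides at a degree-$2$ core point---that the paper uses only implicitly here but then leans on again in Section~\ref{sec:cutting-procedure} when describing $X_x$ as $(X\setminus\{x\})\cup\{x_{(1)},x_{(2)}\}$.
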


\proof Let $c$ be an embedded cycle containing $x$. Fix $y, y'\in
X\setminus\{x\}$, and let $\phi,\phi'$ be geodesics from
$y,y'$ to their respective closest points $z,z'\in c$. Note that
$z$ is distinct from $x$ because otherwise, $x$ would have degree
at least $3$. Likewise, $z'\neq x$. 

Let $\phi''$ be a parametrisation of the arc of $c$ between $z$ and
$z'$ that does not contain $x$, then the concatenation of $\phi,\phi'$
and the time-reversal of the path $\phi''$ is a path from $y$ to $y'$,
not passing through $x$. Hence, $X\setminus \{x\}$ is connected.
\endproof

Let us now discuss the structure of $\core(\rX)$. Equivalently, we
need to describe $\R$-graphs with no leaves, because such graphs are
equal to their cores by Corollary \ref{sec:r-trees-r-1}.

A {\em graph with
  edge-lengths} is a triple $(V,E,(l(e),e\in E))$ where $(V,E)$ is a
finite connected multigraph, and $l(e)\in (0,\infty)$ for every $e\in
E$. With every such object, one can associate an $\R$-graph without
leaves, which is the {\em metric graph} obtained by viewing the edges
of $(V,E)$ as segments with respective lengths $l(e)$. Formally, this
$\R$-graph is the metric gluing of disjoint copies $Y^e$ of the real
segments $[0,l(e)],e\in E$ according to the graph structure of
$(V,E)$. We refer the reader to \cite{burago01} for
details on metric gluings and metric graphs. In Section
\ref{sec:RtreesRgraphs}, we will prove the following result.

\begin{theorem}\label{sec:structure-r-graphs-4}
  An $\R$-graph with no leaves is either a cycle, or is the metric
  gluing of a finite connected multigraph with edge-lengths in which
  all vertices have degree at least $3$. The associated multigraph,
  without the edge-lengths, is called the kernel of $X$, and denoted
  by $\ker(\rX)=(k(\rX),e(\rX))$.
  \nomenclature[Kerx]{$\ker(\rX)$}{Kernel $(k(\rX),e(\rX))$ of $\R$-graph $\rX$; see also Section~\ref{sec:core-metric-gluing}.}
\end{theorem}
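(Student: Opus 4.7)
The plan is to show that $X$ has finitely many branchpoints, each of finite degree, and that the open complement $X\setminus k(\rX)$ decomposes as a finite disjoint union of arcs, each isometric to an open interval with endpoints in $k(\rX)$. Once this decomposition is in place, recording for each arc its length produces a finite multigraph with edge-lengths whose metric gluing reproduces $\rX$; in the degenerate case $k(\rX)=\emptyset$, $X$ itself is topologically a circle and, via arc-length parametrisation, isometrically a cycle.

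\textbf{Finite degree at each point.} The main technical step, and the main place the no-leaves hypothesis enters, is to prove that $\deg_X(x)<\infty$ for every $x\in X$. Fix $\eps>0$ with $B_{2\eps}(x)$ an $\R$-tree, and let $(C_\alpha)_{\alpha\in A}$ be the connected components of $B_{2\eps}(x)\setminus\{x\}$. If some $C_\alpha$ were contained in $B_r(x)$ for some $r<2\eps$, then $\overline{C_\alpha}=C_\alpha\cup\{x\}$ would be a compact sub-$\R$-tree of $B_{2\eps}(x)$ and would therefore possess a leaf $y\neq x$. For $\delta$ small, $B_\delta(y)$ is a connected subset of $B_{2\eps}(x)\setminus\{x\}$ meeting $C_\alpha$, hence contained in $C_\alpha$, forcing $\deg_X(y)=\deg_{\overline{C_\alpha}}(y)=1$ and contradicting the no-leaves hypothesis. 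Thus $d(x,\cdot)$ restricted to the connected set $C_\alpha$ takes values arbitrarily close to both $0$ and $2\eps$, and so by the intermediate value theorem some $z_\alpha\in C_\alpha$ satisfies $d(x,z_\alpha)=\eps$. Distinct $z_\alpha,z_\beta$ are separated by $x$ in the $\R$-tree $B_{2\eps}(x)$, so $d(z_\alpha,z_\beta)=2\eps$, and compactness of $\overline{B_{2\eps}(x)}$ forces $|A|=\deg_X(x)<\infty$.

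\textbf{Finiteness of $k(\rX)$.} Next one shows that $k(\rX)$ is discrete, hence finite by compactness of $X$. Suppose toward a contradiction that a sequence $y_n\to x$ of distinct branchpoints with $y_n\neq x$ exists. Applying the preceding step at each $y_n$ with a ball $B_{2s_n}(y_n)\subset B_{2\eps}(x)$ produces at least two ``excess'' branches at $y_n$ disjoint from the geodesic $[x,y_n]$, each containing a point at distance $s_n$ from $y_n$. A careful bookkeeping of these excess branches inside the compact $\R$-tree $B_{2\eps}(x)$---leveraging the fact that any accumulation of branchpoints must still respect the no-leaves condition globally---produces the required contradiction. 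With $k(\rX)$ discrete and $X$ compact, $k(\rX)$ is finite.

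\textbf{Conclusion.} With $k(\rX)$ finite and all degrees finite, each point of $X\setminus k(\rX)$ has degree $2$ and a neighborhood isometric to an open interval, so the connected components of $X\setminus k(\rX)$ are $1$-manifolds, each homeomorphic to $\R$ or to $S^1$. An $S^1$-component is compact and thus closed in $X$, so connectedness of $X$ forces it to equal $X$ and $k(\rX)=\emptyset$; in this case arc-length parametrisation exhibits $\rX$ as isometric to a cycle of length $\ell(X)$. Otherwise every component is homeomorphic to $\R$, with closure in $X$ adding one or two branchpoints. Since each $v\in k(\rX)$ has finite degree contributing $\deg_X(v)$ arc-ends, the total number of arcs is $\tfrac{1}{2}\sum_{v\in k(\rX)}\deg_X(v)<\infty$; recording the length $\ell(\overline{C})$ of each arc yields a finite connected multigraph with edge-lengths, all vertex degrees $\geq 3$, whose metric gluing is isometric to $\rX$. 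The main obstacle in this plan is the finiteness of $k(\rX)$: the discreteness claim requires using the no-leaves hypothesis not only pointwise (as for finite degree) but more delicately to forbid an accumulation of branchpoints within a single ball.
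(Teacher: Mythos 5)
Your Step 1 (finite degree at every point) is correct and nicely done: the no-leaves hypothesis genuinely enters to show that each component $C_\alpha$ of $B_{2\eps}(x)\setminus\{x\}$ escapes the ball, after which the intermediate value argument and compactness close the case. This is a clean, self-contained local argument.

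The genuine gap is in Step 2, and you yourself flag it as ``the main obstacle.'' The phrase ``careful bookkeeping of these excess branches\ldots leveraging the fact that any accumulation of branchpoints must still respect the no-leaves condition globally'' does not identify a mechanism for the contradiction. As stated, you take branchpoints $y_n\to x$, extract two excess branches at each $y_n$, and find a point at distance $s_n$ from $y_n$ in each branch; but these points live in the fixed compact ball $B_{2\eps}(x)$ and nothing said so far prevents them from being pairwise close, so no contradiction with compactness is visible. (Recall that a compact $\R$-tree such as the CRT can have a dense set of branchpoints; the $\R$-tree structure of the ball alone cannot rule out accumulation.) The missing idea is a \emph{uniform} separation coming from the fact that excess branches must escape a ball of \emph{fixed} radius $R(\rX)$. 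One workable route in the spirit of your plan: first show the sphere $S=\{w: d(x,w)=R/2\}$ is finite, by using the no-leaves hypothesis to extend the geodesic $[x,w]$ past each $w\in S$ to a point $w'$ at distance $3R/4$ from $x$; for distinct $w_1,w_2\in S$ the meet $w_1\wedge w_2$ lies at distance $<R/2$ from $x$, so $d(w'_1,w'_2)\geq R/2$, and compactness bounds $|S|$. Then each branchpoint $y$ in $B_{R/4}(x)$ is the meet $a_y\wedge b_y$ of two points of $S$ lying in distinct outward components of $B_R(x)\setminus\{y\}$, so the map $y\mapsto\{a_y,b_y\}$ is injective into pairs from the finite set $S$. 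This delivers discreteness of $k(\rX)$, hence finiteness. Without an argument of this kind, your Step 2 is a claim, not a proof.

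For comparison, the paper proceeds quite differently: it first proves (Proposition~\ref{sec:skeleton-core-kernel}) that the union of all embedded cycles is a \emph{finite} union of cycles---a compactness argument anchored on the uniform girth lower bound $\mathrm{gir}(\rX)\geq 4R(\rX)>0$---and then proves finiteness of $k(\rX)$ and of degrees by \emph{induction on the number of independent cycles} (Proposition~\ref{sec:structure-r-graphs-1}), peeling cycles off one at a time. Your proposed route, if completed, would be a more direct, purely local argument and arguably shorter; but the finiteness step is exactly where the paper invests its effort, and it cannot be waved through. Two smaller points for the conclusion: you should also verify that each arc has finite length before speaking of edge-lengths (this again uses $R(\rX)>0$, e.g.\ via Proposition~\ref{sec:r-graphs}, to show a local geodesic of infinite length in a compact space is impossible), and that distinct ends of arcs land in distinct directions at a branchpoint, so that the count $\tfrac12\sum_v\deg_X(v)$ is exact.
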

The precise definition of $\ker(\rX)$, and the proof of Theorem~\ref{sec:structure-r-graphs-4}, both appear in Section~\ref{sec:core-metric-gluing}.

\begin{figure}[htb!]
\begin{center}
\includegraphics[scale=.9]{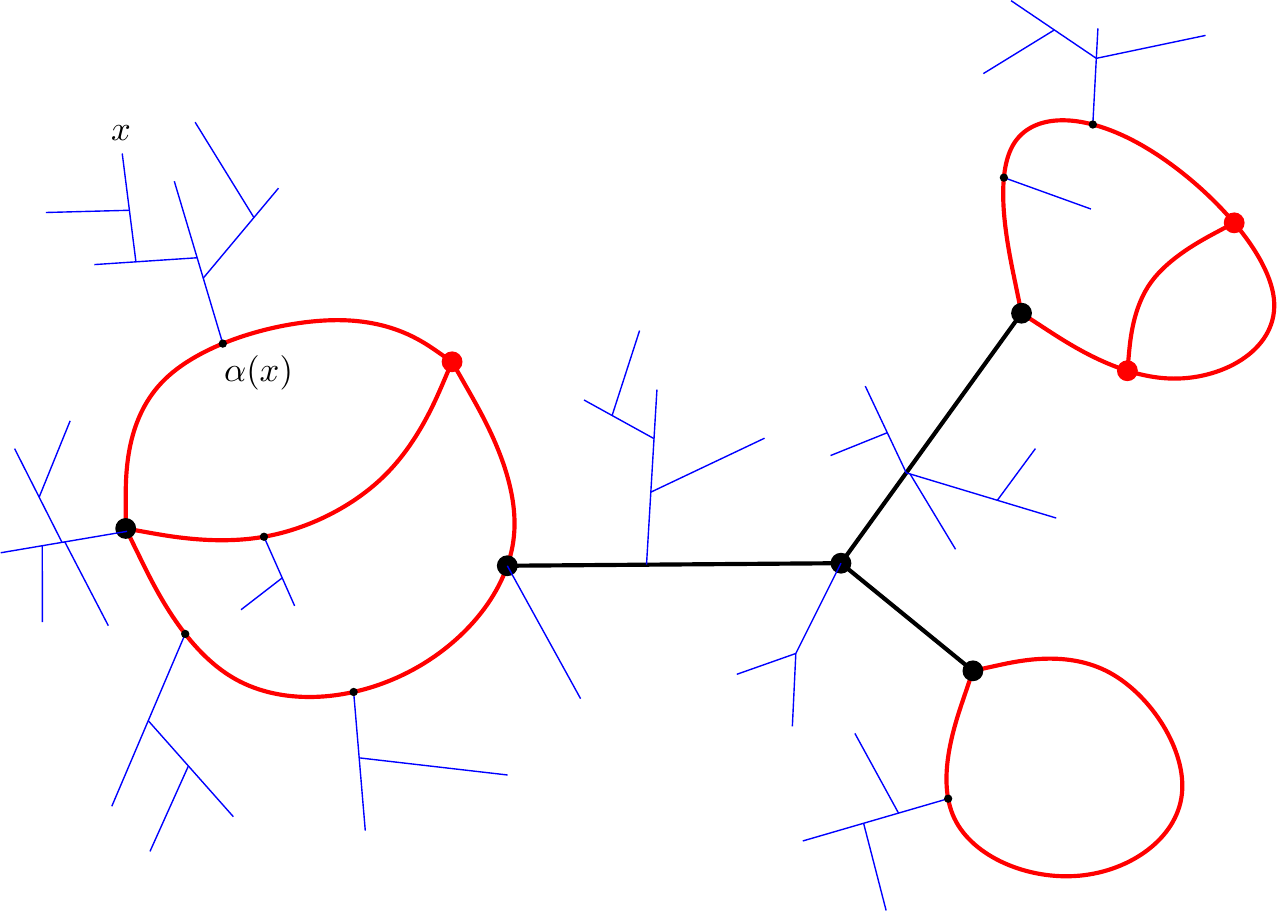}
\end{center}
\caption{An example of an $\R$-graph $(X,d)$, emphasizing the structural
  components. $\core(\rX)$ is in thick line (black and red), with
  $\conn(\rX)$ in red. The subtrees hanging from
  $\core(\rX)$ are in thin blue line. Kernel vertices are
  represented as large dots. An
  example of the projection $\alpha:X\to\core(\rX)$ is provided.}
\label{fig:Rgraph}
\end{figure}

For a connected multigraph $G=(V,E)$, the {\em surplus} $s(G)$ is
$|E|-|V|+1$. 
\nomenclature[Sg]{$s(G),s(\rX)$}{Surplus of $G$ and of $\rX$.}
For an \rg $(X,d)$, we let $s(\rX)=s(\ker(\rX))$ if
$\ker(\rX)$ is non-empty.  Otherwise, either $(X,d)$ is an $\R$-tree
or $\core(\rX)$ is a cycle. In the former case we set $s(\rX)=0$; in
the latter we set $s(\rX)=1$. Since the degree of every vertex in
$\ker(\rX)$ is at least $3$, we have $2|e(\rX)|=\sum_{v\in
  k(\rX)}\deg(v)\geq 3|k(\rX)|$, and so if $s(\rX)\geq 1$ we have
\begin{equation}
  \label{eq:5}
  |k(\rX)|\leq 2s(\rX)-2 \, ,
\end{equation}
with equality precisely if $\ker(\rX)$ is $3$-regular.

\section{Cycle-breaking in discrete and continuous graphs}
\label{sec:cycle-break-discr}

\subsection{The cycle-breaking algorithm}
\label{sec:cycle-break-algor-1}

Let $G=(V,E)$ be a finite connected multigraph. Let $\conn(G)$ be the
set of of all edges $e\in E$ such that $G\setminus e=(V,E\setminus
\{e\})$ is connected. 

If $s(G)>0$, then $G$ contains at least one cycle and $\conn(G)$ is
non-empty.  In this case, let $e$ be a uniform random edge in
$\conn(G)$, and let $K(G,\cdot)$ be the law of the multigraph
$G\setminus e$. 
\nomenclature[Kxcdot]{$K(G,\cdot)$}{Cycle-breaking Markov kernel on finite multigraph $G$.}
If $s(G)=0$, then $K(G,\cdot)$ is the Dirac mass at
$G$. By definition, $K$ is a Markov kernel from the set of graphs with
surplus $s$ to the set of graphs with surplus $(s-1)\vee 0$. Writing $K^n$ 
for the $n$-fold application of the kernel $K$, we have that $K^n(G,\cdot)$ does not depend
on $n$ for $n\geq s(G)$. We define the kernel
$K^\infty(G,\cdot)$ to be equal to this common value: a graph has law
$K^\infty(G,\cdot)$ if it is obtained from $G$ by repeatedly removing 
uniform non-disconnecting edges.

\begin{proposition}\label{sec:cycle-break-algor}
  The probability distribution $K^\infty(G,\cdot)$ is the law of the
  minimum spanning tree of $G$, when the edges $E$ are given
  exchangeable, distinct random edge-weights. 
\end{proposition}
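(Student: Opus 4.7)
The plan is to realise the MST via the classical \emph{cycle-breaking} algorithm and show that, under exchangeable weights, the sequence of non-trivial steps is exactly a Markov chain with kernel $K$.

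First, I would recall (and briefly verify) the cycle-breaking construction: given distinct weights $w$, list the edges of $G$ in decreasing order of weight as $e_1,\ldots,e_m$, set $G_0=G$, and put
\[
G_i=\begin{cases} G_{i-1}\setminus e_i & \text{if }e_i\in \conn(G_{i-1}),\\ G_{i-1} & \text{otherwise,}\end{cases}
\]
for $1\le i\le m$. Then $G_m$ is the MST of $G$. The direct check is that when $e_i$ is removed it lies on a cycle $C$ in $G_{i-1}$; since every processed edge still present in $G_{i-1}$ was a bridge when it was processed and so remains a bridge (edge removals never destroy bridges), no such edge can lie on $C$, so every other edge of $C$ is unprocessed and therefore of smaller weight than $e_i$, making $e_i$ the max-weight edge of a cycle and hence outside the MST. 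Conversely, any edge not in the MST is the max weight edge of some cycle in $G$, and when it is processed that whole cycle is still present, so it gets removed.

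Second, I would establish a conditional-uniformity statement for the weight order. For each $i$, let $P_i\subseteq E(G_i)$ be the set of edges that have been processed by time $i$ but are still in $G_i$. By the analysis in the first step, every $e\in P_i$ is in fact a bridge of $G_i$. By induction on $i$, I would prove that, conditionally on the pair $(G_i,P_i)$, the relative order of the edges in $E(G_i)\setminus P_i$ is uniformly distributed. The inductive step partitions on the identity $e$ of the next edge: if $e\in\conn(G_i)$ then $(G_{i+1},P_{i+1})=(G_i\setminus e,P_i)$, and if $e$ is a bridge of $G_i$ then $(G_{i+1},P_{i+1})=(G_i,P_i\cup\{e\})$; in both cases the surviving unprocessed edges retain a uniform relative order by exchangeability of the initial list.

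Third, I would extract from the (random) sequence $G_0,G_1,\ldots,G_m$ the subsequence $G_{\tau_0},G_{\tau_1},\ldots,G_{\tau_{s(G)}}$ of states visited immediately after a removal. The conditional-uniformity statement implies that, given $G_{\tau_j}$, the next edge to be processed is uniform over $E(G_{\tau_j})\setminus P_{\tau_j}$; further conditioning on the event that this edge lies in $\conn(G_{\tau_j})\subseteq E(G_{\tau_j})\setminus P_{\tau_j}$ (i.e.\ that it is actually removed rather than added to $P$) leaves a uniform distribution on $\conn(G_{\tau_j})$. Thus $(G_{\tau_j})_{0\le j\le s(G)}$ is a Markov chain with transition kernel $K$ started at $G$, so $G_m=G_{\tau_{s(G)}}$ has law $K^{s(G)}(G,\cdot)=K^\infty(G,\cdot)$, which is what we want since $G_m$ is the MST.

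The main obstacle is the bookkeeping in the second step: one must track both $G_i$ and the set $P_i$ of already-examined bridges and check that conditioning on both of them still leaves the future edge order uniform. Once this lemma is in hand, the ``rejection-sampling'' reinterpretation of the skipped-bridge steps is automatic and yields the kernel $K$.
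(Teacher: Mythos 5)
Your proof is correct, and it takes a genuinely different route from the paper's, though both are anchored in the cycle-breaking algorithm (process edges in decreasing weight order, remove each one that does not disconnect). The paper argues by induction on the surplus, with a \emph{strengthened} inductive hypothesis: $K^\infty(G,\cdot)$ is the MST law whenever the weights \emph{restricted to $\conn(G)$} are exchangeable. It then conditions on $e$, the max-weight edge of $\conn(G)$; by exchangeability $e$ is uniform on $\conn(G)$, $e$ is not in the MST (it is max-weight on a cycle), the conditional weights on $\conn(G\setminus e)\subseteq\conn(G)\setminus\{e\}$ are still exchangeable, and induction closes the loop. The strengthening to $\conn(G)$-weights is precisely what makes the inductive step run, since $\conn(G\setminus e)$ is only a subset of $\conn(G)\setminus\{e\}$ and nothing is asserted about bridges. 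Your argument, in contrast, keeps the original hypothesis and instead tracks the whole chain $(G_i,P_i)_i$ forward in time. Your conditional-uniformity lemma (the order of unprocessed edges is uniform given $(G_i,P_i)$, because $(G_i,P_i)$ is a coarsening of the initial segment of the weight-order) replaces the paper's ``conditional exchangeability after peeling off the max''; the rejection-sampling reading of the skipped bridges then exposes the kernel $K$ directly as the law of the removal subchain. What each buys: the paper's proof is shorter, essentially four sentences once the induction is set up; yours is more constructive, gives the algorithmic picture explicitly, and makes clear that the $K$-chain is \emph{embedded} in the cycle-breaking trajectory rather than merely having the same endpoint distribution. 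One small wording point: in your Step 3 you should condition on the pair $(G_{\tau_j},P_{\tau_j})$ rather than on $G_{\tau_j}$ alone when invoking the uniformity lemma; the Markov property for $(G_{\tau_j})_j$ with kernel $K$ then follows because the resulting transition law does not depend on $P_{\tau_j}$, which you should state explicitly.
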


\begin{proof}
  We prove by induction on the surplus of $G$ the stronger statement
  that $K^\infty(G,\cdot)$ is the law of the minimum spanning tree of
  $G$, when the weights of $\conn(G)$ are given exchangeable, distinct
  random edge-weights. For $s(G)=0$ the result is obvious. 

  Assume the result holds for every graph of surplus $s$, and let $G$
  have $s(G)=s+1$.  Let $e$ be the edge of $\conn(G)$ with maximal
  weight, and condition on $e$ and its weight. Then, note that the
  weights of the edges in $\conn(G)\setminus \{e\}$ are still in
  exchangeable random order, and the same is true of the edges of
  $\conn(G\setminus e)$. By the induction hypothesis, $K^s(G\setminus
  e,\cdot)$ is the law of the minimum spanning tree of $G\setminus
  e$. But $e$ is not in the minimum spanning tree of $G$, because by
  definition we can find a path between its endpoints that uses only
  edges having strictly smaller weights. Hence, $K^s(G\setminus
  e,\cdot)$ is the law of the minimum spanning tree of $G$. On the
  other hand, by exchangeability, the edge $e$ of $\conn(G)$ with largest
  weight is uniform in $\conn(G)$, so the unconditional law of a
  random variable with law $K^s(G\setminus e,\cdot)$ is  
  $K^{s+1}(G,\cdot)$.
\end{proof}

\subsection{Cutting the cycles of an 
  $\R$-graph}\label{sec:breaking-cycles-r}

There is a continuum analogue of the cycle-breaking algorithm in the
context of $\R$-graphs, which we now explain. Recall that $\conn(\rX)$
is the set of points $x$ of the $\R$-graph $\rX=(X,d)$ such that $x\in
\core(\rX)$ and $X\setminus \{x\}$ is connected. For $x\in
\conn(\rX)$, we let $(X_x,d_x)$ be the space $\rX$ ``cut at
$x$''. 
\nomenclature[Xxdx]{$(X_x,d_x)$}{$\rX=(X,d)$ or $\rX=(X,d,\mu)$ cut at the point $x \in X$; see also Section~\ref{sec:cutting-procedure}.}
Formally, it is the metric completion of $(X\setminus
\{x\},d_{X\setminus \{x\}})$, where $d_{X\setminus\{x\}}$ is the
intrinsic distance: $d_{X\setminus \{x\}}(y,z)$ is the
minimal length of a path from $y$ to $z$ that does not
visit $x$.
\begin{definition}\label{sec:cutting-cycles-an}
\nomenclature[Safely]{safely pointed}{See Definition~\ref{sec:cutting-cycles-an}.}
A point  $x\in X$ in a measured $\R$-graph $\rX=(X,d,\mu)$ is a
{\em regular point} if $x\in\conn(\rX)$, and moreover 
$\mu(\{x\})=0$ and $\deg_X(x)=2$. A marked space $(X,d,x,\mu)\in
\cM^{1,1}$, where $(X,d)$ is an $\R$-graph and $x$ is a regular point,
is called {\em safely pointed}. We say that a pointed $\R$-graph $(X,d,x)$ is safely pointed 
if $(X,d,x,0)$ is safely pointed. 
\end{definition}
If $x$ is a regular point then $\mu$ induces a measure (still denoted
by $\mu$) on the space $\rX_x$ with the same total mass. We will give a precise description of
the space $\rX_x=(X_x,d_x,\mu)$ in Section
\ref{sec:cutting-procedure}: in particular, it is a measured
$\R$-graph with $s(\rX_x)=s(\rX)-1$. 

Note that if $s(\rX)>0$ and if 
\nomenclature[L]{$L$}{Length measure restricted to $\conn(\rX)$.}
$$\Ell=\ell(\cdot\cap \conn(\rX))\, $$
is the length measure restricted to $\conn(\rX)$, then $\Ell$-almost
every point is regular. Also, $\Ell$ is a finite measure by Theorem~\ref{sec:structure-r-graphs-4}. Therefore, it makes sense to let
$\mathcal{K}(\rX,\cdot)$ be the law of $\rX_x$, where $x$ is a random point of 
$X$ with law $\Ell/\Ell(\conn(\rX))$.
\nomenclature[Kxcdot]{$\mathcal{K}(\rX,\cdot)$}{Cycle-breaking Markov kernel on $\R$-graph $\rX$. See same section and also Section~\ref{sec:cuttingrgs} for $\mathcal{K}$, $\mathcal{K}^n$ and $\mathcal{K}^{\infty}$.}
  If $s(\rX)=0$ we let
$\mathcal{K}(\rX,\cdot)=\delta_{\{\rX\}}$. Again, $\mathcal{K}$ is a Markov kernel from the set of 
measured $\R$-graphs with surplus $s$ to the set of measured $\R$-graphs of
surplus $(s-1)\vee0$, and $\mathcal{K}^n(\rX,\cdot)=\mathcal{K}^{s(\rX)}(\rX,\cdot)$ for
every $n\geq s(\rX)$: we denote this by $\mathcal{K}^\infty(\rX,\cdot)$.

In Section \ref{sec:ckcc} we will give details of the proofs of the aforementioned properties, 
as well as of the following crucial result.  For
$r\in (0,1)$ we let $\mathcal{A}_r$ be the set of measured $\R$-graphs
\nomenclature[Ar]{$\mathcal{A}_r$}{Set of ``$r$-uniformly elliptic $\R$-graphs''.}
with $s(\rX)\leq 1/r$ and whose core, seen as a graph with edge-lengths
$(k(\rX),e(\rX),(\ell(e),e\in e(\rX)))$, is such that 
$$\min_{e\in e(\rX)}\ell(e)\geq r\, ,\quad \mbox{ and }\quad \sum_{e\in e(\rX)}\ell(e)\leq 1/r$$
(if $s(\rX)=1$, this should be
understood as the fact that $\core(\rX)$ is a cycle with length in $[r,1/r]$.)

\begin{theorem}\label{sec:breaking-cycles-r-1}
Fix $r\in (0,1)$. Let $(X^n,d^n,\mu^n)$ be a sequence of measured
$\R$-graphs in $\mathcal{A}_r$, converging
as $n\to\infty$ to $(X,d,\mu)\in \mathcal{A}_r$ in
$(\mathcal{M},\dghp)$. Then $\mathcal{K}^\infty(\rX^n,\cdot)$ converges weakly
to $\mathcal{K}^\infty(\rX,\cdot)$. 
\end{theorem}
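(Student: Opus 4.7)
\emph{Strategy.} Because $\rX^n,\rX \in \mathcal{A}_r$ have surplus at most $1/r$, the iterated kernel $\mathcal{K}^\infty$ coincides with $\mathcal{K}^{s(\rX)}$ for all $n$ large enough (once $s(\rX^n)=s(\rX)$, which will be automatic from the stability argument below). The plan is therefore to prove weak continuity of the single-step kernel $\mathcal{K}$ on $\mathcal{A}_r$, to check that the cut space $\rX_x$ again lies in some $\mathcal{A}_{r'}$ with $r'=r'(r)$, and to iterate at most $\lfloor 1/r\rfloor$ times. Induction on the surplus $s(\rX)$ then delivers the full statement for $\mathcal{K}^\infty$.

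\emph{Step 1: Joint GHP convergence with the length measure on $\conn$.} The first task is to upgrade $\rX^n \to \rX$ to convergence in $\cM^{0,2}$ of the triples $(\rX^n,\mu^n,\Ell^n)$, where $\Ell^n = \ell^n(\cdot\cap\conn(\rX^n))$. For spaces in $\mathcal{A}_r$, Theorem~\ref{sec:structure-r-graphs-4} together with the uniform ellipticity conditions ($|k(\rX)|\leq 2/r-2$, edge-lengths in $[r,1/r]$) provides only finitely many possible combinatorial types for the kernel. A compactness argument then shows that, along any subsequence, $\ker(\rX^n)$ must eventually stabilize to a fixed multigraph, isomorphic to $\ker(\rX)$, and the edge-lengths must converge to those of $\rX$: no core edge can collapse or split, because its length is bounded below by $r$. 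In a near-optimal correspondence for $\dghp(\rX^n,\rX)$, this lets us couple $\Ell^n$ to $\Ell$ edge by edge along the kernel with vanishing discrepancy.

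\emph{Step 2: Sample a regular point.} Normalising the length measures to probability measures $\Ell^n/\Ell^n(\conn(\rX^n))$, I apply Proposition~\ref{sec:grom-hausd-prokh} to sample a random point $x^n$ from this law, obtaining
\[
(\rX^n,x^n,\mu^n) \convdist (\rX,x,\mu) \qquad \text{in }\cM^{1,1},
\]
where $x$ has law $\Ell/\Ell(\conn(\rX))$. The non-regular points of $\conn(\rX)$ are contained in $k(\rX)$ (finite) together with the atoms of $\mu$ (countable), a set of $\Ell$-measure zero, so $(\rX,x,\mu)$ is safely pointed almost surely.

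\emph{Step 3: Continuity of the cut at safely pointed limits.} The crux is to show that the map $(\rX,x,\mu)\mapsto \rX_x$ is $\dghp$-continuous at safely pointed spaces. From a correspondence and coupling achieving $\dghp^{1,1}((\rX^n,x^n),(\rX,x))<\eps$, I build a correspondence between $\rX^n_{x^n}$ and $\rX_x$ by ``doubling'' near the cut point: outside $\overline{B}_\eta(x)$ (resp.\ $\overline{B}_\eta(x^n)$) the intrinsic distance $d_{X\setminus\{x\}}$ coincides with $d$, so the existing correspondence is re-used verbatim. Inside a small $\eta$-ball, the hypotheses $\deg_X(x)=2$ and $\mu(\{x\})=0$ force the local picture to consist of exactly two short geodesic arcs of length $\eta$ carrying mass $o_\eta(1)$; matching the two arcs in $\rX^n$ to the two arcs in $\rX$ and taking $\eta\to 0$ after $n\to\infty$ yields vanishing distortion and discrepancy. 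Combined with Step 2 and the continuous mapping theorem, this produces $\mathcal{K}(\rX^n,\cdot)\convdist \mathcal{K}(\rX,\cdot)$.

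\emph{Step 4: Iteration and the main obstacle.} Since cutting at a regular point $x$ only splits a single core edge into two (or removes a cycle when $s(\rX)=1$), the core of $\rX_x$ has edge-lengths bounded below by some $r'=r'(r)>0$ and total length bounded by $1/r$, so $\rX_x\in\mathcal{A}_{r'}$ with $s(\rX_x)=s(\rX)-1$. Induction on $s(\rX)$, using Step 3 at each stage, completes the proof. The genuine difficulty sits in Step 3: while $d$ and $d_{X\setminus\{x\}}$ agree uniformly outside any neighbourhood of $x$, the intrinsic metric is genuinely discontinuous at $x$ (two points on opposite sides of $x$ on the cut cycle are $d$-close but $d_x$-far by the length of the complementary arc). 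The safely-pointed conditions, rather than mere regularity, are what make this local error controllable uniformly in $n$, and deferring the careful bookkeeping to a precise description of $\rX_x$ as a metric gluing (Section~\ref{sec:cutting-procedure}) will be essential.
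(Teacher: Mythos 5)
Your high-level strategy matches the paper exactly: decompose $\mathcal{K}^\infty$ as an iterate of the one-step kernel $\mathcal{K}$, prove continuity of $\mathcal{K}$ by (a) upgrading GHP convergence to convergence with the length measure on $\conn$, (b) sampling a regular point, (c) showing continuity of the cut map at safely pointed spaces, and (d) checking stability of $\mathcal{A}_r$ under cutting so the induction closes. Steps 2 and 4 are correct; in fact for Step 4 the paper observes that cutting can only lengthen or merge core edges, so $\rX_x\in\mathcal{A}_r$ with the \emph{same} $r$ (no shrinkage to an $r'$ needed).

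However, Step 3 as sketched contains a genuine error. You assert that ``outside $\overline{B}_\eta(x)$ the intrinsic distance $d_{X\setminus\{x\}}$ coincides with $d$, so the existing correspondence is re-used verbatim,'' and repeat this in your final paragraph as ``$d$ and $d_{X\setminus\{x\}}$ agree uniformly outside any neighbourhood of $x$.'' This is false. Take $y,z$ on the same core cycle through $x$, both at $d$-distance greater than $\eta$ from $x$ but on opposite sides of $x$: a $d$-geodesic from $y$ to $z$ may run straight through $x$, whereas $d_x(y,z)$ is the length of the long way around. The discrepancy between $d$ and $d_x$ is not a local phenomenon concentrated near $x$ — it is a global change of geometry — so re-using the original correspondence gives a distortion bound \emph{only} for the pair $(d,d')$, not for $(d_x,d'_{x'})$, which is the one you need. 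The paper's Theorem~\ref{thm:safelypointed} does not reuse the correspondence in this naive way; it builds a new one whose distortion is controlled by \emph{lifting local geodesics}: given a $d_x$-geodesic (a local geodesic in $X$ avoiding $x$), Lemma~\ref{sec:cutting-r-graphs} produces a comparable local geodesic in $X'$ that can be shown to avoid $x'$, yielding a length comparison $d'_{x'}(y',z')\le d_x(y,z)+O(\dis(C))$. That lemma (and the $\eps$-overlay machinery of Proposition~\ref{lem:deltaover} that underlies it) is the missing idea in your Step 3; without it the claimed ``vanishing distortion'' is unsupported.

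A secondary gap sits in Step 1: ``finitely many kernel types plus compactness'' does not by itself deliver that $\ker(\rX^n)$ is eventually isomorphic to $\ker(\rX)$ with converging edge-lengths. One must show that GHP-close spaces in $\mathcal{A}_r$ have isomorphic kernels — this is precisely Proposition~\ref{lem:deltaover}, which requires real work (the local geodesic lifting again). You gesture at the right conclusion, but the argument as stated would not rule out, say, the kernel of $\rX^n$ differing from that of $\rX$ infinitely often while both remain GHP-close.
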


\subsection{A relation between the discrete and continuum
  procedures}\label{sec:relation-between-two}

We can view any finite connected multigraph $G=(V,E)$ as a metric
space $(V,d)$, where $d(u,v)$ is the least number of edges in any
chain from $u$ to $v$.  We may also consider the metric graph
$(m(G),d_{m(G)})$ associated with $G$ by treating edges as segments of
length $1$ (this is sometimes known as the \emph{cable system} for the graph $G$ \cite{varopoulos1985}). Then $(m(G),d_{m(G)})$ is an $\R$-graph. Note that
$\dgh((V,d),(m(G),d_{m(G)})) < 1$ and, in fact, $(m(G),d_{m(G)})$ contains an
isometric copy of $(V,d)$.  Also, temporarily writing $H$ for the {\em
  graph-theoretic} core of $G$, that is, the maximal subgraph of $G$
of minimum degree two, it is straightforwardly checked that
$\core(m(G))$ is isometric to $(m(H),d_{m(H)})$.

Conversely, let $(X,d)$ be an
$\R$-graph, and let $S_X$ be the set of points in $X$ with degree at
least three. We say that $(X,d)$ {\em has integer lengths} if all
local geodesics between points in $S_X$ have lengths in $\Z_+$.  Let
\[
v(\rX)= \{x \in X: d(x,S_X) \in \Z_{+}\},
\]
and note that if $(X,d)$ is compact and has integer lengths then
necessarily $|S_X| < \infty$ and $|v(\rX)| < \infty$.  The removal of
all points in $v(\rX)$ separates $X$ into a finite collection of
paths, each of which is either an open path of length one between two
points of $v(\rX)$, or a half-open path of length strictly less than
one between a point of $v(\rX)$ and a leaf.  Create an edge between
the endpoints of each such {\em open} path, and call the collection of
such edges $e(\rX)$. Then let
\[
g(\rX) = (v(\rX),e(\rX)); 
\]
we call the multigraph $g(\rX)$ the {\em graph corresponding to $\rX$} (see Figure~\ref{fig:graphspace}). 

\begin{figure}[htb]
\begin{center}$
\begin{array}{ccc}
\includegraphics[width=0.3\textwidth]{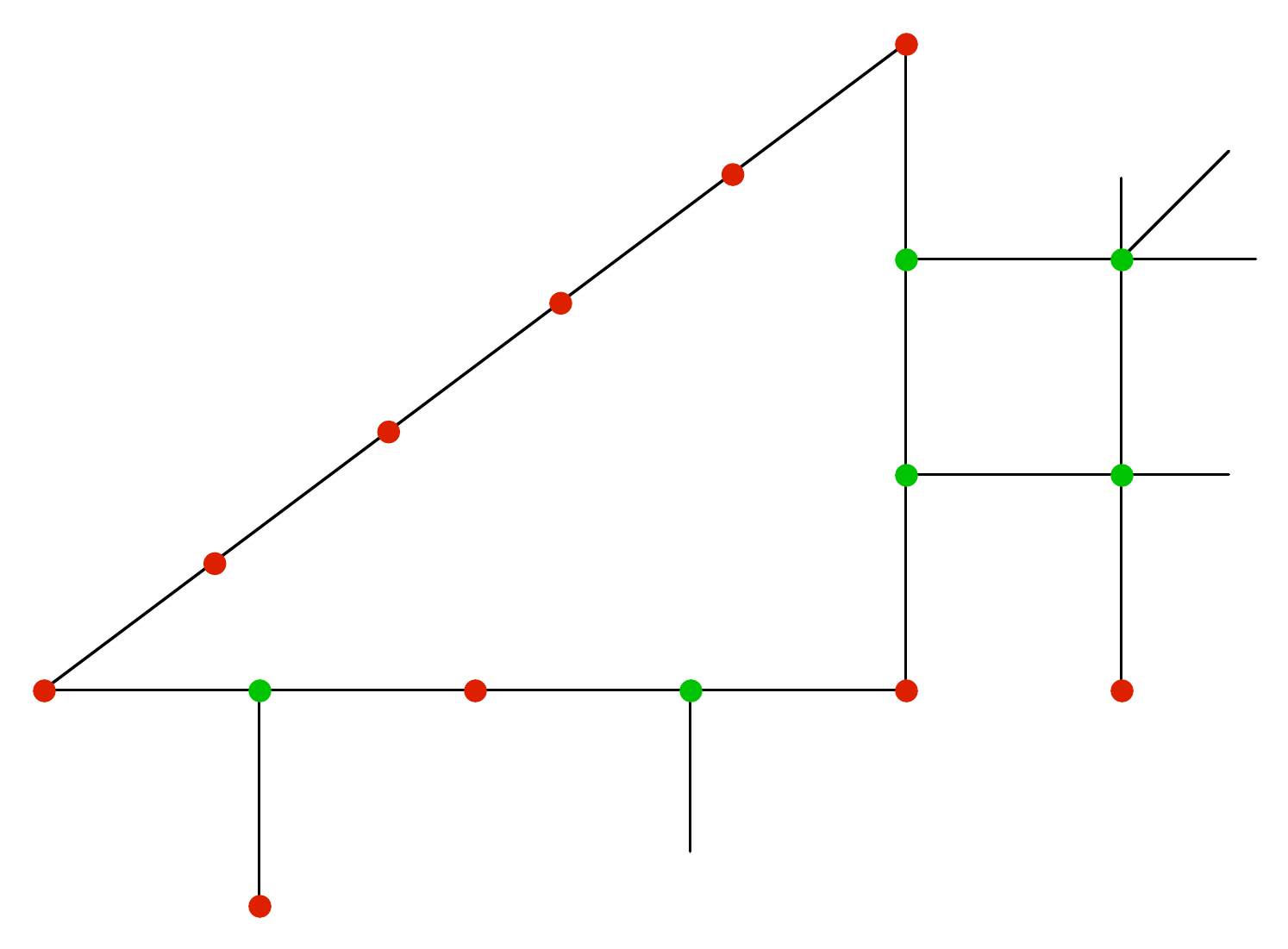} &
\includegraphics[width=0.3\textwidth]{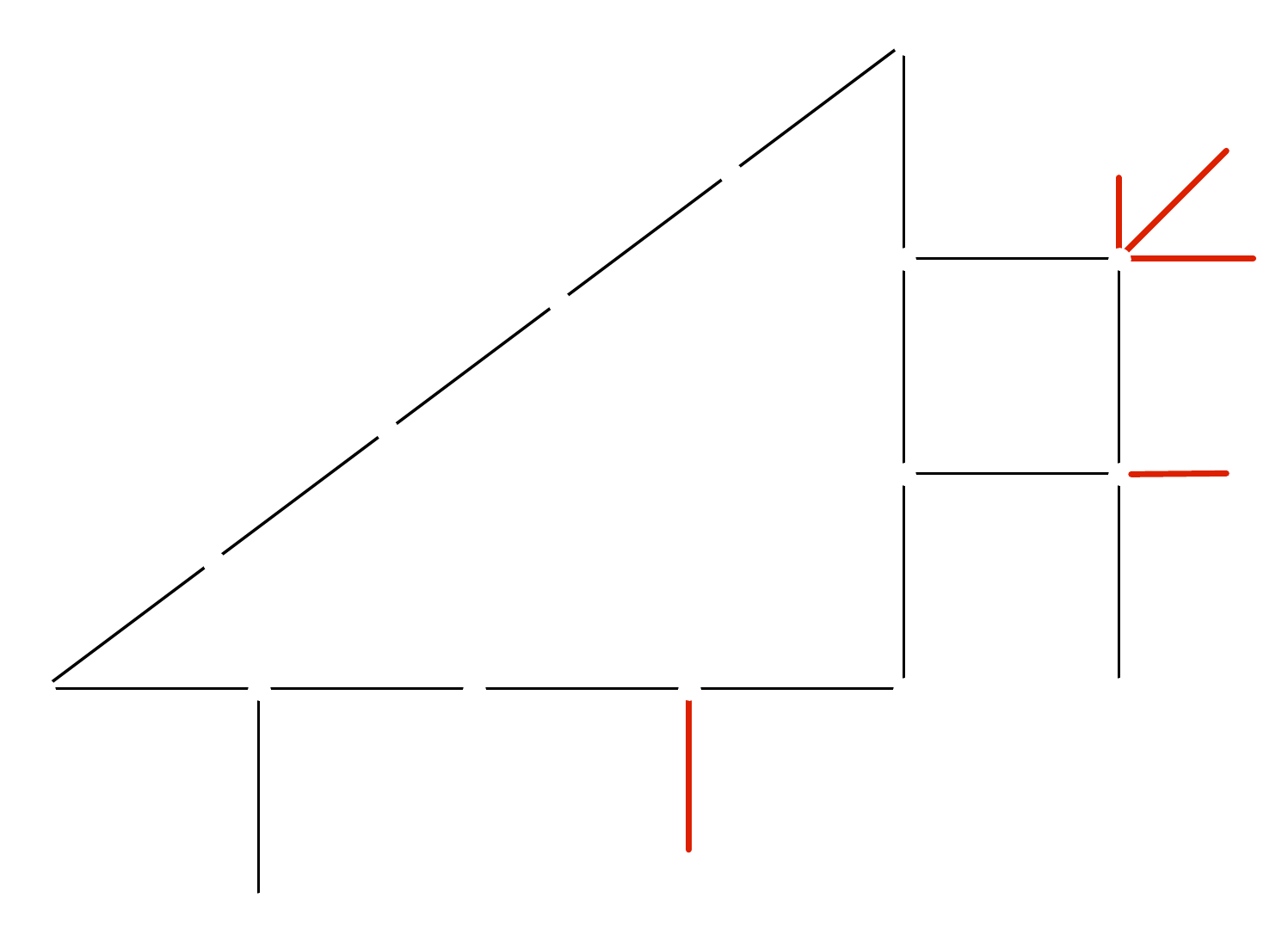} & 
\includegraphics[width=0.3\textwidth]{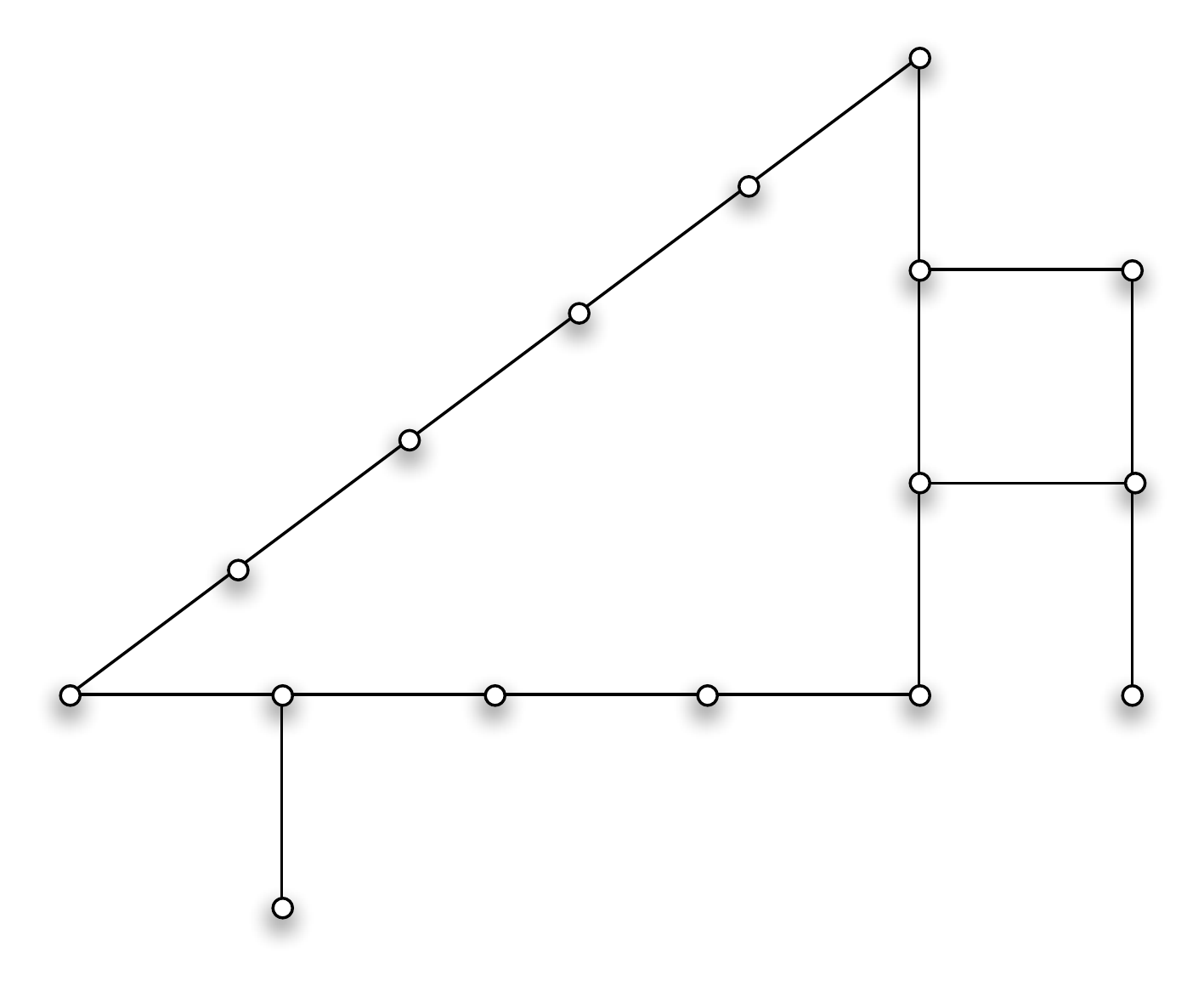}
\end{array}$
\end{center}
\caption{Left: an \rg with integer lengths. The points of degree at least three are marked green, and the remaining points of $v(\rX)$ are marked red. Centre: the collection of paths after the points of $v(\rX)$ are removed. The paths with non-integer length are drawn in red. Right: the graph $g(\rX)$. 
  \label{fig:graphspace}}
\end{figure}

Now, fix an \rg $(\rX,d)$ which has integer lengths and surplus
$s(\rX)$. Let $x_1,\dots, x_{s(\rX)}$ be the points sampled by the
successive applications of $\mathcal{K}$ to $\rX$: given $x_1,\ldots,x_i$, the
point $x_{i+1}$ is chosen according to $\Ell/\Ell(\rX)$ on
$\conn(\rX_{x_1,\ldots,x_i})$, where $\rX_{x_1,\ldots,x_i}$ is the
space $\rX$ cut successively at $x_1,x_2,\ldots,x_i$. Note that $x_i$
can also naturally be seen as a point of $X$ for $1\leq i\leq
s(\rX)$. Since the length measure of $v(\rX)$ is $0$, almost surely
$x_i\neq v(\rX)$ for all $1\le i\le s(\rX)$. Thus, each point $x_i$,
$1\le i\le s(\rX)$, falls in a path component of $\core(\rX)\setminus
v(\rX)$ which itself corresponds uniquely to an edge in $e_i\in
e(\rX)$. Note that the edges $e_i$, $1\le i\le s(\rX)$, are distinct by
construction.  Then let $g_0(\rX)=g(\rX)$, and for $1 \leq i \leq
s(\rX)$, write
\[
g_i(\rX) = (v(\rX),e(\rX)\setminus \{e_1,\ldots,e_i\}). 
\]
By construction, the graph $g_{i}(\rX)$ is connected and has surplus
precisely $s(\rX)-i$, and in particular $g_{s(\rX)}(\rX)$ is a
spanning tree of $g(\rX)$. Let $\cut(\rX)$ be the random \rg
resulting from the application of $\mathcal{K}^\infty$, that is obtained by
cutting $\rX$ at the points $x_1,\dots,x_{s(\rX)}$ in our setting.
\nomenclature[CutX]{$\mathrm{cut}(\rX)$}{Random $\R$-graph with distribution $\mathcal{K}^\infty(\rX,\cdot)$.}
\begin{proposition}\label{prop:cut-close}
We have $\dgh(\cut(\rX),g_{s(\rX)}(\rX)) < 1$. 
\end{proposition}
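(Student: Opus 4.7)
The plan is to exhibit an explicit correspondence $C$ between $\cut(\rX)$ and $g_{s(\rX)}(\rX)$ whose distortion is strictly less than $2$; since $\dgh=\tfrac12\inf\dis(C)$, this suffices. First I would describe the structure of $\cut(\rX)$. Because each $x_i$ is sampled from the continuous length measure $\Ell$, almost surely each $x_i$ lies at a non-integer distance from both endpoints of the open path of $\core(\rX)\setminus v(\rX)$ associated with the corresponding edge $e_i\in e(\rX)$. Cutting $\rX$ at $x_i$ therefore replaces this length-$1$ open path by two pendant half-edges of lengths $a_i\in(0,1)$ and $1-a_i\in(0,1)$, dangling from the two endpoints of $e_i$ in $v(\rX)$. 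Consequently, $\cut(\rX)$ is an $\R$-tree containing an isometric copy of the metric graph $m(g_{s(\rX)}(\rX))$ built from the non-cut edges in $e(\rX)\setminus\{e_1,\dots,e_{s(\rX)}\}$; the rest of $\cut(\rX)$ consists of the original pendant paths of $\rX$ (half-open paths of length strictly less than $1$ joining $v(\rX)$ to a leaf) together with the $2s(\rX)$ new cut half-edges, each of length strictly less than~$1$.

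Next I would define $\phi:\cut(\rX)\to v(\rX)$ by mapping each $y$ to a nearest point of $v(\rX)\subseteq \cut(\rX)$ (with arbitrary tie-breaking), and set $C=\{(y,\phi(y)):y\in\cut(\rX)\}$; this is a valid correspondence since $\phi(v)=v$ for $v\in v(\rX)$. The geometric heart of the argument is that for $v,v'\in v(\rX)$ the unique simple path joining them in the $\R$-tree $\cut(\rX)$ cannot enter any pendant piece (pendants terminate at leaves and are hence dead-ends), so it lies inside the embedded copy of $m(g_{s(\rX)}(\rX))$; as every edge there has length $1$, this geodesic has length exactly $d_{g_{s(\rX)}(\rX)}(v,v')$.

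Given these ingredients, the triangle inequality yields, for any $(y,\phi(y)),(y',\phi(y'))\in C$,
\[
\bigl|d_{\cut(\rX)}(y,y')-d_{g_{s(\rX)}(\rX)}(\phi(y),\phi(y'))\bigr|\le d_{\cut(\rX)}(y,\phi(y))+d_{\cut(\rX)}(y',\phi(y')),
\]
so it suffices to prove $\gamma:=\sup_{y\in\cut(\rX)}d_{\cut(\rX)}(y,\phi(y))<1$. For $y$ on an intact length-$1$ edge of $m(g_{s(\rX)}(\rX))$, the distance to a nearer endpoint is at most $1/2$; for $y$ on any of the finitely many pendant pieces (an original pendant of length $\alpha<1$, or a cut half-edge of length $a_i$ or $1-a_i$), the distance to its unique attaching vertex in $v(\rX)$ is bounded by the length of that piece. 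Since there are only finitely many such pieces, each of length strictly less than $1$, the maximum $\gamma$ is strictly less than $1$, whence $\dis(C)\le 2\gamma<2$ and $\dgh(\cut(\rX),g_{s(\rX)}(\rX))\le\gamma<1$.

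The main point that will need careful justification is the identification of $\cut(\rX)$-geodesics between vertices of $v(\rX)$ with spanning-tree paths inside the embedded $m(g_{s(\rX)}(\rX))$. This rests on $\cut(\rX)$ being an $\R$-tree (so geodesics exist and are unique) together with the structural description above, by which cut half-edges and original pendants form dead-end subtrees. Once this is in place the remainder is a short distortion computation; the only probabilistic input needed is that $\Ell$ has no atoms, which ensures that the cut points fall strictly inside open paths and hence that all pendant pieces have length strictly less than $1$.
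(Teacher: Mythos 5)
Your proof is correct and takes essentially the same approach as the paper's: both rest on projecting each point of $\cut(\rX)$ to a nearest vertex of $v(\rX)$ and observing that this displacement is strictly below $1$ while distances between points of $v(\rX)$ are preserved. The paper phrases this by noting $g_{s(\rX)}(\rX)\cong g(\cut(\rX))$ and invoking the remark at the start of the subsection, whereas you build the correspondence with $g_{s(\rX)}(\rX)$ directly and spell out the distortion computation, in particular making explicit why the projection displacement $\gamma$ stays strictly below $1$ (finitely many dead-end arcs, each of length $<1$) — a point the paper leaves behind the phrase ``as noted in greater generality.''
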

\begin{proof}
  First, notice that $g_{s(\rX)}(\rX)$ and $g(\cut(\rX))$ are
  isomorphic as graphs, so isometric as metric spaces. Also, as noted
  in greater generality at the start of the subsection, we
  automatically have $\dgh(\cut(\rX),g(\cut(\rX))) < 1$.
\end{proof}

\begin{proposition}\label{prop:cut-dist}
  The graph $g(\cut(\rX))$ is identical in distribution to the
  minimum-weight spanning tree of $g(\rX)$ when the edges of $e \in
  e(\rX)$ are given exchangeable, distinct random edge weights.
\end{proposition}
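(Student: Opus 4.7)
The plan is to compare the distribution of $g(\cut(\rX))$ with $K^\infty(g(\rX),\cdot)$, which by Proposition~\ref{sec:cycle-break-algor} equals the law of the minimum spanning tree of $g(\rX)$ under exchangeable edge weights. Since $g(\cut(\rX))$ is isomorphic to $g_{s(\rX)}(\rX) = (v(\rX), e(\rX)\setminus\{e_1,\ldots,e_{s(\rX)}\})$, it suffices to show that the sequence $(e_1,\ldots,e_{s(\rX)})$ produced by the continuum procedure has the same joint distribution as the sequence of edges removed by successive applications of the discrete kernel $K$ to $g(\rX)$. I would argue this by induction, proving that conditional on $(e_1,\ldots,e_i)$, the next edge $e_{i+1}$ is uniform on $\conn(g_i(\rX))$.

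The base case rests on a structural identification: because $\rX$ has integer lengths, $\core(\rX)$ is canonically isometric to the metric graph obtained by giving every edge of the graph-theoretic $2$-core of $g(\rX)$ length one. Under this identification, each edge $f \in e(\rX)$ lying in the $2$-core corresponds to a closed unit segment in $\core(\rX)$, and an interior point $x$ of this segment lies in $\conn(\rX)$ if and only if $f \in \conn(g(\rX))$; indeed, cutting at $x$ topologically separates the two endpoints of $f$, so $\rX\setminus\{x\}$ is connected iff some other path between those endpoints remains, which is exactly the condition that $f$ not be a bridge in $g(\rX)$. Since $v(\rX)$ has length measure zero, $\Ell$ is supported on the open unit segments indexed by $\conn(g(\rX))$, each contributing mass one; consequently, sampling $x_1$ from $\Ell/\Ell(\conn(\rX))$ is equivalent to first choosing $e_1$ uniformly from $\conn(g(\rX))$ and then choosing $x_1$ uniformly on the associated segment. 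This is precisely the law of the edge removed by a single application of $K$ to $g(\rX)$.

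For the inductive step, I would verify that after $i$ cuts the space $\rX_{x_1,\ldots,x_i}$ is still an $\R$-graph whose core is canonically isometric to the metric graph on the $2$-core of $g_i(\rX)$ with every edge still of length one. The key point is that cutting at an interior point of a unit segment only produces two dangling half-segments which, together with any subdivision vertices they leave with degree one, are absorbed into the acyclic complement and thereby disappear from $\core(\rX_{x_1,\ldots,x_i})$; every remaining edge of $e(\rX)\setminus\{e_1,\ldots,e_i\}$ that survives in the $2$-core retains its full unit length. The same argument as above then shows that $e_{i+1}$ is uniform in $\conn(g_i(\rX))$ conditional on $(e_1,\ldots,e_i)$, completing the induction and hence the proof.

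The main obstacle I expect is the bookkeeping for the inductive step: one has to check precisely that the ``continuum $2$-core'' and the ``graph-theoretic $2$-core'' stay in bijective isometric correspondence after a cut, even when the endpoints of a removed edge have degree two in $g(\rX)$ (i.e.\ are subdivision points of some kernel edge), so that entire chains of subdivision vertices get peeled off together. Once this compatibility of the core operation with edge deletion is established, the transfer of the uniform-sampling statement from length measure on $\conn(\rX_{x_1,\ldots,x_i})$ to uniform edge selection on $\conn(g_i(\rX))$ is immediate from the fact that all surviving segments have length one and that $v(\rX)$ is $\ell$-null.
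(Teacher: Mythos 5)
Your proposal takes essentially the same route as the paper's proof: both reduce the claim to showing that the set of edges $\{e_1,\ldots,e_{s(\rX)}\}$ selected by the continuum cutting procedure has the same law as the set removed by the discrete kernel $K$ applied to $g(\rX)$, and then invoke Proposition~\ref{sec:cycle-break-algor}. The paper asserts this distributional identity in a single sentence, whereas you spell out the supporting induction (identifying $\core(\rX_{x_1,\ldots,x_i})$ with the unit-length metric graph on the $2$-core of $g_i(\rX)$ and observing that $\Ell$-sampling on $\conn(\rX_{x_1,\ldots,x_i})$ reduces to uniform edge selection on $\conn(g_i(\rX))$ because $v(\rX)$ is $\ell$-null and each surviving segment carries unit mass); this is a more detailed rendering of the argument the paper leaves implicit, and it is correct.
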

\begin{proof}
  When performing the discrete cycle-breaking on $g(\rX)$, the set of
  edges removed from $g(\rX)$ is identical in distribution to the set
  $\{e_1,\ldots,e_{s(\rX)}\}$ of edges that are removed from $g(\rX)$
  to create $g_{s(\rX)}(\rX)$, so $g_{s(\rX)}(\rX)$ has the same
  distribution as the minimum spanning tree by Proposition~\ref{sec:cycle-break-algor}.  Furthermore, as noted in the proof of
  the preceding proposition, $g_{s(\rX)}(\rX)$ and $g(\cut(\rX))$ are
  isomorphic.
\end{proof}

\subsection{Gluing points in $\R$-graphs} \label{subsec:gluing}

We end this section by mentioning the operation of gluing, which in
a vague sense is dual to the cutting operation. If $(X,d,\mu)$ is an
$\R$-graph  and $x,y$ are two distinct points of $X$, we let $\rX^{x,y}$
be the quotient metric space \cite{burago01} of $(X,d)$ by the smallest
equivalence relation for which $x$ and $y$ are equivalent. This space
is endowed with the push-forward of $\mu$ by the canonical
projection $p$. It is not difficult to see that $\rX^{x,y}$ is again an
$\R$-graph, and that the class of the point $z=p(x)=p(y)$ has degree
$\deg_{X^{x,y}}(z)=\deg_X(x)+\deg_X(y)$. Similarly, if
$\mathcal{R}$ is a finite set of unordered pairs $\{x_i,y_i\}$ with $x_i\neq
y_i$ in $X$, then one can identify $x_i$ and $y_i$ for each $i$, resulting
in an $\R$-graph $\rX^\mathcal{R}$. 

\section{Convergence of the MST} \label{sec:proofs}

We are now ready to state and prove the main results of this paper.
We begin by recalling from the introduction that we write $\MM^n$ for
the MST of the complete graph on $n$ vertices and $M^n$ for the
measured metric space obtained from $\MM^n$ by rescaling the graph
distance by $n^{-1/3}$ and assigning mass $1/n$ to each vertex.

\subsection{The scaling limit of the Erd\H{o}s--R\'enyi random graph}\label{sec:scaling-limit-erdhos}

Recall that $\mathbb{G}(n,p)$ is the Erd\H{o}s--R\'enyi random graph.
For $\lambda \in \R$, we write
\nomenclature[Glambdan]{$\mathbb{G}_{\lambda}^n,G_{\lambda}^n,\mathscr{G}_{\lambda}$}{Sequence of components $(\mathbb{G}^{n,i}_{\lambda},i \ge 1)$ of $\mathbb{G}(n,1/n + \lambda/n^{4/3})$; see same section for measured metric space version $G_{\lambda}^n$ and limit $\mathscr{G}_{\lambda}$.}
\[
\mathbb{G}_{\lambda}^n = (\mathbb{G}^{n,i}_{\lambda},i \ge 1)
\]
for the components of $\mathbb{G}(n,1/n + \lambda/n^{4/3})$ listed in
decreasing order of size (among components of equal size, list
components in increasing order of smallest vertex label, say). For
each $i \ge 1$, we then write $G_{\lambda}^{n,i}$ for the measured
metric space obtained from $\mathbb{G}^{n,i}_{\lambda}$ by rescaling
the graph distance by $n^{-1/3}$ and giving each vertex mass
$n^{-2/3}$, and let
\[
G_{\lambda}^n = (G^{n,i}_{\lambda},i \ge 1). 
\]

In a moment, we will state a scaling limit result for $G_{\lambda}^n$;
before we can do so, however, we must introduce the limit sequence of
measured metric spaces $\mathscr{G}_{\lambda} =
(\mathscr{G}^i_{\lambda}, i \ge 1)$.  We will do this somewhat
briefly, and refer the interested reader to \cite{ABBrGo09b,ABBrGo09a} for more
details and distributional properties.

First, consider the stochastic process $(W_{\lambda}(t), t \ge 0)$ defined by
\nomenclature[Wlambda]{$W_{\lambda}$}{Brownian motion with parabolic drift.}
\[
W_{\lambda}(t) := W(t) + \lambda t - \frac{t^2}{2},
\]
where $(W(t), t \ge 0)$ is a standard Brownian motion.  Consider the
excursions of $W_{\lambda}$ above its running minimum; in other words,
the excursions of
\[
B_{\lambda}(t) := W_{\lambda}(t) - \min_{0 \le s \le t} W_{\lambda}(s)
\]
above 0.  We list these in decreasing order of length as
$(\varepsilon^1, \varepsilon^2, \ldots)$ where, for $i \ge 1$,
$\sigma^i$ is the length of $\varepsilon^i$.  (We suppress the 
$\lambda$-dependence in the notation for readability.)
For definiteness, we shift the origin of each excursion to 0, so that
$\varepsilon^i: [0, \sigma^i] \to \R_+$ is a continuous function such
that $\varepsilon^i(0) = e^i(\sigma^i) = 0$ and $\varepsilon^i(x) > 0$
otherwise.

Now for $i \ge 1$ and for $x,x' \in [0,\sigma^i]$, define a
pseudo-distance via
\[
\hat{d}^i(x,x') = 2 \varepsilon^i(x) + 2 \varepsilon^i(x') - 4 \inf_{x
  \wedge x' \le t \le x \vee x'} \varepsilon^i(t).
\]
Declare that $x \sim x'$ if $\hat{d}^i(x,x') = 0$, so that $\sim$ is
an equivalence relation on $[0,\sigma^i]$.  Now let $\mathcal{T}^i =
[0,\sigma^i]/\!\!\sim$ and denote by $\tau^i: [0,\sigma^i] \to
\mathcal{T}^i$ the canonical projection. Then $\hat{d}^i$ induces a
distance on $\mathcal{T}^i$, still denoted by $\hat{d}^i$, and it is
standard (see, for example, \cite{legall06rrt}) that $(\mathcal{T}^i,
\hat{d}^i)$ is a compact $\R$-tree.  Write $\hat{\mu}^i$ for the
push-forward of Lebesgue measure on $[0,\sigma^i]$ by $\tau^i$, so
that $(\mathcal{T}^i, \hat{d}^i, \hat{\mu}^i)$ is a measured $\R$-tree
of total mass $\sigma^i$.

We now decorate the process $B_{\lambda}$ with the points of an
independent homogeneous Poisson process in the plane.  We can think of
the points which fall under the different excursions separately.  In
particular, to the excursion $\varepsilon^i$, we associate a finite
collection $\mathcal{P}^i = \{(x^{i,j}, y^{i,j}), 1 \le j \le s^i\}$
of points of $[0,\sigma^i] \times [0, \infty)$ which are the Poisson
points shifted in the same way as the excursion $\varepsilon^i$.  (For
definiteness, we list the points of $\cP^i$ in increasing order of first co-ordinate.)
Conditional on $\varepsilon^1, \varepsilon^2, \ldots$, the collections
$\mathcal{P}^1, \mathcal{P}^2, \ldots$ of points are independent.
Moreover, by construction, given the excursion $\varepsilon^i$, we have $s^i
\sim \text{Poisson}(\int_0^{\sigma^i} \varepsilon^i(t) \mathrm{d} t)$.
Let $z^{i,j} = \inf\{t \ge x^{i,j}: \varepsilon^i(t) = y^{i,j} \}$ and
note that, by the continuity of $\varepsilon^i$, $z^{i,j} < \sigma^i$
almost surely.  Let
\[
\mathcal{R}^i = \{\{\tau^i(x^{i,j}), \tau^i(z^{i,j})\}, 1 \le j \le s^i\}.
\]
Then $\mathcal{R}^i$ is a collection of unordered pairs of points in
the $\R$-tree $\mathcal{T}^i$.  We wish to glue these points together
in order to obtain an $\R$-graph, as in Section~\ref{subsec:gluing}.  We define a
new equivalence relation $\sim$ by declaring $x \sim x'$ in
$\mathcal{T}^i$ if $\{x,x'\} \in \mathcal{R}^i$.  Then let $\mathcal{X}^i$
be $\mathcal{T}^i /\!\!\sim$, let $d^i$ be the quotient metric
\cite{burago01}, and let $\mu^i$ be the push-forward of $\hat{\mu}^i$ to
$\mathcal{X}^i$.  Then set $\mathscr{G}_{\lambda}^i =(\mathcal{X}^i,
d^i, \mu^i)$ and $\mathscr{G}_{\lambda} =
(\mathscr{G}_{\lambda}^i, i \ge 1)$. We note that for each $i \ge 1$,
the measure $\hat{\mu}^i$ is almost surely concentrated on the leaves
of $\mathcal{T}^i$. As a consequence, $\mu^i$ is almost surely
concentrated on the leaves of $\mathcal{X}^i$.

Given an $\R$-graph $\rX$, write $r(\rX)$ for the minimal
length of a core edge in $\rX$.  
\nomenclature[Rx]{$r(\mathrm{X})$}{Minimal length of a core edge in $\rX$.}
Then $r(\rX) = \inf \{ d(u,v): u,v\in
k(\rX) \}$ whenever $\ker(\rX)$ is non-empty. We use the convention
that $r(\rX)=\infty$ if $\core(\rX)=\varnothing$ and $r(\rX)=\ell(c)$
if $\rX$ has a unique embedded cycle $c$.  Recall also that $s(\rX)$ denotes
the surplus of $\rX$.

\begin{theorem}\label{thm:gnp-converge}
Fix $\lambda \in \R$. Then as $n \to \infty$, we have the following joint convergence
\begin{align*}
G^n_{\lambda} & \convdist \mathscr{G}_{\lambda}\, , \\
(s(G^{n,i}_{\lambda}),i \ge 1) & \convdist (s(\mathscr{G}^i_{\lambda}),i \ge 1)\, , \mbox{ and} \\
(r(G^{n,i}_{\lambda}),i \ge 1) & \convdist (r(\mathscr{G}^i_{\lambda}),i \ge 1)\, .
\end{align*}
The first convergence takes place in the space $(\mathbb{L}_4,
\mathrm{dist}_{\ghp}^4)$.  The others are in the sense of
finite-dimensional distributions.
\end{theorem}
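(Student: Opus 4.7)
The plan is to leverage the scaling-limit results of \cite{ABBrGo09a,ABBrGo09b} and produce all three convergences simultaneously by working on a single coupling based on the explicit excursion-plus-Poisson-points encoding described in the paragraphs preceding the theorem. The discrete graph $\mathbb{G}(n,1/n+\lambda/n^{4/3})$ admits an analogous encoding: a lattice walk (essentially a rescaled breadth-first walk) whose excursions record the vertex sets of the components, together with a collection of discrete ``shortcut'' points that produce the surplus edges within each component. Under the rescaling by $n^{-1/3}$ in space and $n^{-2/3}$ in mass, Aldous's theorem \cite{aldous97} together with the analysis in \cite{ABBrGo09a} yields joint convergence of the rescaled walk to $W_\lambda$ and of the shortcut points to the Poisson points $\mathcal{P}^i$, in the appropriate product Skorokhod/point-measure topology.

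First I would record that the pure Gromov--Hausdorff statement $G^n_\lambda \convdist \mathscr{G}_\lambda$ in $(\LL_4,\dist^4_{\gh})$ is Theorem~1 of \cite{ABBrGo09a}. To upgrade it to GHP convergence, observe that under the encoding above, the normalized vertex-counting measure $n^{-2/3}\sum_{v \in V(\mathbb{G}^{n,i}_\lambda)}\delta_v$ is, up to a relabeling, the push-forward of the uniform measure on $\{1,\ldots,|V(\mathbb{G}^{n,i}_\lambda)|\}/n^{2/3}$ under the canonical projection; its limit is the push-forward of Lebesgue measure on $[0,\sigma^i]$ under $\tau^i$, followed by the gluing projection, which is exactly $\mu^i$. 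Joint convergence of metric and measure follows from a continuous-mapping argument along the same lines as the metric-only convergence in \cite{ABBrGo09a}, with the tail control in the $\LL_4$ sense coming from the $\ell^4$-summability of the excursion lengths of $B_\lambda$ (and of the corresponding discrete excursions).

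Second, for the surplus convergence, note that $s(\mathbb{G}^{n,i}_\lambda)$ equals the number of discrete shortcut points falling ``under'' the $i$-th excursion. Under the coupling above, this quantity converges jointly with everything else to $s^i = |\mathcal{P}^i|$, which is exactly the surplus of the corresponding $\mathscr{G}^i_\lambda$ constructed by gluing (this identification is a direct consequence of the construction in Section~\ref{sec:scaling-limit-erdhos}). Finite-dimensional joint convergence of $(s(\mathbb{G}^{n,i}_\lambda), i\ge 1)$ to $(s(\mathscr{G}^i_\lambda), i\ge 1)$ follows immediately.

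Finally, for $r(G^{n,i}_\lambda)$, I would use the explicit description of $\ker(\mathscr{G}^i_\lambda)$ from \cite{ABBrGo09b}: its edges correspond to specific segments of the excursion $\varepsilon^i$ cut out at the Poisson points $\{(x^{i,j},y^{i,j})\}$ and $\{z^{i,j}\}$, and the edge lengths are specific continuous functionals of the finite data $(\varepsilon^i;\mathcal{P}^i)$. The discrete core of $\mathbb{G}^{n,i}_\lambda$, rescaled by $n^{-1/3}$, has edges whose lengths are the analogous functionals of the discrete encoding, and these converge jointly to their Brownian analogues under the coupling. Since on the event $s(\mathscr{G}^i_\lambda)=s^i$, $r(\mathscr{G}^i_\lambda)$ is the minimum of at most $2s^i-1$ such positive quantities (using \eqref{eq:5} and the unicyclic convention), it is a continuous functional of the coupled data on each set $\{s^i = k\}$. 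Combining with the surplus convergence gives finite-dimensional joint convergence of the $r$'s as well.

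The main obstacle is checking that the limiting edge lengths of $\ker(\mathscr{G}^i_\lambda)$ are almost surely strictly positive and pairwise distinct so that the relevant functionals really are continuous on the appropriate full-measure set; this boils down to showing that the Brownian excursions $\varepsilon^i$ and the independent Poisson points have no atoms at ``coincidence'' configurations (two Poisson points at the same level, or a Poisson point hitting a branchpoint of $\mathcal{T}^i$), which follows from the fact that the law of $\varepsilon^i$ is absolutely continuous with respect to Itô's excursion measure on each $\{\sigma^i\in\cdot\}$ and from elementary properties of Poisson processes. Technical $\LL_4$-tightness and diagonal-extraction arguments, already worked out in \cite{ABBrGo09a}, take care of passing from finite-dimensional joint convergence to the stated joint convergence.
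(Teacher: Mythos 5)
Your proposal is correct and follows essentially the same route as the paper, with the $r$-convergence argument matching what the paper calls its ``alternative sketch'' at the end of the proof. The paper's primary argument for $r$ when $s\geq 2$ is slightly different: it invokes Theorem~4 of \cite{luczak1994srg} to get a uniform lower bound on $r(G^{n,i}_\lambda)$, so that the spaces eventually lie in a common $\mathcal{A}_r$, and then appeals to the kernel-stability result (Corollary~\ref{sec:cutt-safely-point-1}~(i), itself built on Proposition~\ref{lem:deltaover}). Your route avoids that machinery by working directly with the encoding, as does the paper's sketch: kernel vertices of both $G^{n,i}_\lambda$ and $\mathscr{G}^i_\lambda$ are identified with the points in $\mathbf{v}'$, $\mathbf{x}'$ (gluing points plus degree-$3$ branchpoints of the spanning subtree), and $r$ is the minimum quotient distance between distinct such points. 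For the GHP upgrade, the paper spells out the missing detail as Lemma~\ref{lem:measured_identifications}, which quantifies how identifications inflate the GHP distance by a factor $(k+1)$ (via Lemma~21 of \cite{ABBrGo09a}); your ``continuous-mapping argument along the same lines'' is pointing at exactly this.

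Two small corrections. First, the count ``at most $2s^i-1$'' is not quite right: inequality~\eqref{eq:5} bounds the number of kernel \emph{vertices} by $2s-2$, while a $3$-regular kernel of surplus $s$ has $3s-3$ edges; this does not affect your argument since all that matters is that the number of competing quantities is finite and determined by the kernel combinatorics. Second, pairwise distinctness of the limiting edge lengths is not what is needed --- the minimum of finitely many reals is continuous regardless of ties. What is needed is that the combinatorial kernel structure of $G^{n,i}_\lambda$ eventually coincides with that of $\mathscr{G}^i_\lambda$, which requires strict positivity of the limiting edge lengths and genericity of the Poisson configuration (distinct identification points landing at leaves, binary branchpoints of the spanned subtree); the coincidence events you list do indeed rule these out, just not for the stated reason.
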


Let $\ell_2^{\downarrow} = \{x = (x_1, x_2, \ldots): x_1 \ge x_2 \ge
\ldots \ge 0, \sum_{i=1}^{\infty} x_i^2 < \infty\}$.  Corollary 2 of \cite{aldous97} gives the following joint convergence
\begin{equation}\label{eq:aldous_joint_convergence}
\begin{aligned}
  (\mathrm{mass}(G_{\lambda}^{n,i}), i \ge 1) & \convdist (\mathrm{mass}(\mathscr{G}_{\lambda}^i), i \ge 1), \text{ and} \\
  (s(G^{n,i}_{\lambda}),i \ge 1) & \convdist
  (s(\mathscr{G}^i_{\lambda}),i \ge 1),
\end{aligned}
\end{equation}
where the first convergence is in $(\ell_2^{\downarrow}, \|\cdot \|_2)$ and the second is in the sense of finite-dimensional distributions.  (Of course, $\mathrm{mass}(\mathscr{G}_{\lambda}^i) = \sigma^i$ and $s(\mathscr{G}_{\lambda}^i) = s^i$.)  Theorem 1 of \cite{ABBrGo09a} extends this to give that, jointly,
\begin{equation}\label{eq:abbrgo}
(G_{\lambda}^{n,i}, i \ge 1) \convdist (\mathscr{G}_{\lambda}^i, i \ge 1)
\end{equation}
in the sense of $\dist_{\gh}^4$, where for $\bX,\bY \in
\mathring{\mathcal{M}}^{\N}$, $\dist_{\gh}^4(\bX,\bY) =
\left(\sum_{i=1}^{\infty} \dgh(\rX^i, \rY^i)^4 \right)^{1/4}$.  We
need to improve this convergence from $\dist_{\gh}^4$ to
$\dist_{\ghp}^4$.  First we show that we can get GHP convergence
componentwise.  We do this in two lemmas.

\begin{lemma} \label{lem:measured_identifications} Suppose that
  $(\mathcal{T}, d, \mu)$ and $(\mathcal{T}', d', \mu')$ are measured
  $\R$-trees, that $\{(x_i, y_i), 1 \le i \le k\}$ are pairs of points
  in $\mathcal{T}$ and that $\{(x_i', y_i'), 1 \le i \le k\}$ are
  pairs of points in $\mathcal{T}'$.  
Then if $(\hat{\mathcal{T}}, \hat{d}, \hat{\mu})$ and
$(\hat{\mathcal{T}}', \hat{d}', \hat{\mu}')$ are the measured metric
spaces obtained by identifying $x_i$ and $y_i$ in $\mathcal{T}$ and
$x_i'$ and $y_i'$ in $\mathcal{T}'$, for all $1 \le i \le k$, we have
\[
\dghp((\hat{\mathcal{T}}, \hat{d}, \hat{\mu}), (\hat{\mathcal{T}}',
\hat{d}', \hat{\mu}')) \le 
(k+1)\, \dghp^{2k,1}((\mathcal{T}, d,\mathbf{x} ,\mu), (\mathcal{T}',
d',\mathbf{x}' ,\mu'))
\]
where $\mathbf{x}=(x_1,\ldots,x_k,y_1,\ldots,y_k)$ and similarly for
$\mathbf{x}'$. 
\end{lemma}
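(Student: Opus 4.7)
My approach is to push a near-optimal correspondence and coupling down to the quotient spaces and to keep track of how each ingredient in the definition of $\dghp$ behaves under the identification. Fix $\eta > \dghp^{2k,1}((\mathcal{T},d,\mathbf{x},\mu),(\mathcal{T}',d',\mathbf{x}',\mu'))$ and pick $C\in C(\mathcal{T},\mathcal{T}')$ containing all pairs $(x_i,x_i')$ and $(y_i,y_i')$, together with $\pi\in M(\mathcal{T},\mathcal{T}')$, so that $\tfrac{1}{2}\dis(C)\vee D(\pi;\mu,\mu')\vee\pi(C^c)<\eta$. Writing $p,p'$ for the canonical projections to $\hat{\mathcal{T}},\hat{\mathcal{T}}'$, set
\[
\hat C=\{(p(u),p'(u')):(u,u')\in C\}\quad\text{and}\quad \hat\pi=(p\times p')_*\pi.
\]
Surjectivity of $p,p'$ makes $\hat C$ a correspondence; $\hat\mu=p_*\mu$ and $\hat\mu'=p'_*\mu'$ are by definition the quotient measures; pushforward contracts total variation, so $D(\hat\pi;\hat\mu,\hat\mu')\le D(\pi;\mu,\mu')$; and $\hat\pi(\hat C^c)\le \pi(C^c)$ because $(p\times p')^{-1}(\hat C)\supseteq C$. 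Thus the two measure-side contributions to $\dghp(\hat{\mathcal{T}},\hat{\mathcal{T}}')$ are already at most $\eta$.

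The substantive step is the bound on $\dis(\hat C)$. By definition of the quotient metric,
\[
\hat d(p(u),p(v))=\inf \sum_{j=0}^{\ell} d(u_j,v_j),
\]
the infimum ranging over admissible sequences with $u_0=u$, $v_\ell=v$, and $v_j\equiv u_{j+1}$ in the identification for $0\le j<\ell$. A shortcut argument shows that if an admissible sequence uses the same pair $\{x_i,y_i\}$ twice as a jump, the intermediate portion can be excised without increasing the cost; consequently, for every $\varepsilon>0$ the infimum is $\varepsilon$-attained by some admissible sequence with $\ell\le k$, i.e.\ at most $k+1$ summands. Given $(u,u'),(v,v')\in C$, I lift such a near-optimal sequence to $\mathcal{T}'$ by setting $u_0'=u'$, $v_\ell'=v'$, and, whenever an internal jump $v_j\equiv u_{j+1}$ is realised through a pair $\{x_i,y_i\}$, taking $v_j',u_{j+1}'$ to be the corresponding elements of $\{x_i',y_i'\}$; any remaining $u_j,v_j$ are matched to arbitrary $C$-partners. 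The lifted sequence is admissible in $\mathcal{T}'$ and each summand differs from its $\mathcal{T}$-counterpart by at most $\dis(C)$, so
\[
\hat d'(p'(u'),p'(v'))\le \hat d(p(u),p(v))+\varepsilon+(k+1)\dis(C).
\]
Letting $\varepsilon\to 0$ and symmetrising in the two spaces gives $\tfrac{1}{2}\dis(\hat C)\le (k+1)\cdot\tfrac{1}{2}\dis(C)\le (k+1)\eta$.

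Combining with the measure-side bounds yields $\dghp(\hat{\mathcal{T}},\hat{\mathcal{T}}')\le (k+1)\eta$, and letting $\eta$ decrease to $\dghp^{2k,1}((\mathcal{T},d,\mathbf{x},\mu),(\mathcal{T}',d',\mathbf{x}',\mu'))$ completes the proof. The main obstacle is the shortcut reduction that caps near-optimal admissible sequences at $k+1$ summands, uniformly over $(u,v)$. Once that combinatorial fact is in place, the marking constraints $(x_i,x_i'),(y_i,y_i')\in C$ make the lift to $\mathcal{T}'$ automatically respect $\equiv$ there, and the factor $k+1$ on the right-hand side of the lemma appears simply as the bound on the number of summands in the minimising sum.
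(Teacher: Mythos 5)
Your proof is correct and follows essentially the same route as the paper: push the correspondence and coupling down to the quotient spaces, note that the measure discrepancy and the mass outside the correspondence can only shrink under pushforward, and bound the distortion of $\hat{C}$ by $(k+1)\dis(C)$. The only difference is that the paper obtains this last distortion bound by citing Lemma~21 of \cite{ABBrGo09a}, whereas you re-derive it inline via the shortcut argument capping admissible chains at $k+1$ summands—which is precisely the content of that cited lemma.
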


\begin{proof}
  Let $C$ and $\pi$ be a correspondence and a measure which realise
  the Gromov--Hausdorff--Prokhorov distance between $(\mathcal{T},
  d,\mathbf{x}, \mu)$ and $(\mathcal{T}', d',\mathbf{x}', \mu')$;
  write $\delta$ for this distance. 
  Note that by definition,
  $(x_i,x'_i)\in C$ and $(y_i,y'_i)\in C$ for $1\leq i\leq k$.  Now
  make the vertex identifications in order to obtain
  $\hat{\mathcal{T}}$ and $\hat{\mathcal{T}}'$; let $p: \mathcal{T}
  \to \hat{\mathcal{T}}$ and $p': \mathcal{T}' \to \hat{\mathcal{T}}'$
  be the corresponding canonical projections.  Then
\[
\hat{C} = \{(p(x), p'(x')): (x,x') \in \mathcal{R}'\}
\]
is a correspondence between $\hat{\mathcal{T}}$ and
$\hat{\mathcal{T}}'$.  Let $\hat{\pi}$ be the push-forward of the
measure $\pi$ by $(p,p')$.  Then $D(\hat{\pi};\hat{\mu},\hat{\mu}')
\le \delta$ and $\hat{\pi}(\hat{C}^c) \le \delta$.
Moreover, by Lemma 21 of \cite{ABBrGo09a}, we have
$\dis(\hat{C}) \le (k+1)\delta$.  The claimed result follows.
\end{proof}

\begin{lemma} \label{lem:componentwise}
Fix $i \ge 1$.  Then as $n \to \infty$,
\[
G_{\lambda}^{n,i} \convdist \mathscr{G}_{\lambda}^i
\]
in $(\mathcal{M}, \dghp)$.
\end{lemma}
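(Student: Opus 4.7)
The plan is to exhibit each discrete component as a spanning tree with additional identifications, establish pointed measured convergence of these spanning trees together with the identification points, and then transfer this to the identified spaces via Lemma~\ref{lem:measured_identifications}.

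More precisely, let $\mathbb{T}^{n,i}$ be a spanning tree of $\mathbb{G}_\lambda^{n,i}$ and set $s^{n,i} = s(\mathbb{G}_\lambda^{n,i})$, so that $\mathbb{G}_\lambda^{n,i}$ is obtained from $\mathbb{T}^{n,i}$ by adding $s^{n,i}$ edges, equivalently by identifying pairs of vertices $\{(a^{n,i,j}, b^{n,i,j}), 1 \le j \le s^{n,i}\}$. Write $T^{n,i}$ for the measured metric space $\mathbb{T}^{n,i}$ with distances rescaled by $n^{-1/3}$ and with mass $n^{-2/3}$ at each vertex. In the same way, $\mathscr{G}_\lambda^i$ is obtained from the measured $\R$-tree $(\mathcal{T}^i, \hat d^i, \hat\mu^i)$ by identifying the pairs in $\mathcal{R}^i$.

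The key intermediate step is to show that, jointly with the convergence $s^{n,i} \to s^i$ guaranteed by \eqref{eq:aldous_joint_convergence},
\[
\bigl(T^{n,i}, (a^{n,i,j}, b^{n,i,j})_{1 \le j \le s^{n,i}}, \mu^{n,i}\bigr) \convdist \bigl(\mathcal{T}^i, (\tau^i(x^{i,j}), \tau^i(z^{i,j}))_{1 \le j \le s^i}, \hat\mu^i\bigr)
\]
in the pointed--measured space $\mathcal{M}^{2s^i, 1}$. This is essentially the content of the construction of \cite{ABBrGo09a}: the tree $\mathbb{T}^{n,i}$ is coded by a tilted random-walk excursion which rescales to $\varepsilon^i$, the normalized counting measure on its vertices pushes forward to the image of Lebesgue measure under the canonical projection, and the endpoints of the surplus edges correspond, under the same encoding, to the points of a planar Poisson process whose scaling limit is precisely the Poisson point process used to build $\mathcal{R}^i$.

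Given this joint convergence, Skorokhod's representation theorem lets me assume it holds almost surely on a common probability space. Since $s^{n,i}=s^i=:k$ almost surely for all $n$ large enough, Lemma~\ref{lem:measured_identifications} then gives
\[
\dghp\bigl(G_\lambda^{n,i}, \mathscr{G}_\lambda^i\bigr) \le (k+1)\, \dghp^{2k, 1}\bigl((T^{n,i}, (a^{n,i,j},b^{n,i,j})_j, \mu^{n,i}), (\mathcal{T}^i, (\tau^i(x^{i,j}),\tau^i(z^{i,j}))_j, \hat\mu^i)\bigr) \longrightarrow 0
\]
in probability, which yields the desired convergence in $(\mathcal{M},\dghp)$.

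The principal obstacle is verifying the joint convergence with all $2s^i$ marks together with the measure, in $\mathcal{M}^{2s^i,1}$. Only the unmarked, unmeasured convergence in $\dist_{\gh}^4$ is formally stated in \cite{ABBrGo09a}, but the construction there already tracks the endpoints of the surplus edges as well as the encoding measure, so this extension should be a routine strengthening of the arguments in \cite{ABBrGo09a}, rather than a genuinely new piece of analysis.
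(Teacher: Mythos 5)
Your proof follows essentially the same route as the paper: encode the component $\mathbb{G}_\lambda^{n,i}$ via its depth-first spanning tree together with the marked endpoints of the surplus edges, upgrade the convergence from \cite{ABBrGo09a} to a marked, measured GHP convergence in $\mathcal{M}^{2k,1}$, and transfer to the glued spaces via Lemma~\ref{lem:measured_identifications}.

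One inaccuracy should be fixed: adding a surplus edge between $a$ and $b$ is \emph{not} equivalent to identifying $a$ and $b$; the two operations produce metric spaces that differ by a rescaled edge-length. As a result, Lemma~\ref{lem:measured_identifications} bounds $\dghp(\hat{G}^{n,i},\mathscr{G}^i_\lambda)$, where $\hat{G}^{n,i}$ denotes the space obtained from the rescaled spanning tree by \emph{identifying} the marked pairs, not $\dghp(G^{n,i}_\lambda,\mathscr{G}^i_\lambda)$. You therefore need the additional (easy) observation that $\dghp(G^{n,i}_\lambda,\hat{G}^{n,i}) \le n^{-1/3} s^{n,i} \to 0$ before concluding. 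The paper records precisely this bound, and the fix is trivial, but as written your final display asserts slightly more than Lemma~\ref{lem:measured_identifications} provides.
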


\begin{proof}
This proof is a fairly straightforward modification of the proof of Theorem 22 in
\cite{ABBrGo09a}, so we will only sketch the argument. 
  Consider the component $\mathbb{G}_{\lambda}^{n,i}$.  Since we have
  fixed $\lambda$ and $i$, let us drop them from the notation and
  simply write $\mathbb{G}^n$ for the component, and similarly for
  other objects.  Write $n^{2/3} \Sigma^n \in \Z_+$ for the size of
  $\mathbb{G}^n$ and $S^n \in \Z_+$ for its surplus. We can list the
  vertices of this graph in depth-first order as $v_0, v_1, \ldots,
  v_{n^{2/3} \Sigma^n-1}$.  Let $H^{n}(k)$ be the graph distance of
  vertex $v_k$ from $v_0$.  Then it is easy to see that
  $n^{-2/3}H^{n}$ encodes a tree $T^n$ on vertices $v_0, v_1, \ldots,
  v_{n^{2/3} \Sigma^n-1}$ with metric $d_{T^n}$ such that
  $d_{T^n}(v_k, v_0) = H^n(k)$.  We endow $T^n$ with a measure by 
  letting each vertex of $T^n$ have mass $n^{-2/3}$.
  
  Next, let the pairs $\{i_1,
  j_1\}, \{i_2, j_2\}, \ldots, \{i_{S^n}, j_{S^n}\}$ give the
  indices of the surplus edges required to obtain $\mathbb{G}^{n}$ from
  $T^n$, listed in increasing order of first co-ordinate. In other words, to build 
  $\mathbb{G}^n$ from $T^n$, we add an  edge between $v_{i_k}$ and $v_{j_k}$ for 
  each $1 \le k \le S^n$ (and re-multiply distances by $n^{1/3}$). Recall that to get $G^{n}$ from $\mathbb{G}^{n}$, we 
  rescale the graph distance by $n^{-1/3}$ and assign mass $n^{-2/3}$ to each
  vertex.  It is straightforward that $G^{n}$ is at GHP distance at
  most $n^{-1/3}S^n$ from the metric space $\hat{G}^{n}$ obtained from
  $T^n$ by \emph{identifying} vertices $v_{i_k}$ and $v_{j_k}$ for all
  $1 \le k \le S^n$.

From the proof of Theorem 22 of \cite{ABBrGo09a}, we have jointly
\begin{align*}
(\Sigma^n, S^n) & \convdist (\sigma,s) \\
(n^{-1/3} H^{n}(\fl{n^{2/3} t}), 0 \le t < \Sigma^n) & \convdist (2\varepsilon(t), 0 \le t < \sigma) \\
\{ \{n^{-2/3} i_k, n^{-2/3} j_k\}, 0 \le k \le S^n\} & \convdist \{\{x^{k}, z^{k}\}, 1 \le k \le s\}.
\end{align*}
By Skorokhod's representation theorem, we may work on a probability
space where these convergences hold almost surely.  Consider the
$\R$-tree $(\mathcal{T}, d_{\mathcal{T}})$ encoded by $2 \varepsilon$
and recall that $\tau$ is the canonical projection $[0,\sigma] \to
\mathcal{T}$.  We extend $\tau$ to a function on $[0,\infty)$ by
letting $\tau(t) = \tau(t \wedge \sigma)$.  Let $\eta^n: [0, \infty)
\to \{v_0, v_1, \ldots, v_{n^{2/3} \Sigma^n - 1}\}$ be the function
defined by $\eta^n(t) = v_{\fl{n^{2/3} t} \wedge (n^{2/3}\Sigma^n
  -1)}$.  Set
\[
C^n = \{(\eta^n(t), \tau(t')): t,t' \in [0,\Sigma^n \vee
\sigma],|t-t'|\leq \delta_n\},
\]
where $\delta_n$ converges to $0$ slowly enough, that is, 
$$\delta_n\geq \max_{1\leq k\leq
  s}|n^{-2/3}i_k-x^k|\vee|n^{-2/3}j_k-z^k|\, .$$ Then $C^n$
is a correspondence between $T^n$ and $\mathcal{T}$ that contains
$(v_{i_k},x^k)$ and $(v_{j_k},z^k)$ for every $k\in \{1,2,\ldots,s\}$,
and with distortion going to $0$.  Next, let $\pi^n$ be the
push-forward of Lebesgue measure on $[0,\Sigma^n \wedge \sigma]$ under
the mapping $(\eta^n,\tau)$.  Then the discrepancy of $\pi^n$ with
respect to the uniform measure $\mu^n$ on $T^n$ and the image $\mu$ of Lebesgue
measure by $\tau$ on $\mathcal{T}$ is $|\Sigma^n - \sigma|$, and
$\pi^n((C^n)^c) = 0$.  

For all large enough $n$, $S^n = s$, so let us assume henceforth that
this holds. 
Then, writing
$\mathbf{v}=(v_{i_1},\ldots,v_{i_s},v_{j_1},\ldots,v_{j_s})$ and
$\mathbf{x}=(x^1,\ldots,x^k,z^1,\ldots,z^k)$, we have
\[
\dghp^{2s,1}((T^n,\mathbf{v},\mu_n),
(\mathcal{T},\mathbf{x},\mu)) \le \pran{\frac{1}{2}\dis(C^n)} \vee |\Sigma^n - \sigma| \to 0
\]
almost surely, as $n \to \infty$.
By Lemma~\ref{lem:measured_identifications} we thus have
$\dghp(\hat{G}^n, \mathscr{G}) \to 0$ almost surely, as $n \to
\infty$.  Since $\dghp(G^n,\hat{G}^n) \le n^{-1/3}S_n \to 0$, it follows that $\dghp(G^n, \mathscr{G}) \to 0$ as well. 
\end{proof}

\begin{proof}[Proof of Theorem~\ref{thm:gnp-converge}]
By (\ref{eq:aldous_joint_convergence}), (\ref{eq:abbrgo}), Lemma~\ref{lem:componentwise} and Skorokhod's representation theorem, we may work in a probability space in which the convergence 
in (\ref{eq:aldous_joint_convergence}) and in (\ref{eq:abbrgo}) occur almost surely, and in which for every $i \ge 1$ we almost surely have
\begin{equation} \label{eq:coordconv}
\dghp(G_{\lambda}^{n,i}, \mathscr{G}_{\lambda}^{i}) \to 0
\end{equation}
as $n \to \infty$.  Now, for each $i \ge 1$,
\[
\dghp(G_{\lambda}^{n,i}, \mathscr{G}_{\lambda}^{i}) \le 2 \max\{\diam(G_{\lambda}^{n,i}), \diam(\mathscr{G}_{\lambda}^i), \mathrm{mass}(G_{\lambda}^{n,i}), \mathrm{mass}(\mathscr{G}_{\lambda}^i)\}.
\]
The proof of Theorem 24 from \cite{ABBrGo09a} shows that almost surely 
\[
\lim_{N \to \infty} \sum_{i=N}^{\infty} \diam(\mathscr{G}^i_{\lambda})^4 = 0, 
\]
and (\ref{eq:abbrgo}) then implies that almost surely 
\[
\lim_{N \to \infty} \limsup_{n \to \infty} \sum_{i=N}^{\infty} \diam(G^{n,i}_{\lambda})^4 = 0\, .
\]
The $\ell_2^{\downarrow}$ convergence of the masses entails that almost surely
\[
\lim_{N \to \infty} \sum_{i=N}^{\infty} \mathrm{mass}(\mathscr{G}_{\lambda}^i)^4 = 0
\]
and (\ref{eq:aldous_joint_convergence}) then implies that almost surely 
\[
\lim_{N \to \infty} \limsup_{n \to \infty} \sum_{i=N}^{\infty} \mathrm{mass}(G^{n,i}_{\lambda})^4 = 0\, .
\]
Hence, on this probability space, we have
\begin{align*}
  & \lim_{N \to \infty} \limsup_{n \to \infty} \sum_{i=N}^{\infty} \dghp(G_{\lambda}^{n,i},\mathscr{G}^i_{\lambda})^4 \\
  & \le 16 \lim_{N \to \infty} \limsup_{n \to \infty} \sum_{i=N}^{\infty}
  ( \diam(G_{\lambda}^{n,i})^4 + \diam(\mathscr{G}^i_{\lambda})^4 +
  \mathrm{mass}(G_{\lambda}^{n,i})^4 +
  \mathrm{mass}(\mathscr{G}^i_{\lambda})^4 ) = 0
\end{align*}
almost surely.  Combined with (\ref{eq:coordconv}), this implies that in this space, almost surely
\[
\lim_{n \to \infty} \mathrm{dist}_{\ghp}^4(G_{\lambda}^n, \mathscr{G}_{\lambda}) = 0.
\]

The convergence of $(s(G_{\lambda}^{n,i}),i \ge 1)$ to $(s(\mathscr{G}_{\lambda}^i),i \ge 1)$ follows from (\ref{eq:aldous_joint_convergence}). 

If $i$ is such that $s(\mathscr{G}_{\lambda}^i)=1$ then, by (\ref{eq:aldous_joint_convergence}), we almost surely have $s(G_{\lambda}^{n,i})=1$ for all $n$ sufficiently large. In this case, $r(G_{\lambda}^{n,i})$ and $r(\mathscr{G}_{\lambda}^i)$ are the lengths of the unique cycles in $G_{\lambda}^{n,i}$ and in $\mathscr{G}_{\lambda}^{i}$, respectively. Now, $G_{\lambda}^{n,i} \to \mathscr{G}_{\lambda}^i$ almost surely in $(\mathring{\mathcal{M}},\dgh)$, and it follows easily that in this space, $r(G_{\lambda}^{n,i}) \to r(\mathscr{G}_{\lambda}^i)$ almost surely, for $i$ such that $s(\mathscr{G}_{\lambda}^i)=1$. 

Finally, by Theorem~4 of \cite{luczak1994srg}, $\min(r(G_{\lambda}^{n,i}):s(G_{\lambda}^{n,i}) \ge 2)$ is bounded away from zero in probability. So by Skorokhod's representation theorem, we may assume our space is such that almost surely 
\[
\liminf_{n \to \infty} \min(r(G^{n,i}_{\lambda}):s(G_{\lambda}^{n,i}) \ge 2) > 0. 
\]
In particular, it follows from the above that, for any $i \ge 1$ with
$s(\mathscr{G}_{\lambda}^i) \ge 2$, there is almost surely $r > 0$
such that $\mathscr{G}_{\lambda}^{i} \in \mathcal{A}^r$ and
$G^{n,i}_{\lambda} \in \mathcal{A}^r$ for all n sufficiently
large. Corollary \ref{sec:cutt-safely-point-1} (i) then yields that in
this space, $r(G^{n,i}_{\lambda}) \to r(\mathscr{G}_\lambda^i)$ almost
surely.

Together, the two preceding paragraphs establish the final claimed convergence. 
For completeness, we note that this final convergence may also be deduced without recourse to the results of \cite{luczak1994srg}; here is a brief sketch, using the notation of the previous lemma. 
It is easily checked that the points of the kernels
of $G^{n,i}_\lambda$ and $\mathscr{G}^i_\lambda$ correspond to the
identified vertices $(v_{i_k},v_{j_k})$ and $(x^k,z^k)$, and those 
vertices of degree at least $3$ in the subtrees of
$T^n,\mathcal{T}$ spanned by the points $(v_{i_k},v_{j_k}),1\leq k\leq
s$ and $(x^k,z^k),1\leq k\leq s$ respectively. These trees are
combinatorially finite trees (i.e., they are finite trees with edge-lengths), so the
convergence of the marked trees $(T^n,\mathbf{v})$ to
$(\mathcal{T},\mathbf{x})$ entails in fact the convergence of the same
trees marked not only by $\mathbf{v},\mathbf{x}$ but also by the points of degree
$3$ on their skeletons. Write $\mathbf{v}',\mathbf{x}'$ for these enlarged collections of points. Then one concludes by noting that $r(G_\lambda^{n,i})$ (resp.\
$r(\mathscr{G}_\lambda^i)$) is the minimum quotient distance, after
the identifications $(v_{i_k},v_{j_k})$ (resp.\ $(x^k,z^k)$) between
any two distinct elements of $\mathbf{v}'$ (resp.\ $\mathbf{x}'$).
This entails that $r(G_{\lambda}^{n,i})$ converges almost surely to
$r(\mathscr{G}_{\lambda}^i)$ for each $i \ge 1$.
\end{proof}

The above description of the sequence $\mathscr{G}_{\lambda}$ of
random $\R$-graphs does not make the distribution of the cores and
kernels of the components explicit.  (Clearly the kernel of
$\mathscr{G}_{\lambda}^i$ is only non-empty if
$s(\mathscr{G}_{\lambda}^i) \ge 2$ and its core is only non-empty if
$s(\mathscr{G}_{\lambda}^i) \ge 1$.)  Such an explicit 
distributional description was provided in \cite{ABBrGo09b}, and will be partially detailed below in Section~\ref{sec:properties}. 

\subsection{Convergence of the minimum spanning forest}

Recall that $\mathbb{M}(n,p)$ is the minimum spanning forest of $\mathbb{G}(n,p)$ and that we write
\nomenclature[Mlambdan]{$\mathbb{M}_{\lambda}^n,M_{\lambda}^n,\mathscr{M}_{\lambda}$}{Sequence of components $(\mathbb{M}^{n,i}_{\lambda},i \ge 1)$ of $\mathbb{M}(n,1/n + \lambda/n^{4/3})$; see same section for measured metric space version $M_{\lambda}^n$ and limit $\mathscr{M}_{\lambda}$.}
\[
\mathbb{M}_{\lambda}^n = (\mathbb{M}_{\lambda}^{n,i}, i \ge 1)
\]
for the components of $\mathbb{M}(n,1/n + \lambda/n^{4/3})$ listed in decreasing order of size.  For each $i \ge 1$ we write $M_{\lambda}^{n,i}$ for the measured metric space obtained from $\mathbb{M}_{\lambda}^{n,i}$ by rescaling the graph distance by $n^{-1/3}$ and giving each vertex mass $n^{-2/3}$.  We let
\[
M_{\lambda}^n = (M_{\lambda}^{n,i}, i \ge 1).
\]
Recall the cutting procedure introduced in Section~\ref{sec:breaking-cycles-r}, and that 
for an $\R$-graph $\rX$, we write $\cut(\rX)$ for a random variable with distribution $\mathcal{K}^{\infty}(\rX, \,\cdot)$. 
For $i \ge 1$, if $s(\mathscr{G}_{\lambda}^i) = 0$, let $\mathscr{M}_{\lambda}^i = \mathscr{G}_{\lambda}^i$.  Otherwise, let $\mathscr{M}_{\lambda}^i = \cut(\mathscr{G}_{\lambda}^i)$, where the cutting mechanism is run independently for each $i$. We note for later use that the mass measure on $\mathscr{M}_{\lambda}^i$ is almost surely concentrated on the leaves of $\mathscr{M}_{\lambda}^i$, since this property holds for $\mathscr{G}_{\lambda}^i$, and $\mathscr{G}_{\lambda}^i$ may be obtained from $\mathscr{M}_{\lambda}^i$ by making an almost surely finite number of identifications. 

\begin{theorem}\label{thm:msf-converge}
Fix $\lambda \in \R$.  Then as $n \to \infty$,
\[
M_{\lambda}^n \convdist \mathscr{M}_{\lambda}
\]
in the space $(\mathbb{L}_4, \mathrm{dist}_{\ghp}^4)$. 
\end{theorem}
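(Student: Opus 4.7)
The plan is to combine the Erd\H{o}s--R\'enyi scaling limit of Theorem~\ref{thm:gnp-converge} with the continuity of the continuum cycle-breaking kernel in Theorem~\ref{sec:breaking-cycles-r-1}, using the correspondence between discrete and continuum cycle-breaking provided by Propositions~\ref{prop:cut-close} and~\ref{prop:cut-dist}.

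First I would observe that, by construction, $\mathbb{M}^{n,i}_\lambda$ is the MST of $\mathbb{G}^{n,i}_\lambda$ under the (exchangeable) restriction of the original edge weights, so by Proposition~\ref{sec:cycle-break-algor} its law is $K^\infty(\mathbb{G}^{n,i}_\lambda,\cdot)$. Taking the metric graph $m(\mathbb{G}^{n,i}_\lambda)$, rescaling distances by $n^{-1/3}$ and placing mass $n^{-2/3}$ on each original vertex yields a measured $\R$-graph $\widetilde G^{n,i}_\lambda$ with $\dghp(\widetilde G^{n,i}_\lambda,G^{n,i}_\lambda)\leq \tfrac12 n^{-1/3}$. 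Propositions~\ref{prop:cut-dist} and~\ref{prop:cut-close} then show that there is a coupling under which $\dghp(\cut(\widetilde G^{n,i}_\lambda),M^{n,i}_\lambda)=O(n^{-1/3})$: the former identifies $g(\cut(\widetilde G^{n,i}_\lambda))$ in law with $\mathbb{M}^{n,i}_\lambda$, and the latter bounds the distance between $\cut(\widetilde G^{n,i}_\lambda)$ and its skeletal graph.

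I would then apply Skorokhod's representation to Theorem~\ref{thm:gnp-converge}, working on a probability space where $\mathrm{dist}^4_{\ghp}(G^n_\lambda,\mathscr{G}_\lambda)\to 0$ almost surely and, for every $i\geq 1$, both $s(G^{n,i}_\lambda)\to s(\mathscr{G}^i_\lambda)$ and $r(G^{n,i}_\lambda)\to r(\mathscr{G}^i_\lambda)$ almost surely. Since the total limiting surplus $\sum_{i\geq 1} s(\mathscr{G}^i_\lambda)$ is almost surely finite, for every $\varepsilon>0$ there exists $N=N(\varepsilon)$ such that, with probability at least $1-\varepsilon$, $s(\mathscr{G}^i_\lambda)=0$ for all $i>N$, and the joint convergence of surpluses propagates this to $G^{n,i}_\lambda$ once $n$ is large. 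For each $i\leq N$ with $s(\mathscr{G}^i_\lambda)\geq 1$, almost surely there exists $r_i>0$ such that $\mathscr{G}^i_\lambda$ and, for $n$ large, $\widetilde G^{n,i}_\lambda$ lie in $\mathcal{A}_{r_i}$; Theorem~\ref{sec:breaking-cycles-r-1} then yields $\mathcal{K}^\infty(\widetilde G^{n,i}_\lambda,\cdot)\to \mathcal{K}^\infty(\mathscr{G}^i_\lambda,\cdot)$ weakly, and the latter is by definition the law of $\mathscr{M}^i_\lambda$. Since the cycle-breaking is performed independently across components and is trivial outside the finite index set $\{i\leq N:s(\mathscr{G}^i_\lambda)\geq 1\}$, a standard coupling argument delivers joint convergence of the cut sequence for $i\leq N$.

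To upgrade to $(\mathbb{L}_4,\mathrm{dist}^4_{\ghp})$ convergence I would split $\mathrm{dist}^4_{\ghp}(M^n_\lambda,\mathscr{M}_\lambda)^4$ at index $N$. The head $i\leq N$ is controlled by the componentwise convergence above combined with the $O(n^{-1/3})$ bound from Proposition~\ref{prop:cut-close}; the tail $i>N$, on the high-probability event that all surpluses vanish above $N$ both at finite $n$ and in the limit, equals $\sum_{i>N}\dghp(G^{n,i}_\lambda,\mathscr{G}^i_\lambda)^4$, which tends to zero by the Skorokhod coupling. The main obstacle is precisely this passage to the $\mathbb{L}_4$ topology: since cycle-cutting can increase intra-component distances, one cannot naively dominate post-cutting tails by pre-cutting ones, and the argument crucially uses the almost sure finiteness of the total limiting surplus in order to confine all non-trivial cutting to finitely many components.
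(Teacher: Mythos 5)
Your proposal follows essentially the same route as the paper: apply Skorokhod to the conclusions of Theorem~\ref{thm:gnp-converge}, use Theorem~\ref{sec:breaking-cycles-r-1} together with Propositions~\ref{prop:cut-close} and~\ref{prop:cut-dist} to get componentwise a.s.\ convergence of the cut spaces, and then control the $\mathbb{L}_4$ tail by exploiting the a.s.\ finiteness of the number of components with non-trivial cycle structure. One point where your presentation is actually a little cleaner than the paper's: you explicitly pass to the cable system $\widetilde G^{n,i}_\lambda$ before invoking the cutting machinery, which is formally necessary since Theorem~\ref{sec:breaking-cycles-r-1} is stated for $\R$-graphs and the rescaled discrete graph $G^{n,i}_\lambda$ is a finite metric space, not an $\R$-graph; the paper glosses over this.

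There is, however, a genuine gap in your tail argument. You choose a deterministic $N=N(\eps)$ so that with probability at least $1-\eps$ one has $s(\mathscr{G}^i_\lambda)=0$ for all $i>N$, and then assert that ``the joint convergence of surpluses propagates this to $G^{n,i}_\lambda$ once $n$ is large.'' This is not a consequence of the finite-dimensional convergence of surpluses recorded in Theorem~\ref{thm:gnp-converge}: fidi convergence only gives, for each \emph{fixed} $i>N$, that eventually $s(G^{n,i}_\lambda)=0$, and one cannot exchange the quantifiers to get this uniformly in $i>N$. Some uniform control of the tail of the surplus sequence is required. The paper supplies exactly this by invoking Theorem~5.19 of Janson--\L{}uczak--Ruci\'nski to obtain that $I=\sup\{i:s(\mathscr{G}^i_\lambda)>1\}$ is a.s.\ finite and that the corresponding finite-$n$ quantity converges to $I$, so that for $n$ large one has $s(G^{n,i}_\lambda)\le 1$ for all $i>I$. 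Note also that the paper deliberately uses the threshold ``surplus $>1$'' rather than your ``surplus $\ge 1$'': unicyclic components above the threshold are still subject to one cut, but a single cut at most doubles the diameter, so the bound $\dghp(M^{n,i}_\lambda,\mathscr{M}^i_\lambda)\le 4\max\bigl(\diam(G^{n,i}_\lambda),\diam(\mathscr{G}^i_\lambda),\mathrm{mass}(G^{n,i}_\lambda),\mathrm{mass}(\mathscr{G}^i_\lambda)\bigr)$ still holds. This is slightly more robust than your version, which requires literally no cutting above $N$ and therefore demands the stronger propagation statement you have not justified. To repair your proof, either cite the complex-component tightness result as the paper does, or weaken your threshold to ``surplus at most $1$ above the cutoff'' and use the diameter-doubling bound.
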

\begin{proof}
Write 
\[
I = \sup \{i \ge 1: s(\mathscr{G}^i_{\lambda}) > 1\},
\]
with the convention that $I=0$ when $\{i \ge 1: s(\mathscr{G}^i_{\lambda}) > 1\}=\emptyset$. Likewise, for $n \ge 1$ let 
$I_n = \{i \ge 1: s(G^{n,i}_{\lambda}) > 1\}$. 
We work in a probability space in which the convergence statements of Theorem~\ref{thm:gnp-converge} are all almost sure. 
In this probability space, by Theorem 5.19 of \cite{janson00random} we have that 
$I$ is almost surely finite and that $I_n \to I$ almost surely.

By Theorem~\ref{thm:gnp-converge}, almost surely 
  $r(G_\lambda^{n,i})$ is bounded away from zero for all $i\geq 1$. 
It follows from Theorem~\ref{sec:breaking-cycles-r-1} that almost surely 
for every $i \ge 1$ we have 
\[ 
\dghp(\cut(G_{\lambda}^{n,i}),\cut(\mathscr{G}^i_{\lambda})) \to 0. 
\] 
Propositions~\ref{prop:cut-close} and~\ref{prop:cut-dist} then imply that we may work in a probability space in which almost surely, for every $i \ge 1$,
\begin{equation}\label{eq:msf-converge}
\dghp(M_{\lambda}^{n,i},\mathscr{M}^i_{\lambda}) \to 0.
\end{equation}
Now, for each $i \ge 1$, we have
\[
\dghp(M_{\lambda}^{n,i},\mathscr{M}^i_{\lambda})  \le 2 \max(\diam(M_{\lambda}^{n,i}),\diam(\mathscr{M}^i_{\lambda}), \mathrm{mass}(M_{\lambda}^{n,i}), \mathrm{mass}(\mathscr{M}_{\lambda}^i)).
\]
Moreover, for each $i \ge I$ the right-hand side is bounded above by
\[
4\max(\diam(G_{\lambda}^{n,i}),\diam(\mathscr{G}^i_{\lambda}), \mathrm{mass}(G_{\lambda}^{n,i}), \mathrm{mass}(\mathscr{G}_{\lambda}^i)). 
\]
Since $I$ is almost surely finite, as in the proof of Theorem~\ref{thm:gnp-converge} we thus almost surely have that
\begin{align*}
&  \lim_{N \to \infty} \limsup_{n \to \infty} \sum_{i=N}^{\infty} \dghp(M_{\lambda}^{n,i},\mathscr{M}^i_{\lambda})^4 \\
 & \le 64 \lim_{N \to \infty} \limsup_{n \to \infty} \sum_{i=N}^{\infty} (
\diam(G_{\lambda}^{n,i})^4 + \diam(\mathscr{G}^i_{\lambda})^4 + \mathrm{mass}(G_{\lambda}^{n,i})^4 + \mathrm{mass}(\mathscr{G}^i_{\lambda})^4 ) = 0,
\end{align*}
which combined with (\ref{eq:msf-converge}) shows that in this space, almost surely
\[
\lim_{n \to \infty} \mathrm{dist}_{\ghp}^4(M_{\lambda}^n, \mathscr{M}_{\lambda}) = 0. \qedhere
\]
\end{proof}

\subsection{The largest tree in the minimum spanning forest}

In this section, we study the largest component
$\mathbb{M}_\lambda^{n,1}$ of the minimum spanning
forest $\mathbb{M}_\lambda$ obtained by partially running Kruskal's algorithm, as well as
its analogue $\mathbb{G}_\lambda^{n,1}$ for the
random graph. It will be useful to consider the random variable
$\Lambda^n$ which is the smallest number $\lambda\in \R$ such that
$\mathbb{G}_\lambda^{n,1}$ is a subgraph of
$\mathbb{G}_{\lambda'}^{n,1}$ for every $\lambda'>\lambda$. 
\nomenclature[Lambdan]{$\Lambda^n$}{Last time a new component takes the lead; see 
Proposition~\ref{sec:largest-tree-minimum} for subsequential limit $\Lambda$.}
In other
words, in the race of components, $\Lambda^n$ is the last instant
where a new component takes the lead. It follows from Theorem 7 of
\cite{luczak90component} that $(\Lambda^n,n\geq 1)$ is tight, that is
\begin{equation}\label{eq:lzbd}
\lim_{\lambda\to \infty} \limsup_{n \to \infty} \p{\Lambda^n > \lambda} = 0.
\end{equation}
(This result is stated in \cite{luczak90component} for the other
Erd\H{o}s--R\'enyi random graph model, $\mathbb{G}(n,m)$, rather
than $\mathbb{G}(n,p)$, but it is standard that results for the
former have equivalents for the latter; see \cite{janson00random}
for more details.) 

In the following, if $x\mapsto f(x)$ is a real function, we write $f(x)=\oe(x)$ if there
exist positive, finite constants $c,c',\eps, A$ such that 
\nomenclature[Oe]{$\mathrm{oe}(x)$}{Say $f(x)=\mathrm{oe}(x)$ if $\lvert f(x)\rvert \leq c\exp(-c' x^{\eps})$ 
for some $c,c',\eps$ and $x$ large.}
$$|f(x)|\leq c\exp(-c' x^\eps)\, , \qquad \mbox{ for every }x>A\, .$$
In the following lemma, we write $\dhau(M^{n,1}_\lambda,M^n)$ for the
Hausdorff distance between $M^{n,1}_\lambda$ and $M^n$, seen as
subspaces of $M^n$. 
\nomenclature[Dh]{$\dhau$}{Hausdorff distance.}
Obviously, $\dgh(M_\lambda^{n,1},M^n)\leq
\dhau(M_\lambda^{n,1},M^n)$.

\begin{lemma}\label{lem:cauchy-property2}For any $\epsilon\in (0,1)$ and
  $\lambda_0$ large enough, we have
$$\limsup_{n\to\infty}
\p{\dhau(M_\lambda^{n,1},M^n)\ge \frac{1}{\lambda^{1-\epsilon}}\, \Big|\,
  \Lambda^n\leq \lambda_0}=\oe(\lambda)\, .$$
\end{lemma}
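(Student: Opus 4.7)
The idea is to reduce the Hausdorff-distance bound to a graph-distance estimate in $\mathbb{M}^n$, decompose every path from a non-giant vertex to the giant along the Kruskal threshold $p_\lambda := 1/n + \lambda/n^{4/3}$, and bound each piece via quantitative barely-supercritical estimates.

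\textbf{Reduction to graph distance.} On the event $\{\Lambda^n \leq \lambda_0\}$ and for every $\lambda \geq \lambda_0$, the vertex set $V(\mathbb{M}_\lambda^{n,1})$ is nested inside $V(\mathbb{M}_{\lambda'}^{n,1})$ for all $\lambda' \geq \lambda$, so $\mathbb{M}_\lambda^{n,1}$ is a subgraph of $\mathbb{M}^n$ and
\[
n^{1/3}\, \dhau(M_\lambda^{n,1}, M^n) \;=\; \max_{v \notin V(\mathbb{M}_\lambda^{n,1})} d_{\mathbb{M}^n}\bigl(v, V(\mathbb{M}_\lambda^{n,1})\bigr).
\]
The unique path from $v$ to $V(\mathbb{M}_\lambda^{n,1})$ in the tree $\mathbb{M}^n$ splits, along its edges of weight $>p_\lambda$, into a chain of pieces lying inside successive components $C_0(v), \ldots, C_{m(v)-1}(v)$ of the MSF $\mathbb{M}_\lambda^n$ (none of which is the giant), giving
\[
d_{\mathbb{M}^n}\bigl(v, V(\mathbb{M}_\lambda^{n,1})\bigr) \;\leq\; m(v) + \sum_{i=0}^{m(v)-1}\diam_{\mathbb{M}^n}(C_i(v)).
\]

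\textbf{Two key tail estimates.} For any $\delta > 0$, I would prove, using refinements of \cite{luczak90component, luczak1994srg, abbr09}, that (a) with probability $1 - \oe(\lambda)$, every non-giant component of $\mathbb{G}_\lambda^n$ (hence of $\mathbb{M}_\lambda^n$) has MST-diameter at most $n^{1/3}\lambda^{-1+\delta}$, and (b) with probability $1 - \oe(\lambda)$, $\max_v m(v) \leq \lambda^\delta$. For (a), graph-diameter bounds for non-giant components in the barely supercritical regime are classical; since these components have bounded expected total surplus, cycle-breaking to form the MST changes the diameter by at most a constant factor. For (b), one organises the successive Kruskal merges of non-giant components into a tree of merges rooted at the giant, and bounds its height by noting that, past the critical window, each merge attaches a small component directly to the giant with overwhelming probability (a quantitative form of the fact that the giant dominates merges). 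Taking $\delta = \epsilon/2$ and combining,
\[
d_{\mathbb{M}^n}\bigl(v, V(\mathbb{M}_\lambda^{n,1})\bigr) \;\leq\; \lambda^{\epsilon/2}\bigl(1 + n^{1/3}\lambda^{-1+\epsilon/2}\bigr) \;\leq\; n^{1/3}\lambda^{-1+\epsilon}
\]
for $\lambda$ sufficiently large, on an event of probability $1 - \oe(\lambda)$ within $\{\Lambda^n \leq \lambda_0\}$. Dividing by $n^{1/3}$ and taking $\limsup_{n \to \infty}$ then yields the lemma.

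\textbf{Main obstacle.} The most delicate step is producing the $\oe(\lambda)$-tail bounds in (a) and (b), both of which go beyond the classical size/surplus estimates of \cite{luczak90component}. For (a), one must control cycle-breaking within non-giant components to ensure MST-diameters are comparable to graph-diameters. For (b), one must quantify how rapidly the giant absorbs non-giant components past the critical window; equivalently, one bounds the depth of a union-find-style merge tree under the Kruskal order. Both estimates are within reach of the tools developed in \cite{abbr09}, which provide the refined barely-supercritical tail control needed here.
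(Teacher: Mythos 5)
Your reduction to a graph-distance bound and the idea of decomposing the $\mathbb{M}^n$-path from $v$ along its edges of weight $>p_\lambda$ are reasonable starting points, but the route you propose from there is different from the paper's, and step (b) is a genuine gap. The paper's proof is fundamentally \emph{multiscale}: it fixes geometric scales $f_j=(5/4)^j f_0$, chooses $f_t \approx n^{1/3}/\log n$, and splits
$\dhau(M_{f_{i_0}}^{n,1},M^n)\le \dhau(M_{f_{i_0}}^{n,1},M_{f_t}^{n,1})+\dhau(M_{f_t}^{n,1},M^n)$.
The second term is controlled by Lemma~4 of \cite{abbr09} (proved via Prim's algorithm), giving $O(n^{-1/6}(\log n)^{7/2})$ with probability $1-1/n$. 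The first is telescoped along the scales and bounded by $\sum_{j>i_0}f_j^{\epsilon-1}\lesssim f_{i_0}^{\epsilon-1}$, using Lemma~\ref{lem:long_path-improved} at each dyadic level. The geometric decay of the per-scale contributions is exactly what makes the Hausdorff increments summable, and it does so \emph{without ever counting} how many non-giant components a geodesic crosses at a fixed level $\lambda$.

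Your single-scale decomposition instead needs a bound (b) on $\max_v m(v)$, the number of non-giant components of $\mathbb{M}_\lambda^n$ visited by the $\mathbb{M}^n$-path from $v$ to the giant. You acknowledge you have not proved this, and I do not think it is ``within reach'' of \cite{luczak90component,luczak1994srg,abbr09}: none of those gives a merge-tree-depth or path-crossing-count estimate with $\oe(\lambda)$ tails uniform in $n$. More concerning, the heuristic you offer for (b) — that ``past the critical window, each merge attaches a small component directly to the giant with overwhelming probability'' — is false in the relevant regime: the giant has only $\Theta(\lambda n^{2/3})$ vertices while non-giant vertices number $\Theta(n)$, so for any fixed $\lambda$ (and indeed for all $\lambda\ll n^{1/3}$) mergers between two non-giant components vastly outnumber absorptions by the giant, the ratio being of order $\lambda n^{-1/3}$. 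So the dynamics do not favour shallow merge trees in the way you describe, and I would not be surprised if $\max_v m(v)$ grows polynomially in $\lambda$ rather than like $\lambda^{\epsilon/2}$. Your ingredient (a) is essentially Lemma~\ref{lem:long_path-improved} and is fine, but without (b) — or without replacing the single-scale decomposition by the paper's dyadic one, which avoids (b) entirely — the argument does not close.
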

In the course of the proof of Lemma~\ref{lem:cauchy-property2}, we
will need the following estimate on the length of the longest path
outside the largest component of a random graph within the critical
window.

\begin{lemma}\label{lem:long_path-improved} For all $0<\eps<1$ there
  exists $\lambda_0$ such that for all $\lambda \ge \lambda_0$ and all $n$
  sufficiently large, the probability that a connected component of
  $\mathbb G_\lambda^n$ aside from $\mathbb{G}_\lambda^{n,1}$ contains
  a simple path of length at least $n^{1/3}/\lambda^{1-\epsilon}$ is
  at most $e^{-\lambda^{\epsilon/2}}$.
\end{lemma}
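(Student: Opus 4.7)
The plan is to bound the expected number of length-$k$ simple paths that lie in non-giant components by a first-moment argument, and then to convert this bound into the claimed form via a decomposition over component sizes. Set $k := \lceil n^{1/3}/\lambda^{1-\epsilon} \rceil$, $p = 1/n + \lambda/n^{4/3}$, and $\mu := \lambda/n^{1/3}$.

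For a fixed ordered sequence $(v_0,\ldots,v_k)$ of distinct vertices of $\{1,\ldots,n\}$, the probability that it forms a simple path in a non-giant component factors as $p^k$ times the conditional probability that the component of $v_0$ does not coincide with $\mathbb{G}_\lambda^{n,1}$, given that the path is present. To estimate this conditional probability, I would first expose the $k+1$ vertices of the path, then reveal the set of edges from $\{v_0,\ldots,v_k\}$ to $\{1,\ldots,n\}\setminus\{v_0,\ldots,v_k\}$: this set has size $Y \sim \mathrm{Bin}((k+1)(n-k-1),p)$ with mean $k+1+\lambda^\epsilon(1+o(1))$. By the standard coupling of the cluster exploration with a Poisson$(np)$ branching process, whose extinction probability is $q=1-2\mu(1+o_\lambda(1))$, the conditional probability is at most $\E{q^Y}=(1-p(1-q))^{(k+1)(n-k-1)}\leq e^{-(2-o(1))\lambda^\epsilon}$. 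Summing over the at most $n^{k+1}/2$ ordered paths, and using $(np)^k \leq e^{\lambda^\epsilon}$, yields
\[
  \E{\#\text{length-}k\text{ simple paths in non-giant components}} \leq \tfrac{n}{2}e^{-(1-o(1))\lambda^\epsilon}.
\]

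To convert the last display, which carries a stray factor of $n$, into the uniform-in-$n$ bound $e^{-\lambda^{\epsilon/2}}$, I would split the count according to the size of the component containing the path. By duality with a near-critical subcritical random graph, the non-giant part of $\mathbb{G}_\lambda^n$ behaves like $\mathbb{G}(m,(1-\mu)/m)$ with $m\sim n$; the estimates underlying the tightness statement \eqref{eq:lzbd} (cf.\ \cite{luczak90component}) give two ingredients: (i) the expected number of components of size at least $1/\mu^2 = n^{2/3}/\lambda^2$ is $O(\lambda^3)$, uniformly in $n$, and (ii) a Gaussian tail $\p{\diam(T_m)\geq \ell}\leq Ce^{-c\ell^2/m}$ for the diameter of a near-critical subcritical component of size $m$. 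Applying these with $\ell=k$ and $m\leq C/\mu^2$ gives a per-component tail of at most $e^{-c\lambda^{2\epsilon}}$ (since $k^2\mu^2=\lambda^{2\epsilon}$), and a union bound over the $O(\lambda^3)$ "large" components gives a contribution of at most $\lambda^3 e^{-c\lambda^{2\epsilon}}\leq e^{-\lambda^{\epsilon/2}}$ for $\lambda$ large.

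The main technical obstacle is the treatment of medium-sized non-giant components (with size in $[k,1/\mu^2]$): the expected number of such components carries a factor $n^{1/2}\lambda^{3(1-\epsilon)/2}$, which is not uniformly bounded in $n$ for fixed $\lambda$. These components force the lemma to be read in the barely supercritical regime with $\lambda\to\infty$ and $\lambda = o(n^{1/3})$; the stipulation "for all $n$ sufficiently large" is best understood as "$n\geq n_0(\lambda)$" with $n_0$ growing with $\lambda$, so that the contribution from medium components is absorbed by the per-component tail. The careful bookkeeping needed to prove this—combining Łuczak's sharp estimates on the tail of the second-largest component with the tree-diameter large-deviation uniformly over the relevant size range—is the delicate step.
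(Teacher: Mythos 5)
The paper does not provide a self-contained proof of this lemma: it states that the argument ``follows precisely the same steps as the proof of Lemma~3 (b) of \cite{abbr09},'' with the footnoted clarification that one must replace the specialised last estimate in Theorem~19 of \cite{abbr09} by the more general Theorem~11 (iii) of \cite{luczak98random}. That route conditions on (or symmetrises away) the giant and then invokes {\L}uczak's bound on the longest path in a barely subcritical random graph directly. Your proposal takes a genuinely different route: a first-moment bound over ordered paths via the branching-process coupling, followed by a decomposition over component sizes.

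There is, however, a genuine gap. You are right that the raw first moment, of order $n\,e^{-(1-o(1))\lambda^{\eps}}$, is useless for fixed $\lambda$ and $n\to\infty$, and you correctly attempt to repair this with a component-size decomposition. But your repair is not carried through, and the diagnosis of the obstruction is itself off. You apply the diameter tail $\p{\diam(T_m)\ge k}\le Ce^{-ck^2/m}$ only at the worst value $m\asymp 1/\mu^2$ (yielding $e^{-c\lambda^{2\eps}}$) and then union-bound this single figure against a putative count of $n^{1/2}\lambda^{3(1-\eps)/2}$ ``medium'' components of size in $[k,1/\mu^2]$, concluding that a stray $n^{1/2}$ survives. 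This is too crude: the per-component tail $e^{-ck^2/m}$ is overwhelmingly smaller for $m$ near $k$ than for $m$ near $1/\mu^2$, and the component-size density $\sim n\,e^{-\mu^2 m/2}/m^{5/2}$ decays polynomially in $m$. Integrating the product over $m\in[k,\infty)$ (a Laplace-type computation with saddle at $m\asymp k/\mu$, i.e.\ $k^2/m\asymp \mu^2 m\asymp \mu k=\lambda^\eps$) shows the expected number of bad non-giant components is of order $n^{-2/3}\,\lambda^{O(1)}\,e^{-c\lambda^\eps}$, which vanishes as $n\to\infty$ for fixed $\lambda$ and is comfortably $e^{-\lambda^{\eps/2}}$ for $\lambda$ large. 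In other words, the medium-size range is not a genuine obstruction; the problem is that you never actually do the integral. The closing suggestion that taking ``$n\ge n_0(\lambda)$'' absorbs the $n^{1/2}$ factor is also backwards --- that factor only grows with $n$ --- so the sketch as written does not close.

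To salvage the argument, replace the worst-case union bound over medium components by the size-resolved first-moment computation sketched above (which requires (a) a quantitative count of non-giant components by size, e.g.\ via duality to the subcritical $\mathbb G(m,(1-\mu)/m)$ and the tree-component formula, and (b) the uniform-in-$m$ Gaussian tail for the diameter of a random labeled tree, or its near-critical analogue). Alternatively, and more in the spirit of the paper, condition on the giant, pass to the dual subcritical graph, and apply {\L}uczak's Theorem~11(iii) directly; this sidesteps the first-moment bookkeeping entirely.
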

The proof of Lemma~\ref{lem:long_path-improved} follows precisely the
same steps as the proof of Lemma~3 (b) of \cite{abbr09}, which is
essentially the special case $\eps =1/2$.\footnote{In \cite{abbr09} it was sufficient for the purpose of
  the authors to produce a path length bound of
  $n^{1/3}/\lambda^{1/2}$, but their proof does imply the present
  stronger result. For the careful reader, the key point is that the
  last estimate in Theorem 19 of \cite{abbr09} is a specialisation of
  a more general bound, Theorem 11 (iii) of
  \cite{luczak98random}. Using the more general bound in the proof is
  the only modification required to yield the above result.}  Since no
new idea is involved, we omit the details.
\begin{proof}[Proof of Lemma~\ref{lem:cauchy-property2}]
Fix $f_0 > 0$ and for $i \geq 0$, let $f_i = (5/4)^i \cdot f_0$. Let $t=t(n)$ be the smallest $i$ for which $f_i \geq n^{1/3}/\log n$. Lemma 4 of \cite{abbr09} (proved via Prim's algorithm) states that
\[
\E{\diam(\mathbb{M}^n) - \diam(\mathbb{M}^{n,1}_{f_t})} = O(n^{1/6} (\log n)^{7/2});
\]
this is established by proving the following stronger bound, which will be useful in the sequel:
\begin{equation} \label{eqn:tdistance}
\p{\dhau(M^{n,1}_{f_t},M^n) > n^{-1/6}(\log n)^{7/2}} \le \frac 1 n.
\end{equation}
Let $B_i$ be the event that some component of $\mathbb{G}^n_{f_i}$ aside from $\mathbb{G}^{n,1}_{f_i}$ contains a simple path with more than $n^{1/3}/f_i^{1-\epsilon}$ edges and let
\[
I^n = \max\{i\leq t: B_i~\mbox{occurs}\}.
\]
Lemma~\ref{lem:long_path-improved} entails that, for $f_0$ sufficiently large, for all $n$, and all $0 \leq i \leq t-1$, 
\[
\p{i\le I^n \le t} \leq \sum_{\ell \ge i} e^{-f_i^{\epsilon/2}}\le 2 e^{-f_i^{\epsilon/2}},
\]
where the last inequality holds for all $i$ sufficiently large. 
For fixed $i < t$, if $\Lambda^n \le f_i$ then for all $\lambda \in [f_i,f_t]$ we have
\[
\dhau(M_{\lambda}^{n,1},M_{f_t}^{n,1}) \leq \dhau(M_{f_i}^{n,1},M_{f_t}^{n,1}).
\]
If, moreover, $I^n \le i$, then we have 
\begin{equation}\label{dghtreebd}
\dhau(M_{f_i}^{n,1},M_{f_t}^{n,1}) 
\leq \sum_{j=i+1}^t f_j^{\epsilon-1} \leq \frac{f_i^{\epsilon-1}}{1-(4/5)^{1-\epsilon}}< 10 f_i^{\epsilon-1},
\end{equation}
the latter inequality holding for $\epsilon<1/2$.

Finally, fix $ \lambda \in \R$ and let $i_0=i_0(\lambda)$ be such that
$\lambda\in [f_{i_0},f_{i_0+1})$. Since $f_t\to \infty$ as
$n\to\infty$, we certainly have $i_0< t$ for all $n$ large
enough. Furthermore,
\begin{align*}
\lefteqn{\p{\dhau(M_{\lambda}^{n,1},M^n) \geq \frac 1 {\lambda^{1-\epsilon}}\,\Big|
\,\Lambda^n\leq \lambda_0}} \\
& \le \p{\dhau(M_{\lambda}^{n,1}, M_{f_t}^{n,1}) \ge \frac 1 2 \frac 1 {\lambda^{1-\epsilon}}\,\Big|
\,\Lambda^n\leq\lambda_0} + \p{\dhau(M_{f_t}^{n,1}, M^n) \ge \frac 1 2 \frac 1 {\lambda^{1-\epsilon}}\,\Big|
\,\Lambda^n>\lambda_0} \\
& \leq \frac{1}{\p{
\Lambda^n\leq \lambda_0}}\left(\p{\dhau(M_{f_{i_0}}^{n,1},M_{f_t}^{n,1}) >
\frac{10}{f_{i_0}^{1-\epsilon/2}}}  + \frac 1 n \right),
\end{align*}
for all $\lambda$ large enough and all $n$ such that $2 \lambda\le
n^{1/6} (\log n)^{-7/2}$, by (\ref{eqn:tdistance}). It then follows from \eqref{dghtreebd} and
the tightness of $(\Lambda^n,n\geq 1)$ that there exists a constant
$C\in (0,\infty)$ such that for all $\lambda_0$
large enough,
\begin{align*}
\p{\dhau(M_{\lambda}^{n,1},M^n) \geq \frac 1 {\lambda^{1-\epsilon}}\,\Big|
\,\Lambda^n\leq \lambda_0}
& \leq \frac{1}{\p{\Lambda^n\leq \lambda_0}}\left(\p{i_0(\lambda) \le I^n \le t} + \frac 1 n \right)\\
& \le C \left(e^{-f_{i_0(\lambda)}^{\epsilon/2}}+
\frac 1 n \right).
\end{align*}
Letting $n$ tend to infinity proves the lemma.
\end{proof}
We are now in a position to prove a partial version of our main
result.  
In what follows, we write $\mathring{M}^n$, $\mathring{M}^{n,1}_{\lambda}$ and $\mathring{\mathscr{M}}^1_{\lambda}$ for the metric spaces obtained from 
$M^n$, $M^{n,1}_{\lambda}$ and $\mathscr{M}^1_{\lambda}$ by ignoring their measures. 
\nomenclature[Mnzzz]{$\mathring{M}^n,\mathring{M}^{n,1}_{\lambda},\mathring{\mathscr{M}}, \mathring{\mathscr{M}}^1_{\lambda}$}{Spaces obtained from $M^n$, $M^{n,1}_{\lambda},\mathscr{M},\mathscr{M}^1_{\lambda}$ by ignoring measures.}
\begin{lemma} \label{lem:firstdraft}
There exists a random compact metric space $\mathring{\mathscr{M}}$ such that, as $n \to \infty$,
\[
\mathring{M}^n \convdist \mathring{\mathscr{M}} \quad\text{in~} (\mathring{\mathcal{M}}, \dgh).
\]
Moreover, as $\lambda \to \infty$,
\[
\mathring{\mathscr{M}}_{\lambda}^1 \convdist \mathring{\mathscr{M}} \quad \text{in~}(\mathring{\mathcal{M}}, \dgh).
\]
\end{lemma}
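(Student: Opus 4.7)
My plan is to apply the principle of accompanying laws in the complete separable metric space $(\mathring{\cM},\dgh)$, using Theorem~\ref{thm:msf-converge} as the source of the ``accompanying'' fixed-$\lambda$ convergence and Lemma~\ref{lem:cauchy-property2} (together with the tightness of $(\Lambda^n)$ provided by \eqref{eq:lzbd}) as the Cauchy-type closeness input. This is precisely the strategy outlined in Section~\ref{sec:proof_idea}; the point is to carry it out rigorously for the unmeasured spaces.

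First, I would extract from Theorem~\ref{thm:msf-converge} that for each fixed $\lambda \in \R$, $M^{n,1}_\lambda \convdist \mathscr{M}^1_\lambda$ in $(\cM,\dghp)$, since convergence in $(\LL_4,\dist^4_{\ghp})$ implies coordinatewise convergence in $(\cM,\dghp)$. Forgetting measures is 1-Lipschitz from $(\cM,\dghp)$ into $(\mathring{\cM},\dgh)$, so by the continuous mapping theorem,
\[
\mathring{M}^{n,1}_\lambda \convdist \mathring{\mathscr{M}}^1_\lambda \qquad \text{in } (\mathring{\cM},\dgh), \quad n\to\infty.
\]

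Second, I would establish the Cauchy-type estimate
\[
\lim_{\lambda \to \infty}\limsup_{n\to\infty}\p{\dgh(\mathring{M}^{n,1}_\lambda,\mathring{M}^n)\ge \eps}=0 \qquad \text{for every } \eps>0.
\]
Since $M^{n,1}_\lambda$ sits isometrically in $M^n$, we have $\dgh(\mathring{M}^{n,1}_\lambda,\mathring{M}^n) \le \dhau(M^{n,1}_\lambda, M^n)$. Given $\delta>0$, use \eqref{eq:lzbd} to pick $\lambda_0$ with $\limsup_n \p{\Lambda^n > \lambda_0} < \delta$, and then decompose
\[
\p{\dhau(M^{n,1}_\lambda,M^n)\ge \lambda^{-(1-\epsilon')}} \le \p{\dhau(M^{n,1}_\lambda,M^n)\ge \lambda^{-(1-\epsilon')}\,\Big|\,\Lambda^n\le \lambda_0}+\p{\Lambda^n>\lambda_0}.
\]
Lemma~\ref{lem:cauchy-property2} bounds the conditional term by $\oe(\lambda)$; letting $n\to\infty$ and then $\lambda\to\infty$ gives a $\limsup$ bounded by $\delta$, and finally $\delta\to 0$ yields the claim (after fixing some $\epsilon'\in(0,1)$ to ensure $\lambda^{-(1-\epsilon')} < \eps$ for large $\lambda$).

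Third, armed with these two ingredients, the principle of accompanying laws, applied in the Polish space $(\mathring{\cM},\dgh)$, yields a random compact metric space $\mathring{\mathscr{M}}$ such that simultaneously $\mathring{M}^n\convdist \mathring{\mathscr{M}}$ and $\mathring{\mathscr{M}}^1_\lambda\convdist \mathring{\mathscr{M}}$ as $\lambda\to\infty$. Concretely, the Cauchy estimate implies that $(\mathring{\mathscr{M}}^1_\lambda)_\lambda$ is a Cauchy net in distribution, hence converges in the Prokhorov sense to some limit $\mathring{\mathscr{M}}$ by completeness of the space of probability measures on the Polish space $(\mathring{\cM},\dgh)$; a standard triangle inequality then transfers this to convergence of $\mathring{M}^n$ to the same limit. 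The main subtlety is really bookkeeping: converting the conditional estimate in Lemma~\ref{lem:cauchy-property2} into its unconditional form, and verifying that the ``accompanying laws'' framework applies in the non-locally-compact Polish space $(\mathring{\cM},\dgh)$. Neither is a genuine obstacle; the substantive work has already been done in Theorem~\ref{thm:msf-converge} and Lemma~\ref{lem:cauchy-property2}.
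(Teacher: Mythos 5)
Your proposal is correct and follows essentially the same route as the paper: fixed-$\lambda$ convergence from Theorem~\ref{thm:msf-converge}, the Cauchy-type closeness from Lemma~\ref{lem:cauchy-property2} (de-conditioned via the tightness \eqref{eq:lzbd} of $\Lambda^n$), and the principle of accompanying laws in the Polish space $(\mathring{\cM},\dgh)$. The only difference is that you spell out the de-conditioning bookkeeping and the $1$-Lipschitz measure-forgetting map, which the paper leaves implicit.
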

\begin{proof}
Recall that the metric space $(\mathring{\mathcal{M}},\dgh)$ is complete and separable.  Theorem~\ref{thm:msf-converge} entails that
\[
\mathring{M}_{\lambda}^{n,1} \convdist \mathring{\mathscr{M}}_{\lambda}^1
\]
as $n \to \infty$ in $(\mathring{\cM}, \dgh)$.  The stated results then
follow from this, Lemma~\ref{lem:cauchy-property2} and the principle
of accompanying laws (see Theorem 3.1.14 of \cite{Stroock93} or
Theorem 9.1.13 in the second edition).
\end{proof}

Let $\hat{M}^{n,1}_{\lambda}$ be the measured metric space obtained from $M^{n,1}_{\lambda}$ by rescaling so that the total mass is one (in $M^{n,1}_{\lambda}$ we gave each vertex mass $n^{-2/3}$; now we give each vertex mass $|V(\mathbb{M}^{n,1}_{\lambda})|^{-1}$). 
\nomenclature[Mhatlambda]{$\hat{M}^{n,1}_{\lambda}$}{$M^{n,1}_{\lambda}$ renormalised to have mass one.}
\begin{proposition}\label{lem:mst-lim}
For any $\eps > 0$,
\[
\lim_{\lambda \to \infty} \limsup_{n \to \infty} \p{ \dghp(\hat{M}^{n,1}_{\lambda},M^n) \geq \eps} =0. 
\]
\end{proposition}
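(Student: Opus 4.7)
The plan is to construct an explicit coupling between the uniform probability measures $\hat\mu$ on $V(\mathbb{M}^{n,1}_\lambda)$ and $\mu_{M^n}$ on $V(\mathbb{M}^n)$, using the natural projection onto the subtree $\mathbb{M}^{n,1}_\lambda \subset \mathbb{M}^n$. Fix $\eta \in (0,1)$. By tightness of $(\Lambda^n)$ via \eqref{eq:lzbd}, choose $\lambda_0$ so that $\p{\Lambda^n > \lambda_0}$ is arbitrarily small, and condition throughout on $\{\Lambda^n \le \lambda_0\}$. For $\lambda$ large, Lemma~\ref{lem:cauchy-property2} gives $\dhau(M^{n,1}_\lambda, M^n) \le \delta := \lambda^{\eta - 1}$ with conditional probability at least $1 - \oe(\lambda)$. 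The key structural observation is that $\mathbb{M}^{n,1}_\lambda$ is a subtree of $\mathbb{M}^n$ (since Kruskal's algorithm only adds edges), so every $v \in V(\mathbb{M}^n)$ admits a \emph{unique} nearest vertex $\phi(v) \in V(\mathbb{M}^{n,1}_\lambda)$, with $d_{M^n}(v, \phi(v)) \le \delta$ on the good event.

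Using $\phi$, I would define the correspondence $C := \{(u, v) \in V(\mathbb{M}^{n,1}_\lambda) \times V(\mathbb{M}^n) : \phi(v) = u\}$; since for $(u,v),(u',v') \in C$ any geodesic from $v$ to $v'$ in the tree $\mathbb{M}^n$ passes through $\phi(v)=u$ and $\phi(v')=u'$, we have $|d(u,u') - d(v,v')| \le d(v,u) + d(v',u') \le 2\delta$, so $\dis(C) \le 2\delta$. For the transport plan, take $\pi(\{u\} \times \{v\}) := (1/n)\, \ind{\phi(v) = u}$: then $p'_*\pi = \mu_{M^n}$ exactly, $\pi(C^c) = 0$, and the first marginal assigns mass $k_u/n$ to $u$, where $k_u := |\phi^{-1}(u)|$ and $\sum_u k_u = n$. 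Writing $N = |V(\mathbb{M}^{n,1}_\lambda)|$, this yields
\[
\dghp(\hat M^{n,1}_\lambda, M^n) \;\le\; \max\!\Bigl(\delta,\; \|\hat\mu - \phi_*\mu_{M^n}\|\Bigr) \;=\; \max\!\Bigl(\delta,\; \sum_u \bigl|1/N - k_u/n\bigr|\Bigr).
\]

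The hard part will be controlling the total-variation quantity $E := \tfrac12\sum_u |k_u/n - 1/N|$ and showing $E \to 0$ in probability as $n \to \infty$ then $\lambda \to \infty$: this is a concentration statement for the subtree sizes $\{k_u\}$ around the mean $n/N \sim n^{1/3}/(2\lambda)$. My strategy would combine two ingredients. First, the inclusion $\phi^{-1}(u) \subseteq \{v : d_{M^n}(v,u) \le \delta\}$, together with the fact that in the barely supercritical regime all components of $\mathbb{G}(n, 1/n+\lambda/n^{4/3})$ aside from the giant are small for large $\lambda$, should give $\max_u k_u = o(n)$ in probability, via a strengthening of Lemma~\ref{lem:long_path-improved} that controls component vertex counts rather than only path lengths. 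Second, an exchangeability argument for how these small components attach to $\mathbb{M}^{n,1}_\lambda$ during Kruskal's later steps should yield $\E{\sum_u k_u^2} = (1+o(1))\, n^2/N$, whence Cauchy--Schwarz gives $E = o_P(1)$. Should this direct TV bound prove too delicate, a fallback is to replace $\pi$ by a coupling that redistributes the excess mass between ``heavy'' and ``light'' values of $u$ over short distances, producing an exact first marginal $\hat\mu$ at the cost of placing at most $E$ mass outside a slightly enlarged correspondence $C' := \{(u,v) : d(u,v) \le 3\delta\}$; either route yields $\dghp(\hat M^{n,1}_\lambda,M^n) \le \max(O(\delta), O(E)) \to 0$ and completes the proof.
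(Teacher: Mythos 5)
Your geometric setup is exactly right: you use the natural projection $\phi$ onto $\mathbb{M}^{n,1}_\lambda$ (the root of the dangling forest component $\mathbb{F}^n_\lambda(v)$) to build a correspondence of distortion $\le 2\dhau(M^{n,1}_\lambda,M^n)$, and you push the uniform measure on $\mathbb{M}^n$ through $\phi$ to get a coupling whose discrepancy is the total variation distance between $\hat\mu$ and $\phi_*\mu_{M^n}$, i.e.\ $E = \tfrac12\sum_u |k_u/n - 1/N|$ with $k_u = S^n_\lambda(u)$. This matches the paper up to the point where the discrepancy must be controlled, and there your primary route breaks down.

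The claim $\E{\sum_u k_u^2}=(1+o(1))\,n^2/N$ is false, and the TV quantity $E$ does \emph{not} vanish. Writing $\sum_u k_u^2$ as the number of ordered pairs $(v,v')$ with $\phi(v)=\phi(v')$, one sees that $\sum_u k_u^2 - n^2/N$ is (up to negligible terms) the excess of this pair count over what a uniform ``balls in boxes'' allocation of the non-giant vertices would produce. Because those vertices attach to $\mathbb{M}^{n,1}_\lambda$ in whole clumps---the components of $\mathbb{M}^n_\lambda$ other than the giant---every pair in the same such component contributes, so $\sum_u k_u^2 - n^2/N \gtrsim \sum_{i\ge 2}|V(\mathbb{M}^{n,i}_\lambda)|^2$. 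By Theorem~A1 of \cite{janson2008susceptibility} this is of order $n^{4/3}/\lambda$, which is the \emph{same} order as $n^2/N \approx n^{4/3}/(2\lambda)$, not smaller. Hence the relative second-moment correction is $\Theta(1)$, Cauchy--Schwarz gives no decay, and indeed $E$ is bounded away from $0$: the individual $k_u$ simply are not concentrated around their common mean $n/N$. Your Lemma~\ref{lem:exchbitssmall}-type control of $\max_u k_u$ rules out macroscopic outliers, but that is far weaker than individual concentration.

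Your fallback---redistribute excess mass between heavy and light $u$'s over short distances---is the right instinct, but as stated it presupposes that heavy and light slots are geometrically interleaved, which is not justified and is in any case not the efficient way to organize the argument. What makes the redistribution work is replacing the pointwise matching $\phi(v)=u$ by a matching at scale $\eps$: cover $M^n$ by $N^n_\eps$ balls of radius $\eps/4$ (so also cover $M^{n,1}_\lambda$), refine to disjoint sets $B^{n,1}_\lambda,\dots,B^{n,N^n_\eps}_\lambda$ of diameter $\le \eps/2$, and use the correspondence $C^n_\lambda=\bigcup_i B^{n,i}_\lambda\times \widetilde B^{n,i}_\lambda$ with $\widetilde B^{n,i}_\lambda=\bigcup_{v\in B^{n,i}_\lambda}\mathbb{F}^n_\lambda(v)$, whose distortion is $\le 2\dhau(M^{n,1}_\lambda,M^n)+\eps$. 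One then needs only that the aggregated masses over each bucket are close, i.e.\ $\sum_{v\in B^{n,i}_\lambda}k_v/n\approx |B^{n,i}_\lambda|/N$. Since the $(S^n_\lambda(v))_v$ are exchangeable, and $\max_v S^n_\lambda(v)=o_P(n)$ by Lemma~\ref{lem:exchbitssmall}, Kallenberg's Theorem~16.23 gives uniform concentration of the \emph{partial sums} $\sum_{j\le i} S^n_\lambda(j)/n$ around $i/N$---a statement compatible with large individual fluctuations; after relabeling so that each bucket has consecutive indices, this controls the bucket discrepancies. Finally, $N^n_\eps$ is tight because $\mathring M^n\to\mathring{\mathscr{M}}$ in $(\mathring{\cM},\dgh)$ by Lemma~\ref{lem:firstdraft}, which closes the argument. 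These three ingredients---covering at scale $\eps$, exchangeability of the subtree sizes combined with Kallenberg's theorem on partial sums, and the already-proved Gromov--Hausdorff convergence---are the missing content in your proposal.
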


In order to prove this proposition, we need some notation and a lemma.
Let $\mathbb{F}^{n}_{\lambda}$ be the subgraph of $\mathbb{M}^n$ with
edge set $E(\mathbb{M}^n) \setminus
E(\mathbb{M}_{\lambda}^{n,1})$. 
\nomenclature[Fnlambda]{$\mathbb{F}^{n}_{\lambda}$}{Subgraph of $\mathbb{M}^n$ with
edges $E(\mathbb{M}^n) \setminus E(\mathbb{M}_{\lambda}^{n,1})$; its components 
are $\{\mathbb{F}_{\lambda}^n(v), v \in V(\mathbb{M}_{\lambda}^{n,1})\}$.} 
Then $\mathbb{F}^n_{\lambda}$ is a
forest which we view as rooted by taking the root of a component to be
the unique vertex in that component which was an element of
$\mathbb{M}^{n,1}_{\lambda}$. For $v \in V(\mathbb{M}_{\lambda}^{n,1})$,
let $S^{n}_{\lambda}(v)$ be the number of nodes in the component
$\mathbb{F}_{\lambda}^n(v)$ of $\mathbb{F}^n_{\lambda}$ rooted at $v$.
\nomenclature[Snlambda]{$S^{n}_{\lambda}(v)$}{Size of $\mathbb{F}_{\lambda}^n(v)$.}
The fact that the random variables $(S^{n}_{\lambda}(v), v \in
V(\mathbb{M}_{\lambda}^{n,1}))$ are exchangeable will play a key role
in what follows.

\begin{lemma} \label{lem:exchbitssmall}
For any $\delta > 0$,
\begin{equation}
\lim_{\lambda \to \infty} \limsup_{n \to \infty} \p{ \max_{v \in V(\mathbb{M}_{\lambda}^{n,1})} S^{n}_{\lambda}(v) > \delta n} = 0.
\end{equation}
\end{lemma}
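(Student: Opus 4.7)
The plan is to exploit the exchangeability of $(S^n_\lambda(v))_{v \in V(\mathbb{M}^{n,1}_\lambda)}$ to reduce the statement to bounding a collision probability for two independent uniform random vertices of $V(\mathbb{M}^n)$, and then to control that collision probability by combining Lemma~\ref{lem:cauchy-property2} with the atomlessness of the mass measure on $\hat{\mathscr{M}}^1_\lambda$.

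First, since $\sum_v S^n_\lambda(v) = n$, for $u_1, u_2$ two independent uniform random vertices of $V(\mathbb{M}^n)$ and conditional on $(\mathbb{M}^n, \mathbb{M}^{n,1}_\lambda)$ we have
\[
\p{u_1 \sim u_2 \mid \mathbb{M}^n, \mathbb{M}^{n,1}_\lambda} \;=\; \frac{1}{n^2}\sum_v S^n_\lambda(v)^2 \;\geq\; \frac{1}{n^2}\Bigl(\max_v S^n_\lambda(v)\Bigr)^2,
\]
where $\{u_1 \sim u_2\}$ denotes the event that $u_1, u_2$ lie in the same component of $\mathbb{F}^n_\lambda$. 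Markov's inequality then gives $\p{\max_v S^n_\lambda(v) > \delta n} \leq \delta^{-2}\p{u_1 \sim u_2}$, so it suffices to show $\lim_{\lambda \to \infty}\limsup_{n\to\infty}\p{u_1 \sim u_2} = 0$.

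Next, by Lemma~\ref{lem:cauchy-property2} together with the tightness of $\Lambda^n$ from \eqref{eq:lzbd}, for any fixed $\epsilon \in (0,1)$ the event $\{\dhau(M^{n,1}_\lambda, M^n) \leq \lambda^{\epsilon-1}\}$ has probability at least $1 - o(1) - \oe(\lambda)$. On this event every component of $\mathbb{F}^n_\lambda$ has graph-diameter at most $2\lambda^{\epsilon-1}n^{1/3}$, so $\{u_1 \sim u_2\}$ forces $d_{\mathbb{M}^n}(u_1, u_2) \leq 2\lambda^{\epsilon-1}n^{1/3}$; writing $v_i$ for the unique closest vertex of $V(\mathbb{M}^{n,1}_\lambda)$ to $u_i$, the tree structure of $\mathbb{M}^n$ together with the triangle inequality give $d_{M^{n,1}_\lambda}(v_1, v_2) \leq 4\lambda^{\epsilon-1}$. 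We are thus reduced to bounding $\p{d_{M^{n,1}_\lambda}(v_1, v_2) \leq 4\lambda^{\epsilon-1}}$.

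For this, Theorem~\ref{thm:msf-converge} applied to the mass-normalised $\hat{M}^{n,1}_\lambda$, together with two applications of Proposition~\ref{sec:grom-hausd-prokh}, would give joint convergence of $(\hat{M}^{n,1}_\lambda, w_1, w_2)$---where $w_1, w_2$ are two independent uniform random vertices of $V(\mathbb{M}^{n,1}_\lambda)$---to $(\hat{\mathscr{M}}^1_\lambda, W_1, W_2)$ with $W_1, W_2$ two independent samples from the mass measure of $\hat{\mathscr{M}}^1_\lambda$. This limit measure is almost surely atomless---being obtained by pushing Lebesgue measure forward along a continuous Brownian excursion and performing an almost surely finite number of point identifications---so $\p{d_{M^{n,1}_\lambda}(w_1, w_2) \leq \eta} \to 0$ as $n \to \infty$ and then $\eta \to 0$, for each fixed $\lambda$. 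The principal obstacle, and the bulk of the work, is that $v_i = r(u_i)$ is drawn not from the uniform measure on $V(\mathbb{M}^{n,1}_\lambda)$ but from the size-biased measure $\nu(v) = S^n_\lambda(v)/n$, so the bound for $w_1, w_2$ does not transfer directly; to close this loop without circularity I would enrich $\hat{M}^{n,1}_\lambda$ by adding $\nu$ as an extra marked probability measure and argue via a tightness-and-accompanying-laws step in $(\cM^{0,2}, \dghp^{0,2})$, using the Hausdorff control to compare $\nu$ with the uniform measure in the scaling limit, that any subsequential limit of $\nu$ is also atomless.
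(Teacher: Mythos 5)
Your opening reduction is correct and matches the paper's: conditioning on $(\mathbb{M}^n,\mathbb{M}^{n,1}_\lambda)$, the probability that two i.i.d.\ uniform vertices $u_1,u_2$ lie in the same component of $\mathbb{F}^n_\lambda$ equals $n^{-2}\sum_v S^n_\lambda(v)^2 \ge n^{-2}(\max_v S^n_\lambda(v))^2$, whence $\p{\max_v S^n_\lambda(v)>\delta n}\le \delta^{-2}\p{u_1\sim u_2}$; the paper phrases this with labelled vertices $1,2$ and exchangeability, but it is the same step.

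The remainder has a genuine circularity that you flag but do not resolve. Note first that if $u_1\sim u_2$ then $v_1$ and $v_2$ are the \emph{same} root of the common pendant tree, so the $4\lambda^{\eps-1}$ distance bound is vacuously true and your target is really $\p{v_1=v_2}=\sum_v \nu(v)^2$ with $v_1,v_2$ i.i.d.\ from $\nu$. Bounding this requires showing precisely that $\nu$ (equivalently, the family of pendant tree masses $S^n_\lambda(v)/n$) has no macroscopic atom, which \emph{is} the lemma. Your proposed fix---pass to a subsequential GHP limit of $(\hat{M}^{n,1}_\lambda,\nu)$ and argue the limit of $\nu$ is atomless ``using the Hausdorff control to compare $\nu$ with the uniform measure''---does not close the loop, because the Hausdorff control only yields $\nu(\{v\}) \le \mu^n(B_{2\lambda^{\eps-1}}(v))$ where $\mu^n$ is the uniform measure on $M^n$. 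To conclude that such ball masses are uniformly small, you would need GHP convergence of $(M^n,\mu^n)$ together with atomlessness (indeed non-degenerate small-ball behaviour) of the limit mass measure; but that is Theorem~\ref{thm:big_statement}, which is proved \emph{downstream} of this lemma via Proposition~\ref{lem:mst-lim}. Theorem~\ref{thm:msf-converge} gives you atomlessness of the mass measure on $\hat{\mathscr{M}}^1_\lambda$, but that is the limit of the measure giving each vertex of $\mathbb{M}^{n,1}_\lambda$ mass $|V(\mathbb{M}^{n,1}_\lambda)|^{-1}$, which lives at a different scale from $\nu$ and is not related to it by GHP or Hausdorff control.

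The paper sidesteps this entirely with a direct combinatorial argument: it runs a modified Prim's algorithm from vertex $1$, shows that the component containing $1$ attaches to $\mathbb{M}^{n,1}_\lambda$ after only $O(n^{1/3}/\lambda)$ absorption steps (using \eqref{eq:6} and \eqref{eqn:luczak}), accumulating only $O(n^{2/3}/\lambda^2)$ vertices along the way (via a stochastic domination by subcritical component sizes and Theorem~A1 of \cite{janson2008susceptibility}); it then freezes and restarts Prim from vertex $2$, comparing the stopping time $I_2$ at which $2$ attaches to $\mathbb{M}^{n,1}_\lambda$ against the stopping time $J_1$ at which $2$ would attach to the small set accumulated by $1$. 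A careful analysis of the conditioning on edge weights shows $I_2 = O(n^{1/3}/\lambda)$ while $J_1 = \Omega(\lambda^2 n^{1/3})$ with high probability, whence $\p{J_1<I_2}\to 0$. This discrete argument is what furnishes the control on pendant tree masses that your scaling-limit approach presupposes.
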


\begin{proof}
Let $U_{\lambda}^n$ be the event that vertices 1 and 2 lie in the same component of $\mathbb{F}_{\lambda}^n$.  Note that, conditional on $\max_{v \in V(\mathbb{M}_{\lambda}^{n,1})} S^{n}_{\lambda}(v) > \delta n$, the event $U_{\lambda}^n$ occurs with probability at least $\delta^2/2$ for sufficiently large $n$.  So, in order to prove the lemma it suffices to show that
\begin{equation} \label{eqn:12diffcmpts}
\lim_{\lambda \to \infty} \limsup_{n} \p{U_{\lambda}^n} = 0.
\end{equation}

In order to prove (\ref{eqn:12diffcmpts}), we consider the following modification of Prim's algorithm.  We build the MST conditional on the collection $\mathbb{M}_{\lambda}^n$ of trees.  We start from the component containing vertex 1 in $\mathbb{M}_{\lambda}^{n}$.  To this component, we add the lowest weight edge connecting it to a new vertex.  This vertex lies in a new component of $\mathbb{M}_{\lambda}^{n}$, which we add in its entirety, before again seeking the lowest-weight edge leaving the tree we have so far constructed.  We continue in this way until we have constructed the whole MST.  (Observe that the components we add along the way may, of course, be singletons.)  Note that if we think of Prim's algorithm as a discrete-time process, with time given by the number of vertices added so far, then this is simply a time-changed version which looks only at times when we add edges of weight \emph{strictly greater than $1/n+\lambda/n^{4/3}$}.  This is because when Prim's algorithm first touches a component of $\mathbb{M}_{\lambda}^n$, it necessarily adds all of its edges before adding any edges of weight exceeding $1/n + \lambda/n^{4/3}$.  For $i \ge 0$, write $C_i$ for the tree constructed by the modified algorithm up to step $i$ and let $e_i$ be the edge added at step $i$.  The advantage of the modified approach is that, for each $i \ge 1$, we can calculate the probability that the endpoint of $e_i$ which does not lie in $C_{i-1}$ touches $\mathbb{M}_{\lambda}^{n,1}$, given that it has not at steps $0,1,\ldots,i-1$.  Recall that, at each stage of Prim's algorithm, we add the edge of minimal weight leaving the current tree.  We are thinking of this tree as a collection of components of $\mathbb{M}_{\lambda}^n$ connected by edges of weight strictly greater than $1/n + \lambda/n^{4/3}$.  In general, different sections of the tree built so far are subject to different conditionings depending on the weights of the connecting edges and the order in which they were added.  In particular, the endpoint of $e_i$ contained in $C_{i-1}$ is more likely to be in a section with a lower weight-conditioning.  However, the \emph{other} endpoint of $e_i$ is equally likely to be any of the vertices of $\{1,2,\ldots,n\} \setminus C_{i-1}$ because all that we know about them is that they lie in (given) components of $\mathbb{M}^{n}_{\lambda}$.

Formally, let $k=n - 1-|E(\mathbb{M}^{n}_{\lambda})|$. Let $C_0$ be the component containing 1 in $\mathbb{M}_{\lambda}^n$.  Recursively, for $1 \le i \le k$, let 
\begin{itemize}
\item $e_i$ be the smallest-weight edge leaving $C_{i-1}$ and 
\item $C_i$ be the component containing 1 in the graph with edge-set $E(\mathbb{M}_{\lambda}^n) \cup \{e_1, \ldots, e_i\}$.
\end{itemize}
The graph with edge-set $E(\mathbb{M}_{\lambda}^n) \cup \{e_1, \ldots,
e_k\}$ is precisely $\mathbb{M}^n$.  Let $I_1$ be the first index for
which $V(\mathbb{M}^{n,1}_{\lambda}) \subset V(C_{I_1})$, so that
$I_1$ is the time at which the component containing $1$ attaches to
$\mathbb{M}^{n,1}_{\lambda}$. For each $1 \le i \le k$, the endpoint
of $e_i$ not in $C_{i-1}$ is uniformly distributed among all vertices
of $\{1,\ldots,n\}\setminus C_{i-1}$.  So, conditionally given
$\mathbb{M}_\lambda^n,e_1,\ldots,e_{i-1}$ and on $\{I_1\geq i\}$, the
probability that $I_1$ takes the value $i$ is
$|V(\mathbb{M}_{\lambda}^{n,1})| / (n - |V(C_{i-1})|)$. Therefore, 
\begin{equation}
\label{eq:6}
\p{I_1>i\, |\, \mathbb{M}_\lambda^n}\leq
\left(1-\frac{|V(\mathbb{M}_\lambda^{n,1})|}{n}\right)^i\, .
\end{equation}
By Theorem 2 of \cite{NaPe07} (see also Lemma 3 of \cite{luczak90component}), for all $\delta > 0$, 
\begin{equation} \label{eqn:luczak} \lim_{\lambda \to \infty}
  \limsup_{n \to \infty} \p{ \left|
      \frac{|V(\mathbb{M}^{n,1}_{\lambda})|}{2\lambda n^{2/3}} - 1
    \right| > \delta} = 0.
\end{equation}
Using \eqref{eq:6} and \eqref{eqn:luczak}, it follows that for any $\delta > 0$, there exists $B > 0$ such that 
\begin{equation}\label{eq:mst-lim2}
\lim_{\lambda \to \infty} \limsup_{n \to \infty} \p{I_1 >
  Bn^{1/3}/\lambda} < \delta\, .
\end{equation}
Next, let $Z$ be a uniformly random element of
$\{1,\ldots,n\}\setminus V(\mathbb{M}^{n,1}_{\lambda})$, and let
$L_{\lambda}$ be the size of the component of
$\mathbb{M}^{n}_{\lambda}$ that contains $Z$.  Theorem~A1 of
\cite{janson2008susceptibility} shows that
\[
\lim_{\lambda \to \infty} \limsup_{n \to \infty} \E{\frac{\sum_{i=2}^{\infty} |V(\mathbb{M}_{\lambda}^{n,i})|^2}{n^{4/3}/\lambda}} < \infty,
\]
which implies that 
\[
\lim_{\lambda \to \infty} \limsup_{n \to \infty} \E{\frac{L_{\lambda}}{n^{1/3}/\lambda}} < \infty.
\]
For each $i
\ge 1$, given that $i < I_1$, the difference $|V(C_i)| - |V(C_{i-1})|$
is stochastically dominated by $L_{\lambda}$, so that 
$$\E{|V(C_{I_1-1})|\ind_{\{ I_1\leq  i\}}}\leq i\E{L_\lambda}\, .$$
By \eqref{eq:mst-lim2} and Markov's inequality, there
exists $B' > 0$ such that
\begin{equation}\label{eq:mst-lim3}
\lim_{\lambda \to \infty} \limsup_{n \to \infty} \p{|V(C_{I_1-1})| > B'n^{2/3}/\lambda^2} < \delta. 
\end{equation}
The graph $C_*$ with edge-set $E(C_{I_1-1}) \cup \{e_{I_1}\}$ forms
part of the component containing 1 in $\mathbb{F}_{\lambda}^n$;
indeed, the endpoint of $e_{I_1}$ not contained in $C_{I_1-1}$ is the
root of this component.  Write $v_1$ for this root vertex.  Now
consider freezing the construction of the MST via the modified version
of Prim's algorithm at time $I_1$ and constructing the rest of the MST
using the modified version of Prim's algorithm starting now from
vertex 2.  Let $\ell = n - 1 - |E(\mathbb{M}_{\lambda}^n)| - I_1$.
Let $D_0$ be the component containing 2 in the graph with edge-set
$E(\mathbb{M}_{\lambda}^n) \cup \{e_1, \ldots, e_{I_1}\} $.
Recursively, for $1 \le j \le \ell$, let
\begin{itemize}
\item $f_j$ be the smallest-weight edge leaving $D_{j-1}$ and
\item $D_j$ be the component containing 2 in the graph with edge-set $E(\mathbb{M}_{\lambda}^n) \cup \{e_1, \ldots, e_{I_1}\} \cup \{f_1, \ldots, f_j\}$.
\end{itemize}
Let $I_2$ be the first index for which $f_{I_2}$ has an endpoint in $V(\mathbb{M}_{\lambda}^{n,1}) \setminus \{v_1\}$, and 
let $J_1$ be the first index for which $f_{J_1}$ has an endpoint in $V(C_*)$. 

Recall that $U_{\lambda}^n$ is the event that 1 and 2 lie in the same component of $\mathbb{F}_{\lambda}^n$.  If $U_{\lambda}^n$ occurs then we necessarily have $J_1 < I_2$. 
To prove (\ref{eqn:12diffcmpts}) it therefore suffices to show that 
\begin{equation}\label{eq:mst-lim5}
\lim_{\lambda \to \infty} \limsup_{n \to \infty} \p{J_1 < I_2} = 0. 
\end{equation}
In order to do so, we first describe how the construction of $C_{I_1}$ conditions the locations of attachment of the edges $f_i$. 
As in the introduction, for $e \in E(K_n)$, $W_{e}$ is the weight of edge $e$, and unconditionally these weights are i.i.d.\ Uniform$[0,1]$ random variables. 

Write $A_0=V(C_0)$, and for $1 \le i \le I_1$, let $A_i = V(C_i)\setminus V(C_{i-1})$. In particular, $A_{I_1}=V(\bbM_{\lambda}^{n,1})$. After $C_{I_1}$ is constructed, for each $0 \le i \le I_1$, the conditioning on edges incident to $A _i$ is as follows.
\begin{itemize}
\item[(a)] Every edge between $V(C_{i-1})$ and $A_i$ has weight at least $W_{e_i}$. 
\item[(b)] For each $i < j \le I_1$, every edge between $A_i$ and $[n]\setminus V(C_j)$ has weight at least $\max\{ W_{e_j}, i < k \le j\}$.
\end{itemize}
In particular, (b) implies all edges from $A_i$ to $[n]\setminus V(C_{I_1})$ are conditioned to have weight at least $\max\{W_{e_j}, i < k \le I_1\}$. This entails that components which are added later have lower weight-conditioning. In particular, there is no conditioning on edges from $A_{I_1}=V(\bbM_{\lambda}^{n,1})$ to $[n]\setminus V(C_{I_1})$ (except the initial conditioning, that all such edges have weight at least $1/n + \lambda/n^{4/3}$, which comes from conditioning on $\mathbb{M}^n_{\lambda}$). 

It follows that under the conditioning imposed by the construction of $C_{I_1}$, it is {\em not} the case that for $1 \le j < \ell$, the endpoint of $f_{j+1}$ \emph{outside} $D_j$ is uniformly distributed among $\{1,\ldots,n\}\setminus D_j$.
However, the conditioning precisely biases these endpoints away from the sets $A_i$ with $i < I_1$ (but not away from $A_{I_1}=V(\mathbb{M}^{n,1}_{\lambda})$). 
As a consequence, for each $1 \le j \le \ell$, conditional on the edge set $E(\mathbb{M}_{\lambda}^n) \cup \{e_1, \ldots, e_{I_1}\} \cup \{f_1, \ldots, f_j\}$ and on the event $\{J_1 \ge j\} \cap \{I_2 \ge j\}$,
the probability that $j = I_2$ is at least $(|V(\mathbb{M}_{\lambda}^{n,1})| - 1)/(n-|V(D_{j-1})|)$ and the probability that $j=J_1$ is at most $|V(C_{*})|/(n-|V(D_{j-1})|)$.  Hence,
\[
\p{I_2 > i \,|\, \bbM_{\lambda}^n} \le \left(1 - \frac{|V(\bbM_{\lambda}^{n,1})| - 1}{n} \right)^i
\]
and so, by (\ref{eqn:luczak}), we obtain that for any 
$\delta > 0$ there exists $B'' > 0$ such that 
\begin{equation} \label{eq:ibound}
\lim_{\lambda \to \infty} \limsup_{n \to \infty} \p{I_2 > B''n^{1/3}/\lambda} < \delta.
\end{equation}
Moreover,
\[
\p{J_1 > i\, |\, \bbM_{\lambda}^n} \ge \pran{1 - \frac{|V(C_*)|}{n - |V(D_{J_1-1})|}}^i.
\]
Note that $|V(C_*)| = |V(C_{I_1 - 1})| + 1$.  Also, just as for the components $C_i$, given that $i < J_1$, the difference $|V(D_i)| - |V(D_{i-1})|$ is stochastically dominated by $L_{\lambda}$ and so we obtain the analogue of (\ref{eq:mst-lim3}): there exists $B'''$ such that
\[
\lim_{\lambda \to \infty} \limsup_{n \to \infty} \p{|V(D_{J_1 - 1})| > B''' n^{2/3}/\lambda^2} < \delta.
\]
Hence, from this and (\ref{eq:mst-lim3}), we see that there exists $B''''$ such that
\begin{equation} \label{eq:jbound}
\lim_{\lambda \to \infty} \limsup_{n \to \infty} \p{J_1 \le \lambda^2 n^{1/3}/B''''} < \delta. 
\end{equation}
Together, (\ref{eq:ibound}) and (\ref{eq:jbound}) establish (\ref{eq:mst-lim5}) and complete the proof.
\end{proof}

Armed with this lemma, we now turn to the proof of Proposition~\ref{lem:mst-lim}.

\begin{proof}[Proof of Proposition~\ref{lem:mst-lim}]
  Fix $\eps > 0$ and let $N^n_{\eps}$ be the minimal number of open
  balls of radius $\eps/4$ needed to cover the (finite) space $M^n$.
  This automatically yields a covering of $M_{\lambda}^{n,1}$ by
  $N^n_{\eps}$ open balls of radius $\eps/4$ since $M_\lambda^{n,1}$ is
  included in $M^n$. From this covering, we can easily construct a new
  covering $B_{\lambda}^{n,1}, \ldots, B_{\lambda}^{n,N^n_{\eps}}$ of
  $M_\lambda^{n,1}$ by sets of diameter at most $\eps/2$ which are
  pairwise disjoint. Let
$$\tilde{B}_\lambda^{n,i}=\bigcup_{v\in
  B_\lambda^{n,i}}\mathbb{F}_\lambda^n(v)\, ,\qquad 1\leq i\leq
N^n_\eps\, ,$$ and let
$C^n_\lambda=\bigcup_{i=1}^{N^n_\eps}(B_\lambda^{n,i}\times
\tilde{B}_\lambda^{n,i})\, ,$ which defines a correspondence between
$M_\lambda^{n,1}$ and $M^n$. Moreover, its
distortion is clearly at most $2\dhau(M_\lambda^{n,1},M^n)+\eps$. 
Therefore, 
by
Lemma~\ref{lem:cauchy-property2}, 
\begin{equation} \label{eq:distortionsmall}
\lim_{\lambda \to \infty} \limsup_{n \to \infty} \p{\dis(C_{\lambda}^n) > 2\eps} = 0.
\end{equation}
Next, write $V^n_{\lambda} =
|V(\mathbb{M}_{\lambda}^{n,1})|$ and take an arbitrary relabelling of
the elements of $V(\mathbb{M}_{\lambda}^{n,1})$ by $\{1,2,\ldots,
V_n^{\lambda}\}$.  Since $(S_{\lambda}^n(1), \ldots,
S_{\lambda}^n(V_{\lambda}^n))$ are exchangeable, Theorem 16.23 of
Kallenberg~\cite{Kallenberg} entails that for any
$\delta > 0$,
\begin{equation} \label{eq:Kallenberg} \lim_{\lambda \to \infty}
  \limsup_{n \to \infty} \p{\max_{1 \le i \le V_{\lambda}^n}
    \left|\sum_{j=1}^i \frac{S_{\lambda}^n(j)}{n} -
      \frac{i}{V_{\lambda}^n} \right| > \delta} = 0
\end{equation}
as soon as we have that for all $\delta > 0$,
\[
\lim_{\lambda \to \infty} \limsup_{n \to \infty} \p{\max_{1 \le i \le V_{\lambda}^n} S_{\lambda}^n(i) > \delta n} = 0,
\]
which is precisely the content of Lemma~\ref{lem:exchbitssmall}. 

Now define a measure $\pi^n$ on $M_\lambda^{n,1}\times M^n$ by 
$$\pi^n(\{(u,v)\})=\frac{1}{V_\lambda^n|\tilde{B}_\lambda^{n,i}|}\wedge
\frac{1}{n|B_\lambda^{n,i}|}\, ,\qquad (u,v)\in B_\lambda^{n,i}\times
\tilde{B}_\lambda^{n,i}\, ,\quad 1\leq i\leq N^n_\eps\, .$$
Note that
$\pi^n_{\lambda}((C_{\lambda}^n)^c) = 0$ by definition.
Moreover, the marginals of $\pi^n$ are given by 
$$\pi^n_{(1)}(\{u\})=\frac{1}{V_\lambda^n}\wedge
\frac{|\tilde{B}_\lambda^{n,i}|}{n|B_\lambda^{n,i}|}
\, ,\qquad u\in B_\lambda^{n,i}\, ,\quad 1\leq i\leq N^n_\eps\, ,$$
and 
$$\pi^n_{(2)}(\{v\})=\frac{1}{n}\wedge
\frac{|B_\lambda^{n,i}|}{V_\lambda^n|\tilde{B}_\lambda^{n,i}|}\,
  ,\qquad v\in \tilde{B}_\lambda^{n,i}\, ,\quad 1\leq i\leq
  N^\eps_n\, .$$
Therefore, the discrepancy $D(\pi^n_\lambda)$ of $\pi_{\lambda}^n$ with respect
to the uniform measures on $M^n$ and $M_\lambda^{n,1}$ is at most
\[
N_{\eps}^n \max_{1 \le i \le N_{\eps}^n} \left|\frac{|\tilde{B}_\lambda^{n,i}|}{n} - \frac{|B_{\lambda}^{n,i}|}{V_{\lambda}^n} \right|
\]
which (by relabelling the elements of $M_{\lambda}^{n,1}$ so that the
vertices in each $B_\lambda^{n,i}$ have consecutive labels and using
exchangeability) is bounded above by
\[
2  N_{\eps}^n \max_{1 \le i \le V_{\lambda}^n} \left|\sum_{j=1}^i \frac{S_{\lambda}^n(j)}{n} - \frac{i}{V_{\lambda}^n} \right|.
\]
Then
\begin{align*}
& \p{\dghp(\hat{M}_{\lambda}^{n,1}, M^n) \ge \eps} \\
& \qquad \le \p{\dis(C_{\lambda}^n) > 2\eps} + \p{D(\pi_{\lambda}^n) > \eps} \\
& \qquad \le \p{\dis(C_{\lambda}^n) > 2\eps} + \p{N_{\eps}^n \max_{1 \le i \le V_{\lambda}^n} \left|\sum_{j=1}^i \frac{S_{\lambda}^n(j)}{n} - \frac{i}{V_{\lambda}^n} \right|> \frac{\eps}{2}} \\
& \qquad \le \p{\dis(C_{\lambda}^n) > 2\eps} + \p{\max_{1 \le i \le V_{\lambda}^n} \left|\sum_{j=1}^i \frac{S_{\lambda}^n(j)}{n} - \frac{i}{V_{\lambda}^n} \right|> \frac{\eps}{2K}} + \p{N_{\eps}^n > K}.
\end{align*}
But now recall that $N_{\eps}^n$ is the minimal number of open balls
of radius $\eps/4$ needed to cover $M^n$.  Let $N_{\eps}$ be the same quantity for
$\mathring{\mathscr{M}}$. Then by
Lemma~\ref{lem:firstdraft}, $\mathring{M}^n \convdist \mathring{\mathscr{M}}$, which
easily implies that $\limsup_{n\to\infty}\p{N_{\eps}^n>K}\leq
\p{N_\eps> K}$.  In particular, by (\ref{eq:distortionsmall})
and (\ref{eq:Kallenberg})
\[
\lim_{\lambda \to \infty} \limsup_{n \to \infty} \p{\dghp(\hat{M}_{\lambda}^{n,1}, M^n) \ge \eps} \le \p{N_{\eps} > K}
\]
and the right-hand side converges to 0 as $K \to \infty$.
\end{proof}

Let $\hat{\mathscr{M}}_{\lambda}^1$ be the measured metric space
obtained from $\mathscr{M}_{\lambda}^1$ by renormalising the measure
to be a probability.
\nomenclature[Mhatlambda]{$\hat{\mathscr{M}}_{\lambda}^1$}{$\mathscr{M}_{\lambda}^1$ renormalised to have mass one.}
\begin{theorem}\label{thm:big_statement}
There exists a random compact measured metric space $\mathscr{M}$ of total mass 1 such that as $n \to \infty$,
\[
M^n \convdist \mathscr{M}
\]
in the space $(\mathcal{M}, \dghp)$.  Moreover, as $\lambda \to \infty$,
\[
\hat{\mathscr{M}}_{\lambda}^1 \convdist \mathscr{M}
\]
in the space $(\mathcal{M}, \dghp)$. 
Finally, writing $\mathscr{M}=(X,d,\mu)$, we have $(X,d) \eqdist \mathring{\mathscr{M}}$ in $(\mathcal{M},\dgh)$, where 
$\mathring{\mathscr{M}}$ is as in \refL{lem:firstdraft}.
\end{theorem}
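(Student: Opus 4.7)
The plan is to apply the principle of accompanying laws in $(\mathcal{M},\dghp)$, using $\hat{M}^{n,1}_{\lambda}$ as the accompanying sequence, and then deduce the final identification in distribution by functoriality of the ``forget the measure'' map.

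First, I would promote Theorem~\ref{thm:msf-converge} to a statement about $\hat{M}^{n,1}_{\lambda}$. The componentwise convergence $M^{n,1}_{\lambda}\convdist \mathscr{M}^1_{\lambda}$ in $(\mathcal{M},\dghp)$ follows from Theorem~\ref{thm:msf-converge} together with the fact that the projection $\bX\mapsto \rX_1$ from $(\mathbb{L}_4,\mathrm{dist}^4_{\ghp})$ to $(\mathcal{M},\dghp)$ is continuous. Since $\mathrm{mass}(M^{n,1}_{\lambda})=|V(\mathbb{M}^{n,1}_{\lambda})|/n$ converges jointly to $\mathrm{mass}(\mathscr{M}^1_{\lambda})=\sigma^1>0$ (using~\eqref{eqn:luczak} or the mass-component of Theorem~\ref{thm:gnp-converge}), and since rescaling the measure of a measured metric space by a positive scalar is continuous in $\dghp$ at any space of positive mass, we obtain
\[
\hat{M}^{n,1}_{\lambda} \convdist \hat{\mathscr{M}}^1_{\lambda}
\quad\text{in $(\mathcal{M},\dghp)$, as $n\to\infty$},
\]
for every fixed $\lambda\in \R$.

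Combining this with Proposition~\ref{lem:mst-lim}, namely
\[
\lim_{\lambda\to\infty}\limsup_{n\to\infty}\p{\dghp(\hat{M}^{n,1}_{\lambda},M^n)\geq \eps}=0 \qquad \text{for every } \eps>0,
\]
and using that $(\mathcal{M},\dghp)$ is complete and separable, the principle of accompanying laws (Theorem 3.1.14 of \cite{Stroock93}) gives two conclusions simultaneously: the family $\{\hat{\mathscr{M}}^1_{\lambda},\lambda\in \R\}$ has a weak limit $\mathscr{M}$ as $\lambda\to\infty$ in $(\mathcal{M},\dghp)$, and $M^n\convdist \mathscr{M}$ in $(\mathcal{M},\dghp)$ as $n\to\infty$. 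Since $\mathrm{mass}(M^n)=1$ for every $n$ and mass is continuous under $\dghp$, the limit $\mathscr{M}$ has total mass $1$ almost surely, and $\mathscr{M}$ is compact as an element of $\mathcal{M}$.

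For the final identification, I would observe that the forgetful map $(X,d,\mu)\mapsto (X,d)$ from $(\mathcal{M},\dghp)$ to $(\mathring{\mathcal{M}},\dgh)$ is $1$-Lipschitz (since $\dgh\leq \dghp$ on measured isometry classes), so $M^n\convdist \mathscr{M}$ pushed forward along this map yields $\mathring{M}^n\convdist (X,d)$. But by Lemma~\ref{lem:firstdraft} we also have $\mathring{M}^n\convdist \mathring{\mathscr{M}}$, and weak limits are unique, whence $(X,d)\eqdist \mathring{\mathscr{M}}$ in $(\mathring{\mathcal{M}},\dgh)$.

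The only mildly delicate point is the renormalisation step that turns convergence of $M^{n,1}_{\lambda}$ into convergence of $\hat{M}^{n,1}_{\lambda}$ in $\dghp$; everything else is a clean application of standard topological principles, given the preceding lemmas. All the real work has already been done in establishing Proposition~\ref{lem:mst-lim} and Lemma~\ref{lem:firstdraft}.
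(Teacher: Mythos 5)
Your proposal is correct and takes essentially the same route as the paper's proof: the paper likewise reduces the statement to $\hat{M}^{n,1}_{\lambda}\convdist \hat{\mathscr{M}}^1_{\lambda}$ (via Theorem~\ref{thm:msf-converge}), then invokes Proposition~\ref{lem:mst-lim} and the principle of accompanying laws in the complete separable space $(\mathcal{M},\dghp)$. The only difference is that you spell out the two small points the paper leaves implicit — that mass-renormalisation is continuous in $\dghp$ at positive-mass spaces, and that the identification with $\mathring{\mathscr{M}}$ follows from pushing forward along the $1$-Lipschitz forgetful map and uniqueness of weak limits — both of which are sound.
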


\begin{proof} Recall that the metric space $(\cM,\dghp)$ is a complete and separable.  Theorem~\ref{thm:msf-converge} entails that
\[
\hat{M}_{\lambda}^{n,1} \convdist \hat{\mathscr{M}}_{\lambda}^1
\]
as $n \to \infty$ in $(\cM, \dghp)$.  The stated results then follow from this, Proposition~\ref{lem:mst-lim} and the principle of accompanying laws (see Theorem 3.1.14 of \cite{Stroock93} or Theorem 9.1.13 in the second edition).
\end{proof}

Finally, we observe that, analogous to the fact that $M_\lambda^{n,1}$ is a subspace of $M^n$, 
we can view $\mathscr{M}_\lambda^1$ as a subspace of $\mathscr{M}$. 
(We emphasize that this does not follow from Theorem~\ref{thm:big_statement}.) 
To this end, we briefly introduce the marked Gromov--Hausdorff topology of
\cite[Section 6.4]{miermont09}. Let 
$\mathcal{M}_*$ be the set of ordered pairs of the form $(\rX,Y)$, where $\rX=(X,d)$ is a
compact metric space and $Y\subset X$ is a compact subset of $X$ (such
pairs are considered up to isometries of $\rX$). 
\nomenclature[Mdghpstar]{$\mathcal{M}_*$}{Set of pairs $(\rX,Y)$ where $\rX$ is a compact metric space and $Y \subset X$ is compact; see same section for the marked Gromov--Hausdorff topology.}
A sequence
$(\rX_n,Y_n)$ of such pairs converges to a limit $(\rX,Y)$ if
there exist correspondences $C_n\in C(X_n,X)$ whose restrictions to
$Y_n\times Y$ are correspondences between $Y_n$ and $Y$, and such
that $\dis(C_n)\to 0$. (In particular, this implies that $Y_n$
converges to $Y$ for the Gromov--Haudorff distance, when these spaces
are equipped with the restriction of the distances on
$\rX_n,\rX$.) Moreover, a set $\mathcal{A}\subset \mathcal{M}_*$ 
is relatively compact if and only if $\{\rX:(\rX,Y)\in \mathcal{A}\}$
is relatively compact for the Gromov--Hausdorff topology.

Recall the definition of the tight sequence of random variables
$(\Lambda^n,n\geq 1)$ at the beginning of this section.  
By taking subsequences, we may assume that we have the joint convergence in 
distribution 
$$(((\mathring{M}^n,\mathring{M}_\lambda^{n,1}),\lambda\in \Z),\Lambda^n)\convdist
(((\tilde{\mathscr{M}},\tilde{\mathscr{M}}_\lambda^1),\lambda\in
\Z),\Lambda)\, ,$$ 
for the product topology on $\mathcal{M}_*^{\Z}\times \R$.\footnote{This 
is a slight abuse of notation, in the sense
that the limiting spaces $\tilde{\mathscr{M}}$ on the right-hand side
should, in principle, depend on $\lambda$, but obviously these
spaces are almost surely all isometric.}  This coupling of course has the properties that 
$\tilde{\mathscr{M}}\eqdist\mathring{\mathscr{M}}$ and that 
$\tilde{\mathscr{M}}_\lambda^1\eqdist\mathring{\mathscr{M}}_\lambda^1$ for every
$\lambda\in \Z$.  Combining this with Lemma \ref{lem:cauchy-property2}
we easily obtain the following.

\begin{proposition}
  \label{sec:largest-tree-minimum}
  There exists a probability space on which one may define a triple
  \[
  (\tilde{\mathscr{M}},(\tilde{\mathscr{M}}_\lambda^1,\lambda\in
  \Z),\Lambda)
  \]
  with the following properties: (i) $\Lambda$ is an a.s.\ finite random
  variable; (ii) $\tilde{\mathscr{M}}\eqdist\mathring{\mathscr{M}}$, 
  $\tilde{\mathscr{M}}_\lambda^1\eqdist\mathring{\mathscr{M}}_\lambda^1$ and
  $(\tilde{\mathscr{M}},\tilde{\mathscr{M}}_\lambda^1)\in
  \mathcal{M}_*$ for every $\lambda\in \Z$; and (iii) 
  for every $\eps\in (0,1)$ and $\lambda_0>0$ large enough,
$$\p{\dhau(\tilde{\mathscr{M}},\tilde{\mathscr{M}}_\lambda^1)>\lambda^{\eps-1}\, \Big|\,\Lambda\leq
  \lambda_0}=\oe(\lambda)\, .$$ In particular,
$(\tilde{\mathscr{M}},\tilde{\mathscr{M}}_\lambda^1)\convdist
(\tilde{\mathscr{M}},\tilde{\mathscr{M}})$ as $\lambda\to\infty$ for
the marked Gromov-Hausdorff topology. 
\end{proposition}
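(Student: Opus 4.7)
The plan is to read off each of properties (i)--(iii) from the joint subsequential convergence already assumed in the paragraph preceding the proposition, with the transfer of Lemma~\ref{lem:cauchy-property2} to the limit being the only genuinely new ingredient. A further application of Skorokhod's representation is not needed here; Portmanteau-type arguments will suffice.

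For (i), I would invoke the tightness estimate \eqref{eq:lzbd}, which is preserved under passage to the weak limit and so forces $\Lambda<\infty$ almost surely. For (ii), projecting the joint convergence onto each coordinate of the product topology on $\mathcal{M}_*^{\Z}\times\R$ yields $\mathring{M}^n\convdist\tilde{\mathscr{M}}$ and, for each $\lambda\in\Z$, $\mathring{M}_\lambda^{n,1}\convdist\tilde{\mathscr{M}}_\lambda^1$; the distributional identifications then follow from Lemma~\ref{lem:firstdraft}, and membership in $\mathcal{M}_*$ is automatic since that is the ambient space of the convergence.

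The main work is in (iii). The first step is to verify that the functional $(X,Y)\mapsto \dhau(X,Y)$, with $Y$ regarded as a subspace of $X$ carrying the restricted metric, is lower semicontinuous on $\mathcal{M}_*$. This I would check directly from the definition of marked Gromov--Hausdorff convergence: if $(X_n,Y_n)\to(X,Y)$ via correspondences $C_n\in C(X_n,X)$ whose restrictions to $Y_n\times Y$ are correspondences between $Y_n$ and $Y$ and with $\dis(C_n)\to 0$, then any $x\in X$ with $d(x,Y)>r$ lifts, through any $x_n$ with $(x_n,x)\in C_n$, to a point satisfying $d_n(x_n,Y_n)\geq r-\dis(C_n)$, so $\liminf_n \dhau(X_n,Y_n)\geq r$. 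Granting this, the set $\{\dhau>\lambda^{\eps-1}\}$ is open in $\mathcal{M}_*$. To accommodate the closed conditioning event $\{\Lambda\leq\lambda_0\}$, I would enlarge it to the open event $\{\Lambda<\lambda_0+\delta\}$ and send $\delta\downarrow 0$ along continuity points of the distribution of $\Lambda$ at the end. Applying Portmanteau to the resulting open set in $\mathcal{M}_*\times\R$ gives
\[
\p{\dhau(\tilde{\mathscr{M}},\tilde{\mathscr{M}}_\lambda^1)>\lambda^{\eps-1},\,\Lambda<\lambda_0+\delta}\;\leq\;\liminf_{n\to\infty}\p{\dhau(\mathring{M}^n,\mathring{M}_\lambda^{n,1})>\lambda^{\eps-1},\,\Lambda^n<\lambda_0+\delta},
\]
and Lemma~\ref{lem:cauchy-property2}, applied with $\lambda_0$ replaced by $\lambda_0+\delta$, bounds the right-hand side by $\oe(\lambda)\cdot\p{\Lambda^n\leq\lambda_0+\delta}$. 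Dividing by $\p{\Lambda\leq\lambda_0}$, which by (i) is bounded below once $\lambda_0$ is large, and using the inclusion $\{\Lambda\leq\lambda_0\}\subseteq\{\Lambda<\lambda_0+\delta\}$, yields (iii).

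Finally, the ``in particular'' claim follows from (i) and (iii) by a standard argument: given $\eta>0$, first choose $\lambda_0$ so that $\p{\Lambda>\lambda_0}<\eta/2$ using (i), and then $\lambda$ so that both $\lambda^{\eps-1}<\delta$ and the bound in (iii) is below $\eta/2$; this gives $\p{\dhau(\tilde{\mathscr{M}},\tilde{\mathscr{M}}_\lambda^1)>\delta}<\eta$, that is, convergence in probability in the marked Gromov--Hausdorff topology, which in turn implies convergence in distribution. The step I expect to require the most care is ensuring that the Portmanteau inequality is applied to a genuinely open set, which is handled via the $\delta$-enlargement above; all remaining bookkeeping is routine.
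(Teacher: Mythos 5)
Your proof is correct and follows essentially the same route as the paper: start from the subsequential joint convergence in $\mathcal{M}_*^\Z\times\R$ stated just before the proposition, identify the marginals via Lemma~\ref{lem:firstdraft} and Theorem~\ref{thm:msf-converge}, and transfer Lemma~\ref{lem:cauchy-property2} to the limit. The paper states this step very tersely (``Combining this with Lemma~\ref{lem:cauchy-property2} we easily obtain the following''); your write-up simply makes explicit the ingredients the authors leave implicit---lower semicontinuity of $(X,Y)\mapsto\dhau(X,Y)$ on $\mathcal{M}_*$, the Portmanteau inequality for the open event obtained after enlarging $\{\Lambda\leq\lambda_0\}$, and the conversion back to a conditional probability---and these details all check out.
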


\section{Properties of the scaling limit} \label{sec:properties}

In this section we give some properties of the limiting metric space $\mathscr M$. We start with some general properties that $\mathscr M$ shares with the Brownian CRT of Aldous \cite{aldouscrt91,aldouscrtov91,aldouscrt93}:

\begin{theorem}\label{thm:basic_properties}
$\mathscr{M}$ is a measured $\R$-tree which is almost surely binary and whose mass measure is concentrated on its leaves.
\end{theorem}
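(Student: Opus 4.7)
My plan is to treat the three claims separately, exploiting the scaling-limit statements already available.

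\textbf{$\mathscr{M}$ is a measured $\R$-tree.} By the construction of Section~\ref{sec:breaking-cycles-r}, each $\mathscr{M}_\lambda^1 = \cut(\mathscr{G}_\lambda^1)$ has surplus zero and is therefore a compact $\R$-tree. Since the class of compact $\R$-trees is closed in $(\mathring{\mathcal{M}},\dgh)$, the convergence $\hat{\mathscr{M}}_\lambda^1 \convdist \mathscr{M}$ in $(\mathcal{M},\dghp)$ from Theorem~\ref{thm:big_statement} forces $\mathscr{M}$ to be a.s.\ an $\R$-tree.

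\textbf{$\mathscr{M}$ is a.s.\ binary.} I first show that $\mathscr{M}_\lambda^1$ is a.s.\ binary: the tree $\mathcal{T}^1$ encoded by $2\varepsilon^1$ is a.s.\ binary; almost surely, each $\tau^1(x^{1,j})$ is a leaf and each $\tau^1(z^{1,j})$ is a degree-$2$ skeleton point of $\mathcal{T}^1$, so each identification in the construction of $\mathscr{G}_\lambda^1$ creates only a new degree-$3$ vertex; and cutting at regular (degree-$2$) points merely splits them into pairs of leaves. To pass to the limit, I invoke Proposition~\ref{sec:largest-tree-minimum}: on the coupled space $\tilde{\mathscr{M}}_\lambda^1 \subset \tilde{\mathscr{M}}$, and Borel--Cantelli applied to the $\oe(\lambda)$ tail in~(iii), combined with the a.s.\ finiteness of $\Lambda$, yields $\dhau(\tilde{\mathscr{M}}_\lambda^1,\tilde{\mathscr{M}}) \to 0$ almost surely. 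If some $x \in \tilde{\mathscr{M}}$ had degree $\ge 4$, pick four points $y_1,\ldots,y_4$ on distinct branches of $x$ with $r = \min_i d(x,y_i)/2 > 0$; for $\lambda$ large one finds $y_i' \in \tilde{\mathscr{M}}_\lambda^1$ with $d(y_i,y_i') < r/2 < d(x,y_i)$, which forces each $y_i'$ to lie on the same branch of $x$ as $y_i$. The connected subtree $\tilde{\mathscr{M}}_\lambda^1$ then contains the convex hull of $\{y_1',\ldots,y_4'\}$, hence $x$, at which it has four emanating arcs $[x,y_i']$, contradicting the binarity of $\tilde{\mathscr{M}}_\lambda^1 \eqdist \mathring{\mathscr{M}}_\lambda^1$.

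\textbf{Mass on leaves.} Fix $\delta > 0$. Any $v \in V(\mathbb{M}^n) \setminus V(\mathbb{M}_\lambda^{n,1})$ lies in a pendant subtree of $\mathbb{F}_\lambda^n$ of rescaled depth at most $\dhau(M_\lambda^{n,1},M^n)$, and its deepest descendant within that subtree is a leaf of $\mathbb{M}^n$; hence $d_{M^n}(v,\mathcal{L}(M^n)) \le \dhau(M_\lambda^{n,1},M^n)$. By Lemma~\ref{lem:cauchy-property2} and the tightness of $(\Lambda^n)$, one can pick $\lambda=\lambda(\delta)$ so that $\mathbb{P}(\dhau(M_\lambda^{n,1},M^n) < \delta) \to 1$; for such $\lambda$ and all sufficiently large $n$,
\[
\mu_n(\{x \in M^n: d_{M^n}(x,\mathcal{L}(M^n)) > \delta\}) \le \frac{|V(\mathbb{M}_\lambda^{n,1})|}{n},
\]
which tends to $0$ in probability by \eqref{eqn:luczak}. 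Passing to the GHP limit via Skorohod coupling gives $\mu(\{x \in \mathscr{M}: d(x,\mathcal{L}(\mathscr{M})) > \delta\}) = 0$ a.s.; letting $\delta \downarrow 0$ shows $\mu$ is concentrated on $\overline{\mathcal{L}(\mathscr{M})}$. One upgrades to $\mu(\mathcal{L}(\mathscr{M})^c) = 0$ by using that the mass measure on each $\mathscr{M}_\lambda^1$ is the pushforward of Lebesgue under $\tau^1$, a.s.\ supported on $\mathcal{L}(\mathcal{T}^1)$, and that the finitely many identifications and cuts affect only a null set.

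The main obstacle is the last passage to the limit: discrete leaves of $M^n$ do not correspond cleanly to $\mathcal{L}(\mathscr{M})$ through GHP correspondences. The resolution goes through the subspace coupling of Proposition~\ref{sec:largest-tree-minimum}, which controls the pendant subtrees of $\mathscr{M}$ off $\mathscr{M}_\lambda^1$ and thereby localizes the ``far-from-leaf'' set inside $\mathscr{M}_\lambda^1$, whose $\mu$-mass vanishes as $\lambda \to \infty$ by the discrete estimate.
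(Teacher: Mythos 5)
Your proofs of the first two claims match the paper's essentially exactly: closedness of $\R$-trees under Gromov--Hausdorff limits, and the binarity argument via the coupling of Proposition~\ref{sec:largest-tree-minimum} (your version spells out the convex-hull step a little more than the paper, but it is the same idea, and it is correct).

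The third claim is where you diverge, and your argument has real gaps that the flagged ``main obstacle'' remark does not actually resolve. The localization step itself is fine: in the coupling, if $\dhau(\tilde{\mathscr{M}}_\lambda^1,\tilde{\mathscr{M}}) < \delta$ then every pendant subtree hanging off $\tilde{\mathscr{M}}_\lambda^1$ has diameter $< 2\delta$ and contains a leaf of $\tilde{\mathscr{M}}$, so indeed $\{x: d(x,\mathcal{L}(\tilde{\mathscr{M}})) > 2\delta\} \subset \tilde{\mathscr{M}}_\lambda^1$. But you then need to know $\mu(\tilde{\mathscr{M}}_\lambda^1)=0$, and this does \emph{not} follow from the discrete estimate $|V(\mathbb{M}_\lambda^{n,1})|/n \to 0$ by ``passing to the limit'': under Prokhorov convergence the natural Portmanteau inequality for the closed set $\tilde{\mathscr{M}}_\lambda^1$ points the wrong way, so the vanishing of the discrete masses gives you nothing directly. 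Moreover, even granting $\mu(\tilde{\mathscr{M}}_\lambda^1)=0$ for all $\lambda$, you would only obtain that $\mu$ is concentrated on the closure $\overline{\mathcal{L}(\mathscr{M})}$, not on $\mathcal{L}(\mathscr{M})$; your final ``upgrade'' is not justified --- $\overline{\mathcal{L}(\mathscr{M})}\setminus\mathcal{L}(\mathscr{M})$ is contained in the skeleton, but nothing proved so far says it is $\mu$-null, and the leaf-support property of the pre-limit measures does not transfer to the limit through a naked GHP approximation.

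The paper takes a different and cleaner route that avoids both problems at once. It samples $x$ from $\mu$ and $x^\lambda$ from the probability measure on $\hat{\mathscr{M}}_\lambda^1$, and uses Proposition~\ref{sec:grom-hausd-prokh} together with Skorokhod's theorem to arrange that the \emph{marked} spaces $(\mathscr{M},x)$ and $(\hat{\mathscr{M}}_\lambda^1,x^\lambda)$ converge almost surely in $\dghp^{1,1}$. It then notes that $x^\lambda$ is almost surely an actual leaf of $\hat{\mathscr{M}}_\lambda^1$ (the mass measure on $\hat{\mathscr{M}}_\lambda^1$ is a.s.\ concentrated on its leaves), introduces a ``leafness'' functional $\Delta(x)$ over geodesics through $x$ which vanishes iff $x$ is a leaf, and bounds $\Delta(x)$ in terms of the marked GHP distance to a space whose marked point is a leaf; letting $\lambda\to\infty$ then gives $\Delta(x)=0$ a.s., i.e.\ $x$ is a leaf of $\mathscr{M}$. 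This argues directly about a $\mu$-typical point and sidesteps both the one-sided Portmanteau issue and the closure-versus-leaf-set gap. You should either adopt this route or supply rigorous proofs of the two missing steps above.
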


\begin{proof}
  By the second distributional convergence in
  Theorem~\ref{thm:big_statement}, we may (and will) work in a space
  in which we almost surely have $\lim_{\lambda \to \infty}
  \dghp(\hat{\mathscr{M}}^1_{\lambda},\mathscr{M}) = 0$.  Since it is the
  Gromov--Hausdorff limit of the sequence of $\R$-trees $\mathscr
  M_\lambda^1$, $\mathscr M$ is itself an $\R$-tree (see for instance
  \cite{evans2006rpr}).  For fixed $\lambda \in \R$, each component of
  $\mathscr{M}_{\lambda}$ is obtained from $\mathscr G_\lambda$, the
  scaling limit of $G_\lambda^n$, using the cutting process. 
  From the construction of $\mathscr G_\lambda$ detailed in Section~\ref{sec:scaling-limit-erdhos}, 
  it is clear that $\mathscr G_\lambda^1$ almost surely does not contain points of 
  degree more than three, and so $\mathscr M_\lambda^1$ is almost surely
  binary.

  Next, let us work with the coupling
  $(\tilde{\mathscr{M}},(\tilde{\mathscr{M}}_\lambda^1,\lambda\in
  \Z))$, of Proposition
  \ref{sec:largest-tree-minimum}. We can assume, using the
  last statement of this proposition and the Skorokhod representation
  theorem, that
  $(\tilde{\mathscr{M}},\tilde{\mathscr{M}}_\lambda^1)\to
  (\tilde{\mathscr{M}},\tilde{\mathscr{M}})$ a.s.\ in
  $\mathcal{M}_*$.  Now suppose that $\tilde{\mathscr{M}}$ has a point
  $x_0$ of degree at least $4$ with positive probability. On this
  event, we can find four points $x_1,x_2,x_3,x_4$ of the skeleton of
  $\tilde{\mathscr{M}}$, each having degree $2$, and such that the geodesic
  paths from $x_i$ to $x_0$ have strictly positive lengths and meet
  only at $x_0$.  But for $\lambda$ large enough, $x_0,x_1,\ldots,x_4$
  all belong to $\tilde{\mathscr{M}}_\lambda^1$, as well as the
  geodesic paths from $x_1,\ldots,x_4$ to $x_0$. This contradicts the
  fact that $\mathscr{M}_\lambda^1$ is binary. Hence, $\mathscr{M}$ is
  binary almost surely.

Let $x$ and $x^{\lambda}$ be sampled according to the probability measures on $\mathscr{M}$ 
and on $\hat{\mathscr{M}}^1_{\lambda}$, respectively. 
For the remainder of the proof we abuse notation by writing $(\mathscr{M},x)$ and 
$(\hat{\mathscr{M}}^1_{\lambda},x^{\lambda})$ for the marked spaces 
(random elements of $\mathcal{M}^{1,1}$) obtained by marking at the points 
$x$ and $x^{\lambda}$. 
Then we may,
in fact, work in a space in which almost surely
\[
\lim_{\lambda \to \infty} \dghp^{1,1}((\mathscr{M},x),(\hat{\mathscr{M}}^1_{\lambda},x^{\lambda})) = 0\, . 
\]
As noted earlier, the mass measure on $\hat{\mathscr{M}}^1_{\lambda}$ is
almost surely concentrated on the leaves of $\hat{\mathscr{M}}^1_{\lambda}$,
and it follows that for each fixed $\lambda$, $x^{\lambda}$ is almost
surely a leaf.  Let 
\[
\Delta(x) = \sup\left\{ \min(f^{-1}(x),t-f^{-1}(x))~\text{for}~f:[0,t] \to M~\mbox{a geodesic with }x \in \im(f)\right\}\, ,
\]
so, in particular, $\Delta(x)=0$ precisely if $x$ is a leaf. For each fixed
$\lambda$, since $x^{\lambda}$ is almost surely a leaf, it is
straightforward to verify that almost surely
\[
\dghp^{1,1}((\mathscr{M},x),(\hat{\mathscr{M}}^1_{\lambda},x^{\lambda})) \ge \Delta(x)/2.
\]
But then taking $\lambda \to \infty$ along any countable sequence shows that $\Delta(x)=0$ almost surely. 
\end{proof}

To distinguish $\mathscr M$ from Aldous' CRT, we look at a natural
notion of fractal dimension, the \emph{Minkowski (or box-counting) 
dimension} \cite{Falconer90}. Given a compact metric space $\rX$ and $r
> 0$, let $N(\rX, r)$ be the minimal number of open balls of radius $r$
needed to cover $\rX$. 
\nomenclature[Nxr]{$N(\rX, r)$}{Minimal number of open balls of radius $r$
needed to cover $\rX$.}

We define the lower and upper Minkowski dimensions by
$$\underline{\dim}_{\mathrm{M}}(\rX) = \liminf_{r\downarrow 0} \frac{\log N(\rX,r)}{\log(1/r)}\qquad \text{and}\qquad \overline{\dim}_{\mathrm{M}}(\rX) = \limsup_{r\downarrow 0} \frac{\log N(\rX,r)}{\log(1/r)}.$$
If $\underline{\dim}_{\mathrm{M}}(\rX)=\overline{\dim}_{\mathrm{M}}(\rX)$, then this value
is called the Minkowski dimension and is denoted $\dim_{\mathrm{M}}(\rX)$.
\nomenclature[Dimm]{$\dim_{\mathrm{M}}(\rX)$}{Minkowski dimension of $\rX$; see same section for 
$\underline{\dim}_{\mathrm{M}}(\rX)$ and $\overline{\dim}_{\mathrm{M}}(\rX)$.}
\begin{proposition}\label{prop:box-counting}
The Minkowski dimension of $\mathscr{M}$ exists and is equal to $3$
almost surely. 
\end{proposition}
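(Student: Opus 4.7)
The plan is to prove both $\overline{\dim}_{\mathrm{M}}(\mathscr{M}) \leq 3$ and $\underline{\dim}_{\mathrm{M}}(\mathscr{M}) \geq 3$ almost surely, which together force $\dim_{\mathrm{M}}(\mathscr{M}) = 3$. I work in the coupling of Proposition~\ref{sec:largest-tree-minimum}, in which $\tilde{\mathscr{M}}_\lambda^1$ is realised as a compact isometric subspace of $\tilde{\mathscr{M}}$ for every integer $\lambda$. Fix any $\eps \in (0,1)$. From the tail bound $\p{\dhau(\tilde{\mathscr{M}},\tilde{\mathscr{M}}_\lambda^1)>\lambda^{\eps-1}\mid\Lambda\leq\lambda_0}=\oe(\lambda)$ of that proposition, together with Borel--Cantelli along $\lambda_k = 2^k$ and the a.s.\ finiteness of $\Lambda$, one obtains that almost surely,
\begin{equation}\label{eq:planHaus}
\dhau(\tilde{\mathscr{M}}, \tilde{\mathscr{M}}_{\lambda_k}^1) \;\leq\; \lambda_k^{\eps-1}
\quad\text{for all $k$ sufficiently large.}
\end{equation}

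The essential analytic input, which I would derive using the explicit distributional description of the cores, kernels, and hanging continuum trees of the components of $\mathscr{G}_\lambda$ in \cite{ABBrGo09b}, is the following uniform two-sided covering estimate on $\mathscr{M}_\lambda^1$: for every $\delta > 0$ there exist constants $c = c(\delta),\, C = C(\delta) \in (0,\infty)$ such that almost surely, for all integer $\lambda$ sufficiently large and all $r$ with $\lambda^{-\alpha} \leq r \leq 1$ for some fixed $\alpha > 1$,
\begin{equation}\label{eq:planCov}
c\, \lambda\, r^{-2+\delta} \;\leq\; N(\mathscr{M}_\lambda^1, r) \;\leq\; C\, \lambda\, r^{-2-\delta}\, .
\end{equation}
This reflects the fact that $\mathscr{M}_\lambda^1$ is locally like a Brownian continuum random tree of total mass $\sigma^1 \asymp \lambda$: it is obtained from the $\R$-tree $\mathcal{T}^1$ coded by $2\varepsilon^1$ through a bounded random number of cycle-forming identifications and cycle-breaking cuts, and $\varepsilon^1$ is locally Brownian since the parabolic term $\lambda t - t^2/2$ is smooth and so irrelevant on small scales. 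The uniform-in-$\lambda$ quantitative control of diameter, kernel geometry, and mass spread needed to make \eqref{eq:planCov} precise is what requires the explicit description in \cite{ABBrGo09b}.

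Granted \eqref{eq:planHaus} and \eqref{eq:planCov}, both dimension bounds follow by calibrating $\lambda$ to $r$. For the upper bound, fix $\delta > 0$, set $\eps = \delta/4$, and for small $r$ pick $k = k(r)$ with $\lambda_k \asymp r^{-1/(1-\eps)}$. Then $\lambda_k^{\eps-1} \leq r$, so by \eqref{eq:planHaus} every $r$-covering of $\tilde{\mathscr{M}}_{\lambda_k}^1$ yields a $2r$-covering of $\tilde{\mathscr{M}}$ by inflating each ball. Combined with the upper half of \eqref{eq:planCov},
\[
N(\tilde{\mathscr{M}}, 2r) \;\leq\; N(\tilde{\mathscr{M}}_{\lambda_k}^1, r) \;\leq\; C \lambda_k\, r^{-2-\delta} \;\leq\; C'\, r^{-1/(1-\eps)-2-\delta}\, ,
\]
whose exponent is at most $3+2\delta$ for $\eps = \delta/4$ small, so $\overline{\dim}_{\mathrm{M}}(\mathscr{M}) \leq 3+2\delta$; letting $\delta \to 0$ yields $\overline{\dim}_{\mathrm{M}}(\mathscr{M}) \leq 3$. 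For the lower bound, fix $\delta > 0$ and take $\lambda_k \asymp r^{-1+\delta/2}$. Since $\tilde{\mathscr{M}}_{\lambda_k}^1 \subset \tilde{\mathscr{M}}$ isometrically, any packing of the subspace is also a packing of the ambient space; the standard chain $N(X,r) \geq P(X,r) \geq N(X,2r)$ between covering and packing numbers, combined with the lower half of \eqref{eq:planCov}, gives
\[
N(\tilde{\mathscr{M}}, r) \;\geq\; N(\tilde{\mathscr{M}}_{\lambda_k}^1, 2r) \;\geq\; c\, \lambda_k\, (2r)^{-2+\delta} \;\geq\; c'\, r^{-3+3\delta/2}\, ,
\]
so $\underline{\dim}_{\mathrm{M}}(\mathscr{M}) \geq 3 - 3\delta/2$, and letting $\delta \to 0$ completes the proof.

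The main obstacle is establishing the two-sided estimate \eqref{eq:planCov} uniformly in $\lambda$. The upper half should follow via a multi-scale chaining argument using the modulus of continuity of $\varepsilon^1$, together with the surplus-based bound \eqref{eq:5} on the number of kernel vertices. The lower half is more delicate: one must exhibit, with overwhelming probability and uniformly in the diverging parameter $\lambda$, a packing of $\mathscr{M}_\lambda^1$ of size $\asymp \lambda\, r^{-2}$, which amounts to ruling out pathological concentration of the mass measure $\hat{\mu}^1$ in small balls. This is where the explicit distributional identification of the components of $\mathscr{G}_\lambda$ from \cite{ABBrGo09b} becomes essential.
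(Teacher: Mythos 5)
Your reduction is structurally sound: if the two-sided covering estimate~\eqref{eq:planCov} were available, the calibration $\lambda\asymp r^{-1/(1-\eps)}$ (upper bound) and $\lambda\asymp r^{-1+\delta/2}$ (lower bound), combined with the Hausdorff-distance control~\eqref{eq:planHaus} from Proposition~\ref{sec:largest-tree-minimum} and the isometric inclusion $\tilde{\mathscr{M}}_\lambda^1\subset\tilde{\mathscr{M}}$, does yield $\dim_{\mathrm{M}}(\mathscr{M})=3$. This calibration is, arithmetically, exactly what the paper does by choosing $r=1/\lambda$ for the lower bound and $r=2/\lambda^{1-\eps}$ for the upper bound. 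However, the proposal contains a genuine gap, which you acknowledge yourself: the key estimate~\eqref{eq:planCov} is asserted, not proved, and the sketch you offer for establishing it does not match a workable strategy.

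Two concrete problems. First, \eqref{eq:planCov} as stated (a two-sided bound uniform in $r$ over $[\lambda^{-\alpha},1]$) is stronger than needed and in fact false near the top of the range: since $\diam(\mathscr{M}_\lambda^1)=O(1)$ while the mass is $\asymp\lambda$, the lower bound $N(\mathscr{M}_\lambda^1,r)\geq c\lambda r^{-2+\delta}$ cannot hold for $r$ of order $1$. This does not affect your calibration (which stays in a regime $r\lesssim\lambda^{-(1-\eps)}$), but it shows that the ``uniform in $r$'' formulation is the wrong target; only a single scale per $\lambda$ is required, which is a much more tractable claim. Second, and more seriously, your plan for proving~\eqref{eq:planCov} is off-track. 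For the upper bound you suggest a chaining argument from the modulus of continuity of $\varepsilon^1$, but the covering number of $\mathscr{G}_\lambda^1$ cannot be read from the excursion's H\"older exponent alone: the Poissonian surplus identifications dramatically alter the small-scale geometry, and one must pass from $\mathscr{G}_\lambda^1$ to $\mathscr{M}_\lambda^1$ through the cutting procedure. The paper instead uses Lemma~\ref{sec:prop-scal-limit-2} (cutting at a regular point changes covering numbers by at most $2$), the concentration estimate $s(\mathscr{G}_\lambda^1)\asymp 2\lambda^3/3$ of Lemma~\ref{sec:prop-scal-limit}, and \textbf{Construction~2} (gluing $3k-3$ independent CRTs of masses $\sigma X_i$ with $(X_i)\sim\mathrm{Dirichlet}(1/2,\ldots,1/2)$): if every hanging CRT has diameter below $1/\lambda^{1-\eps}$, then $3k-3$ balls suffice. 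For the lower bound you suggest ruling out concentration of the \emph{mass} measure $\hat\mu^1$, but the paper's argument has nothing to do with the mass measure; it counts core edges of length at least $2/\lambda$ using \textbf{Construction~1} (edge lengths $Y_j\sqrt{\sigma\Gamma_k}$ with $Y\sim\mathrm{Dirichlet}(1,\ldots,1)$ and $\Gamma_k\sim\mathrm{Gamma}$), showing that $\Omega(\lambda^3)$ of them have length $\geq 2/\lambda$ with probability $1-\oe(\lambda)$. In short, the high-level skeleton is correct, but the technical core of the argument --- precisely the part you defer --- is missing, and your heuristics for filling it in point in a different and substantially harder direction than the distributional identities of \cite{ABBrGo09b} actually exploited in the paper.
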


Since the Brownian CRT $\mathscr{T}$ satisfies
$\dim_{\mathrm{M}}(\mathscr{T})=2$ almost surely (\cite[Corollary
5.3]{duqlegprep}), we obtain the following result, which gives a
negative answer to a conjecture of Aldous \cite{aldous90mst}.
\begin{corollary}\label{cor:notCRT}
For any random variable
$A>0$, the laws of $\mathscr{M}$ and of $A\mathscr{T}$, the metric space 
$\mathscr{T}$ with distances rescaled by $A$, are mutually singular.
\end{corollary}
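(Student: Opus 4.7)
The plan is to use the Minkowski dimension as a Borel statistic that distinguishes the two laws. By Proposition~\ref{prop:box-counting}, we have $\dim_{\mathrm{M}}(\mathscr{M})=3$ almost surely, so the natural candidate is the event
\[
E = \{[X,d]\in \mathcal{M} : \dim_{\mathrm{M}}(X,d) \text{ exists and equals } 3\}\, ,
\]
which satisfies $\p{\mathscr{M}\in E}=1$. To conclude mutual singularity it suffices to prove that $\p{A\mathscr{T}\in E}=0$ for every positive random variable $A$.

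First I would check that $E$ is a Borel subset of $(\mathcal{M},\dghp)$. For each fixed $r>0$, the covering number $N(\cdot,r)$ is Borel measurable on $\mathcal{M}$: if $\rX_n\to \rX$ in $\dgh$, then for any $\delta>0$ and $n$ large we have $N(\rX_n,r+\delta)\leq N(\rX,r)\leq N(\rX_n,r-\delta)$, and one recovers Borel measurability of $N(\cdot,r)$ by, e.g., working along a countable dense set of radii. Consequently $\overline{\dim}_{\mathrm{M}}$ and $\underline{\dim}_{\mathrm{M}}$ are Borel as countable limsup/liminf of Borel functions along $r=1/k$, $k\in \N$, so $E$ is Borel.

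The second and main step is to observe that Minkowski dimension is invariant under rescaling of distances. Indeed, for $a>0$ a ball of radius $r$ in the rescaled space $a\rX$ is exactly a ball of radius $r/a$ in $\rX$, so $N(a\rX,r)=N(\rX,r/a)$. Since $\log(a/r)/\log(1/r)\to 1$ as $r\downarrow 0$, it follows that $\dim_{\mathrm{M}}(a\rX)=\dim_{\mathrm{M}}(\rX)$ whenever either side exists. Conditioning on the value of $A$ and using that $\dim_{\mathrm{M}}(\mathscr{T})=2$ almost surely (by \cite{duqlegprep}), Fubini gives $\dim_{\mathrm{M}}(A\mathscr{T})=2$ almost surely. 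In particular $\p{A\mathscr{T}\in E}=0$, and mutual singularity is established.

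There is no substantive obstacle: the statement follows essentially immediately from Proposition~\ref{prop:box-counting} combined with the scale-invariance of Minkowski dimension, the only mildly technical point being the Borel measurability of $\dim_{\mathrm{M}}$ on $(\mathcal{M},\dghp)$, which is routine.
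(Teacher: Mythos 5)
Your argument is correct and is essentially the one the paper has in mind: the paper states Corollary~\ref{cor:notCRT} as an immediate consequence of Proposition~\ref{prop:box-counting} together with the fact that $\dim_{\mathrm{M}}(\mathscr{T})=2$ almost surely, leaving the routine details (scale-invariance of $\dim_{\mathrm{M}}$ and Borel measurability of the event $E$) implicit. The only minor remark is that the Fubini/conditioning step is not needed: since the scale-invariance $\dim_{\mathrm{M}}(a\rX)=\dim_{\mathrm{M}}(\rX)$ holds pointwise for every $a>0$, one has $\dim_{\mathrm{M}}(A\mathscr{T})=\dim_{\mathrm{M}}(\mathscr{T})=2$ almost surely regardless of any dependence between $A$ and $\mathscr{T}$.
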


The proof relies on an explicit description of the components of
$\mathscr{G}_\lambda$, given in \cite{ABBrGo09b}. We only give a
partial statement, since that is all that we need here.  Note
that, given $s(\mathscr{G}_\lambda^1)=k\geq 2$, the
kernel $\mathrm{ker}(\mathscr{G}_\lambda^1)$ is a 3-regular multigraph
with $3k-3$ edges and hence $2(k-1)$ vertices.  Fix $\lambda\in \R$
and $k\geq 2$, and $K$ a $3$-regular multigraph with $3k-3$ edges.
Label the edges of $K$ by $\{1,2,\ldots, 3k-3\}$ arbitrarily. 
  \begin{description}
  \item[Construction 1. ] Independently sample random variables $\Gamma_k \sim
    \mathrm{Gamma}((3k-2)/2,1/2)$ and $(Y_1, Y_2, \ldots, Y_{3k-3}) \sim
    \mathrm{Dirichlet}(1,1, \ldots,1)$.
    Attach a line-segment of length $Y_j \sqrt{\sigma \Gamma_k}$ in
    the place of edge $j$ in $K$, for $1 \le j \le 3k-3$.
  \item[Construction 2. ] Sample 
    $(X_1,X_2,\ldots,X_{3k-3}) \sim \mathrm{Dirichlet}(1/2,1/2,\ldots,1/2)$
    and, given $(X_1,\ldots,X_{3k-3})$, let
    $(\mathscr{T}^{(1)},\ldots,\mathscr{T}^{(3k-3)})$ be independent CRT's
    with masses given by $(\sigma X_1,\ldots,\sigma X_{3k-3})$ respectively. For
    $1\leq i\leq 3k-3$, let $(x_i,x_i')$ be two independent points in
    $\mathscr{T}^{(i)}$, chosen according to the normalized mass measure. 
    Take the metric gluing of $(\mathscr{T}^{(i)},1\leq i\leq 3k-3)$ induced by the
    graph structure of $K$, by viewing $x_i,x'_i$ as the extremities of
    the edge $i$.
\end{description}

Here we should recall some of the basic properties of the CRT
$\mathscr{T}$, referring the reader to, e.g., \cite{legall05b} for more
details. If $\varepsilon=(\varepsilon(s),0\leq s\leq 1)$ is a standard normalized
Brownian excursion then $\mathscr{T}$ is the quotient space of $[0,1]$
endowed with the pseudo-distance
$d_\varepsilon(s,t)=2(\varepsilon(s)+\varepsilon(t)-2\inf_{s\wedge t\leq u\leq s\vee
  t}\varepsilon(u))$, by the relation $\{d_\varepsilon=0\}$. It is seen as a
measured metric space by endowing it with the mass measure which
is the image of Lebesgue measure on $[0,1]$ by the canonical
projection $p:[0,1]\to \mathscr{T}$. It is also naturally rooted at
the point $p(0)$. Likewise, the CRT with mass $\sigma$, denoted by
$\mathscr{T}_\sigma$, is coded in a similar fashion by (twice) a
Brownian excursion conditioned to have duration $\sigma$. By scaling
properties of Brownian excursion, this is the same as multiplying
distances by $\sqrt{\sigma}$ in $\mathscr{T}$, and multiplying the
mass measure by $\sigma$.

\begin{proposition}
  \label{sec:prop-scal-limit-1}  
  The metric space obtained by Construction 1 (resp.\ Construction 2) has
  same distribution as $\core(\mathscr{G}_\lambda^1)$ (resp.\
  $\mathscr{G}_\lambda^1$), given 
  $\mathrm{mass}(\mathscr{G}_{\lambda}^1) = \sigma$,
  $s(\mathscr{G}_{\lambda}^1) = k$ and
  $\mathrm{ker}(\mathscr{G}_\lambda^1)=K$.
\end{proposition}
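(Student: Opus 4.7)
The plan is to start from the excursion-based construction of $\mathscr{G}_\lambda^1$ in Section~\ref{sec:scaling-limit-erdhos} and to disintegrate it along the events specified by $(\sigma, k, K)$. By Brownian scaling, $\varepsilon^1$ conditioned on $\sigma^1 = \sigma$ is a standard Brownian excursion of duration $\sigma$, so $(\mathcal{T}^1, \hat{d}^1, \hat{\mu}^1)$ is a CRT of mass $\sigma$; conditionally on $\varepsilon^1$ and on $s^1 = k$, the $k$ Poisson points $(x^{1,j}, y^{1,j})$ are i.i.d.\ uniform in the area under $\varepsilon^1$. Each identification pair $(\tau^1(x^{1,j}), \tau^1(z^{1,j}))$ then has a transparent CRT-theoretic description: $\tau^1(x^{1,j})$ is a height-biased point of $\mathcal{T}^1$ for its mass measure, and conditionally on $\tau^1(x^{1,j}) = p$, the point $\tau^1(z^{1,j})$ is uniform on the geodesic $[\rho,p]$ for the length measure.

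I would argue Construction 2 first and deduce Construction 1 by restriction to the core. The kernel of $\mathscr{G}_\lambda^1$ is the combinatorial shape of the reduced subtree of $\mathcal{T}^1$ spanned by the root together with the $2k$ marked points, after the $k$ identifications and suppression of the resulting degree-two vertices. Conditioning on this shape being $K$ and on $s^1 = k$ imposes a specific density on the positions of the marked points. A spinal decomposition of the CRT in the spirit of \cite{aldouscrt93} should then show that, conditionally on the masses of the $3k-3$ sub-$\R$-trees attached to the edges of the kernel, these subtrees are independent CRTs of the prescribed masses, with their two attachment points independent and uniform-by-mass within each subtree (the height-biasing and length-uniform laws above collapsing to this symmetric pair upon disintegration with respect to subtree mass). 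The joint distribution of the masses themselves is $\mathrm{Dirichlet}(1/2,\ldots,1/2)$ on the simplex with total mass $\sigma$, arising from the standard Dirichlet--Gamma decomposition applied to the excursion area. Construction 1 then follows because the $i$-th core edge-length is the distance between the two uniform-by-mass points in a sub-CRT of mass $\sigma X_i$, equal in law to $\sqrt{\sigma X_i}$ times a fixed Rayleigh-type variable; the standard Dirichlet--Gamma identity then rearranges this into $Y_j \sqrt{\sigma \Gamma_k}$ with $Y \sim \mathrm{Dirichlet}(1,\ldots,1)$ and $\Gamma_k \sim \mathrm{Gamma}((3k-2)/2, 1/2)$ independent.

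The hard part will be establishing the spinal decomposition and computing the Dirichlet--Gamma parameters. Concretely, one must disintegrate the Brownian excursion law under the marked-point conditioning and recognise the conditional law of the excursion restricted to the complement of the spanning reduced structure as a concatenation of independent Brownian excursions of the prescribed durations. The exponents $1/2$ in the Dirichlet parameters and the shape $(3k-2)/2$ in the Gamma reflect the $3/2$-power scaling of Brownian excursion area with duration, combined with the Poisson character of the $k$ marked points; once the excursion decomposition is in place, verifying these constants is a direct computation.
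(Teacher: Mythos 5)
The paper does not prove this proposition; it is cited from Addario-Berry, Broutin and Goldschmidt~\cite{ABBrGo09b}, so there is no in-paper argument to compare with yours. Your outline does align with the broad strategy used there (tilting the Brownian excursion by its area, conditioning on the Poisson points and reduced-tree shape, and then identifying the pendant subtrees as independent mass-rescaled CRTs). However, the ``hard part'' that you defer is precisely where the content of the proposition lies, and the deferred steps are not direct computations.

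Two concrete gaps. First, the claim that the height-biased point $\tau^1(x^{1,j})$ together with the length-uniform ancestor $\tau^1(z^{1,j})$ ``collapse to a symmetric pair'' of independent uniform-by-mass attachment points in each pendant subtree is not a disintegration bookkeeping step: it requires showing that, after conditioning on the kernel shape and the subtree masses, the joint law of the excursion together with the $2k$ marked points factorizes into independent Brownian excursions with two i.i.d.\ mass-uniform marks each. This is a Bismut/Williams-type excursion decomposition carried out under a non-trivial area tilting, and needs explicit densities; it is not an exchangeability symmetry. Second, your heuristic for the $\mathrm{Dirichlet}(1/2,\ldots,1/2)$ masses and the $\mathrm{Gamma}((3k-2)/2,1/2)$ variable misattributes the exponents: the $1/2$'s trace back to the $1/2$-stable scaling of excursion \emph{durations} (equivalently the level-set structure of reflected Brownian motion), not to the $3/2$-power scaling of areas, and the shape parameter $(3k-2)/2$ cannot be read off as a simple edge count (the reduced tree has $3k-3$ edges, and summing $3k-3$ independent Rayleigh-squared lengths would give shape $3k-3$, not $(3k-2)/2$), so the conditional law of the edge lengths given the Poisson marking cannot be ``independent Rayleighs'' as one might naively expect; the correct computation must go through the joint density of the reduced-tree lengths and masses under the tilted excursion measure. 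As written, the proposal is a plausible roadmap but not a proof; you would need to supply the excursion decomposition and the explicit density identity, along the lines of Sections~5--7 of~\cite{ABBrGo09b}.
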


The proof of Proposition \ref{prop:box-counting} builds on this result
and requires a couple of lemmas.  Recall the notation $\rX_x$ from Section
\ref{sec:breaking-cycles-r}.
\begin{lemma}
  \label{sec:prop-scal-limit-2}Let $\rX=(X,d,x)$ be a safely pointed
  $\R$-graph and fix $r>0$. Then $N(\rX,r)\leq N(\rX_x,r)\leq N(\rX,r)+2$.
\end{lemma}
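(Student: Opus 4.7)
The plan is to exploit the canonical quotient projection $p : X_x \to X$ that sends each of the two ``cut copies'' $x^-, x^+$ of $x$ to $x$ (the condition $\deg_X(x) = 2$ ensures $p^{-1}(\{x\})$ consists of exactly two points) and is the identity on $X \setminus \{x\}$. Since any path in $X_x$ projects to a path of the same length in $X$, one has $d(p(a), p(b)) \le d_x(a,b)$; in particular $p$ is $1$-Lipschitz and (clearly) surjective. Both inequalities will follow more or less directly from this.

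For the first inequality $N(\rX, r) \le N(\rX_x, r)$: if $B^{X_x}_r(z_1), \dots, B^{X_x}_r(z_M)$ cover $X_x$, then $B^X_r(p(z_1)), \dots, B^X_r(p(z_M))$ cover $X$. Indeed, any $y \in X$ has some preimage $a \in p^{-1}(\{y\})$, and choosing $i$ with $a \in B^{X_x}_r(z_i)$ yields $d(y, p(z_i)) \le d_x(a, z_i) < r$.

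For the second inequality $N(\rX_x, r) \le N(\rX, r) + 2$: given a cover $B^X_r(y_1), \dots, B^X_r(y_N)$ of $X$, pick an arbitrary preimage $y_j' \in p^{-1}(\{y_j\})$ for each $j$, and consider the $N+2$ balls
\[
B^{X_x}_r(x^-),\ B^{X_x}_r(x^+),\ B^{X_x}_r(y_1'), \dots, B^{X_x}_r(y_N').
\]
I will argue these cover $X_x$ by fixing $a \in X_x$ and splitting into two cases. If $d(p(a), x) < r$, a shortest $X$-path from $p(a)$ to $x$ (of length $<r$) cannot visit $x$ before its endpoint, and since $\deg_X(x) = 2$ it approaches $x$ from exactly one of the two local sides; therefore it lifts to a path in $X_x$ of the same length from $a$ to either $x^-$ or $x^+$, placing $a$ in one of the first two balls. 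If instead $d(p(a), x) \ge r$, picking $j$ with $p(a) \in B^X_r(y_j)$ forces $d(p(a), y_j) < r \le d(p(a), x)$, so $y_j \ne x$ and every shortest $X$-path from $p(a)$ to $y_j$ must avoid $x$ (lest its length exceed $d(p(a), x) \ge r$); such a path lifts to $X_x \setminus \{x^-, x^+\}$, giving $d_x(a, y_j') < r$.

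The only mildly delicate step is the lifting of a shortest path near $x$: namely, that the $\R$-graph structure at the degree-$2$ point $x$, combined with the metric-completion construction of $X_x$, lets one unambiguously extend a path arriving at $x$ ``from one side'' to a path ending at the corresponding one of $x^\pm$. I do not expect any real obstacle beyond this local picture.
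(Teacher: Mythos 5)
Your proof is correct and follows essentially the same route as the paper's. The paper's proof works directly in $X$: given a cover of $X$ by $N$ balls of radius $r$, it observes that any $y\in X$ either reaches some centre $x_i$ by a geodesic of length $<r$ avoiding $x$ (hence $y\in B_r^{X_x}(x_i)$), or has a geodesic of length $<r$ ending at $x$, which places $y$ in the ball around $x_{(1)}$ or $x_{(2)}$ depending on the approach side; the converse inequality is handled by noting $d\le d_x$. Your version introduces the quotient map $p:X_x\to X$ explicitly and performs the analogous case split for $a\in X_x$ according to whether $d(p(a),x)<r$ or $\geq r$; the observations that $p$ is $1$-Lipschitz and surjective, that a geodesic from $p(a)$ to $x$ only touches $x$ at its endpoint, and that a short geodesic to a centre $y_j\neq x$ necessarily misses $x$, are exactly the same ingredients. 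The one place that deserves a word more than you give it is the claim that the lifted path to $x^{\pm}$ retains the same length under the metric $d_x$: this holds because a geodesic $\gamma$ in $X$ with $\gamma(t)\neq x$ for $t<L$ has $d_{X\setminus\{x\}}(\gamma(s),\gamma(t))=d(\gamma(s),\gamma(t))$ for $s,t<L$ (the subarc of $\gamma$ itself realises the $X\setminus\{x\}$ distance), so the $d_x$-length agrees with the $d$-length $L$ and passes to the completion. This is implicit in the paper as well, so it is not a gap — just worth making explicit if you were writing this up.
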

This lemma will be proved in Section \ref{sec:cutting-procedure}, where
we give a more precise description of $\rX_x$. 
The next lemma is a concentration
result for the mass and surplus of $\mathscr{G}_\lambda^1$. This
should be seen as a continuum analogue of similar results in
\cite{luczak90component,NaPe07}. We stress that these bounds are far from
being sharp, and could be much improved by a more careful analysis. In
the rest of this section, if $(Y(\lambda),\lambda\geq 0)$ is a family
of positive random variables and $(f(\lambda),\lambda\geq 0)$ is a
positive function, 
we write $Y(\lambda)\asymp f(\lambda)$ if for all $a > 1$, 
\nomenclature[Aaaa]{$\asymp$}{$Y(\lambda)\asymp f(\lambda)$ if for all $a > 1$, $\p{Y(\lambda) \not \in [f(\lambda)/a,af(\lambda)]}=\oe(\lambda)$.}
\[\p{Y(\lambda) \not \in 
  [f(\lambda)/a,af(\lambda)]}
  =\oe(\lambda)\, .
\]
Note that this only constrains the above probability for large $\lambda$.

\begin{lemma}
  \label{sec:prop-scal-limit}
It is the case that 
$$\mathrm{mass}(\mathscr{G}_\lambda^1)\asymp 2\lambda\qquad \mbox{ and
}\qquad 
s(\mathscr{G}_\lambda^1)\asymp\frac{2\lambda^3}{3}\, .$$
\end{lemma}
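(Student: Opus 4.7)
My plan is to exploit the fact that for large $\lambda$, the deterministic drift $g(t) := \lambda t - t^2/2$ in $W_\lambda$ dominates the Brownian noise $W$ on the scale of the longest excursion. Note that $g$ is non-negative on $[0, 2\lambda]$, reaches maximum $\lambda^2/2$ at $t=\lambda$, and decreases quadratically past $2\lambda$ (with $|g'(s)|\geq \lambda$ for $s\geq 2\lambda$). Fix $\alpha \in (1/2, 1)$, e.g.\ $\alpha = 3/4$. The Gaussian tail bound $\p{\sup_{[0,T]}|W|\geq x}\leq 2\exp(-x^2/(2T))$ shows that
$$E_\lambda := \left\{ \sup_{0 \le t \le 3\lambda} |W(t)| \le \lambda^\alpha \right\}$$
has probability $1 - \oe(\lambda)$, and the entire analysis is carried out on $E_\lambda$.

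First I identify the longest excursion. For $t \in [\lambda^{\alpha-1},\, 2\lambda - \lambda^{\alpha-1}]$, one checks $g(t) > \lambda^\alpha$, so $W_\lambda(t) > 0$ throughout this interval, and the running minimum $m(t) := \min_{[0,t]} W_\lambda$ stays at the value $m^* := \min_{[0,\lambda^{\alpha-1}]} W_\lambda$ with $|m^*| \leq \lambda^\alpha$. Near $t = 2\lambda$, since $g(2\lambda+u)=-\lambda u(1+o(1))$, the equation $g(2\lambda+u)+W(2\lambda+u)=m^*$ forces $u = O(\lambda^{\alpha-1})$, so the excursion of $B_\lambda$ containing $[\lambda^{\alpha-1}, 2\lambda - \lambda^{\alpha-1}]$ has length $\sigma = 2\lambda + O(\lambda^{\alpha-1})$. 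For $t > 2\lambda$, $g$ has slope at most $-\lambda$, and a routine comparison with Brownian motion with linear drift $-\lambda$ shows that all subsequent excursions have length $O(\lambda^{-2+\epsilon})$ on an event whose complement has probability $\oe(\lambda)$; the same argument handles the small Brownian excursions on $[0, \lambda^{\alpha-1}]$. Thus the big excursion is $\varepsilon^1$, and $\sigma^1 = \mathrm{mass}(\mathscr{G}_\lambda^1) = 2\lambda + O(\lambda^{\alpha-1})$ on $E_\lambda$. This gives $\mathrm{mass}(\mathscr{G}_\lambda^1) \asymp 2\lambda$.

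For the surplus, let $l \leq \lambda^{\alpha-1}$ be the start time of the big excursion. During the excursion $B_\lambda(s) = W_\lambda(s) - m^*$, so
$$\int_0^{\sigma^1} \varepsilon^1(s)\, ds = \int_l^{l + \sigma^1} g(s)\, ds + \int_l^{l + \sigma^1} W(s)\, ds - m^* \sigma^1.$$
Direct computation gives $\int_0^{2\lambda} g(s)\, ds = 2\lambda^3/3$; adjusting the limits by $O(\lambda^{\alpha-1})$ contributes an error of $O(\lambda^{2\alpha-1})$, since $|g|=O(\lambda^\alpha)$ on the relevant boundary regions. The second term is bounded by $\sigma^1 \cdot \lambda^\alpha = O(\lambda^{1+\alpha})$, and the third by $|m^*|\sigma^1 = O(\lambda^{1+\alpha})$. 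All errors are $o(\lambda^3)$, so the integral equals $(2\lambda^3/3)(1+o(1))$ on $E_\lambda$. Conditional on $\varepsilon^1$, the surplus $s(\mathscr{G}_\lambda^1) = s^1$ is Poisson with this mean, and the standard Chernoff bound for Poisson random variables of mean $\mu$ gives $\p{|s^1/\mu - 1| > \epsilon\mid \varepsilon^1}\leq 2\exp(-c_\epsilon \mu)$. Combined with the mean estimate, this yields $\p{s(\mathscr{G}_\lambda^1) \notin [2\lambda^3/(3a),\, 2a\lambda^3/3]\mid E_\lambda} = \oe(\lambda)$ for every $a > 1$, so $s(\mathscr{G}_\lambda^1) \asymp 2\lambda^3/3$.

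The most delicate part is pinning down the deterministic hump of $W_\lambda$ as $\varepsilon^1$: one must rule out simultaneously that the running minimum dips deeply in the middle of the hump (splitting it into shorter excursions), and that some excursion outside the hump rivals it in length. The former is handled cleanly by the strict positivity of $g$ on the bulk of $[0, 2\lambda]$ compared to the $\lambda^\alpha$-scale Brownian fluctuations, while the latter requires a small coupling argument with drifted Brownian motion to control excursion lengths past $2\lambda$ and near the origin with $\oe(\lambda)$ precision.
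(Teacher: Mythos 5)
Your overall strategy is the same as the paper's: control the Brownian perturbation on a good event of probability $1-\oe(\lambda)$, use the parabolic drift $g(t)=\lambda t - t^2/2$ to pin the large excursion of $B_\lambda$ to a neighbourhood of $[0,2\lambda]$ with the right length and area, and finish with Poisson concentration for the surplus. The arithmetic you do for the length ($2\lambda+O(\lambda^{\alpha-1})$) and the area ($2\lambda^3/3+o(\lambda^3)$) on your event $E_\lambda$ is sound.

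However, there is a genuine gap in the way you rule out long excursions at large times. Your good event $E_\lambda=\{\sup_{0\le t\le 3\lambda}|W(t)|\le\lambda^\alpha\}$ controls $W$ only on the \emph{bounded} interval $[0,3\lambda]$, but the excursion process of $B_\lambda$ lives on all of $[0,\infty)$, and ruling out a rival long excursion at time $t>3\lambda$ requires a bound on $W$ there. You invoke ``a routine comparison with Brownian motion with linear drift $-\lambda$'' yielding a separate event of probability $1-\oe(\lambda)$, but this event is never defined and the comparison argument for an unbounded time horizon (over infinitely many excursions, with the drift varying) is not given. The paper avoids this entirely by choosing the event $A_\lambda = \{|W(t)|\le (2\lambda)\vee t\text{ for all }t\ge 0\}$, whose complement is shown to have probability $\oe(\lambda)$ using the exponential tail of $\max_{u\ge 0}(W'(u)-u)$ past time $2\lambda$. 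The sublinear envelope $(2\lambda)\vee t$ is weak enough to have the required probability but strong enough that the bounds $-t^2/2+\lambda t\pm((2\lambda)\vee t)$ on $W_\lambda$ make all subsequent assertions about excursion lengths and areas purely deterministic for every $t\ge 0$, which is exactly what your truncated event cannot do. To repair your argument you would either need to replace $E_\lambda$ by an event with an envelope on all of $[0,\infty)$, or supply the details of the drift-comparison argument you allude to; neither is done in your write-up.
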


\proof We use the construction of $\mathscr{G}_\lambda$ described in
Section \ref{sec:scaling-limit-erdhos}. Recall that $(W(t),t\geq 0)$
is a standard Brownian motion, that $W_\lambda(t)=W(t)+\lambda
t-t^2/2$, and that $B_{\lambda}(t)=W_{\lambda}(t)-\min_{0 \le s \le t} W_{\lambda}(s)$. 
Note that, letting
\begin{equation}
  \label{eq:8}
  A_\lambda=\{|W(t)|\leq (2\lambda)\vee t\mbox{ for all }t\geq 0\}\, ,
\end{equation} 
we have $\p{A_\lambda^c}=\oe(\lambda)$.  Considering first $t \le 2\lambda$, 
by symmetry, the reflection principle and scaling we have that
\begin{align*}
\p{\sup_{0\leq t\leq 2\lambda}|W(t)|>
  \lambda}&\leq 2\p{\sup_{0\leq t\leq 2\lambda}W(t)>
  \lambda}\\
&=2\p{|W(2\lambda)|> \lambda}
\\
&=2\p{|W(2)|>\sqrt{\lambda}}\,
,
\end{align*}
and this is $\oe(\lambda)$ since $W(2)$ is Gaussian. Turning to $t > 2\lambda$, note that letting $W'=(W(u+2\lambda)-W(2\lambda),u\geq 0)$, 
then $W'$ is a standard Brownian motion by the Markov property. Hence, on the event
$\{\sup_{0\leq t\leq 2\lambda}|W(t)|\leq \lambda\}$, the probability
that $|W(t)|>t$ for some $t\geq 2\lambda$ is at most $\p{\exists u\geq
  0:|W'(u)|\geq u+\lambda}\leq 2 \p{\max_{u\geq 0}(W'(u)-u)\geq
  \lambda}$. We deduce that $\p{A_\lambda^c}=\oe(\lambda)$
from the fact that $\max_{u\geq 0}(W'(u)-u)$ has an exponential
distribution, see e.g.\ \cite{revyor}.

On $A_\lambda$, 
$$-\frac{t^2}{2}+\lambda t-((2\lambda)\vee t) \leq W_\lambda(t)\leq
-\frac{t^2}{2}+\lambda t+(2\lambda)\vee t \, ,\qquad t\geq 0\, ,$$
from which it is elementary to obtain that if $\lambda\geq 4$, the following properties hold. 
\begin{itemize}
\item[(i)] The excursion $\varepsilon$ of $B_\lambda$ that straddles the time
  $\lambda$ has length in $[2\lambda-8,2\lambda+8]$.
\item[(ii)] All other excursions of $B_\lambda$ have length at most $6$. 
\item[(iii)] The area of $\varepsilon$ is in
  $[2\lambda^3/3-4\lambda^2,2\lambda^3/3+8\lambda^2]$.
\end{itemize}
Note that (i) and (ii) imply that, for $\lambda\geq 8$, on
$A_\lambda$, the excursion $\varepsilon$ of $B_\lambda$ is the longest, 
which we previously called $\varepsilon^1$, and which encodes the
component $\mathscr{G}_\lambda^1$ of $\mathscr{G}_\lambda$. This
implies that $\mathrm{mass}(\mathscr{G}_\lambda^1)\asymp 2\lambda$, 
since $\mathrm{mass}(\mathscr{G}_\lambda^1)$ is precisely the length 
of $\varepsilon^1$. Finally, recall that, given $\varepsilon^1$,
$s(\mathscr{G}_\lambda^1)$ has a Poisson distribution with parameter
equal to the area of $\varepsilon^1$. Therefore, standard large deviation
bounds together with (iii) imply that
$s(\mathscr{G}_\lambda^1)\asymp 2\lambda^3/3$. 
\endproof

\begin{proof}[Proof that $\underline{\dim}_{\mathrm{M}}(\mathscr{M})\geq 3$ almost surely.]
In this proof, we always
work with the coupling from Proposition \ref{sec:largest-tree-minimum},
but for convenience omit the decorations from the notation, e.g., writing 
$\mathscr{M}$ in place of $\mathring{\mathscr{M}}$ or of $\tilde{\mathscr{M}}$. 
In particular, this allows us to view $\mathscr{M}_\lambda^1$ as a subspace 
of $\mathscr{M}$ for every $\lambda\in \Z$. 

Since $\mathscr{M}_\lambda^1$ is obtained from 
$\mathscr{G}_\lambda^1$ by performing the cutting operation of Section
\ref{sec:breaking-cycles-r}, 
Lemma~\ref{sec:prop-scal-limit-2} implies that for every $r>0$, 
\begin{equation}
  \label{eq:12}
\p{N(\mathscr{M},1/\lambda)<r}\leq \p{N(\mathscr{M}_\lambda^1,1/\lambda)<r}\leq
\p{N(\mathscr{G}_\lambda^1,1/\lambda)<r}\, .
\end{equation}
Next, by viewing
$\core(\mathscr{G}_\lambda^1)$ as a graph with edge-lengths, we obtain
that $N(\mathscr{G}_\lambda^1,1/\lambda)$ is at least equal to the
number $N'(1/\lambda)$ of edges of $\core(\mathscr{G}_\lambda^1)$ that
have length at least $2/\lambda$, since the open balls with radius
$1/\lambda$ centred at the midpoints of these edges are pairwise
disjoint.

Now fix $\sigma>0,k\geq 2$ and a $3$-regular multigraph $K$ with
$3k-3$ edges, and recall the notation of {\bf Construction 1}. 
Given that
$\mathrm{mass}(\mathscr{G}_\lambda^1)=\sigma,s(\mathscr{G}_\lambda^1)=k$
and $\ker(\mathscr{G}_\lambda^1)=K$,  the edge-lengths of
$\core(\mathscr{G}_\lambda^1)$ are given by
$Y_i\sqrt{\sigma\Gamma_k},1\leq i\leq 3k-3$, and we conclude that
(still conditionally)
$$N'(1/\lambda)\eqdist|\{i\in
\{1,\ldots,3k-3\}:Y_i\sqrt{\sigma\Gamma_k}> 2/\lambda\}|\, .$$ Note that
this does not depend on $K$ but only on $\sigma$ and on $k$. Now $\Gamma_k\sim \mathrm{Gamma}((3k-2)/2,1/2)$ can
be represented as the sum of $3k-2$ independent random variables with
distribution $\mathrm{Gamma}(1/2,1/2)$, which have mean $1$, and by
standard large deviation results this implies that
$$\sup_{k\in [\lambda^3/2,\lambda^3]}\p{\Gamma_k<
  \lambda^3}=\oe(\lambda)\, .$$ Hence, by first conditioning on
$\mathrm{mass}(\mathscr{G}_\lambda^1),s(\mathscr{G}_\lambda^1)$ and
  using Lemma \ref{sec:prop-scal-limit}, for any given $c>0$, 
\begin{align}
 \p{N'(1/\lambda)<c\lambda^3}&\leq \sup_{\substack{\sigma\geq \lambda\\k\in
      [\lambda^3/2,\lambda^3]}}\p{N'(1/\lambda)<c\lambda^3\, |\,
    \mathrm{mass}(\mathscr{G}_\lambda^1)=\sigma,s(\mathscr{G}_\lambda^1)=k}+\oe(\lambda)\nonumber\\
 & \leq \sup_{\substack{\sigma\geq \lambda\\k\in
      [\lambda^3/2,\lambda^3]}}\p{|\{i\in
    \{1,\ldots,3k-3\}:Y_i\sqrt{\sigma\Gamma_k}>2/\lambda\}|<c\lambda^3}+\oe(\lambda)
\nonumber\\
  &\leq \sup_{k\in
      [\lambda^3/2,\lambda^3]}\p{|\{i\in
    \{1,\ldots,3k-3\}:Y_i>2/\lambda^3\}|<c\lambda^3}+\oe(\lambda) \nonumber
\end{align}
We now use that $(Y_1,\ldots,Y_{3k-3})\sim
\mathrm{Dirichlet}(1,\ldots,1)$ is distributed as
$(\gamma_1,\ldots,\gamma_{3k-3})/(\gamma_1+\ldots+\gamma_{3k-3})$, where
$\gamma_1,\ldots,\gamma_{3k-3}$ are independent Exponential$(1)$ random
variables. 
Standard large deviations results 
for gamma random variables imply that 
$$\sup_{k\in[\lambda^3/2,\lambda^3]}\p{\gamma_1+\ldots+\gamma_{3k-3}> 4\lambda^3} =
\oe(\lambda)\, .$$
From this we obtain
\begin{align}
\lefteqn{  \sup_{k\in
      [\lambda^3/2,\lambda^3]}\p{|\{i\in
    \{1,\ldots,3k-3\}:Y_i>2/\lambda^3\}|<c\lambda^3}}\nonumber\\
&\leq 
\sup_{k\in
      [\lambda^3/2,\lambda^3]}\p{|\{i\in
    \{1,\ldots,3k-3\}:\gamma_i>8\}|<c\lambda^3}+\oe(\lambda)\nonumber
\end{align}
and this is $\oe(\lambda)$ for $c<e^{-8}$, since $|\{i\in \{1,\ldots,3k-3\}:\gamma_i>8\}|$ is 
Bin$(3k-3,e^{-8})$ distributed. 
   
It follows that for such $c$, 
$\p{N'(1/\lambda)<c \lambda^3} = \oe(\lambda)$, 
which with \eqref{eq:12} implies that 
$$\p{N(\mathscr{M},1/\lambda)<c\lambda^3/2}=\oe(\lambda)\, .$$
We obtain by the Borel--Cantelli Lemma that $N(\mathscr{M},1/\lambda)\geq
c\lambda^3/2$ for all $\lambda \in \Z$ sufficiently large. 
By sandwiching $1/r$ between
consecutive integers, this yields that almost surely
$$\underline{\dim}_{\mathrm{M}}(\mathscr{M})=\liminf_{r\to0}\frac{\log
  N(\mathscr{M},r)}{\log (1/r)}\geq 3\, .$$ 
\end{proof}
We now prove the upper bound from Proposition~\ref{prop:box-counting}. 
\begin{proof}[Proof that $\overline{\dim}_{\mathrm{M}}(\mathscr{M})\leq 3$ almost surely.]
Recall the definition of $\Lambda$ from Proposition \ref{sec:largest-tree-minimum}. 
Fix $\lambda_0 > 0$ and an integer $\lambda > \lambda_0$. 
We work conditionally on the event $\{\Lambda\leq \lambda_0\}$. 
Next, fix $\eps>0$. If $B_1,\ldots,B_N$ is a covering of $\mathscr{M}_\lambda^1$
by balls of radius $1/\lambda^{1-\eps}$ then, since
$\mathscr{M}_\lambda^1\subset \mathscr{M}$, the centres
$x_1,\ldots,x_N$ of these balls are elements of $\mathscr{M}$. 
On the event 
$\{\dhau(\mathscr{M}_\lambda^1,\mathscr{M})<1/\lambda^{1-\eps}\}$,
whose complement has conditional probability $\oe(\lambda)$ by Proposition
\ref{sec:largest-tree-minimum}, the balls with centres
$x_1,\ldots,x_N$ and radius $2/\lambda^{1-\eps}$ then form a covering of
$\mathscr{M}$. Hence
\begin{align}\label{eq:11}
 \p{N(\mathscr{M},2/\lambda^{1-\eps})>5\lambda^3 \, |\, \Lambda\leq
    \lambda_0}&\leq
  \frac{\p{N(\mathscr{M}_\lambda^1,1/\lambda^{1-\eps})>5\lambda^3}}{\p{\Lambda\leq
      \lambda_0}}+\oe(\lambda)\nonumber\\
  &\leq
  \frac{\p{N(\mathscr{G}_\lambda^1,1/\lambda^{1-\eps})+2s(\mathscr{G}_\lambda^1)>5\lambda^3}}{\p{\Lambda\leq
      \lambda_0}}+\oe(\lambda)\nonumber\\
&\leq  \frac{\p{N(\mathscr{G}_\lambda^1,1/\lambda^{1-\eps})>3\lambda^3}}{\p{\Lambda\leq
      \lambda_0}}+\oe(\lambda)
\end{align}
where in the penultimate step we used Lemma
\ref{sec:prop-scal-limit-2} and the fact that $\mathscr{M}_\lambda^1$
is obtained from $\mathscr{G}_\lambda^1$ by performing
$s(\mathscr{G}_\lambda^1)$ cuts, and in the last step we
used the fact that $s(\mathscr{G}_\lambda^1)\asymp 2\lambda^3/3$ from
Lemma \ref{sec:prop-scal-limit}. 

To estimate $N(\mathscr{G}_\lambda^1,1/\lambda^{1-\eps})$, we now use
{\bf Construction 2} to obtain a copy of $\mathscr{G}_\lambda^1$ conditioned to satisfy 
$\mathrm{mass}(\mathscr{G}_\lambda^1)=\sigma,s(\mathscr{G}_\lambda^1)=k$
and $\ker(\mathscr{G}_\lambda^1)=K$, where $K$ is a $3$-regular
multigraph with $3k-3$ edges. Recall that we glue $3k-3$ Brownian CRT's $(\mathscr{T}^{(1)}_{\sigma X_1},\ldots,\mathscr{T}^{(3k-3)}_{\sigma X_{3k-3}})$ along the edges of $K$.
These CRT's are conditionally independent given their masses $\sigma X_1,\ldots,\sigma X_{3k-3}$, and $(X_1,\ldots,X_{3k-3})$ has Dirichlet$(1/2,\ldots,1/2)$
distribution. (Here we include the mass in the notation because it will vary later on.) 
If each of these trees has diameter less than $1/\lambda^{1-\eps}$, then clearly we can cover
the glued space by $3k-3$ balls of radius $1/\lambda^{1-\eps}$, each centred in a distinct tree $\mathscr{T}^{(i)}_{\sigma X_i},1\leq i\leq
3k-3$. Therefore, by first conditioning on
$\mathrm{mass}(\mathscr{G}_\lambda^1)$ and on $s(\mathscr{G}_\lambda^1)$, and
then using Lemma \ref{sec:prop-scal-limit},
\begin{align}\label{eq:1729}
\lefteqn{\p{N(\mathscr{G}_\lambda^1,1/\lambda^{1-\eps})>3\lambda^3}}\nonumber\\
&\leq
\sup_{\substack{\sigma\leq 3\lambda\\k\in
    [\lambda^3/2,\lambda^3]}}  
\p{N(\mathscr{G}_\lambda^1,1/\lambda^{1-\eps})>3\lambda^3\, |\,
  \mathrm{mass}(\mathscr{G}_\lambda^1)=\sigma,s(\mathscr{G}_\lambda^1)=k}+\oe(\lambda)\nonumber\\
&\leq \sup_{\substack{\sigma\leq 3\lambda\\k\in
    [\lambda^3/2,\lambda^3]}}  
\p{\max_{1\leq i\leq 3k-3}\diam(\mathscr{T}^{(i)}_{\sigma X_i})>1/\lambda^{1-\eps}}+\oe(\lambda)
\end{align}

We can represent $(X_1,\ldots,X_{3k-3})$ as
$(\gamma_1,\ldots,\gamma_{3k-3})/(\gamma_1+\ldots+\gamma_{3k-3})$, where
$\gamma_1,\ldots,\gamma_{3k-3}$ are i.i.d.\ random variables with
distribution $\mathrm{Gamma}(1/2,1)$. 
Hence 
\begin{align*}
 \p{\max_{1\leq i\leq 3k-3}X_i> 1/\lambda^{3-\eps}}&\leq
  \p{\gamma_1+\ldots+\gamma_{3k-3}<\lambda^{3-\eps/2}}
  +\p{\max_{1\leq i\leq {3k-3}}\gamma_i>\lambda^{\eps/2}}\\
  &\leq
  \p{\gamma_1+\ldots+\gamma_{3k-3}<\lambda^{3-\eps/2}}+1-\left(1-\p{\gamma_1>\lambda^{\eps/2}}\right)^{3k-3}\, .
\end{align*}
Standard large deviations results for gamma random variables 
then entail that for all $\eps > 0$, 
\[
\sup_{k\in [\lambda^3/2,\lambda^3]}\p{\max_{1\leq i\leq 3k-3}X_i> 1/\lambda^{3-\eps}}=\oe(\lambda)\, ,
\]
which in turn implies that 
\begin{align}
\lefteqn{\sup_{\substack{\sigma\leq   3\lambda\\ k\in
    [\lambda^3/2,\lambda^3]}}\p{\max_{1\leq i\leq
    3k-3}\diam(\mathscr{T}^{(i)}_{\sigma X_i})>1/\lambda^{1-\eps}} }\nonumber\\
&\leq 
\sup_{k\in [\lambda^3/2,\lambda^3]}\p{\max_{1\leq i\leq
    3k-3}X_i>1/\lambda^{3-\eps}}
+\p{\max_{1\leq i\leq
    3\lambda^3}\diam(\mathscr{T}^{(i)}_{3/\lambda^{2-\eps}})>1/\lambda^{1-\eps}}\nonumber
\end{align}
where we used that, by scaling,
$\diam(\mathscr{T}_\sigma)$ is stochastically increasing in $\sigma$,
and 
$(\mathscr{T}^{(i)}_{3/\lambda^{2-\eps}}),1\leq i\leq \lfloor
3\lambda^3\rfloor$ are independent CRT's, each with mass 
$3/\lambda^{3-\eps}$.
Using this bound and Brownian scaling, it follows that 
\begin{equation}\label{eq:14}
\sup_{\substack{\sigma\leq   3\lambda\\ k\in
    [\lambda^3/2,\lambda^3]}}\p{\max_{1\leq i\leq
    3k-3}\diam(\mathscr{T}^{(i)}_{\sigma X_i})>1/\lambda^{1-\eps}} 
    \leq
\oe(\lambda)+1-\left(1-\p{\diam(\mathscr{T}>\lambda^{\eps/2}/\sqrt{3})}\right)^{3\lambda^3}\, .
\end{equation}
Next, 
it is well-known that the height of $\mathscr{T}$, that is, the
maximal distance from the root to another point, is theta-distributed:
$$\p{\mathrm{height}(\mathscr{T})\geq x}=\sum_{k\geq
  1}(-1)^{k+1}e^{-k^2x^2}\leq e^{-x^2}\, .$$ 
  Since $\diam(\mathscr{T})\leq 2\,
\mathrm{height}(\mathscr{T})$ it follows that 
$$\p{\diam(\mathscr{T})\geq x}=\oe(x)\, .$$
We obtain that
\eqref{eq:14} is $\oe(\lambda)$, and (\ref{eq:1729}) then yields that
$\p{N(\mathscr{G}_\lambda^1,1/\lambda^{1-\eps})>3\lambda^3}=\oe(\lambda)$.
By \eqref{eq:11}, we then have 
$$\p{N(\mathscr{M},2/\lambda^{1-\eps})>5\lambda^3\, |\,
  \Lambda\leq \lambda_0}=\oe(\lambda)\, .$$ Therefore, the
Borel--Cantelli Lemma implies that
$N(\mathscr{M},2/\lambda^{1-\eps})\leq 5\lambda^3$ a.s.\ for every
integer $\lambda>\lambda_0$ large enough. This implies that,
conditionally on $\{\Lambda\leq \lambda_0\}$,
$\overline{\dim}_{\mathrm{M}}(\mathscr{M})\leq 3+\eps$ almost surely for every
$\eps>0$, by sandwiching $1/r$ between integers in $\limsup_{r\to
  0}\log N(\mathscr{M},r)/\log(1/r)$. Since $\Lambda$ is almost
surely finite and $\lambda_0$ was arbitrary, this then holds
unconditionally for any $\eps>0$. \end{proof} This concludes the proof
of Proposition \ref{prop:box-counting}.

\section{The structure of $\R$-graphs} \label{sec:RtreesRgraphs}

In this section, we investigate $\R$-graphs and prove
the structure theorems claimed in Section~\ref{sec:r-trees-r}. 

\subsection{Girth in $\R$-graphs}
In this section, $\rX=(X,d)$ is an $\R$-graph. The {\em girth} of $\rX$ is defined by 
\nomenclature[Girx]{$\mathrm{gir}(\rX)$}{The girth of $\rX$ is $\inf \{\len(c):c \mbox{ is an embedded cycle in }X\}$.}
\[
\mathrm{gir}(\rX)=\inf \{\len(c):c \mbox{ is an embedded cycle in }X\}\, .
\]
If $(X,d)$ is an $\R$-graph, then by definition $(B_{\eps(x)}(x),d)$
is an $\R$-tree for every $x\in X$ and for some function
$\eps:X\to(0,\infty)$. The balls $(B_{\eps(x)}(x),x\in X)$ form an
open cover of $X$. By extracting a finite sub-cover, we see that there
exists $\eps>0$ such that for every $x\in X$, the space
$(B_\eps(x),d)$ is an $\R$-tree. 
\nomenclature[Rx]{$R(\rX)$}{Largest $\eps$ such that $B_{\eps(x)}(x)$ is an $\R$-tree for all $x \in X$.}
We let $R(\rX)$ be the supremum of
all numbers $\eps>0$ with this property. It is immediate that
$\mathrm{gir}(\rX) \ge 2R(X)>0$. In fact, it is not difficult
to show that $\mathrm{gir}(\rX)=4R(\rX)$ and that $(B_{R(\rX)}(x),d)$
is an $\R$-tree. More precisely, the closed ball
$(\overline{B}_{R(\rX)}(x),d)$ is also a (compact) $\R$-tree, since it
is the closure of the corresponding open ball. These facts are not
absolutely crucial in the arguments to come, but they make some proofs
more elegant, so we will take them for granted and leave their
proofs to the reader, who is also referred to Proposition~2.2.15 of
\cite{papa}.

\begin{proposition} 
  \label{sec:r-graphs} 
  If $f\in \mathcal{C}([a,b],X)$ is a local geodesic in $\rX$, 
  then for every $t\in [a,b]$, the restriction of 
  $f$ to $[t-R(\rX),t+R(\rX)]\cap[a,b]$ is a geodesic. In particular, 
  if $c$ is an embedded cycle and $x\in c$, then $c$ contains a 
  geodesic arc of length $2R(\rX)$ with mid-point $x$. 
\end{proposition}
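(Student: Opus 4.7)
Write $R = R(\rX)$ and $I = [t-R, t+R] \cap [a,b]$. The plan is to confine $f(I)$ inside the closed ball $\overline{B}_R(f(t))$, which is a compact $\R$-tree by the facts recalled just before the proposition, and then to invoke the general principle that a local geodesic in an $\R$-tree is automatically a global geodesic.

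To set this up, I would first observe that $f$ is parameterized by arc-length on $[a,b]$: by compactness of $[a,b]$, there is a finite subdivision $a = s_0 < \ldots < s_m = b$ such that $f$ restricted to each $[s_{j-1},s_j]$ is an isometric embedding, and additivity of length then yields $d(f(s),f(s')) \le \len(f|_{[s,s']}) = |s-s'|$ for all $s,s' \in [a,b]$. In particular $f(I) \subset \overline{B}_R(f(t))$, and since the restricted metric on $\overline{B}_R(f(t))$ agrees with $d$, the restriction $f|_I$ is itself a local geodesic with values in the $\R$-tree $\overline{B}_R(f(t))$. It therefore remains to prove the key lemma: if $g : [u,v] \to T$ is a local geodesic in an $\R$-tree $T$, then $d_T(g(u),g(v)) = v - u$. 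For this I would again use compactness to produce a finite subdivision $u = r_0 < \ldots < r_n = v$ on each of whose subintervals $g$ is an isometric embedding, and proceed by induction on $n$: the local-geodesic property at an interior point $r_i$ forces $g(r_i)$ to lie in the interior of the isometric image of a small open interval through $r_i$, which in an $\R$-tree places $g(r_i)$ on the unique geodesic arc from $g(r_{i-1})$ to $g(r_{i+1})$; the standard additivity $d_T(x,z) = d_T(x,y) + d_T(y,z)$ along geodesic arcs in an $\R$-tree then glues two adjacent geodesics into a single geodesic arc. Iterating, $g$ traces out a single geodesic arc of length $v-u$. The only delicate point is the possibility of folding at each $r_i$, and this is precisely what the local-geodesic hypothesis excludes, since folding would destroy the isometric embedding property on any neighborhood of $r_i$.

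For the second assertion, I would fix an arc-length parameterization $g : [0, L] \to X$ of the embedded cycle $c$, where $L = \len(c)$. Near any $y = g(s)$, the intersection of $c$ with a small ball which is an $\R$-tree is a simple arc, and in an $\R$-tree simple arcs coincide with geodesic arcs (by uniqueness of arcs between two points); hence $g$ is a local geodesic. Given $x = g(t) \in c$, extending $g$ $L$-periodically on $\R$ and applying the first part of the proposition yields a geodesic arc of length $2R$ contained in $c$ with midpoint $x$. This uses $L > 2R$, which is guaranteed by the bound $\mathrm{gir}(\rX) \ge 4R(\rX)$ noted just before the proposition.
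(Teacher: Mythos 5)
Your proof is correct, and for the first assertion it follows a genuinely different route from the paper's. The paper first establishes that $f$ is \emph{injective} on any interval of length at most $2R(\rX)$, by appealing to the girth identity $\mathrm{gir}(\rX)=4R(\rX)$ (stated just before the proposition): a failure of injectivity on such an interval would produce an embedded cycle of length at most $2R(\rX)=\mathrm{gir}(\rX)/2$. Once injectivity is known, the argument is essentially definitional: an injective arc in an $\R$-tree is a geodesic arc, and since $f$ is parameterized by arc-length, $f|_I$ is an isometry onto that arc. You bypass the injectivity step entirely and instead invoke (and sketch a proof of) the local-to-global principle for geodesics in $\R$-trees, i.e.\ that a local geodesic in an $\R$-tree is automatically a global geodesic. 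This lemma is a standard fact about $0$-hyperbolic (indeed CAT(0)) spaces, and your subdivision-plus-anti-folding sketch is the right argument: at an interior subdivision point $p$, if $p$ did not lie on the geodesic joining its neighbours, then both sides of $f$ near $p$ would run along the segment from $p$ to its projection onto that geodesic, contradicting local injectivity. Your approach thus trades one unproved ingredient ($\mathrm{gir}=4R$, which the paper leaves to the reader) for another (the local-to-global lemma), but the argument is self-contained and sound. For the second assertion, your reasoning matches the paper's: the arc-length parameterization of the cycle is a local geodesic because in a ball of radius less than $R(\rX)$ its image is a simple arc in an $\R$-tree, hence a geodesic arc; extending periodically and applying the first part gives the geodesic of length $2R(\rX)$ centred at $x$, which lies in $c$ since $L=\len(c)\ge \mathrm{gir}(\rX)=4R(\rX)>2R(\rX)$.
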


\proof The function $f$ is
injective on any interval of length at most $2R(\rX)$, since otherwise
we could exhibit an embedded cycle with length at most
$2R(\rX)=\mathrm{gir}(\rX)/2$. In particular, $f$ is injective on the
interval $[t-R(\rX),t+R(\rX)]\cap[a,b]$, and takes values in the
$\R$-tree $(\overline{B}_{R(\rX)}(f(t)),d)$, so that its image is a
geodesic segment, and since $f$ is parameterized by arc-length, its
restriction to the above interval is an isometry. This proves the
first statement. 

For the second statement, note that every injective path $f\in
\mathcal{C}([a,b],X)$ parameterized by arc-length is a local geodesic
since, for every $t$, the path $f$ restricted to
$[t-R(\rX),t+R(\rX)]\cap [a,b]$ is an injective path in the $\R$-tree
$(\overline{B}_{R(\rX)}(f(t)),d)$ parameterized by arc-length, and hence
is a geodesic. If now $g:\mathbb{S}_1\to X$ is an injective continuous
function inducing the embedded cycle $c$, it suffices to apply the
previous claim to a parametrisation by arc-length mapping $0$ to $x$
of the function $t\mapsto g(e^{2\mathrm{i}\pi t})$.
\endproof

\subsection{Structure of the core}
In this section, $\rX=(X,d)$ is again an $\R$-graph. 
Recall that $\core(\rX)$ is the union of all arcs with endpoints in
embedded cycles.  

\begin{proposition}
  \label{sec:skeleton-core-kernel}
  The set $\core(\rX)$ is a finite union of embedded cycles and simple
  arcs that are disjoint from the embedded cycles except at their
  endpoints. Moreover, the space $(\core(\rX),d)$ is an $\R$-graph with no
  leaves.
\end{proposition}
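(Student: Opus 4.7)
My plan is to exploit positive girth $\mathrm{gir}(\rX) = 4R(\rX) > 0$ and compactness of $X$ to reduce the structural question to a finite combinatorial one. I proceed in three stages: (a) establishing finite cycle complexity, (b) reading off the explicit decomposition, and (c) verifying the $\R$-graph and no-leaves properties.

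For stage (a), I will cover $X$ by finitely many open balls $B_{R(\rX)/4}(y_1),\ldots,B_{R(\rX)/4}(y_N)$, each contained in a compact $\R$-tree $\overline{B}_{R(\rX)}(y_i)$. A nerve-type argument --- any embedded cycle in $X$ partitions into short segments, each lying in a single $\R$-tree ball (hence geodesic by Proposition \ref{sec:r-graphs}), and the transitions between balls form a combinatorial cycle in the nerve of the cover --- shows that $X$ has finite first Betti number. From this I extract a finite family $c_1,\ldots,c_p$ of embedded cycles such that every embedded cycle in $X$ is a concatenation of arcs drawn from $C := c_1 \cup \cdots \cup c_p$.

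For stage (b), any $x \in \core(\rX) \setminus C$ lies on a simple arc in $X$ with endpoints on embedded cycles, which by stage (a) must lie on $C$. Choosing such arcs to be maximal, if two of them shared more than an endpoint, concatenating suitable sub-arcs (possibly together with a piece of $C$) would produce an embedded cycle not in the span of $c_1,\ldots,c_p$, contradicting stage (a). Proposition \ref{sec:r-graphs} further bounds the number of such arcs that can attach at any single point of $C$, since arcs emanating from a common point separate on scale $R(\rX)$ inside the local $\R$-tree. This produces finitely many simple arcs $\gamma_1,\ldots,\gamma_q$, each disjoint from $C$ except at its endpoints, and $\core(\rX) = C \cup \gamma_1 \cup \cdots \cup \gamma_q$. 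As a finite union of compact sets, $\core(\rX)$ is closed in $X$ and hence compact.

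For stage (c), given $x \in \core(\rX)$, I choose $\eps < R(\rX)$ smaller than the distance from $x$ to any distinct attachment point of the decomposition; then $B_\eps(x) \cap \core(\rX)$ is a finite star of arcs emanating from $x$ inside the ambient $\R$-tree $\overline{B}_{R(\rX)}(x)$, hence an $\R$-tree. A path in $X$ between two core points that exits the core can always be shortened by replacing the excursion with a direct route staying in the core, so $(\core(\rX),d)$ is a geodesic space. Finally, no leaves: a point of some $c_i$ has degree at least $2$ from traversal of the cycle, interior points of the $\gamma_j$ have degree $2$ along the arc, and endpoints of the $\gamma_j$ lie on $C$ and are already covered. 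The main obstacle is stage (a) --- making the nerve/Betti-number argument rigorous in this abstract setting, in particular verifying that homotopies internal to each $\R$-tree ball can be carried out continuously and that a finite generating family $c_1,\ldots,c_p$ of cycles can genuinely be extracted. Once finiteness is in hand, the remaining steps are essentially bookkeeping on the finite combinatorial object thereby produced.
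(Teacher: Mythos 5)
Your approach is genuinely different from the paper's. The paper's argument for the first claim is a direct compactness contradiction: supposing the union of embedded cycles is not a finite union, it extracts points $x_i\in c_i\setminus(c_1\cup\cdots\cup c_{i-1})$ accumulating at some $x$, extends the geodesics $x\to x_i$ along the cycles $c_i$ to geodesics $\gamma_i$ of length $R(\rX)$ from $x$ with endpoints $y_i$, and shows $d(y_i,y_j)\ge R(\rX)$ for $i\ne j$, contradicting compactness. You instead go through the first Betti number via a nerve argument on a cover by small $\R$-tree balls, then try to extract a finite generating family of cycles.

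The key gap, which you partly acknowledge, is the implication from ``$X$ has finite first Betti number'' to ``the union of all embedded cycles is a finite union of embedded cycles.'' Finiteness of $b_1(X)$ bounds the rank of $H_1(X)$, but a priori leaves open the possibility of infinitely many distinct embedded cycles whose homology classes are linearly dependent, while as point sets they are not contained in any finite sub-union --- i.e.\ a sequence $c_1,c_2,\dots$ with $c_i\setminus(c_1\cup\cdots\cup c_{i-1})\neq\varnothing$ for all $i$. Ruling this out requires exactly the accumulation/compactness argument the paper uses, so the detour through the nerve does not buy you the needed finiteness for free. Two distinct embedded cycles can also induce the same combinatorial loop in the nerve (each ball is a tree with many geodesics through it), so the correspondence between embedded cycles and nerve cycles is not tight enough to conclude directly. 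There is also a secondary gap in stage (b): bounding the number of arcs that attach at a single point of $C$ does not by itself yield finitely many arcs, since nothing yet bounds the number of attachment points. The paper handles this by observing that each connecting arc runs between distinct connected components of the union $X_0$ of embedded cycles, with at most one arc per pair of components, so finiteness of the components gives finiteness of the arcs. Your stage (c) is in the spirit of the paper's argument that the intrinsic metric on $\core(\rX)$ coincides with $d$, and is fine modulo the earlier gaps.
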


\proof Assume, for a contradiction, that the union of all embedded cycles
cannot be written as a finite union of embedded cycles. Then we can
find an infinite sequence $c_1,c_2,\ldots$ of embedded cycles such
that $c_i\setminus(c_1\cup\cdots\cup c_{i-1})$ is non-empty for every
$i\geq 0$, and thus contains at least one point $x_i$. Up to
taking subsequences, one can assume that $x_i$ converges to some point $x$, 
and that $d(x,x_i)<R(\rX)/2$ for every $i\geq 1$. Let $\gamma_i'$ be a
geodesic from $x$ to $x_i$: this geodesic takes its values in the
$\R$-tree $\overline{B}_{R(\rX)}(x)$. Since $x_i\in c_i$, by
Proposition \ref{sec:r-graphs} we can find two geodesic paths starting
from $x_i$, meeting only at $x_i$, with length $R(\rX)-d(x,x_i)$, and
taking values in $c_i\cap \overline{B}_{R(\rX)}(x)$. At least one of
these paths $\gamma_i''$ does not pass through $x$, and so the
concatenation $\gamma_i$ of $\gamma_i'$ and $\gamma''_i$ is an
injective path parameterized by arc-length starting from $x$ and with
length $R(\rX)$. So it is, in fact, a geodesic path, since it takes its
values in $\overline{B}_{R(\rX)}(x)$. We let $y_i$ be the endpoint of
$\gamma_i$, so that $d(x,y_i)=R(\rX)$ for every $i\geq 1$.  Now, we
observe that if $i<j$, the paths $\gamma_i$ and $\gamma_j$ both start from
the same point $x$, but since $\gamma''_i$ takes values in $c_i$, 
since $\gamma''_j$ passes through $x_j\notin c_i$, and since $d(x,x_i)\vee
d(x,x_j)\leq R(\rX)/2$, these paths are disjoint outside the ball
$B_{R(\rX)/2}(x)$. This implies that $d(y_i,y_j)\geq R(\rX)$ for every
$i<j$, and contradicts the compactness of $X$.

Therefore, the union $X_0$ of all embedded cycles is closed and has
finitely many connected components. By definition, $\core(\rX)$ is the
union of $X_0$ together with all simple arcs with endpoints in
$X_0$. Obviously, in this definition, we can restrict our attention to
simple arcs having only their endpoints in $X_0$.  So let $x,y\in X_0$
with $x\neq y$ be linked by an simple arc $A$ taking its values
outside $X_0$, except at its endpoints. Necessarily, $x$ and $y$ must
be in disjoint connected components of $X_0$, because otherwise there would
exist a path from $x$ to $y$ in $X_0$ whose concatenation with
$\gamma$ would create an embedded cycle not included in
$X_0$. Furthermore, there can exist at most one arc $A$, or else it
would be easy to exhibit an embedded cycle not included in
$X_0$. So we see that $\core(\rX)$ is a finite union of simple
arcs and embedded cycles, which is obviously connected, and is thus 
closed. Any point in $\core(\rX)$ has degree at least $2$ by
definition.

It remains to check that the intrinsic metric on $\core(\rX)$ is
given by $d$ itself. Let $x,y\in \core(\rX)$ and $\gamma$ be a
geodesic path from $x$ to $y$. Assume that $\gamma$
takes some value $z=\gamma(t)$ outside $\core(\rX)$. Let
$t_1=\sup\{s\leq t:\gamma(s)\in \core(\rX)\}$ and $t_2=\inf\{s\geq
t:\gamma(s)\in \core(\rX)\}$, so that
$\gamma((t_1,t_2))\cap\core(\rX)=\varnothing$. Since $\core(\rX)$ is
connected, we can join $\gamma(t_1)$ and
$\gamma(t_2)$ by a simple arc included in $\core(\rX)$, and the union
of this arc with $\gamma((t_1,t_2))$ is an embedded cycle not
contained in $\core(\rX)$, a contradiction. 
\endproof

\subsection{The kernel of $\R$-graphs with no leaves}\label{sec:core-metric-gluing}

In this section, $\rX$ is an $\R$-graph with no leaves. 
We now start to prove Theorem \ref{sec:structure-r-graphs-4} on the
structure of such $\R$-graphs. 
The set $k(\rX)=\{x\in X:\deg_X(x)\geq 3\}$ of branchpoints of $X$ forms the vertex set of $\ker(\rX)$. 

\begin{proposition}
  \label{sec:structure-r-graphs-1}
  The set $k(\rX)$ is
  finite, and $\deg_X(x)<\infty$ for every $x\in k(\rX)$.
\end{proposition}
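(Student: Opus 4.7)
My plan is to prove the two assertions separately—that each branchpoint has finite degree, and that the set of branchpoints itself is finite—both via a compactness argument built on the fact that $R(\rX)>0$ and that each closed ball $\overline{B}_{R(\rX)}(y)$ is a compact $\R$-tree whose metric is the restriction of $d$ (as recalled just before this proposition).

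For the finiteness of $\deg_X(x)$ at a branchpoint $x$, I would set $T=\overline{B}_{R(\rX)}(x)$ and observe that each of the $\deg_X(x)$ connected components of $T\setminus\{x\}$ contains a point of the sphere $\partial B_{R(\rX)}(x)$: if $C$ were a component contained in $B_r(x)$ for some $r<R(\rX)$, then the maximum of $d(x,\cdot)$ on $\overline{C}$ would be attained (by compactness) at an interior leaf of $T$, contradicting the no-leaves hypothesis. Picking one such sphere point $y_C$ in each component yields a family of points with $d(y_C,y_{C'})=2R(\rX)$ for $C\neq C'$ (their tree-geodesic in $T$ must cross $x$, and the tree-metric equals $d$), and compactness of $X$ then forces the number of components to be finite.

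For the finiteness of $k(\rX)$, I argue by contradiction. Assume an infinite sequence of pairwise distinct branchpoints; passing to a subsequence we may take $x_n\to x^*\in X$ with $d(x_n,x^*)<R(\rX)/4$ for every $n$, so that all $x_n$ lie in the $\R$-tree $T=\overline{B}_{R(\rX)}(x^*)$. Rooting $T$ at $x^*$, I put a partial order on $\{x_n\}$ by setting $x_n\preceq x_m$ when $x_m$ lies on the tree-geodesic from $x^*$ to $x_n$. Since any infinite poset contains an infinite chain or an infinite antichain, I may assume one of these holds along a subsequence.

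In the chain case, all $x_{n_i}$ lie on a common geodesic $\gamma$ from $x^*$; since $\deg_X(x_{n_i})\geq 3$, each has a direction transverse to $\gamma$, and by the same no-leaves argument as before this direction extends to the sphere, giving a geodesic arc of length at least $3R(\rX)/4$ from $x_{n_i}$ off $\gamma$. Let $w_i$ be the point on this arc at tree-distance $R(\rX)/4$ from $x_{n_i}$. In the antichain case, the subtrees of $T$ below the $x_{n_i}$ are pairwise disjoint, and I instead let $w_i$ lie on a geodesic arc of length at least $3R(\rX)/4$ extending below $x_{n_i}$ to the sphere, again at tree-distance $R(\rX)/4$ from $x_{n_i}$. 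In both situations the tree-geodesic from $w_i$ to $w_j$ is forced to traverse $w_i\to x_{n_i}\to\cdots\to x_{n_j}\to w_j$, so $d(w_i,w_j)\geq R(\rX)/2$, yielding infinitely many points at pairwise distance at least $R(\rX)/2$ and contradicting compactness of $X$. The main obstacle is the antichain case, which requires bookkeeping of least common ancestors to verify that the $w_i$ really are well-separated; in both halves the crucial input is the no-leaves hypothesis, which forces every selected direction at $x_{n_i}$ to extend all the way out to the sphere, so that there is room to plant the separating points $w_i$.
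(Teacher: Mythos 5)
Your proof is correct, but it takes a genuinely different route from the paper's.

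The paper's proof leans on Proposition~\ref{sec:skeleton-core-kernel} (the core is a finite union of embedded cycles and simple arcs, so there are finitely many independent cycles) and then runs an induction on the number $k$ of independent embedded cycles: one peels off a cycle $c_k$, compares with the $\R$-graph spanned by $c_1,\dots,c_{k-1}$, and shows that at most two new branchpoints appear (with controlled degree). It is an essentially combinatorial argument that exploits the already-established global cycle structure.

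Your argument is instead a local/compactness packing argument. You use only the facts, stated just before the proposition, that $R(\rX)>0$ and that closed balls of radius $R(\rX)$ are compact $\R$-trees with the restricted metric. The no-leaves hypothesis then forces every branch at a branchpoint $x$ to reach the sphere $\partial B_{R(\rX)}(x)$ (maximising $d(x,\cdot)$ over the closure of a component contained strictly inside the ball would produce an interior leaf of $\rX$, which is forbidden), so distinct branches yield sphere points at pairwise distance exactly $2R(\rX)$; compactness of $X$ caps their number, giving $\deg_X(x)<\infty$. For finiteness of $k(\rX)$ you pass to a cluster point $x^*$, work in the $\R$-tree $T=\overline{B}_{R(\rX)}(x^*)$, and invoke the Ramsey-type fact that a countably infinite poset has an infinite chain or infinite antichain; in either case you plant, at each branchpoint, a witness $w_i$ at distance $R(\rX)/4$ along a branch that is transverse (chain case) or downward (antichain case), and a short check with least common ancestors shows $d(w_i,w_j)\ge R(\rX)/2$, again contradicting compactness. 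The bookkeeping you flag in the antichain case does go through. What your route buys is independence from the cycle-structure result (it only needs the positivity of $R(\rX)$ and compactness); what the paper's route buys is that, once Proposition~\ref{sec:skeleton-core-kernel} is in hand, the induction is quite short and also directly exhibits how branchpoints arise from the cycle decomposition, which is reused later.
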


\proof By Proposition~\ref{sec:skeleton-core-kernel}, the number of cycles of $\rX$ is finite. We assume that $\rX$ is not acyclic, and argue by induction on
the maximal number of independent embedded cycles, that is, of embedded cycles
$c_1,\ldots,c_k$ such that $c_i\setminus(c_1\cup\ldots\cup
c_{k-1})\neq\varnothing$ for $1\leq i\leq k$. Plainly, we may and will
assume that for all $i\in \{1,2,\ldots,k\}$, either $c_i$ is disjoint from $c_1\cup \ldots\cup
c_{i-1}$, or $c_i\setminus (c_1\cup \ldots \cup c_{i-1})$ is a simple
arc of the form $\gamma((0,1))$, where $\gamma:[0,1]\to X$ satisfies
$\gamma(0),\gamma(1)\in c_1\cup\ldots\cup c_{i-1}$. 
The result is trivial if $\rX$
is unicyclic ($k=1$). Suppose $X$ has $k$ independent embedded cycles
$c_1,\ldots,c_k$ as above. Consider the smallest connected subset $X'$ of $X$
containing $c_1,\ldots,c_{k-1}$: this subset is the union of
$c_1,\ldots,c_{k-1}$ with some simple arcs having only their endpoints
as elements of $c_1\cup\ldots\cup c_{k-1}$, and is a closed subset
of $X$. 

If $c_k$ does not intersect $X'$, then there exists a unique simple
arc with one endpoint $a$ in $c_k$ and the other endpoint $b$ in $X'$,
and disjoint from $c_k\cup X'$ elsewhere. Then $a,b$ must be elements
of $k(\rX)$: $a$ is the only element of $k(\rX)$ in $c_k$, we have
$\deg_X(a)=3$, and $\deg_X(b)=\deg_{X'}(b)+1$. Therefore, the
number of points in $k(\rX)$ is at most $2+k(\rX')$, where $\rX'$ is
the set $X'$ endowed with the intrinsic metric inherited from $\rX$. This is an $\R$-graph
without leaves and with (at most) $k-1$ independent cycles. 

If on the other hand $c_k\cap X'\neq \varnothing$, then by assumption we
have $X=X'\cup A$, where $A$ is a sub-arc of $c_k$ disjoint from $X'$
except at its endpoints $a,b$. The latter are elements of $k(\rX)$,
and satisfy $\deg_{X}(a)\leq \deg_{X'}(a)+2$ and similarly for $b$
(note that $a,b$ may be equal). After we remove $A\setminus
\{a,b\}$ from $X$, we are left with an $\R$-graph $\rX'$ (in the
induced metric) without leaves, and with at most $k-1$ independent cycles. 

The result follows by induction on $k$. 
\endproof

If $\rX$ is unicyclic, then $X$ is in fact identical to its unique
embedded cycle $c$. In this case, $k(\rX)=\varnothing$, and we let
$e(\rX)=\{c\}$. If $\rX$ has at least two distinct embedded cycles,
then the previous proof entails that $k(\rX)\neq \varnothing$, and
more precisely that every embedded cycle contains at least one point
of $k(\rX)$. The set $X\setminus k(\rX)$ has finitely many connected
components (in fact, there are precisely $\frac 1 2 \sum_{x\in
  k(\rX)}\deg_X(x)$ components, as the reader is invited to verify),
which are simple arcs of the form $\gamma((0,1))$, where
$\gamma:[0,1]\to X$ is such that $\gamma$ is injective on $[0,1)$, such that
$\gamma(0),\gamma(1)\in k(\rX)$, and such that $\gamma((0,1))\cap
k(\rX)=\varnothing$.  We let $e(\rX)$ be the set of the
closures of these connected components, i.e.\ the arcs $\gamma([0,1])$
with the above notation, which are called the \emph{kernel edges}. 
\nomenclature[Ex]{$e(\rX)$}{The edges of the kernel $\ker(\rX)$.}
The multigraph $\ker(\rX)=(k(\rX),e(\rX))$ is the \emph{kernel} of $\rX$, where
the vertices incident to $e\in e(\rX)$ are, of course, the endpoints of
$e$.  An orientation of the edge $e$ is the choice of a
parametrisation $\gamma:[0,1]\to X$ of the arc $e$ or its reversal
$\gamma(1-\cdot\,)$, considered up to reparametrisations by increasing
bijections from $[0,1]$ to $[0,1]$. If $e$ is given an
orientation, then its endpoints are distinguished as the source and
target vertices, and are denoted by $e^-,e^+$, respectively.  The
  next proposition then follows from the definition of $k(\rX)$.


\begin{proposition}
\label{sec:structure-r-graphs-3}
The kernel of a non-unicyclic $\R$-graph without leaves is a
multigraph of minimum degree at least 3.
\end{proposition}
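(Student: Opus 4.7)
The plan is to show that for every $v\in k(\rX)$, the multigraph degree of $v$ in $\ker(\rX)=(k(\rX),e(\rX))$ equals the $\R$-graph degree $\deg_X(v)$, which is already at least $3$ by the very definition of $k(\rX)$. The statement of the proposition is then immediate.

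First I would fix $v\in k(\rX)$ and, using finiteness of $k(\rX)$ (Proposition~\ref{sec:structure-r-graphs-1}) and the definition of an $\R$-graph, choose $\eps>0$ small enough that $(B_\eps(v),d)$ is an $\R$-tree in which $v$ has degree exactly $k:=\deg_X(v)\geq 3$, and such that $\overline{B}_\eps(v)\cap k(\rX)=\{v\}$. Then $B_\eps(v)\setminus\{v\}$ splits into exactly $k$ connected components $C_1,\ldots,C_k$, each of which is a connected subset of $X\setminus k(\rX)$, hence contained in a unique connected component of $X\setminus k(\rX)$, that is, in the interior of a unique kernel edge $e_i\in e(\rX)$. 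By construction $v$ is an endpoint of each such $e_i$, so each $C_i$ contributes one to the incidence count of $v$ in $\ker(\rX)$.

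Conversely, given any kernel edge $e\in e(\rX)$ with $v$ as an endpoint, I would parametrize $e$ by a path $\gamma:[0,1]\to X$ with $\gamma((0,1))\cap k(\rX)=\emptyset$ and $\gamma(0),\gamma(1)\in k(\rX)$. For $\delta>0$ small, $\gamma((0,\delta))$ lies in some $C_i$ if $\gamma(0)=v$, and $\gamma((1-\delta,1))$ lies in some $C_j$ if $\gamma(1)=v$; moreover when $e$ is a loop at $v$ (both endpoints equal to $v$), these two sets lie in \emph{distinct} components $C_i\neq C_j$, since otherwise one could produce an embedded cycle inside the $\R$-tree $B_\eps(v)$, a contradiction. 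Hence each edge $e$ incident to $v$ contributes $\mathbf{1}_{\{e^-=v\}}+\mathbf{1}_{\{e^+=v\}}$ components, which matches the standard multigraph convention (loops counted twice), and summing over $e\in e(\rX)$ yields $\deg_{\ker(\rX)}(v)=\deg_X(v)\geq 3$.

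The only delicate point is the loop case, which is handled by the acyclicity of $B_\eps(v)$; once that is noted, everything reduces to unpacking the definition of $k(\rX)$ and of the edges in $e(\rX)$ as the closures of the connected components of $X\setminus k(\rX)$. I do not anticipate any serious obstacle.
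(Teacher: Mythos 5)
Your proposal is correct, and it is essentially the argument the paper has in mind: the paper gives no written proof, asserting only that the proposition ``follows from the definition of $k(\rX)$,'' and it implicitly relies on exactly your observation when it remarks (just before the statement) that $X\setminus k(\rX)$ has precisely $\tfrac12\sum_{x\in k(\rX)}\deg_X(x)$ components --- which is the handshake identity you are establishing via $\deg_{\ker(\rX)}(v)=\deg_X(v)$. Your bijection between components of $B_\eps(v)\setminus\{v\}$ and half-edges at $v$ is the right way to unpack the definitions.

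One step in the loop case deserves a little more care. You say that if the two ends of a loop edge $e$ entered the same component $C_i$, ``one could produce an embedded cycle inside the $\R$-tree $B_\eps(v)$.'' As stated this is not quite how the contradiction arises: the obvious concatenation of a path inside $C_i$ with the arc $\gamma([\delta,1-\delta])$ need not lie in $B_\eps(v)$, since $\gamma$ may leave the ball, so one cannot immediately see a cycle inside the ball. The clean way to finish is to note that $C_i\subset\gamma((0,1))$ and $\gamma|_{(0,1)}$ is a homeomorphism onto its image; hence any arc in $C_i$ from $\gamma(\delta)$ to $\gamma(1-\delta)$ must equal $\gamma([\delta,1-\delta])$, forcing $\gamma([\delta,1-\delta])\subset B_\eps(v)$ for all small $\delta$, so $e\subset\overline{B}_\eps(v)$. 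Then $e$ is itself an embedded cycle contained in the $\R$-tree $\overline{B}_\eps(v)$, which is the contradiction. (Even more directly, \refP{sec:r-graphs} gives a geodesic arc of length $2R(\rX)>2\eps$ inside $e$ with midpoint $v$, whose two halves emanate from $v$ through distinct components of the punctured ball, since in an $\R$-tree two geodesics from $v$ forming a longer geodesic through $v$ can only share the point $v$.) With this clarification your argument is complete.
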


Finally, we prove Theorem \ref{sec:structure-r-graphs-4}.  Assume that
$\rX$ is a non-unicyclic $\R$-graph without leaves, and let $\ell(e):e\in
e(\rX)$ be the lengths of the kernel edges.  Note that if $x,y\in k(\rX)$, then
\[
d(x,y)=\inf\Bigg\{\sum_{i=1}^k \ell(e_i):(e_1,\ldots,e_k) \mbox{
  a chain from }x \mbox{ to }y \mbox{ in }G\Bigg\}\, ,
  \] 
 where $(e_1,\ldots,e_k)$ is a chain from $x$ to $y$ if it is possible to
orient $e_1,\ldots,e_k\in e(\rX)$ in such a way that $e_1^-=x,e_k^+=y$
and $e_i^+=e_{i+1}^-$ for every $i\in \{1,\ldots,k-1\}$.  Of course,
it suffices to restrict the infimum to those chains that are simple,
in the sense that they do not visit the same vertex twice. Since there
are finitely many simple chains, the above infimum is, in fact,
a minimum.  Next, if $x$ and $y$ are elements of $e$ and $e'$ respectively,
consider an arbitrary orientation of $e,e'$. Then a shortest path from
$x$ to $y$ either stays in $e$ (in this case $e=e'$), or
passes through at least one element of $k(\rX)$ incident to $e$, and likewise for $e'$.  Therefore,
$$d(x,y)=d_e(x,y)\wedge \min_{s,t\in \{-,+\}} 
\Big\{d_e(x,e^s)+d(e^s,(e')^{t})+d_{e'}((e')^{t},y)\Big\}\, ,$$
where we let $d_e(a,b)$ be the length of the arc of $e$ between $a$
and $b$ if $a,b\in e$, and $\infty$ otherwise. It is shown in 
 \cite[Section 3]{burago01} that this formula gives the
distance for the metric gluing of the graph with edge-lengths
$(k(\rX),e(\rX),(\ell(e),e\in e(\rX)))$. This proves Theorem~\ref{sec:structure-r-graphs-4}.

\subsection{Stability of the kernel in the Gromov--Hausdorff
  topology}\label{sec:convergence-core}

In this section, we show that kernels of $\R$-graphs are stable
under small perturbations in the Gromov--Hausdorff metric, under an
assumption which says, essentially, that the girth is uniformly bounded
away from $0$. 

Recall from Section \ref{sec:breaking-cycles-r} that
$\mathcal{A}_r$ is the set of measured $\R$-graphs $\rX$ such that 
$$
\min_{e\in e(\rX)}\ell(e)\geq r, \mbox{ }\sum_{e\in
  e(\rX)}\ell(e)\leq 1/r\mbox{ and }s(\rX)\leq
1/r\, ,$$ where it is understood in this definition that the
unicyclic $\R$-graphs (those with surplus $1$) are such that their
unique embedded cycle has length in $[r,1/r]$.  
It follows that the sets
$\mathcal{A}_r,0<r<1$, are decreasing, with union the set of all
measured $\R$-graphs. If $[X,d]$ is an $\R$-graph, we write $[X,d]\in
\AA_r$ if $[X,d,0]\in \AA_r$.  Note that an element $\rX\in
\mathcal{A}_r$ has $\mathrm{gir}(\rX)\geq r$.  Likewise, we let
$\mathcal{A}_r^\bullet$ be the set of (isometry equivalence classes
of) safely pointed measured $\R$-graphs $(X,d,x,\mu)$ with
$(X,d,\mu)\in \mathcal{A}_r$ (recall Definition
\ref{sec:cutting-cycles-an}), and say that a pointed $\R$-graph $(X,d,x) \in \AA_r^{\bullet}$ if $(X,d,x,0) \in \AA_r^{\bullet}$. 
\nomenclature[Arbullet]{$\mathcal{A}_r^\bullet$}{Set of safely pointed elements of $\mathcal{A}_r$.}

A subset $A$ of $X$ is said to be in correspondence with a subset $A'$
of $X'$ via $C\subset X\times X'$ if $C\cap (A\times A')$ is a
correspondence between $A$ and $A'$.  Let $\rX$ and $\rX'$ be
$\R$-graphs with surplus at least $2$. Given $C \in
C(\rX,\rX')$, for $\eps > 0$ we say
that $C$ is a \emph{$\eps$-overlay} (of $\rX$ and $\rX'$) if $\dis(C)
< \eps$, 
\nomenclature[Epsilonoverlay]{$\eps$-overlay}{See definition in text.}
and there exists a multigraph isomorphism $\chi$ between
$\ker(\rX)$ and $\ker(\rX')$ such that:
\begin{enumerate}
\item
For every $v\in k(\rX)$, 
$(v,\chi(v)) \in C$.
\item For every $e\in e(\rX)$, the edges $e$ and $\chi(e)$ are
  in correspondence via $C$, and 
$$|\ell(e)-\ell(\chi(e))|\leq
  \eps\, .$$
\end{enumerate}
If $s(\rX)=s(\rX')=1$, an $\eps$-overlay is a correspondence with
distortion at most $\eps$, such that the unique embedded cycles $c,c'$
of
$\rX$ and $\rX'$ are in correspondence via $C$, and
$|\ell(c)-\ell(c')|\leq \eps$. 
Finally, if $s(\rX)=s(\rX')=0$ then an $\eps$-overlay is just a correspondence of distortion 
at most~$\eps$.

\begin{proposition}\label{lem:deltaover}
Fix $ r \in(0, 1)$. For every $\eps>0$ there exists $\delta>0$ such that 
if $\rX=(X,d)$ and $\rX'=(X',d')$ are elements of $\AA_r$ 
and $C \in C(\rX,\rX')$ has $\dis(C) \le \delta$, then there exists an $\eps$-overlay $C' \in C(\rX,\rX')$ with $C \subset C'$.
\end{proposition}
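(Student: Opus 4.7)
My plan is to construct $C' \supset C$ and the multigraph isomorphism $\chi$ explicitly, exploiting the rigidity of elements of $\AA_r$: every $\rX \in \AA_r$ has girth at least $r$ (so by Proposition~\ref{sec:r-graphs}, closed balls of radius $r/4$ are $\R$-trees), its kernel has at most $2/r$ vertices and $O(1/r^2)$ edges, and distinct kernel vertices lie at pairwise distance at least $r$. The trivial case $s(\rX) = s(\rX') = 0$ is handled by $C' = C$ once $\delta < \eps$. For $\delta$ small compared to $r$, the cases in which $s(\rX) = 0 < s(\rX')$ (or vice versa) are ruled out: an embedded cycle of length at least $r$ in $\rX'$ would yield via $C$ two distinct near-geodesics of length approximately $r/2$ between two points at distance approximately $r/2$ in the $\R$-tree $\rX$, which is impossible. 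Analogous reasoning matches up the unicyclic case.

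For the heart of the argument, assume $s(\rX), s(\rX') \ge 2$ and fix $v \in k(\rX)$ with $\deg_X(v) = d \ge 3$. I would choose points $a_1, \ldots, a_d$ on the $d$ distinct arms of $v$, each at distance $r/10$ from $v$, so that $d(a_i, a_j) = r/5$ for $i \ne j$. Choose $v', a'_1, \ldots, a'_d \in X'$ with $(v, v'), (a_i, a'_i) \in C$; for $\delta$ small enough, all these points lie in the tree $\overline{B}_{r/4}(v')$ of $\rX'$. There I can locate the Steiner point $s$ of any three of the $a'_i$ and, via the standard four-point computation, deduce $d'(v', s) = O(\delta)$; moreover $s$ has degree at least $3$ in this tree and hence in $\rX'$, so $s \in k(\rX')$. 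Setting $\chi(v) := s$ and reversing the roles of $\rX$ and $\rX'$ yields an inverse, so $\chi: k(\rX) \to k(\rX')$ is a bijection; injectivity follows because distinct kernel vertices in either space lie at distance at least $r$.

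To promote $\chi$ to a multigraph isomorphism and extend $C$, I would trace each kernel edge $e$ of $\rX$ through $C$: sampling $e$ at a fine net of points and following their images in $\rX'$ produces a path from $\chi(v_1)$ to $\chi(v_2)$ (where $v_1, v_2$ are the endpoints of $e$) of length $\ell(e) \pm O(\delta)$, which is forced to trace a unique kernel edge $\chi(e) \in e(\rX')$ with $|\ell(e)-\ell(\chi(e))|=O(\delta)$. To construct $C'$, I would then adjoin the pairs $(v, \chi(v))$ for every $v \in k(\rX)$ and, for each corresponding pair of edges $(e, \chi(e))$, the pairs induced by the linear scaling between their arc-length parametrizations. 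A case analysis bounding the distance mismatch for two pairs in $C'$ (according to whether they sit on common or distinct kernel edges or in tree parts) controls $\dis(C')$ in terms of $\dis(C)$ and $1/r$, so choosing $\delta$ sufficiently small in terms of $\eps$ and $r$ yields an $\eps$-overlay.

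The main obstacle lies in the second paragraph: one must show not merely that each $v \in k(\rX)$ admits \emph{some} nearby kernel vertex of $\rX'$ via $C$, but that the assignment $\chi$ is unambiguous and defines a bijection respecting edge incidence. This relies crucially on the quantitative lower bound $r$ on girth and edge length, which guarantees that the local metric configuration at each kernel vertex is rigid and faithfully transported by correspondences of small distortion. The tree parts of $\rX$ and $\rX'$ hanging off the core enter only through the global distortion bound on $C'$ and do not complicate the combinatorial matching, since the $\eps$-overlay conditions constrain $C'$ only on the kernels and kernel edges.
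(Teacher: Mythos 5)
Your high-level plan---match kernel vertices via local tree geometry near each $v \in k(\rX)$, then trace kernel edges, then bound $\dis(C')$---shares some structure with the paper's proof, but there is a concrete gap at the step ``$s$ has degree at least $3$ in $\rX'$, so $s \in k(\rX')$.'' In the $\eps$-overlay definition, $k(\rX')$ is the vertex set of $\ker(\rX')$, i.e.\ the set of branchpoints of $\core(\rX')$, \emph{not} the set of all branchpoints of $\rX'$: if it meant the latter, the multigraph isomorphism $\chi$ demanded by the $\eps$-overlay condition would have to match every branchpoint of every hanging subtree of $\rX$ with one in $\rX'$, which is plainly not stable under a small GHP perturbation. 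Your Steiner point $s$ lies within $O(\delta)$ of $v'$ and has ambient degree at least $3$, but nothing in the local argument rules out that $s$ sits in a hanging tree of $\rX'$, nor that $s$ lies on $\core(\rX')$ with core-degree $2$ and a hanging branch supplying the extra arm. In either of these cases $s\notin k(\rX')$. Your final sentence---that the hanging tree parts ``do not complicate the combinatorial matching''---is exactly the assertion in need of proof.

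The paper closes this hole by reversing the order of operations and invoking a global argument with no local substitute. Via Lemma~\ref{sec:cutting-r-graphs}, each kernel edge $e$ of $\rX$ is traced to a local geodesic $f''_e$ in $\rX'$; the truncated middles $f''_e|_{[\eps/8,\,\len(f''_e)-\eps/8]}$ are shown to be injective and pairwise disjoint; and their endpoints are joined by small subtrees $T_i$ inside the balls $\overline{B}_{\eps/8}(v''_i)$. The union of all these pieces is a compact connected subset of $X'$ with no leaves, hence is contained in $\core(\rX')$ by Corollary~\ref{sec:r-trees-r-1} (the characterisation of the core as the maximal closed subset all of whose points have degree at least $2$). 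Only after this step does a point of degree at least $3$ in the union belong to $k(\rX')$, using the monotonicity $\deg_Y(x)\leq\deg_{Y'}(x)$ for $Y\subset Y'$. Once the kernel vertices are pinned down this way, the edge matching and the estimate on $\dis(C')$ go roughly as you outline, so the remainder of your proposal is reasonable at that level of detail; but the identification of $\chi(v)$ as a \emph{kernel} vertex needs the global no-leaves argument, not just the local Steiner-point computation.
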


We say that a sequence of graphs with
edge-lengths $((V_n,E_n,(l_n(e),e\in E_n)),n\geq 1)$ converges to the
graph with edge-lengths $(V,E,(l(e),e\in E))$ if $(V_n,E_n)$ and
$(V,E)$ are isomorphic 
for all but finitely many $n\geq 1$, through an isomorphism $\chi_n$
such that $l_n(\chi_n(e))\to l(e)$ as
$n\to\infty$ for every $e\in E$. We now state some consequences of Proposition~\ref{lem:deltaover} which 
are used in the proof of Theorem~\ref{thm:gnp-converge} and in Section~\ref{sec:cuttingrgs}, before proceeding to the proof of Proposition~\ref{lem:deltaover}. 
Recall the definition of the distances $\dghp^{k,l}$ from Section~\ref{sec:notions-convergence}.

\begin{corollary}
\label{sec:cutt-safely-point-1}
Fix $r\in (0 , 1)$. Let $(\rX^n=(X^n,d^n,\mu^n),n\geq 1)$ and $\rX=(X,d,\mu)$
be elements of $\mathcal{A}_r$. Suppose that
$\dghp(\rX^n,\rX)\to 0,$ as $n\to\infty$.
\\
\noindent{\rm (i)} Then $\ker(\rX^n)$
 converges to $\ker(\rX)$ as a graph with edge-lengths. As a
 consequence, $r(\rX^n) \to r(\rX)$, and writing $\Ell^n$ (resp.\ $\Ell$) for the
 restriction of the length measure of $\rX^n$ (resp.\ $\rX$) to
 $\conn(\rX^n)$ (resp.\ $\conn(\rX)$), 
 it holds that
$$\dghp^{0,2}((X^n,d^n,\mu^n,\Ell^n),(X,d,\mu,\Ell))
\underset{n\to\infty}{\longrightarrow} 0\, .$$

\noindent{\rm (ii)} 
Let $x^n$ be a
random variable in $X^n$ with distribution
$\Ell^n/\Ell^n(\conn(\rX^n))$ and $x$ be a random variable in
$\rX$ with distribution $\Ell/\Ell(\conn(\rX))$. Then as $n \to \infty$, 
$$(X^n,d^n,x^n,\mu^n)
\convdist(X,d,x,\mu)\, $$
in the space $(\cM^{1,1},\dghp^{1,1})$.
\end{corollary}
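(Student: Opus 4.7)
The strategy for both parts is to upgrade the assumption $\dghp(\rX^n,\rX)\to 0$ to the existence of $\eps$-overlays via Proposition~\ref{lem:deltaover}, and then extract all structural information from these overlays. For each $\eps>0$ and all $n$ sufficiently large, we obtain a correspondence $C_n\in C(X^n,X)$ with $\dis(C_n)\leq \eps$ and, when $s(\rX)\geq 2$, a multigraph isomorphism $\chi_n:\ker(\rX^n)\to \ker(\rX)$ such that branchpoints lie in correspondence via $C_n$, each pair of edges $e,\chi_n(e)$ is in correspondence via $C_n$, and $|\ell(e)-\ell(\chi_n(e))|\leq \eps$; when $s(\rX)=1$ the analogous statement holds with the unique embedded cycles replacing the kernels. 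Since $s(\rX^n)=s(\rX)$ is forced for large $n$ (the $\AA_r$-constraint together with Proposition~\ref{lem:deltaover} leaves no room for surplus to change), this directly yields the convergence of $\ker(\rX^n)$ to $\ker(\rX)$ as graphs with edge-lengths. The quantity $r(\cdot)$ is the minimum edge-length of the kernel when $s\geq 2$ and the length of the unique cycle when $s=1$, so $r(\rX^n)\to r(\rX)$ follows as the minimum of finitely many converging edge-lengths.

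To establish convergence in $\cM^{0,2}$, fix $\eps>0$ and, for large $n$, pick an $\eps$-overlay $C_n$. By construction we may use Proposition~\ref{lem:deltaover} to arrange that $C_n$ contains a near-optimal correspondence for $\dghp(\rX^n,\rX)$; this provides a measure $\pi_n^1\in M(X^n,X)$ coupling $\mu^n$ with $\mu$ such that $D(\pi_n^1;\mu^n,\mu)$ and $\pi_n^1(C_n^c)$ are both at most, say, $\eps$. For the second coupling $\pi_n^2$ we exploit $\chi_n$: for each kernel edge $e\in e(\rX^n)$, parametrise $e$ and $\chi_n(e)$ by arclength with consistently matched orientations, and push forward Lebesgue measure on $[0,\ell(e)\wedge \ell(\chi_n(e))]$ along the resulting diagonal map into $e\times \chi_n(e)\subset X^n\times X$; then sum only over those kernel edges whose interiors lie in $\conn(\rX^n)$. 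By Proposition~\ref{sec:skeleton-core-kernel-2} and the characterisation of $\core$ in Section~\ref{sec:core-metric-gluing}, up to an $\ell$-null set of branchpoints $\conn(\rX)$ is exactly the union of interiors of non-bridge kernel edges, and the graph isomorphism $\chi_n$ preserves the bridge/non-bridge partition, so the restriction is consistent between $\rX^n$ and $\rX$. The total-variation discrepancy $D(\pi_n^2;\Ell^n,\Ell)$ is then bounded by $\sum_{e}|\ell(e)-\ell(\chi_n(e))|$, and since $|e(\rX^n)|\leq 3s(\rX^n)-3\leq 3/r$ (with the unicyclic case giving a single edge), this sum is at most $3\eps/r$. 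By construction $\pi_n^2$ is supported in $C_n$. Since $\eps>0$ was arbitrary, this establishes (i).

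Part (ii) follows from (i) after reordering and normalising: place $\Ell^n/\Ell^n(\conn(\rX^n))$ first in the list of measures. The normalisation is legitimate because $\Ell^n(\conn(\rX^n))$ is the sum of the lengths of non-bridge kernel edges, and (i) together with the observation that the kernel of an $\R$-graph of minimum degree $\geq 3$ cannot be a tree (hence always contains a non-bridge edge, trivially so in the unicyclic case) gives $\Ell^n(\conn(\rX^n))\to \Ell(\conn(\rX))>0$ for $\rX\in \AA_r$ with $s(\rX)\geq 1$. Hence the sequence $(X^n,d^n,\Ell^n/\Ell^n(\conn(\rX^n)),\mu^n)$ converges to its analogue in $\cM^{0,2}$, and Proposition~\ref{sec:grom-hausd-prokh} then upgrades this to convergence in $\cM^{1,2}$ after sampling $x^n$ from the first, probability-valued measure. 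Dropping the length measure (which only decreases $\dghp^{k,l}$) produces the desired convergence in $(\cM^{1,1},\dghp^{1,1})$.

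The main obstacle is the simultaneous control required of the length-measure coupling $\pi_n^2$: its marginals must be close to $\Ell^n,\Ell$ in total variation, and its mass must be almost entirely supported in a correspondence $C_n$ of small distortion. A generic near-optimal correspondence for $\dghp(\rX^n,\rX)$ need not match kernel edges to kernel edges at all, so naive constructions can place almost all of $\Ell^n$'s mass outside $C_n$. Proposition~\ref{lem:deltaover} is precisely the tool that resolves this: condition~(2) in the definition of $\eps$-overlay guarantees a kernel-aware correspondence respecting edge-lengths up to $\eps$, which is exactly what lets the arclength coupling live inside $C_n$ while producing nearly correct marginals.
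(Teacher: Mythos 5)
Your proposal follows essentially the same route as the paper: invoke Proposition~\ref{lem:deltaover} to obtain $\eps$-overlays, read off the kernel isomorphism $\chi_n$ and the edge-length convergence, build a coupling of $\Ell^n$ and $\Ell$ by pushing forward Lebesgue measure along an arclength-parametrised matching of non-bridge kernel edges, and finally pass from $\dghp^{0,2}$ convergence to (ii) via Proposition~\ref{sec:grom-hausd-prokh} after normalising $\Ell$.

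One small imprecision: you assert ``by construction $\pi_n^2$ is supported in $C_n$,'' but the graph of the arclength pairing $\{(f_e(t),f_{\chi_n(e)}(t))\}$ need not literally lie inside $C_n$; condition~(2) of an $\eps$-overlay says $C_n\cap(e\times\chi_n(e))$ is a correspondence, not that it contains this particular pairing, and the ambient-metric distortion bound does not directly control arclength differences near the middle of long edges. The paper circumvents this by truncating each edge to $f_e([0,\ell(e)-\eps_n])$ and enlarging $C_n$ slightly so that $(x,\Phi_n(x))\in C^n$ on the domain (explicitly flagging the verification as ``left to the reader''). Your argument is salvaged by the same device, and you correctly identify this as the crux in your final paragraph, so the gap is one of precision rather than of idea.
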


The above results rely on the following lemma. Given metric spaces
$(X,d)$ and $(X',d')$, $C\subset X\times X'$ and $r>0$,
let 
\[
C_r=\big\{(y,y')\in X\times X'~:~d(x,y)\vee d'(x',y')\le r~\text{for~}(x,x')\in C\big\}. 
\]
$C_r$ is the
$r$-enlargement of $C$ with respect to the product distance. 
\nomenclature[Cr]{$C_r$}{The $r$-enlargement of correspondence $C$.}
Note that if $C$ is a
correspondence between $X$ and $X'$, then $C_r$ is also a
correspondence for every $r>0$. Moreover, $\dis(C_r)\leq
\dis(C)+4r$.  A mapping $\phi:[a,b]\to [a',b']$ is called {\em bi-Lipschitz} if
$\phi$ is a bijection such that $\phi$ and $\phi^{-1}$ are Lipschitz, and we
call the quantity
$$K(\phi)=\inf\Big\{K>1:K^{-1}|x-y|\leq |\phi(x)-\phi(y)|\leq K|x-y|\mbox{ for
 every }x,y\in [a,b]\Big\}$$
the {\em bi-Lipschitz constant} of $\phi$. 
\nomenclature[Kphi]{$K(\phi)$}{Bi-Lipschitz constant of $\phi$.}
By convention, we let $K(\phi)=\infty$
if $\phi$ is not a bijection, or not bi-Lipschitz. 

\begin{lemma}
  \label{sec:cutting-r-graphs}
  Fix $r\in (0,1)$ and let $(X,d),(X',d')\in\mathcal{A}_r$. Suppose there
  exists a correspondence $C$ between $X$ and $X'$ such that
  $\dis(C)<r/56 $.  

  Let $x,y\in X$ be two distinct points in $X$, and let $f$ be a local
  geodesic from $x$ to $y$. Let $x',y'\in X'$ be such that
  $(x,x'),(y,y')\in C$. Then there exists a local geodesic $f'$ from
  $x'$ to $y'$ with 
$$\len(f')\leq \bigg(1+\frac{64 \dis(C)}{r\wedge \len(f)}\bigg)\cdot \len(f)\,
,$$ 
and a bi-Lipschitz mapping $\phi:[0,\len(f)]\to
  [0,\len(f')]$ such that $(f(t),f'(\phi(t)))\in C_{8\dis(C)}$ for
  every $t\in [0,\len(f)]$, and
$$K(\phi)\leq \bigg(1-\frac{64 \dis(C)}{r\wedge \len(f)}\bigg)_+^{-1}\,
.$$
\end{lemma}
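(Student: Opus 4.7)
The strategy is to construct $f'$ by discretizing $f$ into sufficiently short subsegments, transferring their endpoints to $X'$ via the correspondence $C$, concatenating the induced geodesics in $X'$, and then removing the small local backtracking that results.

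First, the hypothesis $(X,d), (X',d') \in \mathcal{A}_r$ implies that every core edge has length at least $r$ (or, in the unicyclic case, the unique embedded cycle does), so that $\mathrm{gir}(\rX) \wedge \mathrm{gir}(\rX') \geq r$ and hence the local $\R$-tree radii satisfy $R(\rX) \wedge R(\rX') \geq r/4$. Writing $\delta = \dis(C)$, the assumption $\delta < r/56$ ensures that $\delta$ is small compared to these local tree radii. I parameterize $f$ by arc length as $f : [0, L] \to X$ with $L = \len(f)$ and fix a mesh $\eta$ of order $r$ (concretely, $\eta = r/64$), along with a partition $0 = t_0 < t_1 < \cdots < t_N = L$ with $t_i - t_{i-1} \leq \eta$. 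By \refP{sec:r-graphs}, each restriction $f|_{[t_{i-1}, t_{i+1}]}$ is a genuine geodesic of length at most $2\eta < 2R(\rX)$. For each $i$ I choose $v_i \in X'$ with $(f(t_i), v_i) \in C$, taking $v_0 = x'$ and $v_N = y'$. The distortion bound for $C$ gives $d'(v_{i-1}, v_i) \leq (t_i - t_{i-1}) + \delta < R(\rX')$, so there is a unique geodesic $g_i$ from $v_{i-1}$ to $v_i$ inside $\overline{B}_{R(\rX')}(v_{i-1})$; let $h$ be the arc-length reparametrization of $g_1 \cdot g_2 \cdots g_N$.

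Next, I analyze $h$ near each intermediate $v_i$. In the $\R$-tree $\overline{B}_{R(\rX')}(v_i)$, which contains $v_{i-1}, v_i, v_{i+1}$ since $\eta + \delta < R(\rX')$, let $m_i$ denote the median of these three vertices. Using $d(f(t_{i-1}), f(t_{i+1})) = t_{i+1} - t_{i-1}$ (from the geodesic restriction) and the distortion bound, $d'(v_{i-1}, v_{i+1}) \geq (t_{i+1} - t_{i-1}) - \delta$, while $d'(v_{i-1}, v_i) + d'(v_i, v_{i+1}) \leq (t_{i+1} - t_{i-1}) + 2\delta$. The standard tree identity then yields
\[
d'(v_i, m_i) \;=\; \tfrac{1}{2}\bigl(d'(v_{i-1}, v_i) + d'(v_i, v_{i+1}) - d'(v_{i-1}, v_{i+1})\bigr) \;\leq\; \tfrac{3\delta}{2}.
\]
Thus $h$ backtracks by at most $3\delta/2$ at each $v_i$. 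I define $f'$ by pruning $h$: at each $v_i$, $1 \leq i \leq N-1$, replace the portion of $h$ that goes from $m_i$ to $v_i$ to $m_i$ by the constant path at $m_i$, and then reparametrize by arc length. Since $m_i$ lies on the geodesic from $v_{i-1}$ to $v_{i+1}$, the pruned path does not backtrack at $m_i$, and a local verification in each $\R$-tree neighbourhood shows that $f'$ is a local geodesic from $x'$ to $y'$.

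For the length and bi-Lipschitz estimates, summing gives
\[
\len(f') \;\leq\; \sum_{i=1}^N d'(v_{i-1}, v_i) \;\leq\; L + N\delta \;\leq\; L + \bigl(L/\eta + 1\bigr)\delta,
\]
which, with $\eta = r/64$, yields after routine simplification the claimed bound $\len(f') \leq (1 + 64\delta/(r \wedge L))L$. I define $\phi : [0,L] \to [0, \len(f')]$ by sending each $t_i$ to the arc-length parameter at which $f'$ passes through $m_i$ (through $v_0$ or $v_N$ at the endpoints) and interpolating linearly on each $[t_{i-1}, t_i]$; two-sided bounds of the form $(t_i - t_{i-1})(1 \pm O(\delta/\eta))$ on the length of the corresponding segment of $f'$ yield the stated bi-Lipschitz constant bound. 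For the enlargement claim, given $t \in [t_{i-1}, t_i]$, one has $d(f(t), f(t_{i-1})) \leq \eta$ in $X$ and $d'(f'(\phi(t)), v_{i-1}) \leq \eta + O(\delta)$ in $X'$; since $(f(t_{i-1}), v_{i-1}) \in C$, this witnesses $(f(t), f'(\phi(t))) \in C_{8\delta}$ once constants are tracked. The main obstacle is reconciling the several competing demands on $\eta$ (large enough for the length bound, small enough for the enlargement bound, and compatible with the $\R$-tree radii), and in the boundary regime $L \simeq r$ one may need to treat the case $L \leq R(\rX) - \delta$ separately by taking $f'$ to be a single geodesic inside $\overline{B}_{R(\rX')}(x')$; the most delicate verification is that consecutive medians $m_i, m_{i+1}$ never create a new backtrack after pruning, which is checked using the tree structure in the overlap of adjacent local $\R$-tree neighbourhoods.
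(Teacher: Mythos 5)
Your overall strategy matches the paper's: subdivide $f$ into pieces of length roughly $r$, transfer the subdivision points to $X'$ via $C$, build the induced piecewise geodesic, and excise the local backtracking at each interior node using the median in a local $\R$-tree neighbourhood. The median point $m_i$ you introduce is precisely the point $x'_i$ of the paper's proof, and the length, bi-Lipschitz, and local-geodesic verifications are structured the same way. (The paper chooses exactly $N$ equal subintervals with $r/16 < \len(f)/N \le r/8$, which keeps the final constant $64$ clean; your unequal partition with $t_i-t_{i-1}\leq \eta = r/64$ is a cosmetic difference but you should then fix $N$ to be $\lceil L/\eta\rceil$ or the $+N\delta$ bound on $\len(f')$ runs away.)

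There is, however, a genuine gap in your verification of $(f(t),f'(\phi(t)))\in C_{8\dis(C)}$. You propose to witness this by the pair $(f(t_{i-1}),v_{i-1})\in C$, using $d(f(t),f(t_{i-1}))\leq \eta$ and $d'(f'(\phi(t)),v_{i-1})\leq \eta+O(\delta)$. But this only shows membership in $C_{\eta+O(\delta)}$, and since $\eta \sim r$ while $\delta$ may be much smaller than $r$ (the hypothesis is merely $\delta<r/56$), this is far weaker than $C_{8\delta}$. The correct argument takes the witness pair fresh for each $t$: let $z=f(t)$, pick $z''\in X'$ with $(z,z'')\in C$, so the $X$-side distance is $0$; then use the $\R$-tree structure of $\overline{B}_{R(\rX')}(v''_{i-1})$ to show that $z''$ is within $O(\delta)$ of the relevant geodesic segment of $f'$, and hence within $O(\delta)$ of $f'(\phi(t))$. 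Concretely, the point of $\im(f'_i)$ closest to $z''$ is at distance at most $\tfrac{3}{2}\delta$ from $z''$ by the standard tree median identity, and a short chain of inequalities comparing arc-length parameters bounds $d'(z'',f'(\phi(t)))$ by $8\delta$. Without this step the $C_{8\dis(C)}$ conclusion does not follow from your estimates, so you should replace your last paragraph's argument with the per-$t$ median estimate in $X'$.
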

Note that the second part of the statement also implies a lower bound
on the length of $f'$, namely, 
$$ \len(f')\geq K(\phi)^{-1}\len(f)\geq \len(f)
\bigg(1-\frac{64 \dis(C)}{r\wedge \len(f)}\bigg)_+\, ,$$
which is, of course, useless when $r \wedge \len(f) \le 64\dis(C)$.

\proof Let us first assume that $0<\len(f)\leq r/8 $, so in particular
$d(x,y)\leq R(\rX)$ and $f$ is the geodesic from $x$ to $y$.  We have
\[
d(x,y)-\dis(C)\leq d'(x',y') \leq d(x,y)+\dis(C)\leq r/8 +\dis(C)<R(\rX')\, ,
\]
so that $x'$ and $y'$ are linked by a unique geodesic $f'$. 
Set $\phi(t)=d'(x',y')t/d(x,y)$ for $0\leq t\leq d(x,y)$. From the
preceding chain of inequalities,
we obtain that 
\[
\len(f')\leq \len(f)+\dis(C)\, ,\qquad \mbox{ and }\quad K(\phi)\leq \bigg(1-\frac{\dis(C)}{d(x,y)}\bigg)_+^{-1}\, .\] 

Fix $z=f(t)\in \im(f)$ and let $z''$ be such that $(z,z'')\in
C$. Then $d'(x',z'')\leq d(x,z)+\dis(C)<r/4$, so that $z''$ belongs to the
$\R$-tree $B_{R(\rX')}(x')$. Let $z'$ be the (unique) point of
$\im(f')$ that is closest to $z''$. Then a path from $x'$ or $y'$ to
$z''$ must pass through $z'$, from which we have
\begin{equation}
  \label{eq:2}
  d'(z'',z')=\frac{d'(x',z'')+d'(y',z'')-d'(x',y')}{2}\leq
\frac{3}{2}\dis(C)\, .
\end{equation}
Therefore,
\[
t-\frac{5}{2}\dis(C)\leq d'(x',z'')-d'(z'',z')\leq d'(x',z')\leq
d'(x',z'')\leq t+\dis(C)\, ,
\]
so after a short calculation we get that 
\[
\Big|d'(x',z')-\frac{d'(x',y')}{d(x,y)}t\Big|\leq
\frac{7}{2}\dis(C)\, .
\]
From this we obtain 
\[d'(z',f'(\phi(t)))=d'\pran{f'(d'(x',z')),f'\pran{\frac{d'(x',y')}{d(x,y)}t}}=\Big|d'(x',z')-
\frac{d'(x',y')}{d(x,y)}t\Big|\leq \frac{7}{2}\dis(C)\, ,
\] so that, in
conjunction with \eqref{eq:2}, we have $(f(t),f'(\phi(t)))\in
C_{5\dis(C)}$

We next assume that $\len(f)>r/8 $.  Fix an integer $N$ such that
$r/16 <\len(f)/N\leq r/8 $ and let $t_i=i\, \len(f) /N $ and $x_i=f(t_i)$
for $0\leq i\leq N$. By Proposition \ref{sec:r-graphs}, since $f$ is a
local geodesic and $t_{i+1}-t_{i-1}<R(\rX)$ for every $i\in
\{1,\ldots,N-1\}$, 
the restriction
$f|_{[t_{i-1},t_{i+1}]}$ must be a shortest path and so
\[
d(x_{i-1},x_{i+1})=\frac{2\len(f)}{N}\leq r/4\, ,\qquad
d(x_i,x_{i+1})=\frac{\len(f)}{N}\in [r/16 , r/8 ]\, .
\] 
Letting $x''_i$ be a point such that $(x_i,x''_i)\in C$ (where we
always make the choice $x''_0=x'$ and
$x''_N=y'$), we have $d'(x''_i,x''_{i+1})\leq d(x_i,x_{i+1})+\dis(C)< R(\rX')$, so
that we can consider the unique geodesic $f''_i$ between
$x''_i$ and $x''_{i+1}$.  The concatenation of the paths
$f''_0,f''_1,\ldots,f''_{N-1}$ is not necessarily a local geodesic,
but by excising certain parts of it we will be able to recover a local
geodesic between $x'$ and $y'$.  For each
$i\in\{1,\ldots,N-1\}$, the sets $\im(f''_{i-1})$ and $\im(f''_i)$ are
included in the $\R$-tree $B_{R(\rX')}(x''_i)$, and the concatenation
of $f''_{i-1}$ and $f''_i$ is a path from $x''_{i-1}$ to $x''_{i+1}$
which, as such, must contain the image of the geodesic $g_i$ between
these points. Let $x'_i$ be the unique point of $\im(g_i)$ that is
closest to $x''_i$, and let $x'_0=x',x'_N=y'$. Then
\[
d'(x'_i,x''_i)=\frac{d'(x''_{i-1},x''_i)+d'(x''_{i+1},x''_i)-d'(x''_{i-1},x''_{i+1})}{2}\leq 
\frac{3}{2}\dis(C)\, ,
\]
so that, for $i\in \{0,1,\ldots,N\}$, 
\[
d(x_i,x_{i+1})-\dis(C)\leq d'(x''_i,x''_{i+1})\leq
d'(x'_i,x'_{i+1})\leq d'(x''_i,x''_{i+1})+3\dis(C)\leq
d(x_i,x_{i+1})+4\dis(C)\, .
\]
If $x_{i+1}' \in \im(f_{i-1}'')$ then 
\[
d'(x_{i-1}'',x_{i+1}'') \le d'(x_{i-1}'',x_i'')+\frac{3}{2}\dis(C) \le \frac{\len(f)}{N}+\frac{5}{2}\dis(C)\, .
\]
However, since $(x_{i-1},x_{i-1}''),(x_{i+1},x_{i+1}'') \in C$ and $\dis(C) < r/56 < 2\len(f)/(7N)$, we have 
\[
d'(x_{i-1}'',x_{i+1}'') \ge \frac{2\len(f)}{N} - \dis(C) > \frac{\len(f)}{N}+\frac{5}{2}\dis(C),
\]
so, in fact, $x_{i+1}' \not\in{\im(f_{i-1}'')}$ and, in particular, $x_{i+1}'$ does not lie on the shortest path between $x_{i-1}'$ and $x_i'$. 
From this it follows that if $f'$ denotes the concatenation of the
geodesic $f'_i$ between $x'_i$ and $x'_{i+1}$, for $0\leq i\leq N-1$,
then $f'$ is a local geodesic between $x'$ and $y'$. Its length is
certainly bounded by the sum of the lengths of the paths $f_i''$, so that 
\[
\len(f')\leq \sum_{i=1}^Nd(x_i,x_{i+1})+4N\dis(C)\leq
\len(f)+\frac{64 \len(f)\dis(C)}{r}\, ,
\]
as claimed. 
Next, we let
$\phi_i(t)=d'(x'_i,x'_{i+1})t/d(x_i,x_{i+1})$, so that 
\[
K(\phi_i)\leq \bigg(1-\frac{4\dis(C)}{d(x_i,x_{i+1})}\bigg)^{-1}\leq
\bigg(1-\frac{64 \dis(C)}{r}\bigg)^{-1}\, .
\]
If $\phi:[0,\len(f)]\to
[0,\len(f')]$ is the concatenation of the mappings $\phi_i,0\leq i\leq
N-1$, then $\phi$ is bi-Lipschitz with the same upper-bound for
$K(\phi)$ as for each $K(\phi_i)$.  Finally, we note that
$f'\circ\phi$ is the concatenation of the paths
$f_i'\circ\phi_i$. If $z=f_i(t)$, we let $z''$ be such that
$(z,z'')\in C$ and let $z'$ be the point in $\im(f_i')$ that is
closest to $z''$. Then similar arguments to before entail that
$d'(z',z'')\leq 3\dis(C)$, so that 
\[
t-4\dis(C)\leq d'(x'_i,z')\leq d'(x'_i,z'')\leq t+\dis(C)\, ,
\]
which implies that
\[
d'(z',f'_i(\phi'_i(t)))=\bigg|d'(x'_i,z')-\frac{d'(x'_i,x'_{i+1})}{d(x_i,x_{i+1})}t\bigg|\leq
5\dis(C)\, ,
\]
and we conclude that $(f_i(t),f_i'(\phi_i(t)))\in C_{8\dis(C)}$, and
so that $(f(s),f'(\phi(s)))\in C_{8\dis(C)}$ for every $s\in
[0,\len(f)]$. 
\endproof

\begin{proof}[Proof of Proposition \ref{lem:deltaover}] 
We will prove this result only when one of 
$\rX$ and $\rX'$ (and then, in fact, both) has surplus at least $2$, leaving the 
similar and simpler case of surplus $1$ to the reader 
(the case of surplus $0$ is trivial). Also, we may assume without loss of generality 
that $\eps < r/4$. 
  
Fix $\eps\in (0,r/4)$, and fix any $\delta  \in (0,\eps r^2/128)$. 
Also, fix $\rX,\rX' \in \AA_r$ 
and a correspondence $C \in
C(X,X')$ with $\dis(C) <\delta$.  
List the
elements of $k(\rX)$ as $v_1,\ldots,v_n$, and fix elements
$v_1'',\ldots,v_n''$ of $X'$ with $(v_i,v_i'') \in C$ for each $1 \le
i \le n$. Since $\dis(C)<\delta$ and $v_1,\ldots,v_n$ are pairwise at
distance at least $r$, $v_1'',\ldots,v_n''$ are pairwise at distance at
least $r-2\delta > r/2$ and, in particular, are all distinct.  Next,
for every $e\in e(\rX)$, say with $e^+=v_i,e^-=v_j$, 
fix a local geodesic $f_e$ between $v_i$ and $v_j$ with $\im(f_e)=e$ and $f_e(0)=e^-$.
By Lemma \ref{sec:cutting-r-graphs}, there
exists a geodesic $f''_e$ from $v''_i$ to $v''_j$ and a
bi-Lipschitz mapping $\phi_e:[0,\ell(e)]\to [0,\len(f''_e)]$ with
$$K(\phi_e)\leq \pran{1-\frac{64 \delta}{r}}^{-1}< 2\, ,$$
and such that
$(f_e(t),f''_e(\phi_e(t)))\in C_{8\delta}$ for every $t\in
[0,\ell(e)]$. In particular, it follows that $\len(f''_e)>r/2$.
Then we claim that for $\delta$ small enough, the following two properties hold.
\begin{enumerate}
\item For every $e\in e(\rX)$, the path $(f''_e(t),\eps/8 \leq t\leq
  \len(f''_e)-\eps/8 )$ is injective.
\item 
For $e_1, e_2\in e(\rX)$ with $e_1\neq e_2$, we have 
\[
\{f''_{e_1}(t): \eps/8 \leq t\leq \len(f''_{e_1})-\eps/8 \}\cap
  \{f''_{e_2}(t):\eps/8 \leq t\leq \len(f''_{e_2})-\eps/8 \}=\varnothing\, .
  \]
\end{enumerate}
To establish the first property, suppose that
$f''_e(t)=f''_e(t')$ for some $e\in
e(\rX)$ and distinct $t,t'\in [\eps/8,\len(f''_e)-\eps/8]$.
For concreteness, let us assume that $e^-=v_i$ and $e^+=v_j$.  Since $f''_e$
is a local geodesic, this implies that $|t-t'|\geq R(\rX')\geq r/4$. 
Moreover, since 
$(f_e(\phi_e^{-1}(t)),f''_e(t)),(f_e(\phi_e^{-1}(t')),f_e''(t'))\in
C_{8\delta}$ and since $\delta < \eps/128$, we have
\begin{equation}
  \label{eq:3}
  d(f_e(\phi_e^{-1}(t)),f_e(\phi_e^{-1}(t')))\leq
d'(f''_e(t),f_e''(t'))+8\delta=8\delta<\eps/16 \, .
\end{equation}
On the other hand, we have 
\[
|\phi_e^{-1}(t)-\phi_e^{-1}(t')|\geq K(\phi_e)^{-1}|t-t'|\geq
\frac{1}{2}|t-t'|\geq r/8 > \eps/2\, ,
\]
and since $\phi_e^{-1}(0)=0$ and $\phi_e^{-1}(\len(f_e''))=\ell(e)$, 
\[
|\phi_e^{-1}(t)|\geq K(\phi_e)^{-1} t> \eps/16 \, ,\qquad
|\phi_e^{-1}(t)-\ell(e)|> \eps/16 \, ,
\]
and similarly for $t'$.
But if $s,s'\in
[\eps/16 ,\ell(e)-\eps/16 ]$, then $d(f_e(s),f_e(s'))\geq 
(\eps/8 )\wedge |s-s'|$, because a path from $f_e(s)$ to $f_e(s')$ is
either a subarc of $e$, or passes through both vertices $v_i$ and
$v_j$. It follows that
$d(f_e(\phi_e^{-1}(t)),f_e(\phi_e^{-1}(t'))\geq \eps/8 $, in
contradiction with \eqref{eq:3}. This yields that property 1 holds.

The argument for property 2 is similar: for every $t_1\in
(\eps/8 ,\len(f''_{e_1})-\eps/8 )$ and
$t_2\in(\eps/8 ,\len(f''_{e_2})-\eps/8 )$, there exist
$x_1 \in e_1$ and $x_2 \in e_2$ such that $(x_1,f''_{e_1}(t_1))$ and
$(x_2,f''_{e_2}(t_2))$ are in $C_{8\delta}$. Then the distance from
$x_1,x_2$ to $k(\rX)$ is at least $\eps/16 $ so that
$d(x_1,x_2)\geq \eps/8 $. From this, we deduce that
$d'(f''_{e_1}(t_1),f''_{e_2}(t_2))\geq d(x_1,x_2)-8\delta>0$.

Next, for every $i\in \{1,\ldots,n\}$, consider the points $f''_e(\eps/8 ), e\in
e(\rX)$ for which $e^-=v_i$, as well as the points
$f''_e(\len(f''_e)-\eps/8 )$ for which $e^+=v_i$. These points are on the
boundary of the ball $\overline{B}_{\eps/8 }({v''_i})$, which we recall is an
$\R$-tree. Let $T_i$ be the subtree of $\overline{B}_{\eps/8 }({v''_i})$
spanned by these points. Then property 1 above shows that 
\[
\bigcup_{1\leq i\leq n}T_i \cup \bigcup_{e \in e(\rX)} \{f''_e(t),\eps/8\leq t\leq \len(f''_e)-\eps/8\}
\]
induces a closed subgraph of $(X',d')$ without leaves, and so this subgraph is in fact a subgraph of 
$\core(\rX')$. Furthermore, property 2 implies that the points of
degree at least $3$ in this subgraph can only belong to
$\bigcup_{1\leq i\leq n}T_i$. Since any such point is then an element
of $k(\rX')$ and $\diam (T_i)\leq \eps/4 <r$, we see that each $T_i$ can
contain at most one element of $k(\rX')$. On the other hand, each
$T_i$ must contain at least one element of $k(\rX')$ because  
$T_i$ has at least three leaves (since $v_i$ has degree at least $3$). 
Thus, each $T_i$ contains exactly one element of $k(\rX')$, which we denote by $v_i'$. 
Next, for $e \in e(\rX')$, 
if $e^-=v_i,e^+=v_j$, then we let
$f'_e$ be the simple path from $v'_i$ to $v'_j$ that has a non-empty
intersection with $f''_e$. It is clear that this path is well-defined
and unique. Letting $\chi(v_i)=v'_i$ for $1 \le i \le n$, and letting $\chi(e)=\im(f'_e)$ for $e \in e(\rX)$, we have therefore 
defined a multigraph homomorphism from $\ker(\rX)$ to $\ker(\rX')$, and this
homomorphism is clearly injective. By symmetry of the roles of $X$ and
$X'$, we see that $|k(\rX)|=|k(\rX')|$ and $|e(\rX)|=|e(\rX')|$,
and so $\chi$ must, in fact, be a multigraph isomorphism.

Finally, since $\len(f_e) = \ell(e) \le 1/r$, we have 
\[
|\ell(e) - \len(f''_e)| = |\len(f_e)-\len(f''_e)|\leq
\frac{64\delta}{r}
\len(f_e)
\leq
\frac{64\delta}{r^2} < \frac{\eps}{2}\, ,
\] 
by our choice of $\delta$.
But, by construction,
$|\len(f''_e)-\ell(\chi(e))|<\eps/2$, since the endpoints of $\chi(e)$ each have distance at most $\eps/4$ from an endpoint of $f''(e)$. It follows that 
$|\ell(e)-\ell(\chi(e))|<\eps$.
Finally, since every point of $\chi(e)$ is within distance $\eps/4$ of $f_e''$ and 
$e=\im(f_e)$ and $\im(f_e'')$ are in correspondence via $C_{8\delta}$, it follows 
that $e$ and $\chi(e)$ are in correspondence via $C_{8\delta+\eps/4}$. Since 
$\dis(C_{8\delta+\eps/4}) < \dis(C)+16\delta+\eps/2 < \eps$, this completes the proof.
\end{proof}

\begin{proof}[Proof of Corollary \ref{sec:cutt-safely-point-1}] 
  Again we only consider the case $s(\rX)>1$, the case $s(\rX)=1$
  being easier (and the case $s(\rX)=0$ trivial).  
 
  Let $(\rX_n,n \ge 1)$ and $\rX$ be as in the statement of Corollary~\ref{sec:cutt-safely-point-1}. 
  Let $(C^n,n \ge 1)$ and $(\pi^n,n \ge 1)$ be sequences of correspondences and of measures, respectively, 
  such that $\dis(C^n),\pi^n((C^n)^c)$ and $D(\pi^n;\mu^n,\mu)$ each converge to $0$ as $n\to\infty$.
  The fact
  that $\ker(\rX^n)$ converges to $\ker(\rX)$ as a graph with
  edge-lengths is then an immediate consequence of Proposition
  \ref{lem:deltaover}: for each $n$ sufficiently large, simply 
  replace $C^n$ by $C^n \cup \hat{C}^n$, where $\hat{C}^n$ is 
  an $\eps_n$-overlay of $\rX$ and $\rX^n$, 
  for some sequence $\eps_n\to 0$ (we may assume $\eps_n \ge \dis(C^n)$).
  We continue to write $C^n$ instead of $C^n \cup \hat{C}^n$, and note that enlarging 
  $C^n$ diminishes $\pi_n((C^n)^c)$.

In particular, we obtain that for all large enough $n$, there is an isomorphism
$\chi_n$ from $(k(\rX),e(\rX))$ to $(k(\rX^n),e(\rX^n))$ such that
$\ell(e)-\ell(\chi_n(e))$ converges to $0$ for every $e\in e(\rX)$. 
The fact that $r(\rX^n) \to r(\rX)$ is immediate. 
We now fix a particular orientation of the edges, and view 
$\chi_n$ as an isomorphism of oriented graphs, in the sense that
$\chi_n(e^-)=\chi_n(e)^-$. 

For each $e \in e(\rX)$, let $f_e$ be a local geodesic between $e^-$
and $e^+$ with $f_e(0)=e^-$ and $f_e(\ell(e))=e^+$ and, for each $n
\ge 1$ and $e \in e(\rX^n)$, define $f_e$ accordingly. Then for each
$n$ sufficiently large, define a mapping $\Phi_n$ with domain
$\mathrm{dom}(\Phi_n) = \bigcup_{e \in e(\rX)}f_e([0,\ell(e)-\eps_n])$
by setting $\Phi_n(f_e(t))=f_{\chi_n(e)}^n(t)$ for each $e \in e(\rX)$
and each $0 \le t \le \ell(e)-\eps_n$.

By considering a small enlargement of $C^n$, or, equivalently, by letting $\eps_n$ tend to zero sufficiently slowly, we
may assume without loss of generality that $(x,\Phi_n(x))\in C^n$ for
all $x \in \mathrm{dom}(\Phi_n)$.
This comes from the fact that $e$
and $\chi_n(e)$ are in correspondence via $C^n$; we leave the details of this
verification to the reader. It follows that the relation $\{(x,\Phi_n(x)):x \in \mathrm{dom}(\Phi_n)\}$ is a subset of $C^n$.

Let $e_c(\rX)$ be the set of edges $e \in e(\rX)$ whose removal
from $e(\rX)$ does not disconnect
$\ker(\rX)$. Clearly, $\conn(X)\subseteq k(\rX)\cup\bigcup_{e\in
  e_c(\rX)}e$, and the measure $\Ell$ is carried by $\bigcup_{e\in
  e_c(\rX)}e$ (in fact, it is carried by the subset of
points of $\bigcup_{e\in e_c(\rX)}e$ with degree $2$, by Proposition
\ref{sec:skeleton-core-kernel-2}). Let $\Ell'$ be the restriction
of $\Ell$ to the set $\mathrm{dom}(\Phi_n)$, which has total mass $\sum_{e\in
  e_c(\rX)}(\ell(e)-\eps_n)$. We
consider the push-forward $\rho_n$ of $\Ell'$ by the mapping
$x\mapsto (x,\Phi_n(x))$ from $X$ to $X\times X^n$. Then the second
marginal of $\rho_n$ is the restriction of $\Ell^n$ to
$\bigcup_{e\in e_c(\rX)}\im(f^n_e)$, 
so that 
$$D(\rho_n;\Ell,\Ell^n)\leq  
\sum_{e\in e(\rX)}(\eps_n+|\ell(e)-\ell(\chi_n(e))|) \, .$$ The latter
converges to $0$ by the convergence of the edge-lengths. It only
remains to note that $\rho_n(X\times X^n\setminus C^n)=0$ by
construction. This yields (i).

Finally, (i) implies 
that $(X^n,d^n,\mu^n,\Ell^n/\Ell^n(\conn(\rX^n)))$ converges to
$(X,d,\mu,\Ell/\Ell(\conn(\rX)))$ in the metric $\dghp^{0,2}$
introduced in Section \ref{sec:notions-convergence}, and (ii)
then follows from Proposition \ref{sec:grom-hausd-prokh}. 
\end{proof}

\section{Cutting safely pointed $\R$-graphs}\label{sec:ckcc}

In this section, we will consider a simple cutting procedure on
$\R$-graphs, and study how this procedure is perturbed by small
variations in the Gromov--Hausdorff distance.

\subsection{The cutting procedure}\label{sec:cutting-procedure}

Let $(X,d)$ be an $\R$-graph, and let $x\in \conn(\rX)$. We endow the
connected set $X\setminus\{x\}$ with the intrinsic distance
$d_{X\setminus \{x\}}$: more precisely, $d_{X\setminus \{x\}}(y,z)$ is
defined to be the minimal length of an injective path not visiting
$x$. This is indeed a minimum because there are finitely many
injective paths between $y$ and $z$ in $X$, as a simple consequence of
Theorem \ref{sec:structure-r-graphs-4} applied to $ \core(\rX)$.  The
space $(X\setminus \{x\},d_{X\setminus \{x\}})$ is not complete, so we
let $(X_x,d_x)$ be its metric completion as in Section~\ref{sec:breaking-cycles-r}. This space is connected, and
thus easily seen to be an $\R$-graph. We call it
the \emph{$\R$-graph $(X,d)$ cut at the point $x$}. 

From now on, we will further assume that $\deg_X(x)=2$, 
so that $(X,d,x)$ is safely pointed as in Definition~\ref{sec:cutting-cycles-an}.
In this case, one can provide a more detailed description
of $(X_x,d_x)$.  A Cauchy sequence $(x_n,n\geq 1)$ in $(X\setminus
\{x\},d_{X\setminus \{x\}})$ is also a Cauchy sequence in $(X,d)$,
since $d\leq d_{X\setminus \{x\}}$. If its limit $y$ in $(X,d)$ is
distinct from $x$, then it is easy to see that $d_{X\setminus
  \{x\}}(x_n,y)\to 0$, by considering a ball $B_\eps(y)$ not
containing $x$ within which $d=d_{X\setminus\{x\}}$.

So let us assume that $(x_n,n\geq 1)$ converges to $x$ for the
distance $d$. Since $x$ has degree $2$, the $\R$-tree
$B_{R(\rX)}(x)\setminus \{x\}$ has exactly two components, say
$Y_1,Y_2$. It is clear that $d_{X\setminus \{x\}}(z_1,z_2)\geq 2R(\rX)$
for every $z_1\in Y_1,z_2\in Y_2$. Since $(x_n,n\geq 1)$ is a Cauchy
sequence for $(X\setminus \{x\},d_{X\setminus \{x\}})$, we conclude
that it must eventually take all its values in precisely one of $Y_1$ and
$Y_2$, let us say $Y_1$ for definiteness.  Note that the restrictions
of $d$ and $d_{X\setminus \{x\}}$ to $Y_1$ are equal, so that if
$(x'_n,n\geq 1)$ is another Cauchy sequence in $(X\setminus
\{x\},d_{X\setminus \{x\}})$ which converges in $(X,d)$ to $x$ and
takes all but a finite number of values in $Y_1$, then $d_{X\setminus
  \{x\}}(x_n,x'_n)=d(x_n,x'_n)\to 0$, and so this sequence is equivalent
to $(x_n,n\geq 1)$. 

We conclude that the completion of $(X\setminus \{x\},d_{X\setminus
  \{x\}})$ adds exactly two points to $X\setminus \{x\}$,
corresponding to classes of Cauchy sequences converging to $x$ in
$(X,d)$ ``from one side'' of $x$. So we can write $X_x=(X\setminus \{x\})\cup\{x_{(1)},x_{(2)}\}$ and describe $d_x$ as follows: 
\begin{itemize}
\item
 If $y,z\notin \{x_{(1)},x_{(2)}\}$ then $d_x(y,z)$ is the minimal
 length of a path from $y$ to $z$  in $X$ not visiting $x$. 
\item If $y\neq x_{(2)}$ then $d_x(x_{(1)},y)$ is the minimal length of an injective path
  from $x$ to $y$ in $X$ which takes its values in the component $Y_1$
  on some small initial interval $(0,\eps)$, and similarly for
  $d(x_{(2)},y)$ with $y\neq x_{(1)}$.
\item Finally, $d_x(x_{(1)},x_{(2)})$ is the minimal length of an
embedded cycle passing through $x$. 
\end{itemize}

If $(X,d,x,\mu)$ is a pointed measured metric space such that
$(X,d,x)$ is a safely pointed $\R$-graph, and $\mu(\{x\})=0$, then the
space $(X_x,d_x)$ carries a natural measure $\mu'$, such that
$\mu'(\{x_{(1)},x_{(2)}\})=0$ and, for any open subset $A\subseteq
X_x$ not containing $x_{(1)}$ and $x_{(2)}$,  $\mu'(A)=\mu(A)$ if
on the right-hand side we view $A$ as an open subset of $X$. Consequently, there is little risk of ambiguity in using the notation $\mu$ instead of $\mu'$ for
this induced measure. 

We finish this section by proving Lemma \ref{sec:prop-scal-limit-2} on the number of balls required to cover the cut space. 

\begin{proof}[Proof of Lemma \ref{sec:prop-scal-limit-2}]
Let $B_1,B_2,\ldots,B_N$ be a covering of $\rX$ by open balls
of radius $r>0$, centred at $x_1,\ldots,x_N$ respectively.  By definition, any point of $X$ can be joined to the centre of some ball $B_i$ 
by a geodesic path of length $<r$. If such a path does not pass
through $x$, then it is also a geodesic path in $\rX_x$. Now since
$B_1,\ldots,B_N$ is a covering of $\rX$, this implies that any point
$y$ in $X$ can either
\begin{itemize}
\item be joined to some point $x_i$ by a path of length
  $<r$ that does not pass through $x$, 
\item or can be joined to $x$ through a path $\gamma$ of length $<r$. 
\end{itemize}
In the first case, this means that $y$ belongs to the ball 
with centre $x_i$ and radius $r$ in $\rX_x$. In the second case, depending on
whether the initial segment of $\gamma$ belongs to $Y_1$ or $Y_2$,
this means that $y$ belongs to the ball with centre $x_{(1)}$ or
$x_{(2)}$ with radius $r$ in $\rX_x$. This yields a covering of
$\rX_x$ with at most $N+2$ balls, as desired. 

Conversely, it is clear that if $N$ balls are sufficient to cover
$\rX_x$ then the same is true of $\rX$, because distances are smaller
in $\rX$ than in $\rX_x$ (if $x$ is identified with the points
$\{x_{(1)},x_{(2)}\}$). 
\end{proof}

\subsection{Stability of the cutting procedure}\label{sec:stability-cutting}

The following statement will be used in conjunction with Corollary
\ref{sec:cutt-safely-point-1} (ii). Recall the definition of $\mathcal{A}_r^\bullet$ from 
the start of Section~\ref{sec:convergence-core}. 

\begin{theorem} \label{thm:safelypointed} Fix $r \in (0,1)$. Let
  $(X^n,d^n,x^n,\mu^n),n\geq 1$ and $(X,d,x,\mu)$ be 
  elements of $\mathcal{A}_r^\bullet$.  Suppose that
 $$\dghp^{1,1}((X^n,d^n,x^n,\mu^n),(X,d,x,\mu))\underset{n\to\infty}{\longrightarrow} 
0\, ,$$ 
and that $\mu^n(\{x\})=\mu(\{x\})=0$ for every $n$. 
Then
\[
\dghp\pran{(X^n_{x^n},d^n_{x^n},\mu^n),(X_x,d_{x},\mu)}  \underset{n\to\infty}{\longrightarrow} 0\, .
\]
\end{theorem}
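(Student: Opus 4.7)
The plan is to build, for each $n$, a correspondence $\tilde{C}^n \in C(X^n_{x^n}, X_x)$ together with a coupling measure $\tilde\pi^n \in M(X^n_{x^n}, X_x)$, such that $\dis(\tilde C^n)$, $\tilde\pi^n((\tilde C^n)^c)$, and $D(\tilde\pi^n; \mu^n, \mu)$ all tend to zero as $n \to \infty$. These will be obtained from correspondences and measures realising the assumed $\dghp^{1,1}$ convergence, by a careful local modification near $x^n$ and $x$ to account for the splitting.

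First, I would use the hypothesis together with Proposition~\ref{lem:deltaover} to produce, for each $n$, a correspondence $C^n$ and a measure $\pi^n$ with $(x^n, x) \in C^n$, $\dis(C^n), \pi^n((C^n)^c), D(\pi^n;\mu^n,\mu) = o(1)$, and such that $C^n$ is an $\eps_n$-overlay of $\rX^n$ and $\rX$ with $\eps_n \to 0$. The overlay gives a multigraph isomorphism $\chi_n: \ker(\rX) \to \ker(\rX^n)$ matching edge lengths within $\eps_n$. Since $x$ is a regular point of degree $2$ lying in $\conn(\rX)$, it is either in the interior of a unique core edge $e^* \in e(\rX)$ (when $s(\rX)\geq 2$) or on the unique embedded cycle (when $s(\rX)=1$); in either case $x$ splits a small neighbourhood of itself into two arms $Y_1, Y_2$, and by Lemma~\ref{sec:cutting-r-graphs} the overlay induces, for large $n$, a corresponding splitting $Y_1^n, Y_2^n$ near $x^n$, consistently labelled via the lifted local geodesic through $x$.

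Then $\tilde C^n$ is defined by three pieces, for some $\delta_n \to 0$ sufficiently slowly: pairs $(y, z) \in C^n$ with $y, z$ at distance $\geq \delta_n$ from $x^n, x$ respectively; pairs $(y, x_{(i)})$ for $y \in Y_i^n$ within distance $\delta_n$ of $x^n$; and symmetrically $(x^n_{(i)}, z)$ for $z \in Y_i$ within distance $\delta_n$ of $x$. The split points themselves are matched as $(x^n_{(i)}, x_{(i)})$. The measure $\tilde\pi^n$ is essentially $\pi^n$ restricted to $(X^n\setminus\{x^n\}) \times (X\setminus\{x\})$; since $\mu^n(\{x^n\})=\mu(\{x\})=0$ by hypothesis, and $\dghp^{1,1}$ convergence together with $\mu(\{x\}) = 0$ ensures $\mu^n(B_{\delta_n}(x^n)) + \mu(B_{\delta_n}(x)) \to 0$ for $\delta_n$ chosen appropriately, one readily verifies $\tilde\pi^n((\tilde C^n)^c), D(\tilde\pi^n;\mu^n,\mu) \to 0$.

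The key step, and main obstacle, is bounding $\dis(\tilde C^n)$. For pairs $(y, z), (y', z') \in \tilde C^n$, the distance $d_x(z, z')$ is realised either by a path in $X \setminus \{x\}$ (if some geodesic in $X$ from $z$ to $z'$ avoids $x$) or by a detour through an embedded cycle of $X$ containing $x$. In the first case, the overlay gives $d^n_{x^n}(y,y') \approx d^n(y,y') \approx d(z,z') = d_x(z,z')$ up to $O(\eps_n + \delta_n)$. In the second case, the detour is along a path in $\core(\rX)$ that can be lifted edge-wise via $\chi_n$ and Lemma~\ref{sec:cutting-r-graphs} to a detour of near-equal length in $X^n_{x^n}$, and the converse is symmetric. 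Pairs involving the split points are handled by invoking Cauchy sequences defining them from inside $Y_i$ or $Y_i^n$; in particular $d_x(x_{(1)},x_{(2)})$ and $d^n_{x^n}(x^n_{(1)}, x^n_{(2)})$ both equal, up to $O(\eps_n)$, the length of the shortest embedded cycle through $x$ in $\rX$, which the overlay controls. The truly delicate part will be patching the edge-wise lifts of Lemma~\ref{sec:cutting-r-graphs} into globally consistent rerouted local geodesics in the cut spaces, while ensuring that avoidance of $x$ in one space corresponds to avoidance of $x^n$ in the other. This is where the uniform lower bound $\ell(e)\geq r$ encoded in $\mathcal{A}_r^\bullet$, which bars arbitrarily short cycles through $x$, is used decisively; with it in hand, the three GHP estimates above combine to give the claimed convergence $\dghp(\rX^n_{x^n},\rX_x) \to 0$.
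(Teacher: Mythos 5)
Your overall strategy — modify a correspondence realizing the $\dghp^{1,1}$ convergence near $x$, using an overlay from Proposition~\ref{lem:deltaover} and path-lifting via Lemma~\ref{sec:cutting-r-graphs} — is the same as the paper's. But the way you truncate near $x$ introduces a genuine gap. You excise metric balls $B_{\delta_n}(x^n)$ and $B_{\delta_n}(x)$, whereas the paper excises the \emph{enlarged} balls $\widetilde{B}_\eps(x) = \{y : d(\alpha(y),x) < \eps\}$, i.e.\ the full $\alpha$-fibre over the core interval within distance $\eps$ of $x$. This distinction is not cosmetic. With the enlarged ball, any $d_x$-geodesic between retained points stays at distance $\geq \eps$ from $x$ (the entire subtrees hanging off the core near $x$ have been removed), and it is precisely this uniform lower bound that lets one apply Lemma~\ref{sec:cutting-r-graphs} and argue that the lifted path cannot hit $x'$: the argument in the paper's Lemma~\ref{sec:stab-cutt-proc-2} needs the geodesic $f$ to satisfy $d(x,\im(f)) \geq \eps$, with $\eps$ held \emph{fixed} while the overlay parameter $\delta$ tends to zero. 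With your metric balls, a $d_x$-geodesic between points $z_1,z_2 \notin B_{\delta_n}(x)$ can pass through a subtree attachment point $z_0=\alpha(z_1)$ with $d(z_0,x)$ arbitrarily small compared to $\delta_n$ (think of subtrees with attachment points accumulating at $x$, whose tips lie outside $B_{\delta_n}(x)$). You cannot guarantee that this minimum distance exceeds a fixed multiple of $\dis(C^n)$, so the lifted path may hit $x^n$, and the distortion bound breaks. Choosing $\delta_n$ ``sufficiently slowly'' does not fix this, since the relevant quantity depends on the fibre structure near $x$ and not just on $\delta_n$.

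Two further consequences of this choice: you have no analogue of the quantity $h_\eps(\rX) = \diam(\widetilde B_\eps(x))$, which controls the extra GHP error incurred by collapsing the neighbourhood of $x$ onto the cut points and is the term that goes to zero as $\eps\downarrow 0$ in the paper's Lemma~\ref{sec:stab-cutt-proc-2}; and you implicitly need the stability of the core projection $\alpha$ under overlays (the paper's Lemma~\ref{sec:stab-cutt-proc}) to make the ``consistent labelling of $Y_1,Y_2$'' rigorous — this is what ensures that the overlay respects the two sides of $x$ and hence that ``avoidance of $x$ in one space corresponds to avoidance of $x^n$ in the other.'' Your proposal flags this as ``the truly delicate part'' but does not supply the mechanism; the paper's construction with $\widetilde B_\eps$ and the two-scale argument ($\eps$ fixed, $\delta\to 0$, then $\eps\to 0$) is exactly that mechanism.
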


Our proof of Theorem~\ref{thm:safelypointed} hinges on two lemmas; to state these lemmas we 
require a few additional definitions. 
Let $\rX=(X,d,x,\mu)\in \mathcal{A}^\bullet_r$ 
and recall the definition of the projection $\alpha:X\to \core(\rX)$. 
For $\eps >0$, write
\[
\widetilde{B}_\eps(x)=\{y\in X: d(\alpha(y) ,x)<\eps\}\, \quad \mbox{
  and }\quad h_\eps(\rX)=\diam (\widetilde{B}_\eps(x))\, ,
\]
so that $B_\eps(x)\subseteq\widetilde{B}_\eps(x)$. The sets
$\widetilde{B}_\eps(x)$ decrease to the singleton $\{x\}$ as
$\eps\downarrow 0$, because $\deg_X(x)=2$. Consequently, $h_\eps(\rX)$
converges to $0$ as $\eps\downarrow 0$.  For $\eps>0$ sufficiently
small, the set $X_{x,\eps}=X\setminus
\widetilde{B}_\eps(x)$, endowed with the intrinsic metric, is an
$\R$-graph. In fact, it is easy to see that for $\eps<R(X)$, this
intrinsic metric is just the restriction of $d_x$ to $X_{x,\eps}$.

\begin{figure}[htb!]
  \centering
 \includegraphics{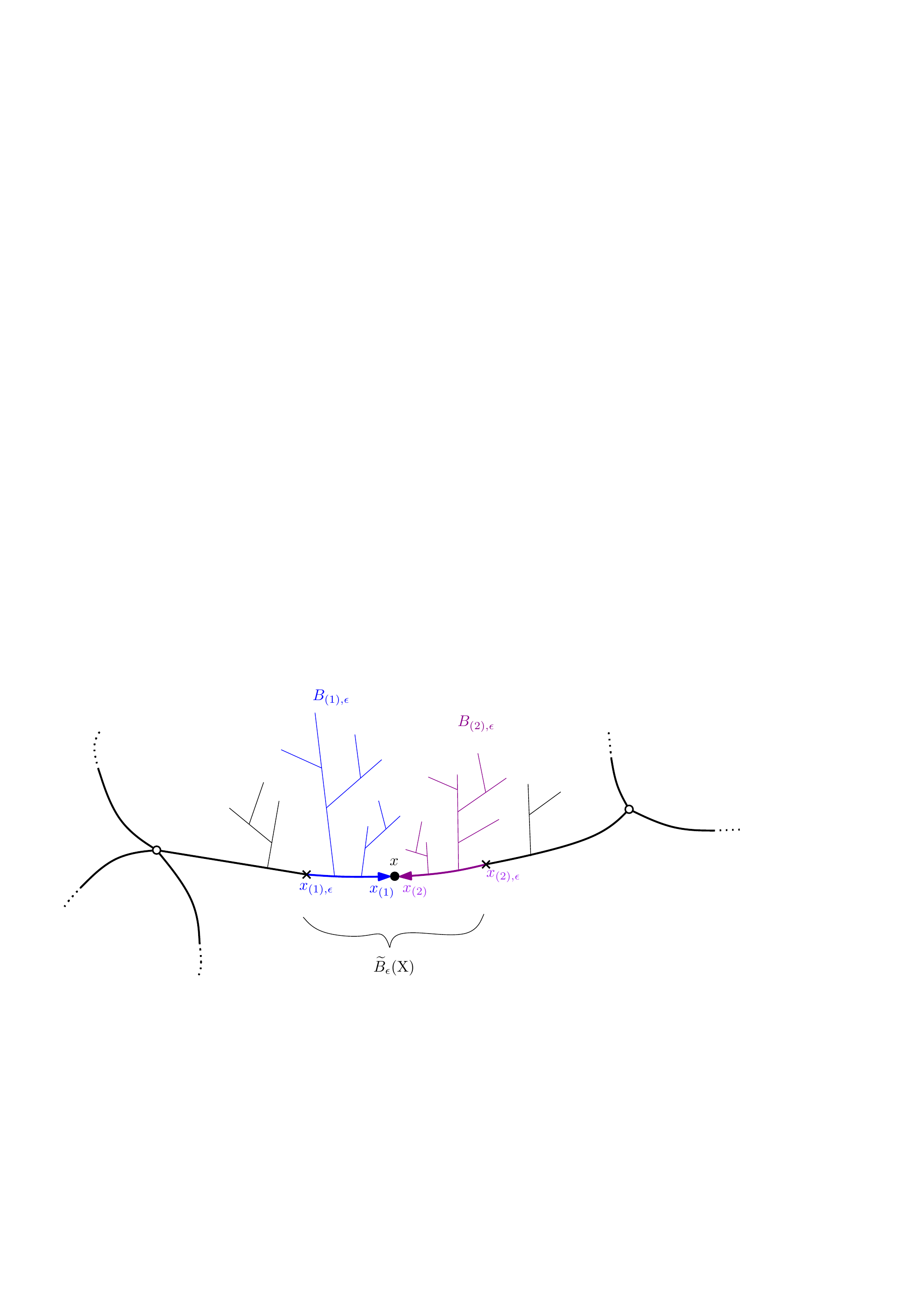} 
  \caption{Part of an $\R$-graph: $\core(\rX)$ is in thicker line.}
  \label{fig:notationcut}
\end{figure}

Let us assume that $\eps<d(x,k(\rX))\wedge R(\rX)$. Let
$x_{(1),\eps},x_{(2),\eps}$ be the two points of $\core(\rX)$ at
distance $\eps$ from $x$, labelled in such a way that, in the notation
of \refS{sec:cutting-procedure}, $x_{(1),\eps}$ is the point closest
to $x_{(1)}$ in $X_x$, or in other words, such that $x_{(1),\eps}\in Y_1$. For $i\in \{1,2\}$
let $P_i$ be the geodesic arc between $x_{(i),\eps}$ and $x$ in $X$. We let $B_{(i),\eps}= \{w\in \widetilde{B}_\eps(x)\setminus \{x\}:\alpha(w)\in P_i\}\cup \{x_{(i)}\}$, which we see as a subset of $X_x$. See Figure~\ref{fig:notationcut} for an illustration.

Now let $\rX' = (X',d',x',\mu')\in\mathcal{A}_r^\bullet$. Just as we defined the space
$\rX_x = (X_x,d_x)$, we define the space
$\rX'_{x'}=(X'_{x'},d'_{x'})$, with $X'_{x'}=(X'\setminus
\{x'\})\cup\{x'_{(1)},x'_{(2)}\}$. We likewise define the sets
$\widetilde{B}_\eps(x')$ and $B'_{(1),\eps},B'_{(2),\eps}$ and the
points $x'_{(1),\eps},x'_{(2),\eps}$ for $\eps <d(x',k(\rX'))\wedge
R(X')$ as above. We will use the same notation $\alpha$ for the
projection $X'\to \core(\rX')$.

\begin{lemma}\label{sec:stab-cutt-proc}
Fix $\delta > 0$. If $C$ is a $\delta$-overlay of $\rX$ and $\rX'$ then 
for every
 $(y,y')\in C$, we have $(\alpha(y),\alpha(y'))\in C_{2\delta}$.
\end{lemma}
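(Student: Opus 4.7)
My plan is to prove the result case-by-case, using in an essential way the fact that a $\delta$-overlay pairs up the cores of $\rX$ and $\rX'$. I will first observe that for any $z \in \core(\rX)$ there exists $z' \in \core(\rX')$ with $(z,z') \in C$: if $z$ is a kernel vertex, take $z' = \chi(z) \in k(\rX') \subset \core(\rX')$, which satisfies $(z,z') \in C$ by property~(1) of the overlay; if $z$ lies in the interior of a kernel edge $e$, then property~(2) together with the fact that $C \cap (e \times \chi(e))$ is a correspondence supplies some $z' \in \chi(e) \subset \core(\rX')$ with $(z,z') \in C$. In the unicyclic case the same conclusion holds via the cycle-to-cycle correspondence built into the definition of an overlay, and the symmetric statement with the roles of $\rX$ and $\rX'$ exchanged follows in the same way.

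The easy cases are $y \in \core(\rX)$ or (symmetrically) $y' \in \core(\rX')$. In the former, $\alpha(y)=y$; choosing $w' \in \core(\rX')$ with $(y,w') \in C$ as above, the pairs $(y,y'),(y,w') \in C$ yield $d'(y',w') \le \dis(C) < \delta$, so minimality of $\alpha(y')$ gives $d'(y',\alpha(y')) \le d'(y',w') < \delta$ and hence $d'(w',\alpha(y')) < 2\delta$ by the triangle inequality. Since $d(\alpha(y),\alpha(y))=0$, the pair $(\alpha(y),w') = (y,w') \in C$ realises $(\alpha(y),\alpha(y')) \in C_{2\delta}$.

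The main case is $y \notin \core(\rX)$ and $y' \notin \core(\rX')$. I will select $w' \in \core(\rX')$ with $(\alpha(y),w') \in C$ and, symmetrically, $w \in \core(\rX)$ with $(w,\alpha(y')) \in C$. Applying $\dis(C) < \delta$ to the pairs $\{(y,y'),(\alpha(y),w')\}$ and $\{(y,y'),(w,\alpha(y'))\}$ yields $|d(y,\alpha(y)) - d'(y',w')| < \delta$ and $|d(y,w) - d'(y',\alpha(y'))| < \delta$; combining these with the minimality properties $d(y,\alpha(y)) \le d(y,w)$ and $d'(y',\alpha(y')) \le d'(y',w')$ gives $d'(y',w') < d(y,\alpha(y)) + \delta$ and $d'(y',\alpha(y')) > d(y,\alpha(y)) - \delta$. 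The geometric heart of the argument is then the observation that, since $y'$ lies in the $\R$-tree hanging off $\core(\rX')$ at $\alpha(y')$ (Proposition~\ref{sec:structure-r-graphs-6}), every path from $y'$ to a point of $\core(\rX')$ must pass through $\alpha(y')$, so $d'(y',w') = d'(y',\alpha(y')) + d'(\alpha(y'),w')$. Rearranging and using the two inequalities above, $d'(\alpha(y'),w') < (d(y,\alpha(y))+\delta) - (d(y,\alpha(y))-\delta) = 2\delta$, and since $d(\alpha(y),\alpha(y))=0$, the pair $(\alpha(y),w') \in C$ witnesses $(\alpha(y),\alpha(y')) \in C_{2\delta}$. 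I do not foresee a serious obstacle; the only point requiring some care is the initial structural claim that core points can always be matched to core points under the overlay, which is a direct consequence of the definition in both the surplus-$1$ and surplus-$\ge 2$ regimes.
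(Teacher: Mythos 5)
Your proof is correct and uses essentially the same ideas as the paper's: the distortion bound applied to a pair involving $\alpha(y)$, plus the geometric fact that every path from $y'$ to the core passes through $\alpha(y')$, combine to squeeze $d'(\alpha(y'),w')$ below $2\delta$. The only real difference is organisational: the paper picks an arbitrary $y''$ with $(\alpha(y),y'')\in C$ and then observes $d'(y'',\core(\rX'))<\delta$ before splitting on whether $\alpha(y'')=\alpha(y')$, whereas you choose the correspondent $w'$ directly inside $\core(\rX')$ (and likewise $w$ inside $\core(\rX)$), which lets you write the exact additivity $d'(y',w')=d'(y',\alpha(y'))+d'(\alpha(y'),w')$ and avoid that case split; the trade-off is a small amount of extra care in your ``easy case'' bookkeeping, but the net effect is the same argument.
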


\proof
Let $y''$ be such that $(\alpha(y),y'')\in C$. Since $C$ is a $\delta$-overlay, $d'(y'',\core(\rX'))<\delta$. In
particular, if $\alpha(y'')=\alpha(y')$ then we have
$d'(\alpha(y'),y'')< \delta$. Otherwise, a geodesic from $y'$ to
$y''$ must pass through $\alpha(y')$ and $\alpha(y'')$, so that 
$$d'(y',\alpha(y'))+d'(\alpha(y'),y'')=d'(y',y'')\leq
d(y,\alpha(y))+\delta\, .$$
On the other hand, since $C$ is an $\delta$-overlay, we know that
$\core(\rX)$ and $\core(\rX')$ are in correspondence via $C$, which
implies that 
$$d'(y',\alpha(y'))=d'(y',\core(\rX'))>d(y,\core(\rX))-\delta=d(y,\alpha(y))-\delta\,
,$$
so that $d'(\alpha(y'),y'')\leq 2\delta$. In all cases, we have
$(\alpha(y),\alpha(y'))\in C_{2\delta}$, as claimed . 
\endproof

\begin{lemma}\label{sec:stab-cutt-proc-2}
Fix $r \in (0,1)$. For all $\eps > 0$ there exists $\eta > 0$ such that 
if $\dghp^{1,1}(\rX,\rX') < \eta$ then 
\begin{align*}
\dghp(\rX_x,\rX'_{x'}) & \le \mu(\widetilde{B}_\eps(x))+\mu'(\widetilde{B}_\eps(x')) \\
& \quad + 3\max(h_{\eps}(x),h_{\eps}(x'))+7\bigg(2\frac{\diam(\rX)\vee\diam(\rX')}{r}\vee 1\bigg)\eps
\end{align*}
\end{lemma}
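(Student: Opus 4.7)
The plan is to construct a correspondence $C^*$ and coupling measure $\pi^*$ between $\rX_x$ and $\rX'_{x'}$ which together realize the claimed GHP bound. First, using $\dghp^{1,1}(\rX,\rX')<\eta$ for $\eta$ small, I would pick $C_0\in C(X,X')$ with $(x,x')\in C_0$ and $\dis(C_0)<2\eta$, together with a measure $\pi\in M(X,X')$ satisfying $D(\pi;\mu,\mu')<\eta$ and $\pi(C_0^c)<\eta$. Applying Proposition~\ref{lem:deltaover}, I would enlarge $C_0$ to a $\delta$-overlay $C$ of $\rX$ and $\rX'$ with $\delta=\delta(\eta,r)\downarrow 0$ as $\eta\downarrow 0$; enlargement only decreases $\pi(C^c)$. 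Since $\deg_X(x)=2$, $x$ lies in the interior of a unique edge of $\ker(\rX)$, corresponding via the kernel isomorphism from the overlay to a unique edge of $\ker(\rX')$ containing $x'$. Applying Lemma~\ref{sec:cutting-r-graphs} to the two geodesic arcs $P_1,P_2$ of length $\eps$ emanating from $x$ along the core produces, provided $\eta$ is small enough, matching local geodesic arcs in $\core(\rX')$ emanating from $x'$; after possibly swapping the labels $(1)\leftrightarrow (2)$, we may assume each $x_{(i),\eps}$ is $O(\delta)$-close via $C$ to $x'_{(i),\eps}$. By Lemma~\ref{sec:stab-cutt-proc}, a pair $(y,y')\in C$ with $y\in\widetilde{B}_\eps(x)$ on side $i$ then forces $y'$ to lie in the correctly matched side of a slightly enlarged neighborhood of $x'$.

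I would set
\begin{align*}
C^* = \{(y,y')\in C : y\notin \widetilde{B}_\eps(x),\ y'\notin\widetilde{B}_\eps(x')\} \cup \bigcup_{i\in\{1,2\}} B_{(i),\eps}\times B'_{(i),\eps},
\end{align*}
viewed inside $X_x\times X'_{x'}$; the identifications from the previous paragraph ensure $C^*$ is a correspondence for $\eta$ small. I would define $\pi^*$ on $X_x\times X'_{x'}$ to equal $\pi$ on $(X\setminus\widetilde{B}_\eps(x))\times(X'\setminus\widetilde{B}_\eps(x'))$, and on each product block $B_{(i),\eps}\times B'_{(i),\eps}$ to be an arbitrary coupling of $\mu|_{B_{(i),\eps}}$ and $\mu'|_{B'_{(i),\eps}}$ maximizing matched mass. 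Then $\pi^*(C^{*c})\le \pi(C^c)<\eta$, and since any marginal mismatch of $\pi^*$ against $(\mu,\mu')$ is concentrated in $\widetilde{B}_\eps(x)\cup\widetilde{B}_\eps(x')$, one obtains $D(\pi^*;\mu,\mu')\le \mu(\widetilde{B}_\eps(x))+\mu'(\widetilde{B}_\eps(x'))+O(\eta)$, accounting for the first summand of the target bound.

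The main obstacle is controlling $\dis(C^*)$. Pairs both lying in a single product block $B_{(i),\eps}\times B'_{(i),\eps}$ contribute at most $h_\eps(\rX)+h_\eps(\rX')$, since these sets have respective diameters at most $h_\eps(\rX)$ and $h_\eps(\rX')$ in the cut spaces. For pairs drawn from the bulk part of $C^*$, I would apply Lemma~\ref{sec:cutting-r-graphs} to a shortest path $f$ realizing $d_x(y_1,y_2)$, which is a local geodesic in $X$ avoiding $x$: it transports $f$ to a local geodesic $f'$ in $X'$ of length $\len(f)(1+64\delta/(r\wedge \len(f)))$ with $f'$ close to $f$ via $C_{8\delta}$. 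The delicate point is to ensure $f'$ can be chosen to avoid $x'$ as well: if $f$ enters $\widetilde{B}_\eps(x)$ along a chord without visiting $x$, then $f'$ might cross $x'$, and I would replace the offending segment by a detour through the matched arc $P'_1$ or $P'_2$, at additional cost at most $h_\eps(\rX')$; the symmetric argument from $X'$ to $X$ yields the other half. Mixed pairs (one endpoint from $C$, one from a product block) are handled via the triangle inequality. Choosing $\delta$ of order $\eps r/\diam$, which is permissible by reducing $\eta$ in Proposition~\ref{lem:deltaover}, bounds the multiplicative distortion by a constant multiple of $\eps(\diam/r\vee 1)$. Collecting all contributions and recalling that $\dghp$ uses $\dis/2$ yields the claimed bound.
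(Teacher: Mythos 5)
Your high-level strategy (replace the correspondence by a $\delta$-overlay using Proposition~\ref{lem:deltaover}, treat the neighborhoods of $x$ and $x'$ separately, transport paths via Lemma~\ref{sec:cutting-r-graphs}) matches the paper's, and your allocation of the near-$x$ mass into $D(\pi^*;\mu,\mu')$ rather than into $\pi^*(C^{*c})$ is a workable variant of the paper's bookkeeping. However, the construction of the correspondence $C^*$ has a real gap: it need not be a correspondence. For a point $y$ with $d(\alpha(y),x)\geq\eps$ (so $y\notin\widetilde{B}_\eps(x)$) but $d(\alpha(y),x)$ close to $\eps$, every $y'$ with $(y,y')\in C$ may satisfy $d'(\alpha(y'),x')<\eps$ and hence $y'\in\widetilde{B}_\eps(x')$, because the overlay only matches core projections up to an $O(\delta)$ error (Lemma~\ref{sec:stab-cutt-proc}). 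Such a $y$ is then in neither the bulk part of $C^*$ nor any product block. This is precisely why the paper works with a correspondence $C^{(\eps)}$ between the \emph{truncated} spaces $X_{x,\eps}$ and $X'_{x',\eps}$ (where boundary points get paired to the explicit extremal points $x_{(i),\eps}$, $x'_{(i),\eps}$) and only then takes an $h_\eps$-enlargement to recover a correspondence between the full cut spaces $X_x$, $X'_{x'}$.

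Your treatment of the distortion also has problems. For bulk--bulk pairs, the ``delicate point'' you raise — that a $d_x$-geodesic between $y_1,y_2\notin\widetilde{B}_\eps(x)$ might enter $\widetilde{B}_\eps(x)$ ``along a chord'' — cannot actually occur: in $X_x$ the set $B_{(i),\eps}$ is attached to the rest of the space only through $x_{(i),\eps}$, so a geodesic between two points of $X_{x,\eps}$ never enters $\widetilde{B}_\eps(x)$ (it would have to traverse $x_{(i),\eps}$ twice). The genuine issue — that $f'$, a priori a local geodesic in $X'$, might pass through $x'$ — is not fixable by a detour of cost $h_\eps(\rX')$: if $f'$ crossed $x'$ it would switch sides of the cut point, and the only alternative route is through the rest of the graph, at cost of order $\diam(\rX')$. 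The paper instead shows by a short distance estimate using $(x,x')\in C$ that $f'$ cannot touch $x'$ at all once $\delta$ is small relative to $\eps$. Finally, the mixed pairs (one bulk, one in a product block) are waved through ``via the triangle inequality''; controlling them is actually a main point of the paper's argument and requires the explicit pairing of boundary points to $x_{(i),\eps}$ via the sets $A_i$, $A_i'$, together with the inequality $d_x(\overline{y},z)\leq d_x(y,z)+h_\eps(x)$ for $\overline{y}\in B_{(i),\eps}$ and $y=x_{(i),\eps}$.
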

\proof Since $\dghp^{1,1}(\rX,\rX') < \eta$ we can find $C_0 \in
C(\rX,\rX')$ with $\dis(C_0) < \eta$ and with $(x,x') \in C_0$, and a
measure $\pi$ with  $D(\pi;\mu,\mu')\leq \eta$ and $\pi(C_0^c) < \eta$. Fix
$\delta> 0$ such that $\delta < \eps/10$ and $\delta < r/56$. By choosing
$\eta< \delta$ sufficiently small, it follows by \refP{lem:deltaover}
that there exists a $\delta$-overlay $C$ of $\rX$ and $\rX'$ with $C_0
\subset C$, so in particular $(x,x') \in C_0$ and $\pi(C^c) < \eta <
\delta$. We also remark that $D(\pi;\mu,\mu')\leq \delta$. 

We next modify $C$ to give a correspondence between $X_{x,\eps}$ and $X'_{x',\eps}$ by letting
\[
C^{(\eps)}=\big(C\cap (X_{x,\eps}\times X'_{x',\eps})\big)\cup A_1\cup A_2\cup A'_1\cup A'_2\, ,
\]
where for $i\in \{1,2\}$, we define
\begin{align*}
A_i & =\{(y,x'_{(i),\eps}):(y,y')\in C\cap (X_{x,\eps}\times B'_{(i),\eps})\}\, , \\
A_i' & = \{(x_{(i),\eps},y'):(y,y') \in C \cap (B_{(i),\eps}\times X'_{x',\eps}) \}\, .
\end{align*}
To verify that $C^{(\eps)}$ is indeed a correspondence between $X_{x,\eps}$ and $X'_{x',\eps}$, it suffices to check that there does not exist $y\in X_{x,\eps}$ for which $(y,x')\in C$, and similarly that there does not exist $y'\in X'_{x',\eps}$ for which $(x,y')\notin C$.  In the first case this is immediate since $d(x,y)\geq \eps$, so for all $y' \in X'$ with $(y,y') \in C$ we have $d'(x',y')\geq \eps-\delta>0$. A symmetric argument handles the second case.

We next estimate the distortion of $C^{(\eps)}$ 
when $X_{x,\eps}$ and $X'_{x',\eps}$ are endowed with the
metrics $d_x$ and $d'_{x'}$ respectively.  To this end, let $(y,y'),(z,z')\in
C^{(\eps)}$.  We have to distinguish several cases. The simplest
case is when $(y,y')$ and $(z,z')$ are, in fact, both in
$C$. In particular, $y,z\in X_{x,\eps}$ and $y',z'\in X'_{x',\eps}$.
Let $f$ be a geodesic from $y$ to $z$ in $X_{x,\eps}$, i.e.\ a local
geodesic in $X_{x,\eps}$ not passing through $x$ and with minimal
length. Let $f'$ be the path from $y'$ to $z'$ associated with $f$ as
in Lemma \ref{sec:cutting-r-graphs}, which we may apply since $\delta
< r/56$. We claim that $f'$ does not pass through $x'$. Indeed, if
it did, then we would be able to find a point $x_0\in \im(f)$ such
that $(x_0,x')\in C_{8\delta}$. Since also $(x,x') \in C_{8\delta}$,
it would follow that
\[
d(x,\im(f))\leq d(x,x_0)\leq
d'(x',x')+8\delta< \eps\, ,
\] 
contradicting the fact that $f$ is a path in
$X_{x,\eps}$. 
By Lemma \ref{sec:cutting-r-graphs}, we deduce that 
\begin{align}
d'_{x'}(y',z')
&\leq d_x(y,z)\bigg(1+\frac{64\delta}{r\wedge
  d_x(y,z)}\bigg)\nonumber\\
&= d_x(y,z)+64\bigg(\frac{d_x(y,z)}{r}\vee 1\bigg)\delta\nonumber\\
&\leq  d_x(y,z)+64\bigg(\frac{2\diam(\rX)}{r}\vee 1\bigg)\delta\, ,\label{eq:7}
\end{align}
where at the last step we use that $d_x(y,z)\leq
\diam(\rX_x)\leq 2\diam(\rX)$.

Let us now consider the cases where $(y,y')\notin C$, still assuming that $(z,z')\in C$. There are two possibilities. 
\begin{enumerate}
\item There exists $y''\in B'_{(i),\eps}$ with $(y,y'')\in C$ and $i\in \{1,2\}$, and so $y'=x'_{(i),\eps}$.
\item
There exists $\overline{y}\in B_{(i),\eps}$ with $(\overline{y},y')\in
C$ and $i \in \{1,2\}$, and so $y=x_{(i),\eps}$. 
\end{enumerate}
Let us consider the first case, assuming $i=1$ for definiteness. The argument leading to \eqref{eq:7} is still valid, with $y''$ replacing $y'$. Using
$d'_{x'}(y'',x'_{(1),\eps})=d'(y'',x'_{(1),\eps})\leq h_\eps(x')$, we obtain
$$d'_{x'}(y',z')\leq d_x(y,z)+64 \bigg(\frac{2\diam(\rX)}{r}\vee 1\bigg)\delta+h_\eps(x')\, .$$

In the second case (still assuming $i=1$ without loss of generality),
we have to modify the argument as follows. We consider a geodesic $f$
from $\overline{y}$ to $z$ in $(X_x,d_x)$. We  $f'$ be the associated
path from $y'$ to $z'$ (again using Lemma \ref{sec:cutting-r-graphs}), and claim that $x' \not\in \im(f')$. Otherwise, $f$ would visit a point at distance less than $8\delta$ from $x$. On the other hand, the point of $\im(f)$ that is closest to $x$ is $\alpha(\overline{y})$. But by Lemma \ref{sec:stab-cutt-proc}, we have
\[
d(x,\alpha(\overline{y}))\geq d'(x',\alpha(y'))-2\delta\geq \eps-2\delta > 8\delta\, .
\]
Finally, since $y=x_{(1),\eps}$ we obtain that $d_x(\overline{y},z)\leq d_x(y,z)+h_\eps(x)$, and the argument leading to \eqref{eq:7} yields
\[
d'_{x'}(y',z')\leq d_x(y,z)+h_\eps(x)+64 \bigg(\frac{2\diam(\rX)}{r}\vee 1\bigg)\delta\, .
\]
Arguing similarly when $(z,z')$ is no longer assumed to belong to $C$, we obtain the following bound for every $(y,y'),(z,z')\in C^{(\eps)}$: 
$$d'_{x'}(y',z')\leq d_x(y,z)+2(h_\eps(x)\vee h_\eps(x'))+64 \bigg(\frac{2\diam(\rX)}{r}\vee 1\bigg)\delta\, .$$
Writing $h_{\eps} = h_{\eps}(x)\vee h_{\eps}(x')$, by symmetry we thus
conclude that
\[
\dis(C^{(\eps)})\leq 2h_{\eps}+64 \bigg(2\frac{\diam(\rX)\vee\diam(\rX')}{r}\vee 1\bigg)\delta\, ,
\]
where the distortion is measured with respect to the metrics $d_x$ and $d'_{x'}$. 
Now let $\hat{C}^{(\eps)}$ be the $h_{\eps}$-enlargement of $C^{(\eps)}$ with respect to $d_x$ and $d'_{x'}$. 
Since $C^{(\eps)}$ is a correspondence between $X_{x,\eps}$ and $X'_{x',\eps}$, and  
all points of $X_x$ (resp.\ $X'_{x'}$) have distance at most $h_{\eps}$ from $X_{x,\eps}$ (resp.\ $X'_{x',\eps}$) under $d_x$ (resp.\ $d'_{x'}$), 
we have that $\hat{C}^{(\eps)}$ is a correspondence between $X_x$ and $X'_{x'}$, of distortion at most 
\[
3h_{\eps}+64\bigg(2\frac{\diam(\rX)\vee\diam(\rX')}{r}\vee 1\bigg)\delta\, . 
\]

Finally, since $D(\pi;\mu,\mu')\leq \delta$ and $\pi(C^c)\leq \delta$, and since 
$(C \cap (X_{x,\eps} \times X'_{x',\eps})) \subset  C^{(\eps)} \subset \hat{C}^{(\eps)}$, 
we have 
\[
\pi((\hat{C}^{(\eps)})^c) \le \pi(C^c) + \pi(\widetilde{B}_{\eps}(x)\times X') + \pi(X \times \widetilde{B}_{\eps}(x')) 
\le \delta + \mu(\widetilde{B}_\eps(x))+\mu'(\widetilde{B}_\eps(x'))\, . 
\]
Since $65 \delta < 6.5 \eps < 7\eps$, the lemma then follows from the
two preceding offset equations and the definition of the distance
$\dghp$.  
\endproof 

\begin{proof}[Proof of Theorem \ref{thm:safelypointed}. ]
  Fix $\eps > 0$. Under the hypotheses of the theorem, for all $n$ large enough, by \refL{sec:stab-cutt-proc-2} we have 
\begin{align*}
\dghp(\rX_x,\rX^n_{x^n})  
\le \;
&\mu(\widetilde{B}_\eps(x))+\mu'(\widetilde{B}_\eps(x^n)) 
 + 3\max(h_{\eps}(x),h_{\eps}(x^n))\\
&+7\bigg(2\frac{\diam(\rX)\vee\diam(\rX^n)}{r}\vee 1\bigg)\eps.  
\end{align*}
It is easily checked that, for all $\eps > 0$,
$$\limsup_{n\to\infty}h_\eps(x^n)\leq h_{2\eps}(x)\, ,\qquad
\limsup_{n\to\infty}\mu^n(\widetilde{B}_\eps(x^n))\leq \mu(\widetilde{B}_{2\eps}(x))\, ,
$$
which both converge to $0$ as $\eps\to 0$.  The result follows.
\end{proof}

\subsection{Randomly cutting $\R$-graphs} \label{sec:cuttingrgs}

Let $\rX=(X,d,x)$ be a safely pointed $\R$-graph, and write $\Ell$
for the length measure restricted to $\conn(\rX)$.  Then
$\rX_x=(X_x,d_x)$ is an $\R$-graph with $s(\rX_x)=s(\rX)-1$. Indeed,
if $e$ is the edge of $\ker(\rX)$ that contains $x$, then it is easy
to see that $\ker(\rX_x)$ is the graph obtained from $\rX$ by first
deleting the interior of the edge $e$, and then taking the kernel of
the resulting $\R$-graph. Taking the kernel
of a graph does not modify its surplus, and so the surplus diminishes by
$1$ during this operation, which corresponds to the deletion of the edge
$e$. Moreover, we see that $\mathcal{A}_r$ is stable under this
operation, in the sense that if $(X,d)\in \mathcal{A}_r$, then for
every $x$ such that $(X,d,x)$ is safely pointed, the space $(X_x,d_x)$
is again in $\mathcal{A}_r$. Indeed, the edges
in $\ker(\rX_x)$ are either edges of $\ker(\rX)$, or a concatenation
of edges in $\ker(\rX)$, and so the minimum
edge-length can only increase. On the other hand, the total core
length and surplus can only decrease. 

Let us now consider the following random cutting procedure for
$\R$-graphs. If $(X,d)$ is an $\R$-graph which is not an $\R$-tree,
then it contains at least one cycle, and by 
Proposition~\ref{sec:skeleton-core-kernel-2}, $\ell$-almost every point of any
such cycle is in $\conn(\rX)$. Consequently, the measure
$\Ell=\ell(\cdot\cap \conn(\rX))$ is non-zero, and we can consider
a point $x$ chosen at random in $\conn(\rX)$ with distribution
$\Ell/\Ell(\conn(\rX))$. Then $(X,d,x)$ is a.s.\ safely pointed
by Proposition \ref{sec:skeleton-core-kernel-2}. Let
$\mathcal{K}(\rX,\cdot)$ be the distribution of $(X_x,d_x)$. By
convention, if $\rX$ is an $\R$-tree, we let
$\mathcal{K}(\rX,\cdot)=\delta_{\{\rX\}}$.  By combining
Corollary \ref{sec:cutt-safely-point-1} (ii) with Theorem
\ref{thm:safelypointed}, we immediately
obtain the following statement.

\begin{proposition}\label{sec:randomly-cutting-r-1}
  Fix $r>0$, and let $(\rX^n,n\geq 1)$ and $\rX$ be elements
  of $\mathcal{A}_r$ such that $\dghp(\rX^n,\rX)\to0$ as
  $n\to\infty$. Then $\mathcal{K}(\rX^n,\cdot) \convdist \mathcal{K}(\rX,\cdot)$ in
  $(\cM,\dghp)$, as $n \to \infty$. 
\end{proposition}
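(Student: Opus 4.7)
The plan is to deduce the result from Corollary~\ref{sec:cutt-safely-point-1}(ii) and Theorem~\ref{thm:safelypointed} via a Skorokhod coupling. First I would let $x^n$ and $x$ be sampled according to the normalised length measures $\Ell^n/\Ell^n(\conn(\rX^n))$ and $\Ell/\Ell(\conn(\rX))$, so that by definition of the cutting kernel, $(X^n_{x^n}, d^n_{x^n}, \mu^n)$ has law $\mathcal{K}(\rX^n, \cdot)$ and $(X_x, d_x, \mu)$ has law $\mathcal{K}(\rX, \cdot)$. Corollary~\ref{sec:cutt-safely-point-1}(ii) applied to the convergent sequence $(\rX^n, n \ge 1)$ then gives
\[
(X^n, d^n, x^n, \mu^n) \convdist (X, d, x, \mu) \quad \text{in } (\cM^{1,1}, \dghp^{1,1}),
\]
and since this space is Polish, Skorokhod's representation theorem allows us to work in a common probability space on which the convergence holds almost surely.

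The only step requiring genuine verification is then to check that Theorem~\ref{thm:safelypointed} applies on a full-measure event. For this I would argue that the length measure $\ell$ on $\skel(\rX)$ is non-atomic, that the set $k(\rX)$ of branchpoints is finite by Proposition~\ref{sec:structure-r-graphs-1}, and that any finite measure $\mu$ has at most countably many atoms. Since $\Ell$ is supported on $\conn(\rX)$ by construction, it follows that $\Ell$-almost every point $y \in X$ lies in $\conn(\rX)$, has $\deg_X(y) = 2$, and satisfies $\mu(\{y\}) = 0$. The same observation applies to each $\rX^n$, and by countable subadditivity there is a single event of probability one on which $(X, d, x, \mu)$ and every $(X^n, d^n, x^n, \mu^n)$ is safely pointed with the prescribed atom-free condition.

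On this event, Theorem~\ref{thm:safelypointed} yields pathwise that
\[
\dghp\bigl((X^n_{x^n}, d^n_{x^n}, \mu^n),\, (X_x, d_x, \mu)\bigr) \to 0,
\]
and hence $\mathcal{K}(\rX^n, \cdot) \convdist \mathcal{K}(\rX, \cdot)$ in $(\cM, \dghp)$, as required. I do not anticipate a serious obstacle: the analytic substance is carried entirely by Theorem~\ref{thm:safelypointed} (stability of cutting under GHP perturbations) and by Corollary~\ref{sec:cutt-safely-point-1}(ii) (joint convergence with the randomly sampled cut point); the only real care required is to coordinate the two via Skorokhod and to observe the almost-sure regularity of the sampled cut points.
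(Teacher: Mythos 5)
Your proof is correct and takes the same route as the paper, which states the result as "immediate" from Corollary~\ref{sec:cutt-safely-point-1}(ii) and Theorem~\ref{thm:safelypointed}; you have usefully made explicit the Skorokhod coupling and the almost-sure regularity of the sampled cut points that the paper leaves implicit. The only minor imprecision is the appeal to Proposition~\ref{sec:structure-r-graphs-1}, which gives finiteness of branchpoints only for leafless $\R$-graphs; for a general $\R$-graph the set $\{y \in X : \deg_X(y) \ge 3\}$ is merely countable, but this is still $\ell$-null and therefore suffices, as already noted in the remark following Definition~\ref{sec:cutting-cycles-an} that $\Ell$-almost every point is regular.
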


In particular, $\mathcal{K}$ defines a Markov kernel from $\mathcal{A}_r$ to
itself for every $r$. Since each application of this kernel decreases
the surplus by $1$ until it reaches $0$, it makes sense
to define $\mathcal{K}^\infty(\rX,\cdot)$ to be the law of $\mathcal{K}^m(\rX,\cdot)$ for
every $m\geq s(\rX)$, where $\mathcal{K}^m$ denotes the $m$-fold composition of
$\mathcal{K}$. 
The next corollary follows immediately from Proposition
\ref{sec:randomly-cutting-r-1} by induction.

\begin{corollary}\label{cor:randomly-cutting-r-2}
Fix $r>0$, and let $(\rX^n,n\geq 1)$ and $\rX$ be elements of $\mathcal{A}_r$ with $\dghp(\rX^n,\rX)\to0$ as $n\to\infty$. Then 
$\mathcal{K}^{\infty}(\rX^n,\cdot) \convdist \mathcal{K}^{\infty}(\rX,\cdot)$ in $(\cM,\dghp)$, as $n \to \infty$.
\end{corollary}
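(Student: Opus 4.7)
The plan is to proceed by induction on the surplus, using Proposition \ref{sec:randomly-cutting-r-1} as the one-step input and exploiting the stability of $\mathcal{A}_r$ under the cutting operation noted at the start of Section \ref{sec:cuttingrgs}.

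First I would set up two preliminary observations. (1) Since $\dghp(\rX^n,\rX) \to 0$ in $\mathcal{A}_r$, Corollary \ref{sec:cutt-safely-point-1}(i) gives convergence of $\ker(\rX^n)$ to $\ker(\rX)$ as graphs with edge-lengths. In particular, these multigraphs become isomorphic for all $n$ large, and therefore $s(\rX^n) = s(\rX)$ for all $n$ large enough. Write $k = s(\rX)$; passing to a tail we may assume $s(\rX^n) = k$ for every $n$. (2) As noted in the opening paragraph of Section \ref{sec:cuttingrgs}, cutting a safely pointed $\R$-graph in $\mathcal{A}_r$ yields a space in $\mathcal{A}_r$ whose surplus is exactly one less; so if $\rY^n$ (resp.\ $\rY$) has law $\mathcal{K}(\rX^n,\cdot)$ (resp.\ $\mathcal{K}(\rX,\cdot)$), then a.s.\ $\rY^n,\rY\in\mathcal{A}_r$, $s(\rY^n)=s(\rY)=k-1$, and the kernel $\mathcal{K}^{k-1}$ applied to either yields $\mathcal{K}^\infty$.

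Now I would argue by induction on $k = s(\rX)$. If $k=0$ then $\rX$ is an $\R$-tree, $\rX^n$ is an $\R$-tree for $n$ large, and $\mathcal{K}^\infty(\rX^n,\cdot)=\delta_{\rX^n}\to \delta_{\rX}=\mathcal{K}^\infty(\rX,\cdot)$ weakly by the Portmanteau theorem applied to $\dghp(\rX^n,\rX)\to 0$. Suppose the statement holds for all surplus strictly less than $k$, and let $s(\rX)=k\geq 1$. By Proposition \ref{sec:randomly-cutting-r-1}, $\mathcal{K}(\rX^n,\cdot)\convdist \mathcal{K}(\rX,\cdot)$ in $(\cM,\dghp)$. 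Since this space is Polish, Skorokhod's representation theorem produces, on a common probability space, random variables $\rY^n\sim\mathcal{K}(\rX^n,\cdot)$ and $\rY\sim\mathcal{K}(\rX,\cdot)$ with $\dghp(\rY^n,\rY)\to 0$ almost surely. By observation (2), almost surely every $\rY^n$ and $\rY$ lies in $\mathcal{A}_r$ with surplus $k-1$, so the inductive hypothesis, applied pathwise, yields that a.s.\ $\mathcal{K}^{k-1}(\rY^n,\cdot)\convdist \mathcal{K}^{k-1}(\rY,\cdot)$.

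To conclude, I would compose: for any bounded continuous $F:\cM\to\R$, the map $\rY\mapsto \int F\,d\mathcal{K}^{k-1}(\rY,\cdot)$ is measurable (indeed one can verify it is continuous on $\mathcal{A}_r$ via the inductive hypothesis), and by the a.s.\ weak convergence above together with bounded convergence,
\[
\E{\int F\,d\mathcal{K}^{k-1}(\rY^n,\cdot)} \longrightarrow \E{\int F\,d\mathcal{K}^{k-1}(\rY,\cdot)}.
\]
The left-hand side equals $\int F\, d\mathcal{K}^{k}(\rX^n,\cdot) = \int F\,d\mathcal{K}^\infty(\rX^n,\cdot)$ and similarly on the right, completing the induction.

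The only genuine subtlety is the composition step: one needs that the pathwise weak convergence of $\mathcal{K}^{k-1}(\rY^n,\cdot)$ to $\mathcal{K}^{k-1}(\rY,\cdot)$ can be integrated against the law of $(\rY^n,\rY)$. This is standard once one knows the map $\rY\mapsto \mathcal{K}^{k-1}(\rY,\cdot)$ is continuous from $\mathcal{A}_r$ to the space of probability measures on $\cM$ endowed with the weak topology, which is exactly what the inductive hypothesis provides; the continuous mapping theorem then seals the argument. No other difficulty arises, since Proposition \ref{sec:randomly-cutting-r-1} and the stability of $\mathcal{A}_r$ do all the real work.
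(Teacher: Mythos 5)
Your proof is correct and takes essentially the same route as the paper, whose own proof of Corollary~\ref{cor:randomly-cutting-r-2} is the single line ``follows immediately from Proposition~\ref{sec:randomly-cutting-r-1} by induction.'' You have simply supplied the details (Skorokhod representation, stability of $\mathcal{A}_r$ and of the surplus under cutting, and bounded convergence to pass the a.s.\ weak convergence through the outer expectation) that the authors left implicit.
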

This proves Theorem~\ref{sec:breaking-cycles-r-1}.

\small
\printnomenclature[3.1cm]
\normalsize

\section*{Acknowledgements}
\addcontentsline{toc}{section}{Acknowledgements} LAB was supported for
this research by NSERC Discovery grant and by an FQRNT Nouveau
Chercheur grant, and thanks both institutions for their support.  LAB
and CG were supported for this research by Royal Society International
Exchange Award IE111100.  NB acknowledges ANR-09-BLAN-0011. CG is grateful to
Universit\'e Paris-Sud for making her professeur invit\'e for March
2011 which enabled work on this project to progress. She is also
grateful for support from the University of Warwick.  GM acknowledges
the support of grants ANR-08-BLAN-0190 and ANR-08-BLAN-0220-01. He
is grateful to PIMS (CNRS UMI 3069) and UBC, Vancouver, for hospitality in the year
2011--2012, and for the support of Universit\'e Paris-Sud. 

\def\polhk#1{\setbox0=\hbox{#1}{\ooalign{\hidewidth
  \lower1.5ex\hbox{`}\hidewidth\crcr\unhbox0}}} \providecommand{\noopsort}[1]{}

\end{document}